\title[K\L{}S-inequality for gradient
flows in metric spaces]{Kurdyka-\L{}ojasiewicz-Simon inequality for gradient flows in metric
  spaces}
\author{Daniel Hauer}
\address[Daniel Hauer]{School of Mathematics and Statistics\\ The University of Sydney\\
  NSW 2006\\ Australia}
\email{\href{mailto:daniel.hauer@sydney.edu.au}{\nolinkurl{daniel.hauer@sydney.edu.au}}}
\author{Jos\'e M. Maz\'on}
\address[Jos\'e M. Maz\'on]{Departament
  d'An\`alisi Matem\`atica, Universitat de Val\`encia, Valencia, Spain}
\email{\href{mailto:mazon@uv.es}{\nolinkurl{mazon@uv.es}}}
\subjclass[2010]{49J52 - 35K90 - 35B40 - 49Q20 - 58J35 - 39B62}
\keywords{Gradient flows in metric spaces,
  Kurdyka-\L{}ojasiewicz-Simon inequality, Wasserstein distances,
  Logarithmic Sobolev, Talagrand's entropy-transportation inequality}
\numberwithin{equation}{section}
\theoremstyle{theorem}
\newtheorem{theorem}{Theorem}[section]
\newtheorem{proposition}[theorem]{Proposition}
\newtheorem{lemma}[theorem]{Lemma}
\newtheorem{corollary}[theorem]{Corollary}
\theoremstyle{definition}
\newtheorem{definition}[theorem]{Definition}
\newtheorem{assumption}{Assumption}[section]
\newtheorem{examples}[theorem]{Example}
\theoremstyle{remark}
\newtheorem{remark}[theorem]{Remark}
\newtheorem{notation}[theorem]{Notation}
\newcommand\R{{\mathbb{R}}}
\newcommand\N{\mathbb{N}}
\newcommand\E{\mathcal{E}}
\newcommand\HE{\mathcal{H}}
\newcommand\U{\mathcal{U}}
\newcommand\D{\mathbb{D}}
\newcommand\td{\mathrm{d}}
\newcommand\dx{\mathrm{d}x }
\newcommand\dy{\mathrm{d}y }
\newcommand\dr{\mathrm{d}r }
\newcommand\ds{\mathrm{d}s }
\newcommand\dt{\mathrm{d}t }
\newcommand\fdt{\frac{\mathrm{d}}{\dt}}
\newcommand\fdr{\frac{\mathrm{d}}{\dr}}
\newcommand\fds{\frac{\mathrm{d}}{\ds}}
\DeclareMathOperator*{\divergence}{div}
\DeclareMathOperator{\inter}{int}
\def\1{\raisebox{2pt}{\rm{$\chi$}}}
\newcommand\abs[1]{\lvert#1\rvert}
\newcommand\labs[1]{\left\lvert#1\right\rvert}
\newcommand\norm[1]{\lVert#1\rVert}
\definecolor{darkred}{rgb}{0.7,0.1,0.1}
\begin{document}
\date{\today}
\maketitle
%

\begin{abstract}
 This paper is dedicated to providing new tools and methods for studying the
  trend to equilibrium of gradient flows in metric spaces
  $(\mathfrak{M},d)$ in the \emph{entropy} and \emph{metric} sense, to
  establish \emph{decay rates}, \emph{finite time of extinction}, and to
  characterize \emph{Lyapunov stable equilibrium points}. More
  precisely, our main results are:
\begin{itemize}[topsep=3pt,itemsep=-3pt,partopsep=1ex,parsep=1ex]
\item Introduction of a \emph{gradient inequality} in the metric space
  framework, which in the Euclidean space $\R^{N}$ was obtained by
  \L{}ojasie\-wicz [\'Edi\-tions du C.N.R.S., Paris, 1963], later
  improved by Kurdyka~[Ann. Inst. Fourier, 48 (3), 1998], and
  generalized to the Hilbert space framework by Simon~[Ann. of
  Math. (2) 118, 1983].
\item Obtainment of the trend to equilibrium in the \emph{entropy} and
  \emph{metric} sense of gradient flows generated by a functional
  $\E : \mathfrak{M}\to (-\infty,+\infty]$ satisfying a
  Kurdyka-\L{}ojasiewicz-Simon inequality in a neighborhood of an
  equilibrium point of $\E$. Sufficient conditions are
  given implying \emph{decay rates} and \emph{finite time of
    extinction} of gradient flows.
\item Construction of a \emph{talweg curve} in
  $\mathfrak{M}$ with an \emph{optimal} growth function yielding the
  validity of a Kurdyka-\L{}ojasiewicz-Simon inequality.
\item Characterization of \emph{Lyapunov stable} equilibrium points of
  $\E$ satisfying a Kurdyka-\L{}ojasiewicz-Simon inequality near such
  points.
\item  Characterization of the \emph{entropy-entropy production
    inequality} with the Kur\-dyka-\L{}ojasiewicz-Simon inequality.
\end{itemize}
As an application of these results, the following properties are established.
\begin{itemize}[topsep=3pt,itemsep=-3pt,partopsep=1ex,parsep=1ex]
\item New upper bounds on the extinction time of gradient flows
  associated with the total variational flow.
\item If the metric space $\mathfrak{M}$ is the $p$-Wasserstein space
  $\mathcal{P}_{p}(\R^{N})$, $1<p<\infty$, then new HWI-, Talagrand-,
  and logarithmic Sobolev inequalities are obtained for functionals
  $\E$ associated with \emph{nonlinear diffusion} problems modeling
  drift, potential and interaction phenomena.\newline \mbox{}$\quad$
  It is shown that these inequalities are equivalent to the
  Kurdyka-\L{}ojasiewicz-Simon inequality and hence, they imply trend
  to equilibrium of the gradient flows of $\E$ with decay rates or
  arrival in finite time.
\end{itemize}
\end{abstract}

\tableofcontents

\section{Introduction}

The long-time asymptotic behavior of gradient flows is one
fundamental task in the study of gradient systems and
often studied separately from well-posedness. Given a metric space
$(\mathfrak{M},d)$, we call a curve $v$ a \emph{gradient flow} in
$\mathfrak{M}$ if there is an \emph{energy} functional
$\E : \mathfrak{M}\to (-\infty,+\infty]$ such that $v$ is a
\emph{$2$-curve of maximal slope of $\E$} (\cite{AGS-ZH}, see
Definition~\ref{def:p-max-slope}); in brief,
$v : [0,+\infty)\to \mathfrak{M}$ is locally absolutely continuous
with \emph{metric derivative} $\abs{v'}\in L^{2}_{loc}(0,+\infty)$
(see~Definition~\ref{def:metric-derivative}), $\E\circ v$ is locally
absolutely continuous, and there is a second functional
$g : \mathfrak{M}\to [0,+\infty]$ called \emph{strong upper gradient
  of $\E$} satisfying
\begin{displaymath}
  \labs{\fdt\E(v(t))}\le g(v(t))\,\abs{v'}(t)\qquad\text{for a.e. $t>0$,}
\end{displaymath}
(see~Definition~\ref{def:strong-uppergradient}) and such that
\emph{energy dissipation inequality}
\begin{equation}\label{EDI11}
    \fdt\E(v(t)) \le - \frac{1}{2} \abs{v'}^2(t) - \frac{1}{2}
    g^2(v(t)) \qquad\text{holds for a.e. $t>0$.}
\end{equation}

This general notion of gradient flows in metric spaces is consistent
with the notion of \emph{strong solutions} of gradient systems in the
\emph{Hilbert spaces framework}~(developed in \cite{BrezisBook},
cf~\cite[Corollary~1.4.2]{AGS-ZH}):
\begin{equation}\label{ACP1}
    v'(t) + \partial \E(v(t))\ni 0\qquad \text{for $t> 0$,}
\end{equation}
where $\partial\E$ is a \emph{subdifferential operator} of a
semi-convex, proper, lower semicontinuous functional $\E$ on a Hilbert
space (see~Section~\ref{sec:Hilbertspacesframework}).

Since the pioneering work~\cite{JKO} by Jordan, Kinderlehrer and Otto, we know
that solutions of some diffusion equations for probability distributions
can also be derived as gradient flows of a given functional $\E$ with
respect to a differential structure induced by the $p$-Wasserstein
space $(\mathcal{P}_{p}(M),W_{p})$. 
 Thus, we call this setting the \emph{metric space framework} (see
 also~\cite{MR1964483,AGS-ZH} and Section~\ref{GradProbmeasures}). 
%

While for the Hilbert space framework, an
extensive literature on determining the long-time asymptotic behavior
of gradients flows already exists (for instance,
see~\cite{HaJenBook,Ha91} or more recently~\cite{MR2763076}, and the
references therein), it seems that the methods available for the
metric space framework (cf, for instance, \cite{ACDDJLMTV}
or~\cite{VillaniCompMath2004}) are mainly based on the idea in
establishing global \emph{entropy (entropy production/transport)
  inequalities}. 


Before passing to the main results of this article, we review the
concept of the classical \emph{entropy method} as it is used, for
instance, in studying the convergence to equilibrium of solutions of
kinetic equations such as the Boltzmann or Fokker-Planck equation
(cf~\cite[\S3]{ACDDJLMTV}, \cite{MR2006305}, \cite[\S 9]{MR1964483}, or
\cite{CJMTU,MR2407976,VillaniCompMath2004}). This will reveal the
connection of the famous \emph{entropy-entropy production inequality}
(see~\eqref{eq:13} below) and the \emph{Kurdyka-\L{}ojasiewicz-Simon
  inequality} (see~\eqref{eq:KL32} below), which is the main object of this paper.

With the task to determine the asymptotic behavior for large time of gradient flows,
the following three problems arise naturally.
\begin{enumerate}[label=(\Roman*),leftmargin=2\parindent]
  \item \label{enum:1} Does every gradient flow $v(t)$ trend to an equilibrium
    $\varphi$ of $\E$ as $t\to+\infty$?
  \item \label{enum:2} In which sense or topology the trend of $v(t)$ to
    $\varphi$ holds as~$t\to+\infty$?
  \item \label{enum:3} What is the \emph{decay rate} of $d(v(t),\varphi)$ as $t\to+\infty$?
\end{enumerate}

Due to inequality~\eqref{EDI11}, $\E$ is a \emph{Lyapunov functional}
of each of its gradient flow curve $v$, that is, $\E\circ v$ is
monotonically decreasing $[0,+\infty)$. In addition, if $\E$ and $g$
are both lower semicontinuous on $\mathfrak{M}$, then $\E$ is even a
\emph{strict Lyapunov functional}, that is, the condition
$\E\circ v\equiv \textrm{const.}$ on $[t_{0},+\infty)$ for some
$t_{0}\ge 0$ implies that $ v\equiv \textrm{const.}$ on
$[t_{0},+\infty)$
(cf~Proposition~\ref{propo:omega-limit-gradientflow}) and so, the
\emph{$\omega$-limit~set}
\begin{displaymath}
    \omega(v):=\Big\{\varphi \in \mathfrak{M}\,\Big\vert\, \text{
      there is $t_{n}\uparrow +\infty$ s.t. }\lim_{n\to \infty}d(v(t_{n}),\varphi)=0 \Big\}
\end{displaymath}
is contained in the set $\mathbb{E}_{g}:=g^{-1}(\{0\})$ of
\emph{equilibrium points} of $\E$ with respect to the strong upper
gradient $g$ (see Definition~\ref{def:equilibrium-points}). Therefore,
Problem~\ref{enum:1} is positively answered.

Given that the strict Lyapunov functional $\E$ of a gradient flow $v$
attains an equilibrium point at $\varphi\in \omega(v)$, one
possibility to measure the discrepancy between $v$ and the
equilibrium $\varphi$ is given by \emph{relative entropy}
\begin{displaymath}
  \E(v(t)\vert\varphi):=\E(v(t))-\E(\varphi)\ge 0.
\end{displaymath}
Thus concerning Problem~\ref{enum:2}, there are two types of \emph{trend to
  equilibrium}; namely the following (\emph{weaker} type)

\begin{enumerate}[topsep=3pt,leftmargin=1.6\parindent,partopsep=1ex,parsep=1ex]
\item[(1)] {\bfseries Trend to equilibrium in the entropy sense}: a
  gradient flow $v$ of $\E$ is said to \emph{trend to equilibrium
    $\varphi$ in the entropy sense} if
  \begin{displaymath}
    \E(v(t)\vert \varphi)\to 0\qquad\text{ as $t\to +\infty$,}
  \end{displaymath}
\end{enumerate}
and the (\emph{stronger} type)
\begin{enumerate}[topsep=3pt,leftmargin=1.6\parindent,itemsep=1ex,partopsep=1ex,parsep=1ex]
\item[(2)] {\bfseries Trend to equilibrium in the metric sense}: a
  gradient flow $v$ is said to \emph{trend to equilibrium $\varphi$
    of $\E$ in the metric sense}~if
  \begin{displaymath}
    \lim_{t\to+\infty}d(v(t),\varphi)=0.
  \end{displaymath}
\end{enumerate}

Now, the classical \emph{entropy method} suggests
to find a strictly increasing function $\Phi\in C([0,+\infty))$
satisfying $\Phi(0)=0$ such that $\E$ 
satisfies a (global) {\bfseries entropy-entropy production/dissipation (EEP-)inequality}
\begin{equation}
  \label{eq:13}
  \D(v)\ge \Phi(\E(v\vert \varphi))
\end{equation}
for all $v\in D(\E)$ at an equilibrium point $\varphi$. Here, the map $\D :
\mathfrak{M}\to [0,+\infty]$ is
called the \emph{entropy production functional} and satisfies
\begin{equation}
  \label{eq:44}
  \D(v(t))=-\frac{\td}{\dt}\E(v(t))=g^{2}(v(t))\qquad\text{for $t>0$}
\end{equation}
for every gradient flow $v$ of $\E$ with strong upper gradient $g$
(cf~Proposition~\ref{propo:chara-p-curves} in
Section~\ref{sec:curves-max-slope} below).
Combining~\eqref{eq:13} with~\eqref{eq:44}, rearranging and then
integrating the resulting inequality yields for each curve $v$
generated by $\E$ trend to equilibrium $\varphi$ in the entropy sense.


If the function $\Phi$ in~\eqref{eq:13} is known, then the entropy
method has certainly the advantage that EEP-inequality~\eqref{eq:13}
provides decay rates (Problem~\ref{enum:3}) to the trend to equilibrium in the
entropy sense. For instance, if $\Phi$ is \emph{linear}, that is,
$\Phi(s)=\lambda s$, ($s\ge 0$, $\lambda>0$), then
EEP-inequality~\eqref{eq:13} implies \emph{exponential decay rates}
and if $\Phi$ is \emph{polynomial} $\Phi(s)=K s^{1+\alpha}$,
($\alpha\in (0,1)$, $K>0$), then EEP-inequality~\eqref{eq:13} implies
that $\E(v(t)\vert \varphi)$ decays to $0$ at least like
$t^{-1/\alpha}$ (\emph{polynomial decay}).

On the other hand, the fact that $\E$ satisfies a (global)
EEP-inequality~\eqref{eq:13} requires in the Hilbert as well as in the
metric space framework additional rather strong conditions on
$\E$. For example, in the Hilbert space framework $H=L^{2}(\Omega)$ of
a bounded extension domain $\Omega$ in $\R^{N}$, ($N\ge 1$), the fact
that a functional $\E : H\to (-\infty,+\infty]$ satisfies an
EEP-inequality~\eqref{eq:13} at $\varphi=0$ is equivalent to the fact
that $\E$ satisfies an abstract \emph{Poincar\'e-Sobolev inequality}
(see~\eqref{eq:PoincareSobolevE} in
Section~\ref{subsubsec:Derivation-LS-inequality}). Or, for instance,
in the metric space framework of the $p$-Wasserstein space
$\mathfrak{M}=\mathcal{P}_{p}(\Omega)$: if a functional
$\E : \mathfrak{M}\to (-\infty,+\infty]$ is
\emph{$\lambda$-geodesically convex} for some $\lambda>0$ (see
Definition~\ref{def:lambdageoconvex}), then $\E$ satisfies a (global)
EEP-inequality~\eqref{eq:13}. See Section~\ref{GradProbmeasures} for
further details.


%
%
%


Now, to obtain trend to equilibrium in the metric sense, an {\bfseries
  entropy-transportation (ET-)inequality}
\begin{equation}
  \label{eq:13-transport0}
  d(v,\varphi)\le \Psi(\E(v\vert \varphi)),\qquad (v\in D(\E)),
\end{equation}
can be very useful (see, for instance, \cite{CMVIbero,MR1964483}
or~\cite{MR2016985}), where $\Psi\in C[0,+\infty)$ is a strictly
increasing function satisfying $\Psi(0)=0$. If $\Phi$ and $\Psi$ are
known, then \emph{decay estimates} to the trend to
equilibrium in the metric sense can be derived by combining the two
inequalities~\eqref{eq:13} and~\eqref{eq:13-transport0}.



However, there are many important examples of functionals $\E$ that do
not satisfy a \emph{global} EEP-inequality~\eqref{eq:13} (see, for
instance,~\cite[Theorem~2, p~21]{MR2407976} or \cite[p
98]{VillaniCompMath2004}). In particular, if $\E$ is not
$\lambda$-geodesically convex for a $\lambda>0$, then it might not
have a unique global equilibrium point. But the study of trend to
equilibrium of gradient flows generated by this class of functionals
remains an important (open) problem and requires more sophisticated tools
and arguments.


Our approach to attack the above mentioned problems is via a functional
inequality, which in the Hilbert space framework is known as the
{\bfseries Kurdyka-\L{}ojasiewicz (K\L{}-)inequality}: a proper
functional $\E : \mathfrak{M}\to (-\infty,+\infty]$ with strong upper
gradient $g$ and equilibrium point $\varphi\in D(\E)$ is said to
satisfy a \emph{Kurdyka-\L{}ojasiewicz inequality} on a set
$\mathcal{U}\subseteq g^{-1}((0,+\infty))\cap \big\{v\in
\mathfrak{M}\,\big\vert\, \theta'(\E(\cdot\vert\varphi))>0\big\}$ if
there is a strictly increasing function $\theta\in W^{1,1}_{loc}(\R)$
satisfying $\theta(0)=0$ such that
\begin{equation}
    \label{eq:KL32}
     \theta'(\E(v\vert \varphi))\, g(v)\ge 1\qquad
     \text{for all $v\in \mathcal{U}$.}
\end{equation}

Since in the literature, EEP-inequality~\eqref{eq:13} is rather used
from the community studying kinetic equations, while
Kurdyka-\L{}ojasiewicz inequality~\eqref{eq:KL32} is rather familiar
for communities from \emph{algebraic geometry} classifying
singularities of manifolds or by groups studying evolution equations
that can be written in a Hilbert space setting, it is important to
stress that both communities actually work with the same
inequality. To see this, recall that in the metric space framework,
the entropy production functional $\D$ of a proper functional
$\E : \mathfrak{M}\to (-\infty,+\infty]$ with strong upper gradient
$g$ is given by~\eqref{eq:44}, which in applications $g$, is usually
given by the descending slope $\abs{D^{-}\E}$ of $\E$ (see
Definition~\ref{propo:lconvex-slope}). Thus, if $\theta$ satisfies, in
addition, that $\theta\in C^1((0,+\infty))$ and
$\lim_{s\to0+}\theta'(s)=+\infty$, then
\begin{center}
    \emph{K\L{}-inequality~\eqref{eq:KL32} is, in fact,
     EEP-inequality~\eqref{eq:13} for $\Phi(s):= \frac{1}{(\theta'(s))^{2}}$.}
\end{center}

In the pioneering works \cite{Lo63,Lo65}, \L{}ojasiewicz showed that
every \emph{real-analytic} energy functional $\E : \mathcal{U}\to \R$
defined on a open subset $\mathcal{U}\subseteq\R^{N}$ satisfies near
each equilibrium point $\varphi\in \mathcal{U}$ a {\bfseries
  \L{}ojasiewicz (\L{}-)inequality}
\begin{equation}
      \label{eq:Loj-Rd}
    \abs{\E(v\vert\varphi)}^{1-\alpha}\le
    C\,\norm{\nabla\E(v)}_{\R^{N}}\quad\text{for all $v\in
      \tilde{\mathcal{U}}_{r}:=\{v\in \mathcal{U}\,\vert\,\abs{v-\varphi}<r\}$}
\end{equation}
for some exponent $\alpha \in (0,1/2]$ and $r>0$. 
In these two papers, \L{}ojasiewicz developed a method
proving that the (\emph{local}) validity of \emph{gradient
  inequality}~\eqref{eq:Loj-Rd} in a neighborhood
$\tilde{\mathcal{U}}_{r}$ of $\varphi$ is
sufficient for establishing convergence to equilibrium in the metric
sense. 
Simon~\cite{Si83} was the first who made \L{}ojasiewicz's
gradient-inequality~\eqref{eq:Loj-Rd} available for evolution problems
formulated in an infinite dimensional Hilbert space framework and
generalized \L{}ojasiewicz's method. As an
application, Simon established the long-time convergence in the metric sense
of analytic solutions of semi-linear parabolic equations and of geometric flows. His ideas were
further developed by many authors
(see, for instance,~\cite{MR1609269,MR2019030,MR2226672, HaJenBook} concerning the
long-time asymptotic behavior of solutions of semi-linear heat and
wave equations, see~\cite{2014arXiv1409.1525F} concerning gradient
flows associated with geometric flows).

The condition \emph{$\E$ being real-analytic} is rather a geometric
property than a regularity property of $\E$. This is well demonstrated by
the gradient system in $\R^{2}$ given by the
\emph{Mexican hat} functional $\E$ due to Palis and de Melo
\cite[p~14]{MR669541}. In this example, $\E$ belongs to the class
$C^{\infty}(\R^{2})$, but $\E$ admits a bounded gradient flow $v$ with
an $\omega$-limit set $\omega(v)$ which is isomorphic to the unit
circle $S^{1}$. The geometric properties of real-analytic functions
and \L{}-inequality~\eqref{eq:Loj-Rd} were studied systematically with
tools from algebraic geometry and generalized to the class of
\emph{definable} functionals (see~\cite{MR1633348}). By introducing
the concept of a \emph{talweg curve}, Kurdyka~\cite{MR1644089} showed
that every definable $C^{1}$ functional $\E$ on $\U\subseteq \R^{N}$
satisfies near every equilibrium point a
K\L{}-inequality~\eqref{eq:KL32} and with this, he established of
every bounded definable gradient flow in $\R^{N}$, the trend to
equilibrium in the metric sense.

First versions of \emph{local} and \emph{global}
K\L{}-inequality~\eqref{eq:KL32} for proper, lower semicontinuous, and
(semi-)convex functionals $\E : H\to (-\infty,+\infty]$ on a Hilbert
space $H$ were introduced by Bolte et al.~\cite{Bolte2010} (see
also~\cite{MR2274510} and~\cite{MR3834660}). They adapted
Kurdyka's notion of a talweg curve to the Hilbert space framework and
characterized the validity of a K\L{}-inequality~\eqref{eq:KL32} with
the existence of a talweg. In addition, first formulations of
K\L{}-inequality~\eqref{eq:KL32} in a metric space framework were also
given in \cite{Bolte2010} (see also the recent
work~\cite{MR3832005}).

In this paper, we introduce \emph{local} and \emph{global}
K\L{}-inequalities~\eqref{eq:KL32} for proper functionals
$\E : \mathfrak{M}\to (-\infty,+\infty]$ defined on a metric space
$(\mathfrak{M},d)$ (see Definition~\ref{def:KL-inequality} in
Section~\ref{subsec:KL-inequality-prliminary}). Our definition here is
slightly different to the one in~\cite{Bolte2010,MR3832005}, but
consistent with one in the Hilbert space framework given, for
instance, by~\cite{MR3834660}. This enables us to provide
new fine tools for determining the trend to equilibrium in both the
\emph{entropy sense} (in Section~\ref{subsec:dynamicalsystem}) and the
\emph{metric sense} (in Section~\ref{secbehaviour}) of gradient flows
in $\mathfrak{M}$. More precisely, we show in
Theorem~\ref{thm:finite-length} that if $\E$ is bounded from below and
satisfies a K\L{}-inequality~\eqref{eq:KL32} on a set $\U$ then every
gradient flow $v$ of $\E$ satisfying $v([t_{0},+\infty))\subseteq\U$
for some $t_{0}\ge 0$ has finite length. In Section~\ref{sec:sufKL},
we study sufficient conditions implying that a functional $\E$ on
$\mathfrak{M}$ satisfies K\L{}-inequality~\eqref{eq:KL32} near an
equilibrium point $\varphi$ of $\E$. In particular,
Theorem~\ref{thm:3talweg-kl} provides optimal conditions on the
\emph{talweg curve} in $\mathfrak{M}$ ensuring the validity of a
K\L{}-inequality~\eqref{eq:KL32} by $\E$ near $\varphi$. In
Section~\ref{sec:chara-KL-talweg}, we characterize the (local)
validity of a K\L{}-inequality~\eqref{eq:KL32} by a functional $\E$
with the existence of a talweg curve. We adapt \L{}ojasiewicz's and
Kurdyka's convergence method from \cite{Lo63,Lo65}
and~\cite{MR1644089} to the metric space framework
(Section~\ref{secbehaviour}) and establish the trend to equilibrium in
the metric sense of every gradient flow of $\E$
(Theorem~\ref{thm:convergence}). We define \L{}ojasiewicz's
inequality~\eqref{eq:Loj-Rd} for proper functionals $\E$ on
$\mathfrak{M}$ (see Definition~\ref{def:LS-inequality}) and deduce
from it \emph{decay rates} of the trend to equilibrium in the metric
and entropy sense, and give upper bounds on the extinction time of
gradient flows (Theorem~\ref{thm:decayrates}). Note, these results are
consistent with the Hilbert space framework (cf, for
instance,~\cite{MR2019030,MR2289546,Bolte2010}). Section~\ref{sec:Lyapunov-stability}
is concerned with the characterization of \emph{Lyapunov stable}
equilibrium points $\varphi$ under the assumption that $\E$ satisfies
a K\L{}-inequality~\eqref{eq:KL32} in a neighborhood of $\varphi$,
and in Section~\ref{sec:character-global-KLandET}, we demonstrate that
K\L-inequality~\eqref{eq:KL32} is equivalent to a (generalized)
ET-inequality~\eqref{eq:13-transport0}.

Before outlining some applications to the theory developed in
Section~\ref{sec:Hilbertspacesframework} of this paper, we briefly
review the example of the \emph{linear Fokker-Planck equation}
\begin{equation}
    \label{eq:35}
    \partial_{t}v=\Delta v+\nabla\cdot (v\nabla
    V)\qquad\text{on $M\times (0,+\infty)$,}
\end{equation}
where $M$ a complete $C^{2}$-Riemannian manifold of dimension
$N\ge 1$, $\mathcal{L}^{N}$ is the standard volume measure on $M$, and
$V\in C^{2}(M)$ is a given potential. It was demonstrated
in~\cite{JKO} (see also~\cite{MR1964483}) that if one fixes a \emph{reference} probability measure
$\nu=v_{\infty}\mathcal{L}^{N}\in \mathcal{P}_{2}(M)$ with
$v_{\infty}=e^{-V}$, then solutions $v$ of equation~\eqref{eq:35} can
be written as the the probability distribution $v(t)$ of the gradient
flows $\mu(t)=v(t)\,\mathcal{L}^{N}$ generated by the
\emph{Boltzmann $H$-functional}
  \begin{equation}
    \label{eq:39}
    \mathcal{H}(\mu\vert \nu):=\int_{M}
    \tfrac{\td\mu}{\td\nu}\,\log\tfrac{\td\mu}{\td\nu}\,\td\nu\qquad
    \text{for every $\mu=v \mathcal{L}^{N}\in \mathcal{P}_{2}(M)$.}
\end{equation}
Since
\begin{displaymath}
  \mathcal{H}(\mu\vert \nu)=\int_{M}v\,\log v\,\dx + \int_{M} v\,V\,\dx=:\E(v),
\end{displaymath}
one sees that EEP-inequality~\eqref{eq:13} is the {\bfseries logarithmic
Sobolev inequality}
  \begin{equation}
    \label{eq:123}
    \mathcal{H}(\mu\vert \nu)\le \frac{1}{2\lambda}
    \mathcal{I}(\mu\vert \nu),
  \end{equation}
 for some $\lambda>0$, which holds true if $D^2(V)+\textrm{Ric}\ge \lambda$
 due to~\cite{MR889476}. The functional
 $\mathcal{I}(\mu\vert \nu)$ in~\eqref{eq:123} is called the \emph{relative Fisher
   information} of $\mu$ with respect to $\nu$ and coincides
 with the entropy production
\begin{displaymath}
  \D(\mu)=
  \int_{M}\labs{\nabla \log \tfrac{\td\mu}{\td\nu}}^{2}\,\td\mu
  = 4 \int_{M}\labs{\nabla \sqrt{\tfrac{\td\mu}{\td\nu}}}^{2}\,\td\nu
  =: \mathcal{I}(\mu\vert \nu).
\end{displaymath}
Thus, in other words, the logarithmic Sobolev inequality~\eqref{eq:123} is
actually K\L{}-inequality~\eqref{eq:KL32} for
$\theta(s)=2\,c\abs{s}^{-\frac{1}{2}}s$. Moreover, ET-inequality~\eqref{eq:13-transport0} is, in fact, {\bfseries
  Talagrand's entropy transportation inequality}
(cf~\cite{OttoVillani})
  \begin{equation}
    \label{eq:41}
    W_{2}(\mu,\nu)\le \sqrt{\tfrac{2}{\lambda} \mathcal{H}(\mu\vert\nu)}.
\end{equation}
%

Thanks to Corollary~\ref{equivalent1} of this paper,
Talagrand's inequality~\eqref{eq:41} and the logarithmic Sobolev
inequality~\eqref{eq:123} are equivalent to each other. 


Section~\ref{sec:application} is dedicated to applications:
Section~\ref{sec:Hilbertspacesframework} is concerned with the Hilbert
space framework and Section~\ref{GradProbmeasures} with the
Wasserstein framework. Section~\ref{subsubsec:Extinction-Dirichlet}
and~\ref{subsubsec:Extinction-Neumann} provide two examples of
gradient flows associated with the \emph{total variational
  flow}. These examples illustrate that an
K\L-inequality~\eqref{eq:KL32} involving known growth functions
$\theta$ provide upper bounds on the extinction time of gradient
flows. 
In Section~\ref{GradProbmeasures}, we establish new
(generalized) \emph{entropy transportation inequalities},
\emph{logarithmic Sobolev inequalities} and \emph{HWI-inequalities}
(see Theorem~\ref{thm:inequalitiesWp} and
cf~\cite{OttoVillani,MR2016985,MR2053603} and~\cite{MR2079071})
associated with proper, lower semicontinuous energy functionals $\E$
defined on the $p$-Wasserstein spaces $\mathcal{P}_{p}(\Omega)$,
($\Omega\subseteq \R^{N}$ an open set and $1<p<\infty$). We recall,
the abbreviation \emph{HWI} was introduced by Otto and
Villani~\cite{OttoVillani}, where $H$ denotes the relative
\emph{Boltzmann $H$-functional}, $W$ the \emph{$2$-Wasserstein distance}, and
$I$ the relative \emph{Fisher information}. The probability densities
of the gradient flows of these functionals $\E$ are solutions of
parabolic doubly nonlinear equations. We prove exponential decay rates
and finite time of extinction of those gradient flows
(Corollary~\ref{cor:exprate-RN}). We note that all results in
Section~\ref{sec:application} provide new insights in the study of the
examples.

During the preparation of this paper, we got aware of the recent
work~\cite{MR3832005} by Blanchet and Bolte. They show in the case
$\lambda=0$ that for proper, lower semicontinuous and
\emph{$\lambda$-geodesically convex} functionals $\E$ on
$\mathcal{P}_{2}(\R^{N})$, a (local) \L{}ojasiewicz -inequality
is equivalent to a (local and generalized)
ET-inequality~\eqref{eq:13-transport0}. Then, as an application of
this, they establish the equivalence between Talagrand's
inequality~\eqref{eq:41} and the logarithmic Sobolev inequality~\eqref{eq:123} for
the Boltzmann $H$-functional~\eqref{eq:39} on $\R^{N}$ for $V=0$. The
method in~\cite{MR3832005} is based on a similar idea to ours (cf
Theorem~\ref{thm:charact-local-KLET}), but our results presented in
Section~\ref{sec:character-global-KLandET} are concerned with a much
general framework. In our forthcoming paper, we show how these results
help us to study the geometry of metric measure length space with a
Ricci-curvature bound in the sense of Sturm~\cite{Sturm1,Sturm} and
Lott-Villani~\cite{LV1}.

We continue this paper with Section~\ref{sec:gradientflows-metric} by
summarizing some important notions and results from~\cite{AGS-ZH}
on the theory of gradient flows in metric spaces.

\subsection*{Acknowledgment}

We warmly thank the anonymous referee for reading the manuscript of
this paper very thoroughly, carefully and in depth.
 
Part of this work was made during a research stay at the Universitat
de Val\`encia by the first author and a second one at the University
of Sydney by the second author. We are very grateful for the kind invitations
   and the hospitality.  The second
  author has been partially supported by the Spanish MINECO and FEDER,
  project MTM2015-70227-P. 

  Both authors warmly thank Professor Luigi Ambrosio for the helpful and
  interesting discussion on $RCD(K,\infty)$ spaces. The first author
  also thanks warmly Professor Adam Parusi\'nski for sharing his
  knowledge on Kurdyka's inequality in $\R^{d}$.

\section{A brief primer on gradient flows in metric spaces}
\label{sec:gradientflows-metric}

Throughout this section $(\mathfrak{M},d)$ denotes a complete metric
space provided nothing more specifically was mentioned and
$-\infty\le a<b\le +\infty$.

\subsection{Metric derivative of curves in metric spaces}

Here, $1\le p\le \infty$. We begin with the following definition.

\begin{definition}[{\bfseries \emph{The space $AC^p$}}]
  A curve $v : (a,b)\to \mathfrak{M}$ is said to belong to the class
  $AC^{p}(a,b;\mathfrak{M})$ if there exists an $m\in L^{p}(a,b)$
  satisfying
  \begin{equation}
    \label{eq:1}
    d(v(s),v(t))\le \int_{s}^{t}m(r)\,\dr\qquad\text{for all $a<s\le t<b$.}
  \end{equation}
  Similarly, a curve $v : (a,b)\to \mathfrak{M}$ belongs to
  the class $AC^{p}_{loc}(a,b;\mathfrak{M})$ if there is an $m\in L^{p}_{loc}(a,b)$
  satisfying~\eqref{eq:1}.
\end{definition}

\begin{notation}
 In the case $p=1$, we simply write
  $AC(a,b;\mathfrak{M})$ for the class
  $AC^{1}(a,b;\mathfrak{M})$ and
  $AC_{loc}(a,b;\mathfrak{M})$ instead of
  $AC^{1}_{loc}(a,b;\mathfrak{M})$. In addition, if $a>-\infty$, then we write $v\in
    AC^{p}_{loc}([a,b);\mathfrak{M})$ to stress that the function $m$
    satisfying~(2.1) belongs to $L^{p}_{loc}([a,b))$.
\end{notation}

\begin{remark}
  \label{rem:AC-left-right-limits}
  Since $L^{p}_{loc}(a,b)\subseteq L^{1}_{loc}(a,b)$, the class
  $AC_{loc}^{p}(a,b;\mathfrak{M})$ is included in
  $AC_{loc}(a,b;\mathfrak{M})$. If $a>-\infty$,
  then for every $v\in AC(a,b;\mathfrak{M})$ and every sequence
  $(s_{n})_{n\ge 1}\subseteq (a,b)$ converging to $a$,
  inequality~\eqref{eq:1} implies that $(v(s_{n}))_{n\ge 1}$ is a
  Cauchy sequence in $\mathfrak{M}$. Thus, the completeness of
  $\mathfrak{M}$ implies that the limit
  \begin{displaymath}
    \lim_{t\to a+}v(t)=:v(a+)\qquad\text{exists in $\mathfrak{M}$}
  \end{displaymath}
  and~\eqref{eq:1} holds also for $s=a$. Analogously, if $b<+\infty$,
  then there is $v(b-)\in \mathfrak{M}$ such that
  $\lim_{t\to b-}v(t)=v(b-)$ exists in $\mathfrak{M}$. In particular,
  if $a>-\infty$ and $b<+\infty$, then every curve
  $v\in AC(a,b;\mathfrak{M})$, inequality~\eqref{eq:1} holds for $s=a$
  and $t=b$, $v$ is absolutely continuous on the closed interval
  $[a,b]$ and hence, $v$ is uniformly continuous on $[a,b]$.
\end{remark}

\begin{proposition}[{\cite[Theorem~1.1.2]{AGS-ZH}, {\bf Metric derivative}}]\label{MDer}
  \label{propo:1}
   For every curve $v\in AC^{p}(a,b;\mathfrak{M})$, the limit
  \begin{equation}
    \label{eq:2}
    \abs{v'}(t):=\lim_{s\to t}\frac{d(v(s),v(t))}{\abs{s-t}}
  \end{equation}
  exists for a.e. $t\in (a,b)$ and $\abs{v'}\in L^{p}(a,b)$
  satisfies~\eqref{eq:1}. Moreover, among all
  functions $m\in L^{p}(a,b)$ satisfying~\eqref{eq:1}, one has
  $\abs{v'}(t)\le m(t)$ for a.e. $t\in (a,b)$.
\end{proposition}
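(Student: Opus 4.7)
The plan is to construct a canonical candidate for the metric derivative as a countable supremum of scalar derivatives, and then sandwich the limit in~\eqref{eq:2} against it.

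First, since~\eqref{eq:1} forces $v$ to be continuous, its image $v((a,b))$ is separable, and I can fix a countable dense subset $\{z_{n}\}_{n\ge 1}\subseteq v((a,b))$. The scalar curves $\varphi_{n}(t):=d(v(t),z_{n})$ are absolutely continuous on $(a,b)$ thanks to the reverse triangle inequality and~\eqref{eq:1}, and they satisfy $\abs{\varphi_{n}'(t)}\le m(t)$ for a.e.\ $t\in(a,b)$. Setting
$$m^{*}(t):=\sup_{n\ge 1}\abs{\varphi_{n}'(t)},$$
I obtain a measurable function with $m^{*}\le m$ a.e., hence $m^{*}\in L^{p}(a,b)$. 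This is the right candidate for $\abs{v'}$.

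The main obstacle is the step that transfers differential information from the scalars back to the curve in $\mathfrak{M}$: I would prove the identity
$$d(v(s),v(t))=\sup_{n\ge 1}\abs{\varphi_{n}(s)-\varphi_{n}(t)}\qquad\text{for all }a<s<t<b.$$
The inequality $\ge$ is elementary; for $\le$, density of $\{z_{n}\}$ in $v((a,b))$ furnishes a subsequence $z_{n_{k}}\to v(t)$ along which $\varphi_{n_{k}}(t)\to 0$ and $\varphi_{n_{k}}(s)\to d(v(s),v(t))$. Integrating the inequality $\abs{\varphi_{n}(s)-\varphi_{n}(t)}\le \int_{s}^{t}\abs{\varphi_{n}'(r)}\,\dr$ and taking the supremum over $n$ then gives
$$d(v(s),v(t))\le \int_{s}^{t}m^{*}(r)\,\dr,$$
so that $m^{*}$ itself is admissible in~\eqref{eq:1}.

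The final step is an a.e.\ sandwich. By Lebesgue's differentiation theorem applied to the integral of $m^{*}$, at every Lebesgue point of $m^{*}$,
$$\limsup_{s\to t}\frac{d(v(s),v(t))}{\abs{s-t}}\le m^{*}(t),$$
while for every $n$ and every differentiability point of $\varphi_{n}$,
$$\abs{\varphi_{n}'(t)}=\lim_{s\to t}\frac{\abs{\varphi_{n}(s)-\varphi_{n}(t)}}{\abs{s-t}}\le \liminf_{s\to t}\frac{d(v(s),v(t))}{\abs{s-t}}.$$
Taking the countable supremum over $n$ on the left yields $m^{*}(t)\le \liminf_{s\to t}d(v(s),v(t))/\abs{s-t}$ a.e., closing the squeeze. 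Hence the limit in~\eqref{eq:2} exists almost everywhere and equals $m^{*}(t)$. Because $\abs{v'}=m^{*}\le m$ a.e.\ for any admissible $m$, the minimality statement follows at once.
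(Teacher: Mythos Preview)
Your argument is correct and is essentially the standard proof found in \cite[Theorem~1.1.2]{AGS-ZH}, which is precisely the reference the paper cites for this proposition. The paper itself does not supply an independent proof of this result; it merely quotes the statement from Ambrosio--Gigli--Savar\'e, so there is no alternative approach in the paper to compare against. Your construction of $m^{*}$ via the countable family of Kuratowski-type scalar functions $\varphi_{n}(t)=d(v(t),z_{n})$, the identity $d(v(s),v(t))=\sup_{n}\abs{\varphi_{n}(s)-\varphi_{n}(t)}$, and the sandwich between the $\limsup$ (via Lebesgue differentiation of $\int m^{*}$) and the $\liminf$ (via differentiability of each $\varphi_{n}$) reproduces exactly the proof in \cite{AGS-ZH}.
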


\begin{definition}[{\bfseries \emph{Metric derivative}}]
  \label{def:metric-derivative}
  For a curves $v\in
  AC^{p}(a,b;\mathfrak{M})$, one calls the function $\abs{v'}\in
  L^{p}(a,b)$ given by~\eqref{eq:2} the \emph{metric derivative} of $v$.
\end{definition}

\begin{lemma}[{\cite[Lemma~1.1.4]{AGS-ZH}, {\bf Arc-length
      reparametrization}}]
  \label{lem:arclengthreparametrisation}
  Let $v\in AC(a,b;\mathfrak{M})$ with length
  \begin{equation}
    \label{eq:52}
    \gamma=\gamma(v):=\int_{a}^{b}\abs{v'}(t)\,\dt.
  \end{equation}
 Then there exists an increasing absolutely continuous map
 \begin{displaymath}
   s : (a,b)\to [0,\gamma]\qquad\text{satisfying $s(0+)=0$, $s(b-)=\gamma$,}
 \end{displaymath}
 and a curve $\hat{v}\in AC^{\infty}(0,\gamma;\mathfrak{M})$ such that
 \begin{displaymath}
   v(t)=\hat{v}(s(t)),\quad \abs{\hat{v}^{\prime}}=1.
 \end{displaymath}
\end{lemma}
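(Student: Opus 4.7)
The plan is to define the arc-length function $s$ as the primitive of the metric derivative, use it to induce the reparametrization $\hat{v}$, and then verify the unit-speed property via a chain rule / change of variables.

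First, I would set
\begin{displaymath}
  s(t):=\int_{a}^{t}\abs{v'}(r)\,\dr\qquad\text{for $t\in (a,b)$.}
\end{displaymath}
Since $v\in AC(a,b;\mathfrak{M})=AC^{1}(a,b;\mathfrak{M})$, Proposition~\ref{propo:1} gives $\abs{v'}\in L^{1}(a,b)$, so $s$ is (classically) absolutely continuous, nondecreasing, with $s'(t)=\abs{v'}(t)$ for a.e.\ $t$, $s(a+)=0$, and $s(b-)=\gamma$. By continuity and the intermediate value theorem, the image of $s$ is all of $[0,\gamma]$.

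Next, I would define $\hat{v}$ on $[0,\gamma]$ by $\hat{v}(\sigma):=v(t)$ for any $t$ with $s(t)=\sigma$. To see this is well-posed, observe that if $s(t_{1})=s(t_{2})$ with $t_{1}<t_{2}$, then $\int_{t_{1}}^{t_{2}}\abs{v'}(r)\,\dr=0$, and the defining inequality~\eqref{eq:1} together with Proposition~\ref{propo:1} yields $d(v(t_{1}),v(t_{2}))\le \int_{t_{1}}^{t_{2}}\abs{v'}(r)\,\dr=0$, hence $v(t_{1})=v(t_{2})$. For the same reason, taking now $t_{1}<t_{2}$ with $s(t_{i})=\sigma_{i}$,
\begin{displaymath}
  d(\hat{v}(\sigma_{1}),\hat{v}(\sigma_{2}))=d(v(t_{1}),v(t_{2}))\le \int_{t_{1}}^{t_{2}}\abs{v'}(r)\,\dr=\sigma_{2}-\sigma_{1},
\end{displaymath}
so $\hat{v}$ is $1$-Lipschitz on $[0,\gamma]$, which places $\hat{v}\in AC^{\infty}(0,\gamma;\mathfrak{M})$ and gives $\abs{\hat{v}'}\le 1$ a.e.

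Finally, I would show $\abs{\hat{v}'}=1$ a.e.\ on $(0,\gamma)$. The relation $v=\hat{v}\circ s$ combined with the one-sided bound $d(v(t+h),v(t))\le |s(t+h)-s(t)|\cdot 1$ and, conversely, the definition $\abs{v'}(t)=s'(t)$ a.e., leads to the chain-rule identity $\abs{v'}(t)=\abs{\hat{v}'}(s(t))\,s'(t)$ for a.e.\ $t$; integrating and applying the change-of-variables formula for the AC map $s$ yields
\begin{displaymath}
  \gamma=\int_{a}^{b}\abs{v'}(t)\,\dt=\int_{a}^{b}\abs{\hat{v}'}(s(t))\,s'(t)\,\dt=\int_{0}^{\gamma}\abs{\hat{v}'}(\sigma)\,\td\sigma.
\end{displaymath}
Combined with $\abs{\hat{v}'}\le 1$, this forces $\abs{\hat{v}'}=1$ a.e.

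The main technical obstacle is the chain-rule / change-of-variables step, because $s$ need not be strictly increasing: it may be constant on sets of positive measure where $\abs{v'}=0$. The careful handling is to restrict the change of variables to the set $\{s'>0\}$, on which $s$ is essentially injective after discarding a null set, and to observe that the complementary set contributes nothing to either integral since $s'=\abs{v'}=0$ there; this justifies the identity $\int |\hat{v}'|\circ s\cdot s'\,dt=\int|\hat{v}'|\,d\sigma$ without requiring $s$ to be a diffeomorphism.
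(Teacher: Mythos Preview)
The paper does not supply its own proof of this lemma; it is simply quoted from \cite[Lemma~1.1.4]{AGS-ZH}. Your argument is the standard one and is correct: define $s$ as the primitive of $\abs{v'}$, check that $\hat v$ is well-defined and $1$-Lipschitz via~\eqref{eq:1}, and then force $\abs{\hat v'}=1$ a.e.\ by comparing total lengths.

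One remark on the final step: your phrasing ``leads to the chain-rule identity $\abs{v'}(t)=\abs{\hat v'}(s(t))\,s'(t)$'' reads as if the identity is being asserted before it is justified, and the justification you sketch afterwards is slightly indirect. A cleaner way to close the argument avoids the pointwise chain rule altogether: the length functional $L(w)=\sup_{\{t_i\}}\sum_i d(w(t_i),w(t_{i+1}))$ is invariant under monotone surjective reparametrization, so $L(\hat v)=L(v)=\gamma$; since $L(\hat v)=\int_0^\gamma\abs{\hat v'}\,\td\sigma$ and $\abs{\hat v'}\le 1$, equality follows. This is equivalent to your change-of-variables computation but sidesteps the need to discuss the set $\{s'=0\}$ and the a.e.\ existence of $\abs{\hat v'}(s(t))$.
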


\subsection{Strong upper gradients of $\E$}
\label{subsec:strong-upper-gradient}
In this subsection, we introduce the first main tool for establishing
the existence gradient flows in metric spaces.

\begin{definition}[{\bfseries \emph{Proper (energy) functionals}}]
  A functional $\E : \mathfrak{M}\to (-\infty,\infty]$ is called
  \emph{proper} if there is a $v\in \mathfrak{M}$ such that
  $\E(v)<+\infty$ and the set $D(\E):=\{v\in
    \mathfrak{M}\,\vert\,\E(v)< +\infty\}$ is called the
    \emph{effective domain} of $\E$.
\end{definition}

\begin{definition}[{\bfseries \emph{Strong upper gradient}}]\label{def:strong-uppergradient}
  For a proper functional $\E : \mathfrak{M}\to (-\infty,\infty]$, a
  proper functional $g : \mathfrak{M}\to [0,+\infty]$ is called a
  \emph{strong upper gradient of $\E$} if for every
  curve $v\in AC(a,b;\mathfrak{M})$, the composition function
  $g\circ v : (0,+\infty)\to [0,\infty]$ is Borel-measurable and
  \begin{equation}
    \label{eq:3}
    \abs{\E(v(t))-\E(v(s))}\le \int_{s}^{t}
    g(v(r))\,\abs{v'}(r)\,\dr\qquad\text{for all $a<s\le t<b$.}
  \end{equation}
\end{definition}

\begin{remark}
  \mbox{}
  \begin{enumerate}
   \item We always assume that the effective domain $D(g)$ of a strong upper
    gradient $g$ of $\E$ is a subset of the effective domain $D(\E)$
    of $\E$.

   \item  In Definition~\ref{def:strong-uppergradient}, it is
    not assumed that
    \begin{equation}
      \label{eq:7}
      (g\circ v)\,\abs{v'}\in L^{1}_{loc}(a,b).
    \end{equation}
    Thus, the value of the integral in~\eqref{eq:3} might be infinity.
  \end{enumerate}
\end{remark}

The following notion is taken from~\cite{AGS-invent}.

\begin{definition}[{\bfseries \emph{Ascending and descending slope}}]
 \label{def:descding-slope}
 For a given functional $\E : \mathfrak{M}\to (-\infty,\infty]$, the \emph{local Lipschitz
  constant} at $u\in D(\E)$ is given by
\begin{displaymath}
 \abs{D \E} (u):=  \limsup_{v \to u} \frac{\abs{\E(v) - \E(u)}}{d(v,u)}
\end{displaymath}
and $\abs{D \E}(u):= +\infty$ for every $u\in \mathfrak{M}\setminus
D(\E)$. Similarly, the \emph{ascending slope} $\abs{D^{+}\E} :
\mathfrak{M}\to [0,+\infty]$ of $\E$ is defined by
\begin{displaymath}
\abs{D^+\E}(u):=
\begin{cases}
  \displaystyle\limsup_{v \to u} \frac{[\E(v) - \E(u)]^+}{d(v,u)} & \text{if $u\in
    D(\E)$,}\\
  +\infty & \text{if otherwise}
\end{cases}
\end{displaymath}
and the \emph{descending slope} $\abs{D^{-}\E} :
\mathfrak{M}\to [0,+\infty]$ of $\E$ is given by
\begin{displaymath}
  \abs{D^- \E}(u):=
\begin{cases}
  \displaystyle\limsup_{v \to u} \frac{[\E(v) - \E(u)]^-}{d(v,u)} & \text{if $u\in
    D(\E)$,}\\
  +\infty & \text{if otherwise.}
\end{cases}
\end{displaymath}
\end{definition}

We come back to the descending slope $\abs{D^{-}\E}$ in
Section~\ref{subsec:geo-convex}. Our next proposition follows
immediately from Definition~\ref{def:strong-uppergradient}. We state
this standard result for later use.

\begin{proposition}
  \label{prop:Energy-abs-inequality}
  If $g : \mathfrak{M}\to [0,+\infty]$ is a strong upper gradient of
  a proper functional $\E : \mathfrak{M}\to (-\infty,\infty]$, then
  for every $v\in AC_{loc}(a,b;\mathfrak{M})$ satisfying~\eqref{eq:7}, the
  composition function $\E\circ v : (a,b)\to (-\infty,\infty]$ is
  locally absolutely continuous and
  \begin{equation}
    \label{eq:4}
    \labs{\fdt\E(v(t)}\le g(v(t))\,\abs{v'}(t)\qquad\text{for
      almost every $t\in (a,b)$.}
  \end{equation}
\end{proposition}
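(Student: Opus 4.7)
The plan is to reduce everything to standard Lebesgue-theoretic facts by using the defining inequality~\eqref{eq:3} of a strong upper gradient together with the hypothesis $(g\circ v)\,\abs{v'}\in L^{1}_{loc}(a,b)$.

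First I would need to argue that $\E\circ v$ is finite throughout $(a,b)$, so that the expression $\fdt\E(v(t))$ makes sense. Since $(g\circ v)\,\abs{v'}\in L^{1}_{loc}(a,b)$, the product $g(v(r))\,\abs{v'}(r)$ is finite for a.e.\ $r\in(a,b)$; in particular there exists at least one $r_{0}\in(a,b)$ with $g(v(r_{0}))<+\infty$, whence $v(r_{0})\in D(g)\subseteq D(\E)$ and $\E(v(r_{0}))<+\infty$. Applying~\eqref{eq:3} with $s=r_{0}$ (or $t=r_{0}$) together with local integrability of the integrand gives $\E(v(t))<+\infty$ for every $t\in(a,b)$.

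For local absolute continuity I would fix a compact interval $[\alpha,\beta]\subset(a,b)$ and set $h(r):=g(v(r))\,\abs{v'}(r)\in L^{1}(\alpha,\beta)$. Then for any finite collection of non-overlapping subintervals $(s_{i},t_{i})\subseteq[\alpha,\beta]$, inequality~\eqref{eq:3} yields
\begin{equation*}
   \sum_{i}\labs{\E(v(t_{i}))-\E(v(s_{i}))}
   \le \sum_{i}\int_{s_{i}}^{t_{i}}h(r)\,\dr
   =\int_{\bigcup_{i}(s_{i},t_{i})}h(r)\,\dr,
\end{equation*}
and the absolute continuity of the Lebesgue integral with respect to $h$ makes the right-hand side arbitrarily small once $\sum_{i}(t_{i}-s_{i})$ is sufficiently small. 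Hence $\E\circ v\in AC([\alpha,\beta])$, and since $[\alpha,\beta]$ was arbitrary, $\E\circ v$ is locally absolutely continuous on $(a,b)$.

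Finally, $\E\circ v$ is then differentiable a.e.\ on $(a,b)$. At any $t$ which is simultaneously a point of differentiability of $\E\circ v$ and a Lebesgue point of $h=(g\circ v)\,\abs{v'}$, dividing~\eqref{eq:3} by $\abs{t-s}$ and letting $s\to t$ gives
\begin{equation*}
  \labs{\fdt\E(v(t))}
  =\lim_{s\to t}\frac{\labs{\E(v(t))-\E(v(s))}}{\abs{t-s}}
  \le\lim_{s\to t}\frac{1}{\abs{t-s}}\int_{\min(s,t)}^{\max(s,t)}h(r)\,\dr
  =h(t)=g(v(t))\,\abs{v'}(t),
\end{equation*}
which is the claimed inequality~\eqref{eq:4}. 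The only genuinely delicate step is the first one, verifying that $\E\circ v$ does not take the value $+\infty$ at isolated points (which would invalidate the absolute-continuity argument); once that is settled, everything else is a routine application of the fundamental theorem of calculus for absolutely continuous functions, and no structural properties of $\mathfrak{M}$ beyond those already built into Definition~\ref{def:strong-uppergradient} are used.
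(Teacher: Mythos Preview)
Your argument is correct and is exactly the standard way one would prove this statement. The paper, however, does not give a proof at all: immediately before stating the proposition it simply remarks that it ``follows immediately from Definition~\ref{def:strong-uppergradient}'' and is ``stated for later use.'' So there is no proof in the paper to compare with; you have supplied precisely the details the authors left implicit, and your use of the standing assumption $D(g)\subseteq D(\E)$ to secure finiteness of $\E\circ v$ at one point (hence everywhere via~\eqref{eq:3}) is the right way to handle the only subtle step.
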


\subsection{$p$-Gradient flows in metric spaces}
\label{sec:curves-max-slope}

In this subsection, let $1<p<\infty$ and
  $p^{\mbox{}_{\prime}}:=\tfrac{p}{p-1}$ be the \emph{H\"older
    conjugate} of $p$ and we focus on the case $a=0$, $0<b=T\le +\infty$.\medskip

We begin by recalling the following definition taken
from~\cite{DeGiorgietalt} (see also~\cite{AGS-ZH}).

\begin{definition}[{\bfseries \emph{$p$-curve of maximal slope}}]
  \label{def:p-max-slope}
  For a proper functional $\E : \mathfrak{M}\to (-\infty,\infty]$ with
  strong upper gradient $g$, a curve $v : [0,+\infty)\to \mathfrak{M}$
  is called a \emph{$p$-curve of maximal slope of $\E$}, ($0<T\le +\infty$), if $v\in
  AC_{loc}(0,T;\mathfrak{M})$, $\E\circ v : (0,T)\to \R$ is non-increasing, and
\begin{equation}
 \label{eq:6}
    \fdt\E(v(t))\le -
      \tfrac{1}{p^{\mbox{}_{\prime}}}g^{p^{\mbox{}_{\prime}}}(v(t))-\tfrac{1}{p}\abs{v'}^{p}(t)
      \qquad\text{for almost every $t\in (0,T)$.}
 \end{equation}
\end{definition}

Some authors (cf~\cite{AGS-invent}) call
Definition~\ref{def:p-max-slope} the \emph{metric formulation of a
  gradient flow}. Since we agree with this idea, we rather use the
following terminology.

\begin{definition}[{\bfseries $p$-gradient flows in $\mathfrak{M}$}]
  \label{def:pgradientflow}
  For a proper functional $\E : \mathfrak{M}\to (-\infty,\infty]$ with
  strong upper gradient $g$, a curve $v : [0,T)\to\mathfrak{M}$
  is called a \emph{$p$-gradient flow of $\E$} or also a
  \emph{$p$-gradient flow in $\mathfrak{M}$}, ($0<T\le +\infty$), if
  $v\in AC_{loc}(0,T;\mathfrak{M})$ and $v$ is a $p$-curve of maximal
  slope of $\E$ with respect to $g$. For the sake of convenience, we
  call each $2$-gradient flow in $\mathfrak{M}$, simply,
  \emph{gradient flow}. Further, a $p$-gradient flow $v$ of $\E$ has
  \emph{initial value} $v_0 \in \mathfrak{M}$ if $v(0)=v_0$.
\end{definition}

For the sake of completeness, we give the following characterization of
\emph{$p$-gradient flows}, which is scattered in the
literature. But we omit its easy proof.

\begin{proposition}
  \label{propo:chara-p-curves}
  Let $\E : \mathfrak{M}\to (-\infty,\infty]$ be proper functional
  with strong upper gradient $g$, and let
  $v\in AC_{loc}(0,T;\mathfrak{M})$, $(0<T\le +\infty)$. Then the following
  statements are equivalent.
 \begin{enumerate}
 \item\label{propo:chara-p-curves-claim-1} $v$ is a $p$-gradient flow
   of $\E$.

   \item\label{propo:chara-p-curves-claim-2} $\E\circ v$
     is non-increasing on $(0,T)$ and for all $0<s<t<T$,
     \begin{equation}
       \label{eq:8}
        \E(v(s))-\E(v(t))\ge \tfrac{1}{p}\int_{s}^{t}\abs{v'}^{p}(r)\,\dr
        +\tfrac{1}{p^{\mbox{}_{\prime}}}\int_{s}^{t}g^{p^{\mbox{}_{\prime}}}(v(r))\, dr.
     \end{equation}

  \item\label{propo:chara-p-curves-claim-3} $\E\circ v$
    is non-increasing on $(0,T)$ and the ``energy dissipation equality''
    \begin{equation}
      \label{EDE}
        \E(v(s))-\E(v(t))=\tfrac{1}{p}\int_{s}^{t}\abs{v'}^{p}(r)\,\dr
        +\tfrac{1}{p^{\mbox{}_{\prime}}}\int_{s}^{t}g^{p^{\mbox{}_{\prime}}}(v(r))\, dr
     \end{equation}
     holds for all $0<s<t<T$.
 \end{enumerate}
 In particular, for every $p$-gradient flow $v$ of $\E$, one has that
 \begin{displaymath}
   v\in AC_{loc}^{p}(0,T;\mathfrak{M}),\qquad
   g\circ v \in L^{p^{\mbox{}_{\prime}}}_{loc}(0,T),
 \end{displaymath}
 and for almost every $t\in (0,T)$,
 \begin{equation}
   \label{eq:11}
    \fdt\E(v(t))=-\abs{v'}^{p}(t)=-g^{p^{\mbox{}_{\prime}}}(v(t))=-(g\circ
    v)(t)\cdot\abs{v'}(t).
 \end{equation}
 Moreover, if $v(0+)$ exists in $\mathfrak{M}$ with $v(0+)\in D(\E)$,
 then $v\in AC^{p}_{loc}([0,T);\mathfrak{M})$ with
 $g\circ v\in L^{p^{\mbox{}_{\prime}}}_{loc}([0,T))$,
 and~\eqref{EDE} holds for all $0\le s<t<T$.
\end{proposition}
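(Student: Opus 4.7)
The plan is to prove the three implications $(1)\Rightarrow (2) \Rightarrow (3) \Rightarrow (1)$ by a standard argument combining the strong upper gradient inequality~\eqref{eq:3} with Young's inequality $ab \le \tfrac{1}{p}a^p + \tfrac{1}{p^{\mbox{}_{\prime}}}b^{p^{\mbox{}_{\prime}}}$, which is tight precisely when $a^p = b^{p^{\mbox{}_{\prime}}}$. The crucial point is that every inequality we will write down is in fact sharp, so the equality case in Young's inequality will force the extra identity $|v'|^p = g^{p^{\mbox{}_{\prime}}}(v)$ that later yields~\eqref{eq:11}.

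For $(1)\Rightarrow (2)$, I would first observe that since $\E\circ v$ is non-increasing on $(0,T)$, it is of locally bounded variation, so its distributional derivative is a non-positive measure whose absolutely continuous part is a.e.\ equal to $\tfrac{d}{dt}\E(v(t))$ and whose singular part is non-positive. Therefore, for all $0<s\le t<T$,
\begin{displaymath}
  \E(v(t))-\E(v(s))\le \int_{s}^{t}\fdt\E(v(r))\,\dr,
\end{displaymath}
and inserting~\eqref{eq:6} rearranges to~\eqref{eq:8}.

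For $(2)\Rightarrow (3)$, the key observation is that~\eqref{eq:8} forces $|v'|^{p},\,g^{p^{\mbox{}_{\prime}}}\circ v\in L^{1}_{loc}(0,T)$, so $(g\circ v)|v'|\in L^{1}_{loc}(0,T)$ by H\"older's inequality, which in turn permits invoking Proposition~\ref{prop:Energy-abs-inequality} (upgrading~\eqref{eq:3} to the pointwise a.e.\ inequality) together with monotonicity of $\E\circ v$ to get
\begin{displaymath}
  \E(v(s))-\E(v(t)) \le \int_{s}^{t}g(v(r))\,\abs{v'}(r)\,\dr
  \le \tfrac{1}{p}\int_{s}^{t}\abs{v'}^{p}\,\dr+\tfrac{1}{p^{\mbox{}_{\prime}}}\int_{s}^{t}g^{p^{\mbox{}_{\prime}}}(v(r))\,\dr,
\end{displaymath}
where the last step is Young's inequality. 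Chaining this with the reverse inequality~\eqref{eq:8} from hypothesis~(2) forces equality throughout, which is~\eqref{EDE} and, as a byproduct, forces the a.e.\ identities $\abs{v'}^{p}(r)=g^{p^{\mbox{}_{\prime}}}(v(r))=g(v(r))\abs{v'}(r)$ via the equality case of Young's inequality. The implication $(3)\Rightarrow (1)$ is then immediate: the right-hand side of~\eqref{EDE} shows $\E\circ v$ is locally absolutely continuous, so differentiating both sides in $t$ a.e.\ yields~\eqref{eq:6}, in fact as an equality.

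The ``in particular'' clause now collects what has been produced along the way: $|v'|\in L^{p}_{loc}(0,T)$ and $g\circ v\in L^{p^{\mbox{}_{\prime}}}_{loc}(0,T)$ come from the finiteness of the integrals in~\eqref{EDE}, so $v\in AC^{p}_{loc}(0,T;\mathfrak{M})$; identity~\eqref{eq:11} combines differentiation of~\eqref{EDE} with the a.e.\ identity $\abs{v'}^{p}=g^{p^{\mbox{}_{\prime}}}\circ v$ extracted from the Young's-inequality equality case. For the extension to $s=0$ under the assumption $v(0+)\in D(\E)$, I would use that $\E\circ v$ is non-increasing and bounded above by $\E(v(0+))<+\infty$, so the right-hand side of~\eqref{EDE} is monotone in $s$ and the monotone convergence theorem lets us pass $s\to 0+$ while preserving~\eqref{EDE} (with $\E(v(0+))$ replacing $\E(v(s))$). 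The only mildly delicate point in the whole argument is the very first step, namely that $\E\circ v$ being only a priori non-increasing (not absolutely continuous) nevertheless allows integration of~\eqref{eq:6}; this is handled by the BV-monotone-function inequality above, after which the rest of the proof is a clean chain of equalities driven by the sharpness of Young's inequality.
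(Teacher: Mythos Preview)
The paper omits the proof of this proposition, calling it standard and scattered in the literature, so there is no in-paper argument to compare against. Your proof of the chain $(1)\Rightarrow(2)\Rightarrow(3)\Rightarrow(1)$ and of identity~\eqref{eq:11} is correct and is precisely the standard route: a non-increasing real function satisfies $f(t)-f(s)\le\int_s^t f'\,\dr$, the strong upper gradient bound~\eqref{eq:3} combined with Young's inequality gives the reverse estimate, and the resulting equality in Young forces $\abs{v'}^{p}=g^{p^{\mbox{}_{\prime}}}(v)$ almost everywhere.

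Your handling of the extension to $s=0$, however, contains a genuine gap. The assertion that $\E\circ v$ is ``bounded above by $\E(v(0+))$'' does not follow from monotonicity of $\E\circ v$ on the open interval $(0,T)$: nothing established so far connects $L:=\lim_{s\to0+}\E(v(s))$ with the value $\E(v(0+))$. In fact the last clause can fail as literally stated. Take $\mathfrak{M}=[0,\infty)$ with the Euclidean metric, set $\E(0)=0$, $\E(x)=1/x$ for $x>0$, and $g(0)=+\infty$, $g(x)=1/x^{2}$ for $x>0$. One checks that $g$ is a strong upper gradient of $\E$ (any absolutely continuous curve reaching $0$ from the positive side makes the right-hand side of~\eqref{eq:3} infinite), and $v(t)=(3t)^{1/3}$ is a $2$-gradient flow with $v(0+)=0\in D(\E)$; yet $\E(v(s))=(3s)^{-1/3}\to+\infty$ and $\abs{v'}^{2}(t)=(3t)^{-4/3}\notin L^{1}$ near $0$, so neither $v\in AC^{2}_{loc}([0,T);\mathfrak{M})$ nor~\eqref{EDE} at $s=0$ holds. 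The missing ingredient is the identification $L=\E(v(0+))$; this is available, for instance, whenever $\abs{v'}\in L^{1}$ near $0$, since then the constant extension of $v$ by $v(0+)$ to negative times lies in $AC$ and~\eqref{eq:3} applied across $t=0$ yields $\abs{\E(v(s))-\E(v(0+))}\le\int_0^s g(v)\abs{v'}\,\dr\to0$. Once that identification is granted, your monotone-convergence step is correct.
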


 \begin{remark}
   Note, for every $p$-gradient flow $v$ of $\E$, one has that
    \begin{equation}
      \label{EDENew}
        \E(v(s))-\E(v(t))=\int_{s}^{t}\abs{v'}^{p}(r)\,\dr =
        \int_{s}^{t}g^{p^{\mbox{}_{\prime}}}(v(r))\, dr
     \end{equation}
     for all $0<s<t<+\infty$ due to \eqref{EDE} and \eqref{eq:11}.
 \end{remark}

We will also need the following more general version of $p$-gradient
flows of $\E$ (cf~\cite[Definition~15]{Bolte2010} in the Hilbert space framework).

\begin{definition}[{\bfseries piecewise $p$-gradient flows in $\mathfrak{M}$}]
 \label{def:piceswise-p-gradientflows}
Given a proper functional $\E : \mathfrak{M}\to (-\infty,+\infty]$
with strong upper gradient $g$, then a curve $v : [0,T)\to
\mathfrak{M}$ is called a \emph{piecewise
   $p$-gradient flow} in $\mathfrak{M}$, ($0<T\le+\infty$), if there
 is a 
 countable partition $\mathcal{P}=\{0=t_{0}, t_{1},
 t_{2},\dots\}$ of $[0,T)$ such that 
 \begin{enumerate}[itemsep=2pt,leftmargin=21pt]
   \item[(i)]  $v_{1}:=v_{\vert (0,t_{1}]}\in AC_{loc}((0,t_{1}];\mathfrak{M})$
     and $v_{1}$ is a $p$-gradient flow of $\E$ on $(0,t_{1})$, 
   \item[(ii)]  for every $i\ge 2$,
   $v_{\vert [t_{i-1},t_{i}]}\in AC(t_{i-1},t_{i};\mathfrak{M})$ and
   $v_{i} : [0,t_{i}]\to \mathfrak{M}$ defined by
   $v_{i}(t)=v(t_{i-1}+t)$, $(t\in [0,t_{i}])$, is a $p$-gradient flow of $\E$ on $(0,t_{i})$, 
 \item[(iii)] for every $i$, $j\ge 1$ with $i\neq j$, the two
   intervals $\E(v_{i}([0,t_{i}]))$ and $\E(v_{j}([0,t_{j}]))$ have at
   most one common point, where for $i=1$ or $j=1$,
   these sets are replaced by
   $\E(v_{1}((0,t_{1}]))$.
 \end{enumerate}
\end{definition}

\begin{remark}
  Trivially, every $p$-gradient flow $v : [0,T)\to \mathfrak{M}$ of
  $\E$ is a piecewise $p$-gradient flow of $\E$. But we emphasize that a piecewise $p$-gradient flow
  $v : (0,T)\to \mathfrak{M}$ does in general not belong to
  $AC_{loc}^{p}(0,T;\mathfrak{M})$. In fact, for a piecewise $p$-gradient
  flow $v$ on $(0,T)$, there is a countable partition $\mathcal{P}=\{0=t_{0}, t_{1},
 t_{2},\dots\}$ of $[0,T)$ such that for every $i\ge 1$, $v(t_{i}-)$
 and $v(t_{i}+)$ exist in $\mathfrak{M}$, but $v$ does not need to be continuous at $t_{i}$.
\end{remark}

 The existence of $p$-gradient flows in metric spaces is obtained
 variationally by minimizing the functional $\Phi_{p}(\tau,v_{0};\cdot) :
\mathfrak{M}\to (-\infty,\infty]$ given by
\begin{equation}\label{Eulerp}
  \Phi_p(\tau, v_{0}; U)= \frac{1}{p\tau^{p-1}}
  d^p(v_{0}, U) + \E(U),\qquad\text{($U\in \mathfrak{M}$),}
\end{equation}
 recursively on a  given partition of of the time interval
 $[0,+\infty)$, $v_{0}\in D(\E)$, and $\tau>0$. This was well elaborated in the
 book~\cite{AGS-ZH} and the existence of such curves was proved by
 using the concept of {\it minimizing movements} (due to De
 Giorgi~\cite{DeGiorgi}, see also \cite{Ambrosio}). Sufficient
conditions for the existence of minimizers of $\Phi_p$ are as
 follows:
 \begin{enumerate}
 \item[(i)] let $\sigma$ be a topology on $\mathfrak{M}$ satisfying
   \begin{equation}\label{eq:17}
     \begin{cases}
       &\text{$\sigma$ is weaker than the induced topology by the
         metric $d$ of $\mathfrak{M}$}\\
       &\text{and $d$ is sequentially $\sigma$-lower semicontinuous;}
     \end{cases}
 \end{equation}

 \item[(ii)] given $\tau>0$ and $v_{0}\in D(\E)$, then for every $c\in
   \R$, the sublevel set
   \begin{displaymath}
     \{U\in \mathfrak{M}\,\vert\,
     \Phi_p(\tau,v_{0};V)\le c\}\qquad\text{is $\sigma$-relatively compact;}
   \end{displaymath}
 \item[(iii)] for given $\tau>0$ and $v_{0}\in D(\E)$, the function $U\mapsto
   \Phi_p(\tau,v_{0};V)$ is $\sigma$-lower semicontinuous.
 \end{enumerate} 
 \medskip 

 For stating the first existence theorem of $p$-gradient flows, we
 also need to introduce the notion of the \emph{relaxed slope} of a
 functional $\E$, (or, also called the \emph{sequentially lower
   semicontinuous envelope} of $\abs{D^{-}\E}$).

 \begin{definition}[{\bfseries \emph{Relaxed Slope}}]
  Given a topology $\sigma$ on $\mathfrak{M}$
  satisfying~\eqref{eq:17}. Then, for a proper functional $\E : \mathfrak{M}\to
(-\infty,+\infty]$, the \emph{relaxed
   slope} $\abs{\partial^{-}\E} : \mathfrak{M}\to [0,+\infty]$ of $\E$ is defined by
   \begin{displaymath}
     \abs{\partial^{-}\E}(u) = \inf \left\{ \liminf_{n \to \infty}
       \abs{D^{-}\E} (u_n)
       \;\Big\vert \; \  u_n \stackrel{\sigma}{\rightharpoonup} u, \ \sup_{n \in \N} \{ d(u_n,u), \E(u_n) \} < \infty\right\}
   \end{displaymath}
   for every $u\in D(\E)$.
 \end{definition}

The following generation result of $p$-gradient flows in metric
 spaces is sufficient for our purpose, where we choose the weak
 topology $\sigma$ to be the topology induced by the metric $d$ of
 $\mathfrak{M}$.

\begin{theorem}[{\cite[Corollary 2.4.12]{AGS-ZH}}]\label{exist1}
 Let $\E : \mathfrak{M}\to (-\infty,+\infty]$ be a proper lower
  semicontinuous function on $\mathfrak{M}$ satisfying
  \begin{enumerate}
  \item[(i)] the functional $\Phi_{p}$ given by~\eqref{Eulerp}
    satisfies the ``convexity condition'': there is a $\lambda\in \R$
    such that for every $v_{0}$, $v_{1}\in D(\E)$, there is a curve
    $\gamma$ with $\gamma(0)=v_{0}$, $\gamma(1)=v_{1}$ and for all $0<\tau<\frac{1}{\lambda^{-}}$,
    \begin{displaymath}
      v\mapsto \Phi_{p}(\tau,v_{0},v)\qquad\text{is
        $(\tau^{-1}+\lambda)$-convex on $\gamma$,}
    \end{displaymath}
  \item[(ii)] for every $\alpha\in \R$, every sequence $(v_{n})_{n\ge 1}\subseteq
    E_{\alpha}:=\{v\in \mathfrak{M}\,\vert\,\E(v)\le \alpha\}$ with
    $\sup_{n,m}d(v_{n},v_{m})<+\infty$ admits a convergent subsequence
    in $\mathfrak{M}$,
      \item[(iii)] for every $\alpha\in \R$, the descending slope
    $\abs{D^{-}\E}$ is lower semicontinuous on the sublevel set $E_{\alpha}$,
  \end{enumerate}
  where $\lambda^{-}=\max\{0,-\lambda\}$ and
  $\frac{1}{\lambda^{-}}:=+\infty$ if $\lambda^{-}=0$.  Then, for
  every $v_{0}\in D(\E)$ there is a $p$-gradient flow of $\E$ with
  $v(0+)=v_{0}$.
\end{theorem}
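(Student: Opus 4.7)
The plan is to prove this via the \emph{minimizing movements} scheme of De Giorgi, adapted to the $p$-setting. Given $v_0 \in D(\E)$ and a step size $\tau > 0$, I would recursively define a discrete sequence $(V^n_\tau)_{n \ge 0}$ by setting $V^0_\tau := v_0$ and choosing
\[
V^n_\tau \in \arg\min_{U \in \mathfrak{M}} \Phi_p(\tau, V^{n-1}_\tau; U), \qquad n \ge 1.
\]
Existence of such minimizers requires checking the three standard conditions for $\sigma$ equal to the metric topology: the topological assumption~\eqref{eq:17} is trivial, the $\sigma$-lower semicontinuity of $\Phi_p(\tau,v_0;\cdot)$ follows from the lower semicontinuity of $\E$ and of $d$, while $\sigma$-relative compactness of sublevel sets of $\Phi_p(\tau,v_0;\cdot)$ follows from hypothesis (ii): sublevel sets are automatically $d$-bounded (since $d^p(v_0,\cdot)$ is controlled) and contained in an $\E$-sublevel set, hence relatively sequentially compact. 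The convexity condition in (i) will be used to obtain, for $\tau$ small, that $U\mapsto \Phi_p(\tau,v_0;U)$ is $(\tau^{-1}+\lambda)$-convex along the connecting curve, which yields uniqueness-type control and the discrete Euler--Lagrange bound
\[
\tfrac{d^{p-1}(V^{n-1}_\tau,V^n_\tau)}{\tau^{p-1}} \ge \abs{D^{-}\E}(V^n_\tau).
\]

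Next, I would build the piecewise constant interpolant $\bar V_\tau$ and the \emph{De Giorgi variational interpolant} $\tilde V_\tau$, and derive the discrete energy identity
\[
\E(V^{n-1}_\tau) - \E(V^n_\tau) \ge \tfrac{1}{p}\tfrac{d^p(V^{n-1}_\tau,V^n_\tau)}{\tau^{p-1}} + \tfrac{1}{p^{\mbox{}_{\prime}}}\int_{(n-1)\tau}^{n\tau}\abs{D^{-}\E}^{p^{\mbox{}_{\prime}}}(\tilde V_\tau(r))\,\dr,
\]
summing to give the uniform a priori bounds
\[
\sup_\tau \int_0^T \abs{\bar V_\tau'}^p_\tau\,\dt < \infty, \qquad \sup_\tau \int_0^T \abs{D^{-}\E}^{p^{\mbox{}_{\prime}}}(\tilde V_\tau)\,\dt < \infty.
\]
These Young-inequality-type bounds follow from the convexity hypothesis (i) and are standard in the theory of minimizing movements; they produce equi-continuity of the $\bar V_\tau$ in the metric $d$. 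Combined with (ii) and a metric Ascoli--Arzelà argument, I extract a subsequence $\tau_k \to 0$ and a limit curve $v \in AC^p_{loc}([0,+\infty);\mathfrak{M})$ with $v(0) = v_0$.

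The decisive step is passing to the limit in the discrete energy identity. The metric derivative $\abs{v'}$ is lower semicontinuous with respect to the weak limit of the discrete speeds, so the kinetic term is handled by standard lower-semicontinuity of convex functionals under weak convergence. The dissipation term is the real obstacle: I must pass the liminf inside $\int \abs{D^{-}\E}^{p^{\mbox{}_{\prime}}}(\tilde V_\tau)$, and this is exactly where assumption (iii) enters. By Fatou and the pointwise $\sigma$-convergence $\tilde V_\tau(t) \to v(t)$ (on a suitable subsequence), the lower semicontinuity of $\abs{D^{-}\E}$ on sublevel sets (which are where the $\tilde V_\tau(t)$ live, thanks to the monotonicity of $n \mapsto \E(V^n_\tau)$) gives
\[
\liminf_{\tau \to 0}\int_s^t \abs{D^{-}\E}^{p^{\mbox{}_{\prime}}}(\tilde V_\tau(r))\,\dr \ge \int_s^t \abs{D^{-}\E}^{p^{\mbox{}_{\prime}}}(v(r))\,\dr.
\]
Since $\abs{D^-\E}$ is a strong upper gradient under these assumptions (this is where lower semicontinuity of the slope is crucial and why $\abs{D^-\E}$ equals its relaxation $\abs{\partial^{-}\E}$ here), the limit curve $v$ satisfies the integral energy dissipation inequality of Proposition~\ref{propo:chara-p-curves}\eqref{propo:chara-p-curves-claim-2}, and hence is a $p$-gradient flow of $\E$ with $g = \abs{D^-\E}$. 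The main obstacle, to summarize, is the \emph{identification of the limit}: bridging the gap between the weak limit of discrete quantities and the pointwise slope of the limit curve — an issue resolved precisely by hypothesis (iii) together with the convexity condition (i), which ensures the sharp discrete estimate above. The remaining claim $v(0+) = v_0$ follows from the metric $AC^p$-regularity up to $t=0$, guaranteed by the initial-value improvement stated at the end of Proposition~\ref{propo:chara-p-curves}.
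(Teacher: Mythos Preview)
The paper does not give its own proof of this theorem: it is stated as a citation of \cite[Corollary~2.4.12]{AGS-ZH} and no argument is supplied. Your sketch via the De~Giorgi minimizing movements scheme, discrete variational interpolants, a~priori energy bounds, metric Ascoli--Arzel\`a compactness, and Fatou-type passage to the limit using the lower semicontinuity hypothesis~(iii) is precisely the strategy of \cite{AGS-ZH}, so there is nothing to compare against within the present paper; your outline is faithful to the cited source.
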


\subsection{Geodesically convex functionals}
\label{subsec:geo-convex}

An important class of functionals
$\E : \mathfrak{M}\to (-\infty,\infty]$ satisfying sufficient
conditions to guarantee the existence of $p$-gradient flows of $\E$ is given by
the ones that are \emph{convex along constant speed geodesics} (see
Theorem~\ref{ExistsGC} below). To be more precise, we recall the following
definition (cf~\cite{AGS-ZH}).

\begin{definition}[{\bfseries \emph{$\lambda$-convexity}}]
  \label{def:lambdaconvex}
  For $\lambda\in \R$, a functional
  $\E : \mathfrak{M}\to (-\infty,\infty]$ is called
  \emph{$\lambda$-convex along a curve
    $\gamma : [0,1] \to \mathfrak{M}$} (we write
  $\gamma\subseteq \mathfrak{M}$) if
\begin{equation}
  \label{eq:27}
\E(\gamma(t)) \le (1-t)\E(\gamma(0)) + t \E(\gamma(1))-
\tfrac{\lambda}{2} t(1-t)d^{2}(\gamma(0), \gamma(1))
\end{equation}
for all $t \in [0,1]$, and $\E$ is called \emph{convex along
  a curve $\gamma \subseteq \mathfrak{M}$} if $\E$ is $\lambda$-convex
along $\gamma$ for $\lambda=0$.
\end{definition}

\begin{definition}[{\bfseries constant speed geodesics and
    $\lambda$-geodesic convexity}]
  \label{def:lambdageoconvex}
  A curve $\gamma : [0,1] \rightarrow \mathfrak{M}$ is said to be a
  \emph{(constant speed) geodesic} connecting two points
  $v_0, v_1 \in \mathfrak{M}$ if $\gamma(0) = v_0$, $\gamma(1) = v_1$
  and
  \begin{displaymath}
    d(\gamma(s), \gamma(t)) = (t-s) d(v_0, v_1) \quad
    \text{for all $s$, $t \in [0,1]$ with $s\le t$.}
  \end{displaymath}
  A metric space $(\mathfrak{M}, d)$ with the property that for every
  two elements $v_{0}$, $v_{1} \in \mathfrak{M}$, there is at least
  one constant speed geodesic $\gamma\subseteq\mathfrak{M}$ connecting
  $v_{0}$ and $v_{1}$ is called a \emph{length space}. Given
  $\lambda\in \R$, a functional
  $\E : \mathfrak{M}\to (-\infty,\infty]$ is called
  \emph{$\lambda$-geodesically convex} if for every $v_{0}$,
  $v_{1} \in D(\E)$, there is a constant speed geodesic
  $\gamma\subseteq\mathfrak{M}$ connecting $v_{0}$ and $v_{1}$ such
  that $\E$ is $\lambda$-convex along $\gamma$ and $\E$ is called
  \emph{geodesically convex} if for every $v_{0}$,
  $v_{1} \in \mathfrak{M}$, there is a constant speed geodesic
  $\gamma\subseteq\mathfrak{M}$ connecting $v_{0}$ and $v_{1}$ and
  $\E$ is convex along $\gamma$.
\end{definition}

The descending slope $\abs{D^{-}\E}$ of a $\lambda$-geodesically
convex functional $\E$ admits several important properties, which we
recall now for later use.

\begin{proposition}[{\cite[Theorem~2.4.9 \& Corollary~2.410]{AGS-ZH}}]
  \label{propo:lconvex-slope}
  For $\lambda\in \R$, let $\E : \mathfrak{M}\to (-\infty,\infty]$ be a proper
  $\lambda$-geodesically convex functional on a length space $\mathfrak{M}$.
  Then, the following statements hold.
  \begin{enumerate}

  \item The descending slope $\abs{D^{-}\E}$ of $\E$ has the representation
    \begin{equation}\label{messi00}
      \abs{D^{-}\E}(u) =\sup_{v \not= u} \left[ \frac {\E(u) -
          \E(v)}{d(u,v)} + \frac{\lambda}{2} d(u,v)\right]^+
      \quad\text{for every $u\in D(\E)$.}
    \end{equation}

  \item For every $u\in D(\E)$, $\abs{D^{-}\E}(u)$ is the smallest
    $L \ge 0$ satisfying
    \begin{displaymath}
      \E(u) - \E(v) \le  L\, d(v,u) -
      \frac{\lambda}{2}d(v,u)^2 \qquad \text{for every } v \in \mathfrak{M}.
    \end{displaymath}
    In particular,
    \begin{equation}
      \label{mes2}
      \E(u) - \E(v) \le  \abs{D^{-}\E}(u)\, d(v,u) -
      \frac{\lambda}{2}d^{2}(v,u) \qquad \text{for every } v \in \mathfrak{M}.
    \end{equation}

  \item \label{propo:lconvex-slope-claim3} If $\E$ is lower
    semicontinuous, then the descending slope $\abs{D^{-}\E}$ of $\E$
    is a strong upper gradient of $\E$, and the relaxed slope
    $\abs{\partial^{-}\E}$ fulfills
    $\abs{\partial^{-}\E}=\abs{D^{-}\E}$. In particular,
    $\abs{D^{-}\E}$ is lower semicontinuous along sequences with
    bounded energy.
  \end{enumerate}
\end{proposition}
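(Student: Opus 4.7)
The plan is to derive (1), (2) and (3) from a single key one-sided inequality, obtained directly from $\lambda$-convexity along a geodesic. For $u \in D(\E)$ and $v \ne u$, let $\gamma:[0,1]\to\mathfrak{M}$ be a constant-speed geodesic joining $u$ to $v$. Rearranging \eqref{eq:27} gives
\begin{displaymath}
  \E(u) - \E(\gamma(t)) \;\ge\; t\bigl[\E(u)-\E(v)\bigr] + \tfrac{\lambda}{2}\,t(1-t)\,d^{2}(u,v)
  \qquad (0<t\le 1).
\end{displaymath}
Since $d(u,\gamma(t)) = t\,d(u,v)$, dividing through and passing $t\to 0^{+}$ in the definition of $\abs{D^{-}\E}(u)$ yields
\begin{displaymath}
  \abs{D^{-}\E}(u) \;\ge\; \tfrac{\E(u)-\E(v)}{d(u,v)} + \tfrac{\lambda}{2}\,d(u,v),
\end{displaymath}
which rearranges to \eqref{mes2}. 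Hence $L=\abs{D^{-}\E}(u)$ is admissible in (ii); conversely, any admissible $L\ge 0$ forces $\tfrac{\E(u)-\E(v)}{d(u,v)}+\tfrac{\lambda}{2}d(u,v)\le L$ for every $v\ne u$, proving minimality and therefore (2). Taking positive parts and suprema over $v\ne u$ already gives the ``$\ge$'' direction of \eqref{messi00}.

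\textbf{Completion of (1).} For the reverse direction in \eqref{messi00}, I would pick $v_n \to u$ with $v_n\ne u$ realizing the limsup defining $\abs{D^{-}\E}(u)$. The elementary bound $a^{+}\le (a+b)^{+}+|b|$, with $a:=(\E(u)-\E(v_n))/d(u,v_n)$ and $b:=\tfrac{\lambda}{2}d(u,v_n)$, gives
\begin{displaymath}
  \tfrac{[\E(u)-\E(v_n)]^{+}}{d(u,v_n)} \;\le\; \sup_{v\ne u}\Bigl[\tfrac{\E(u)-\E(v)}{d(u,v)}+\tfrac{\lambda}{2}d(u,v)\Bigr]^{+} + \tfrac{|\lambda|}{2}\,d(u,v_n);
\end{displaymath}
the last term vanishes as $n\to\infty$, closing \eqref{messi00}.

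\textbf{Claim (3).} Lower semicontinuity of $\abs{D^{-}\E}$ along bounded-energy sequences is immediate from \eqref{messi00}, which displays $\abs{D^{-}\E}$ as the pointwise supremum of the family $\Phi_{v}(u):=\bigl[\tfrac{\E(u)-\E(v)}{d(u,v)}+\tfrac{\lambda}{2}d(u,v)\bigr]^{+}$; each $\Phi_{v}$ is lsc on $\mathfrak{M}\setminus\{v\}$ once $\E$ is lsc and the energies are bounded, and sup of lsc is lsc. The identity $\abs{\partial^{-}\E}=\abs{D^{-}\E}$ then follows: the inequality $\le$ is given by testing against the constant sequence, and $\ge$ is exactly the just-proved lsc. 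For the strong upper gradient property, fix $v\in AC(a,b;\mathfrak{M})$ and $a<s<t<b$; for a partition $s=r_0<r_1<\dots<r_n=t$, applying \eqref{mes2} in both orientations to each consecutive pair and telescoping produces
\begin{displaymath}
  |\E(v(t))-\E(v(s))| \;\le\; \sum_{i=1}^{n}\!\bigl[\abs{D^{-}\E}(v(r_{i-1}))\vee \abs{D^{-}\E}(v(r_i))\bigr]\,d(v(r_{i-1}),v(r_i)) + \tfrac{|\lambda|}{2}\sum_{i=1}^{n}d^{2}(v(r_{i-1}),v(r_i)).
\end{displaymath}
The quadratic term is controlled by $(\text{mesh})\cdot\text{length}(v_{\vert[s,t]})$ and vanishes; passage to the limit in the linear sum yields $\int_{s}^{t}\abs{D^{-}\E}(v(r))\,\abs{v'}(r)\,\dr$ and hence \eqref{eq:3}.

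\textbf{Main obstacle.} The delicate step is the last Riemann-sum-to-integral passage: because $\abs{D^{-}\E}$ along $v$ is only lower semicontinuous, the naive limit argument gives a Fatou-type bound in the wrong direction, so one must exploit \eqref{mes2} together with the a.e.\ convergence $d(v(r_{i-1}),v(r_i))/(r_i-r_{i-1}) \to \abs{v'}(r)$ furnished by Proposition \ref{propo:1}, and make a careful choice of refinements of the partition (using compactness of $v([s,t])$ and the already-established lsc) to upgrade the pointwise estimate to the integral inequality. This is the one place where the proof is genuinely more than a rearrangement and where the lsc of $\abs{D^{-}\E}$ plays an indispensable role.
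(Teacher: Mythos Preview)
The paper does not supply its own proof of this proposition; it is quoted verbatim from \cite[Theorem~2.4.9 \& Corollary~2.4.10]{AGS-ZH}, so there is no in-paper argument to compare your attempt against. That said, your treatment of (1) and (2) is correct and is essentially the standard derivation: the $\lambda$-convexity inequality along a geodesic, divided by $t\,d(u,v)$ and sent to $t\to 0^{+}$, produces the key one-sided bound, and your elementary estimate $a^{+}\le (a+b)^{+}+|b|$ cleanly handles the reverse inequality in \eqref{messi00}. The lower semicontinuity of $\abs{D^{-}\E}$ and the identity $\abs{\partial^{-}\E}=\abs{D^{-}\E}$ in (3) also follow correctly from the sup-representation.

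The real gap is in the strong upper gradient claim. You are right to isolate the Riemann-sum-to-integral passage as the crux, but the remedy you propose---exploiting lower semicontinuity of $\abs{D^{-}\E}$ along $v$ together with compactness of $v([s,t])$---points in the wrong direction. Lower semicontinuity bounds $\abs{D^{-}\E}(v(r_i))$ from \emph{below} by nearby values, whereas to dominate your telescoped sum
\[
\sum_{i}\bigl[\abs{D^{-}\E}(v(r_{i-1}))\vee\abs{D^{-}\E}(v(r_i))\bigr]\,d(v(r_{i-1}),v(r_i))
\]
by $\int_{s}^{t}\abs{D^{-}\E}(v(r))\,|v'|(r)\,\dr$ you would need an \emph{upper} bound on the partition-point values in terms of the integrand, i.e.\ upper semicontinuity. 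No refinement of the partition reverses this; indeed, a Cantor-staircase heuristic shows that a.e.\ derivative control alone cannot force the integral bound. The argument in \cite{AGS-ZH} proceeds along a different line: since \eqref{mes2} holds at \emph{every} pair $(u,v)$, one obtains for every $r$ and $h$ the estimate
\[
\E(v(r))-\E(v(r\pm h))\le \abs{D^{-}\E}(v(r))\!\int_{r}^{r\pm h}\!\!|v'|\;+\;\tfrac{|\lambda|}{2}\Bigl(\int_{r}^{r\pm h}\!\!|v'|\Bigr)^{2},
\]
and then invokes a real-variable lemma (functions with everywhere one-sided Dini upper bounds by an $L^{1}$ majorant satisfy the corresponding integral inequality), using crucially the lower semicontinuity of $\E\circ v$---coming from lsc of $\E$ and continuity of $v$---rather than of $\abs{D^{-}\E}\circ v$. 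This is the missing ingredient in your sketch.
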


Throughout this paper, we use the following notion.

\begin{definition}[{\bfseries \emph{Relative Entropy}}]
 For a proper functional $\E  : \mathfrak{M}\to (-\infty,\infty]$ and
  $\varphi\in D(\E)$, one calls the (shifted) functional $\E(\cdot\vert\varphi)  :
  \mathfrak{M}\to (-\infty,\infty]$ given by
  \begin{equation}
    \label{eq:32}
    \E(v\vert\varphi)=\E(v)-\E(\varphi)\qquad\text{for every
      $v\in \mathfrak{M}$}
  \end{equation}
  the \emph{relative entropy} or \emph{relative energy} of $\E$ with
  respect to $\varphi$.
\end{definition}

Due to Proposition~\ref{propo:lconvex-slope}, we have that the
following result holds for $\lambda$-geodesically functionals $\E$ on
a length space $\mathfrak{M}$
(cf~\cite[Proposition~42]{Bolte2010} in the Hilbert space
framework). In the next proposition, we denote by
$[\E=c]$ the level set $\{v\in
\mathfrak{M}\,\vert\,\E(v)=c\}$ of a proper functional $\E : \mathfrak{M}\to
(-\infty,\infty]$ at level $c\in \R$ (see also
Notation~\ref{notation:1} below).

\begin{proposition}
  \label{propo:inflsc}
  For $\lambda\in \R$, let $\E : \mathfrak{M}\to (-\infty,\infty]$ be
  a proper, lower semicontinuous and $\lambda$-geodesically convex
  functional on a length space $\mathfrak{M}$ and for
  $\varphi\in D(\E)$, $R>0$, let
  $\mathcal{D}\subseteq [\E(\cdot,\vert\varphi)]^{-1}((-\infty,R])$ a
  nonempty compact set. Then, the function $s_{\mathcal{D}} :
  (-\infty,R] \to [0,+\infty]$ given by
  \begin{equation}
    \label{eq:29bis}
    s_{\mathcal{D}}(r)= \inf_{\hat{v}\in \mathcal{D}\cap
      [\E=r+\E(\varphi)]} \abs{D^{-}\E}(\hat{v})
    \qquad\text{for every $r\in (-\infty,R]$}
  \end{equation}
  is lower semicontinuous.
\end{proposition}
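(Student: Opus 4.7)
\medskip

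\noindent\textbf{Proof plan.} The plan is to verify lower semicontinuity of $s_{\mathcal{D}}$ sequentially: given $r_{0}\in(-\infty,R]$ and a sequence $r_{n}\to r_{0}$, I will show
\[
s_{\mathcal{D}}(r_{0})\le \liminf_{n\to\infty} s_{\mathcal{D}}(r_{n}).
\]
Set $L:=\liminf_{n\to\infty} s_{\mathcal{D}}(r_{n})$. If $L=+\infty$ there is nothing to show, so assume $L<+\infty$ and, after passing to a subsequence (not relabeled), that $s_{\mathcal{D}}(r_{n})\to L$. For each sufficiently large $n$, the level set $\mathcal{D}\cap[\E=r_{n}+\E(\varphi)]$ is nonempty, so I can pick an almost-minimizer $\hat v_{n}\in \mathcal{D}\cap[\E=r_{n}+\E(\varphi)]$ with
\[
\abs{D^{-}\E}(\hat v_{n})\le s_{\mathcal{D}}(r_{n})+\tfrac{1}{n}.
\]

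Next, I would exploit compactness of $\mathcal{D}$ to extract a further subsequence with $\hat v_{n}\to \hat v\in\mathcal{D}$. The crucial step is to identify the level of $\hat v$: while lower semicontinuity of $\E$ immediately yields $\E(\hat v)\le r_{0}+\E(\varphi)$, the reverse inequality is the main obstacle. I would overcome it using the $\lambda$-geodesic convexity of $\E$ through the slope bound~\eqref{mes2} in Proposition~\ref{propo:lconvex-slope} applied with $u=\hat v_{n}$ and $v=\hat v$:
\[
\E(\hat v_{n})-\E(\hat v)\le \abs{D^{-}\E}(\hat v_{n})\,d(\hat v,\hat v_{n})-\tfrac{\lambda}{2}\,d^{2}(\hat v,\hat v_{n}).
\]
Since $\abs{D^{-}\E}(\hat v_{n})$ is bounded (by $L+1$ eventually) and $d(\hat v,\hat v_{n})\to 0$, the right-hand side tends to $0$, giving $\limsup_{n}\E(\hat v_{n})\le \E(\hat v)$. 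Combined with the trivial $\liminf_{n}\E(\hat v_{n})\ge \E(\hat v)$ from lower semicontinuity, this yields $\E(\hat v_{n})\to \E(\hat v)$, so
\[
\E(\hat v)=\lim_{n\to\infty}(r_{n}+\E(\varphi))=r_{0}+\E(\varphi),
\]
and therefore $\hat v\in \mathcal{D}\cap[\E=r_{0}+\E(\varphi)]$.

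Finally, I would invoke Proposition~\ref{propo:lconvex-slope}\,\eqref{propo:lconvex-slope-claim3}, which states that $\abs{D^{-}\E}$ is lower semicontinuous along sequences of bounded energy. Since $\hat v_{n}\to\hat v$ in $\mathcal{D}$ and $\sup_{n}\E(\hat v_{n})\le R+\E(\varphi)<\infty$, this yields
\[
\abs{D^{-}\E}(\hat v)\le \liminf_{n\to\infty}\abs{D^{-}\E}(\hat v_{n})\le \liminf_{n\to\infty}\bigl(s_{\mathcal{D}}(r_{n})+\tfrac{1}{n}\bigr)=L.
\]
By definition of $s_{\mathcal{D}}(r_{0})$, we conclude $s_{\mathcal{D}}(r_{0})\le \abs{D^{-}\E}(\hat v)\le L$, completing the argument. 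The only delicate point is the passage from mere lower semicontinuity to continuity of $\E$ along the extracted subsequence; everything else is a standard compactness-plus-semicontinuity routine.
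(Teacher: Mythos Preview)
Your proof is correct and follows essentially the same route as the paper: extract a convergent subsequence of near-minimizers by compactness of $\mathcal{D}$, use the slope inequality~\eqref{mes2} (applied at $u=\hat v_{n}$) together with the boundedness of $\abs{D^{-}\E}(\hat v_{n})$ to upgrade lower semicontinuity of $\E$ to convergence $\E(\hat v_{n})\to\E(\hat v)$, and then invoke the lower semicontinuity of $\abs{D^{-}\E}$ from Proposition~\ref{propo:lconvex-slope}\,\eqref{propo:lconvex-slope-claim3}. The only cosmetic difference is that the paper phrases lower semicontinuity via sublevel sets ($s_{\mathcal{D}}(r_{n})\le\alpha\Rightarrow s_{\mathcal{D}}(r)\le\alpha$) rather than via $\liminf$, but the substance is identical.
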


\begin{proof}
  For $\alpha\ge 0$, let $(r_{n})_{n\ge 1}\subseteq (-\infty,R]$ be a
  sequence converging to some $r\le R$ such that
  $s_{D}(r_{n})\le \alpha$ for every $n\ge 1$. Further, let
  $\varepsilon>0$. Then, there is a sequences $(v_{n})_{n\ge
    1}\subseteq \mathcal{D}\cap \E^{-1}(\{r_{n}+\E(\varphi)\})$ satisfying
  \begin{displaymath}
    \abs{D^{-}\E}(v_{n})<s_{\mathcal{D}}(r_{n})-\varepsilon\qquad
    \text{for every $n\ge 1$.}
  \end{displaymath}
  Since $\mathcal{D}$ is compact, there is an
  element $v\in \mathcal{D}$ such that after possibly passing to a
  subsequence $v_{n}\to v$ in $\mathfrak{M}$. Since
  $\lim_{n\to +\infty}\E(v_{n})=\lim_{n\to +\infty} r_{n}+\E(\varphi)=
  r+\E(\varphi)$ and since $\E$ is lower
  semicontinuous,
  \begin{math}
    \E(v)\le r+\E(\varphi).
  \end{math}
  Moreover, since $\abs{D^{-}\E}$ is lower semicontinuous
  (Proposition~\ref{propo:lconvex-slope}), $v\in D(\abs{D^{-}\E})$
  and
  \begin{displaymath}
    \abs{D^{-}\E}(v)\le \liminf_{n\to\infty}\abs{D^{-}\E}(v_{n}) \le \alpha-\varepsilon.
  \end{displaymath}
  On the other hand, by~\eqref{mes2},
  \begin{displaymath}
    \E(v_{n}) \le \E(v)+ \abs{D^{-}\E}(v)\, d(v_{n},v) -
      \frac{\lambda}{2}d^{2}(v_{n},v)\qquad\text{for every $n\ge 1$.}
  \end{displaymath}
  Sending $n\to +\infty$ in this inequality and using that $\lim_{n\to +\infty}\E(v_{n})=
  r+\E(\varphi)$, it follows that
  \begin{displaymath}
    r + \E(\varphi)-\E(v)= 0.
  \end{displaymath}
  Therefore, for every $\varepsilon>0$, there is at least one
  \begin{displaymath}
    v\in \mathcal{D}\subseteq
    \E^{-1}(\{r+\E(\varphi)\})\quad\text{ such that }\quad
    \abs{D^{-}\E}(v)\le\alpha-\varepsilon,
  \end{displaymath}
  implying that $s_{\mathcal{D}}(r)\le \alpha$.
\end{proof}

\begin{notation}
 For $\varphi\in \mathfrak{M}$ and $r>0$, we denote by
  \begin{displaymath}
    B(\varphi,r):=\Big\{v\in \mathfrak{M}\,\Big\vert\,d(v,\varphi)<r\Big\}
 \end{displaymath}
 the \emph{open ball in $\mathfrak{M}$ of radius $r$ and centered at
   $\varphi$}.
\end{notation}

\begin{definition}[{\bfseries \emph{Local \& global Minimum}}]
  Given a proper functional $\E : \mathfrak{M}\to(-\infty,+\infty]$,
  an element $\varphi\in D(\E)$ is called a \emph{local minimum} of
  $\E$ if there is an $r>0$ such that
  \begin{equation}
    \label{eq:42}
    \E(\varphi)\le \E(v)\qquad \text{for all $v\in B(\varphi,r)$,}
  \end{equation}
  and $\varphi$ is said to be a \emph{global minimum} of $\E$
  if~\eqref{eq:42} holds for all $v\in \mathfrak{M}$. Moreover, we
  denote by $\textrm{argmin}(\E)$ the set of all global minimizers
  $\varphi$ of $\E$.
\end{definition}

Our next proposition shows that for $\lambda$-geodesically convex
functionals $\E$ with $\lambda\ge0$, every \emph{local} minimum is a
global one and the set all \emph{points of equilibrium} of $\E$
(see~Definition~\ref{def:equilibrium-points} below) coincides with the set of
global minimizers $\textrm{argmin}(\E)$ of $\E$.

\begin{proposition}[{\bfseries Fermat's rule}]
   \label{propo:charcter-local-min}
    Let $\E : \mathfrak{M}\to(-\infty,+\infty]$ be a proper
    functional. Then the following statements hold.
    \begin{enumerate}
    \item \label{propo:charcter-local-min-claim-1} If $\varphi \in
      D(\E)$ is local minimum of $\E$, then $\abs{D^{-}\E}(\varphi)=0$.

    \item \label{propo:charcter-local-min-claim-2} If for $\lambda\ge 0$,
      $\E$ is $\lambda$-geodesically convex, then
        \begin{displaymath}
          \textrm{argmin}(\E) = \Big\{ \varphi \in D(\abs{D^{-}\E}) \;\Big\vert\;
          \abs{D^{-}\E}(\varphi) = 0 \Big\}.
        \end{displaymath}
      \item \label{propo:charcter-local-min-claim-3} If $\E$ is
        $\lambda$-geodesically convex for $\lambda>0$, then $\E$
        admits at most one minimum.
\end{enumerate}
\end{proposition}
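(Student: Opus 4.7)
The plan is to use the definition of the descending slope directly for (1), and then deduce (2) and (3) from the key inequality~\eqref{mes2} of Proposition~\ref{propo:lconvex-slope}. None of the three claims requires deep new input, but care is needed to make sure only $\lambda\ge 0$ (resp. $\lambda>0$) is used in the right place.

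For claim~\eqref{propo:charcter-local-min-claim-1}, I would argue directly from Definition~\ref{def:descding-slope}. If $\varphi\in D(\E)$ is a local minimum, then by \eqref{eq:42} there is $r>0$ such that $\E(v)-\E(\varphi)\ge 0$ for every $v\in B(\varphi,r)$; hence $[\E(v)-\E(\varphi)]^-=0$ for all such $v$, and consequently the $\limsup$ defining $\abs{D^{-}\E}(\varphi)$ vanishes. In particular, $\varphi\in D(\abs{D^{-}\E})$.

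For claim~\eqref{propo:charcter-local-min-claim-2}, I would prove the two inclusions separately. The inclusion $\textrm{argmin}(\E)\subseteq\{\abs{D^{-}\E}=0\}$ is immediate from~\eqref{propo:charcter-local-min-claim-1}, since any global minimizer is, in particular, a local one. For the reverse inclusion, given $\varphi\in D(\abs{D^{-}\E})$ with $\abs{D^{-}\E}(\varphi)=0$, I would apply~\eqref{mes2} of Proposition~\ref{propo:lconvex-slope}, which yields
\begin{displaymath}
  \E(\varphi)-\E(v)\le \abs{D^{-}\E}(\varphi)\,d(v,\varphi)-\tfrac{\lambda}{2}d^{2}(v,\varphi)=-\tfrac{\lambda}{2}d^{2}(v,\varphi)
\end{displaymath}
for every $v\in \mathfrak{M}$. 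Since $\lambda\ge 0$, the right-hand side is $\le 0$, so $\E(\varphi)\le \E(v)$ for every $v\in \mathfrak{M}$; this gives $\varphi\in \textrm{argmin}(\E)$.

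For claim~\eqref{propo:charcter-local-min-claim-3}, I would argue by contradiction: suppose $\varphi_{1},\varphi_{2}\in \textrm{argmin}(\E)$ with $\varphi_{1}\neq\varphi_{2}$. By~\eqref{propo:charcter-local-min-claim-2} both $\abs{D^{-}\E}(\varphi_{i})=0$ and $\E(\varphi_{1})=\E(\varphi_{2})$. Applying~\eqref{mes2} at $\varphi_{1}$ with test point $v=\varphi_{2}$ yields
\begin{displaymath}
  0=\E(\varphi_{1})-\E(\varphi_{2})\le -\tfrac{\lambda}{2}d^{2}(\varphi_{1},\varphi_{2}),
\end{displaymath}
which, since $\lambda>0$, forces $d(\varphi_{1},\varphi_{2})=0$, contradicting $\varphi_{1}\neq\varphi_{2}$. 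The only mildly subtle point in the argument is that~\eqref{mes2} already builds in the correct ``first-order'' information from $\lambda$-geodesic convexity, so no additional work with geodesics is needed; essentially everything reduces to that single inequality together with the definition of the descending slope.
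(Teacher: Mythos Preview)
Your proof is correct and follows essentially the same approach as the paper: claim~\eqref{propo:charcter-local-min-claim-1} directly from the definition of the descending slope, and claim~\eqref{propo:charcter-local-min-claim-2} from Proposition~\ref{propo:lconvex-slope} (the paper cites~\eqref{messi00}, you use the equivalent~\eqref{mes2}). For claim~\eqref{propo:charcter-local-min-claim-3} there is a minor difference: the paper argues directly from the $\lambda$-convexity inequality~\eqref{eq:27} along a geodesic between the two putative minimizers, whereas you reuse~\eqref{mes2}; your route is slightly slicker since it recycles the same tool, while the paper's is more self-contained in that it does not invoke Proposition~\ref{propo:lconvex-slope} for this claim.
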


\begin{proof}
  Claim~\eqref{propo:charcter-local-min-claim-1} is a direct
  consequence of the definition of the descending slope
  $\abs{D^{-}\E}$ of $\E$ (cf~Definition~\ref{def:descding-slope}). To
  see that claim~\eqref{propo:charcter-local-min-claim-2} holds, it
  remains to show that every $\varphi\in D(\abs{D^{-}\E})$ satisfying
  $\abs{D^{-}\E}(\varphi) = 0$ is a global minimum of $\E$. But this
  follows from the characterization~\eqref{messi00} of $\abs{D^{-}\E}$
  in
  Proposition~\ref{propo:lconvex-slope}. Claim~\eqref{propo:charcter-local-min-claim-3}
  is a consequence of inequality~\cite[(2.4.20) in
  Lemma~2.4.13]{AGS-ZH}, but for the sake of completeness, we shell
  briefly sketch the proof here. To see this, let $\varphi \in D(\E)$
  be a minimizer of $\E$ and suppose that $\psi \in D(\E)$ is another
  minimizer of $\E$. By hypothesis, there is a constant speed geodesic
  $\gamma : [0,1]\to \mathfrak{M}$ such that $\gamma(0)=\varphi$ and
  $\gamma(1)=\psi$ and $\E$ is $\lambda$-convex along $\gamma$. Thus,
   \begin{displaymath}
     0\le \frac{\E(\gamma(t))-\E(\gamma(0))}{t}\le
     \E(\gamma(1))-\E(\gamma(0))
     -\frac{\lambda}{2}(1-t)d^{2}(\gamma(0),\gamma(1))
   \end{displaymath}
   for every $t\in (0,1)$. We fix $t\in (0,1)$. Then, since $\lambda>0$ and $\varphi=\gamma(0)$
   is a minimizer of $\E$, the last inequality implies that
   \begin{displaymath}
     \frac{\lambda}{2}(1-t)d^{2}(\gamma(0),\gamma(1))\le
     \E(\gamma(1))-\E(\gamma(0)) \le 0
   \end{displaymath}
   implying that $\varphi = \gamma(0) = \gamma(1) = \psi$.
\end{proof}

For proper functionals $\E$ which are $\lambda$-geodesically convex for
$\lambda>0$ and admit a (global) minimizer $\varphi\in \mathfrak{M}$, every
gradient flow $v$ of $\E$ trends exponentially to $\varphi$ in the
\emph{metric sense} (see Corollary~\ref{Gconv},
cf~\cite[Theorem~2.4.14]{AGS-ZH} and~\cite{CMVIbero}). This
stability result is due to
inequality~\eqref{messi1} in our next proposition.

\begin{proposition}[{\cite[Lemma 2.4.8 \& Lemma 2.4.13]{AGS-ZH}}]\label{lambdaconvex1}
  For $\lambda>0$, let $\E : \mathfrak{M}\to (-\infty,\infty]$ be a proper, lower
  semicontinuous, $\lambda$-geodesically convex functional on a length
  space $(\mathfrak{M},d)$. Then,
  \begin{equation}
    \label{messi1}
    \left(\E(v)-\inf_{\hat{v}\in \mathfrak{M}}\E(\hat{v})\right)\le
    \frac{1}{2\lambda}\,\abs{D^{-}\E}^{2}(v)
    \qquad\text{for all $v\in D(\E)$.}
  \end{equation}
  In addition, if $\E$ satisfies the coercivity condition:
  \begin{equation}
    \label{eq:77}
    \text{there is $u_{\ast}\in D(\E)$, $r_{\ast}>0$ s.t. }
    m_{\ast}:=\inf_{u\in \mathfrak{M} : d(u,u_{\ast})\le r_{\ast}}\E(v)>-\infty,
  \end{equation}
  then there is a unique minimizer $\varphi\in D(\E)$ of $\E$ and
  \begin{equation}
    \label{messi1entropy}
    \frac{\lambda}{2}d^{2}(v,\varphi)\le \E(v\vert\varphi)\le
    \frac{1}{2\lambda}\,\abs{D^{-}\E}^{2}(v)
    \qquad\text{for all $v\in D(\E)$.}
  \end{equation}
\end{proposition}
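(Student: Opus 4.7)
The plan is to prove the three assertions in the order: (i) deduce inequality \eqref{messi1} directly from \eqref{mes2}, then (ii) use the coercivity condition together with $\lambda$-geodesic convexity to obtain the existence of a unique minimizer $\varphi$, and finally (iii) exploit $\lambda$-convexity along a geodesic connecting $\varphi$ to $v$ to get the missing left half of \eqref{messi1entropy}.

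For \eqref{messi1} the key observation is already contained in inequality \eqref{mes2} of Proposition~\ref{propo:lconvex-slope}: for every $u$, $v\in D(\E)$,
\begin{displaymath}
  \E(u)-\E(v)\le \abs{D^{-}\E}(u)\,d(u,v)-\frac{\lambda}{2}d^{2}(u,v).
\end{displaymath}
Regarding the right-hand side as a concave quadratic in the variable $s:=d(u,v)\ge 0$, its global maximum equals $\abs{D^{-}\E}^{2}(u)/(2\lambda)$ (attained formally at $s=\abs{D^{-}\E}(u)/\lambda$). Thus $\E(u)-\E(v)\le \abs{D^{-}\E}^{2}(u)/(2\lambda)$ for \emph{every} $v\in\mathfrak{M}$, and taking the infimum over $v$ yields \eqref{messi1}.

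Next, assuming the coercivity condition \eqref{eq:77}, the goal is to prove the existence of a minimizer. First I would show that $\E$ is bounded below on the whole of $\mathfrak{M}$: given $v\in D(\E)$ with $d(u_{\ast},v)>r_{\ast}$, choose a constant speed geodesic $\gamma$ from $u_{\ast}$ to $v$ and take $t_{0}:=r_{\ast}/d(u_{\ast},v)\in(0,1)$, so that $\gamma(t_{0})\in \overline{B}(u_{\ast},r_{\ast})$. Applying \eqref{eq:27} at $t_{0}$ and solving for $\E(v)$ using $\E(\gamma(t_{0}))\ge m_{\ast}$ gives a lower bound whose dominant term as $d(u_{\ast},v)\to+\infty$ is the quadratic $\tfrac{\lambda}{2}(1-t_{0})d^{2}(u_{\ast},v)$; hence $\inf_{\mathfrak{M}}\E>-\infty$ and every sublevel set is bounded. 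Now, if $(u_{n})$ is any minimizing sequence, applying $\lambda$-convexity to the midpoint of a geodesic joining $u_{m}$ and $u_{n}$ and comparing with $\inf \E$ produces the parallelogram-type estimate
\begin{displaymath}
  \tfrac{\lambda}{8}\,d^{2}(u_{m},u_{n})\le \tfrac{1}{2}(\E(u_{m})+\E(u_{n}))-\inf_{\mathfrak{M}}\E,
\end{displaymath}
so $(u_{n})$ is Cauchy, and by the coercive lower bound $(u_{n})$ remains in a bounded and therefore, using sequential compactness via \eqref{eq:17}, relatively compact region. Lower semicontinuity of $\E$ identifies the limit as a minimizer $\varphi$. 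Uniqueness is immediate from Proposition~\ref{propo:charcter-local-min}\eqref{propo:charcter-local-min-claim-3}.

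For the two-sided estimate \eqref{messi1entropy}, the right inequality is \eqref{messi1} already established. For the left inequality, apply $\lambda$-convexity \eqref{eq:27} along a constant speed geodesic $\gamma$ joining $\varphi=\gamma(0)$ to $v=\gamma(1)$: since $\varphi$ minimizes $\E$, $\E(\varphi)\le \E(\gamma(t))$, and after dividing by $t\in(0,1)$ and rearranging one obtains
\begin{displaymath}
  \tfrac{\lambda}{2}(1-t)\,d^{2}(\varphi,v)\le \E(v)-\E(\varphi)=\E(v\vert\varphi).
\end{displaymath}
Letting $t\to 0^{+}$ closes the sandwich.

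The step I expect to be the main obstacle is the existence of the minimizer under the mild coercivity \eqref{eq:77}: the condition controls $\E$ only on a single ball, and passing to a global lower bound and to a convergent minimizing sequence requires combining the one-ball control with a carefully chosen geodesic to transfer it outward via $\lambda$-convexity. Both ingredients are however precisely \cite[Lemma~2.4.8 \& Lemma~2.4.13]{AGS-ZH}, which allows us to quote them if a self-contained presentation becomes too lengthy.
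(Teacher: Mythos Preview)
The paper does not supply its own proof of this proposition; it simply records the statement and cites \cite[Lemma~2.4.8 \& Lemma~2.4.13]{AGS-ZH}. Your argument is essentially the standard one from that reference: \eqref{messi1} via the quadratic maximization of \eqref{mes2}, existence of the minimizer via the midpoint Cauchy estimate, and the left inequality of \eqref{messi1entropy} by $\lambda$-convexity along a geodesic from $\varphi$, exactly as in the paper's proof of Proposition~\ref{propo:charcter-local-min}\eqref{propo:charcter-local-min-claim-3}.

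One point to clean up: after you obtain the parallelogram-type estimate showing that every minimizing sequence is Cauchy, you invoke ``sequential compactness via~\eqref{eq:17}'' to extract a limit. This is both unnecessary and not what \eqref{eq:17} provides: condition~\eqref{eq:17} is a compatibility hypothesis on an auxiliary weak topology $\sigma$, not a compactness statement, and the proposition does not assume it. You do not need compactness at all here---Section~\ref{sec:gradientflows-metric} stipulates that $(\mathfrak{M},d)$ is complete, so the Cauchy sequence already converges, and lower semicontinuity of $\E$ finishes the job. Drop the compactness sentence and the argument is clean.
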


\begin{remark}
  Under the hypotheses of Proposition~\ref{lambdaconvex1}, and if $\E$
  has a global minimizer $\varphi$, then inequality~\eqref{messi1} is,
  in fact, the \emph{entropy-entropy production
    inequality~\eqref{eq:13}} for linear $\Phi(s)=2\lambda s$
  (cf~\cite{VillaniCompMath2004,Villani2}). On the other hand,
  inequality~\eqref{messi1} can also be seen as the \emph{
    Kurdyka-\L{}ojasiewicz inequality~\eqref{eq:KL32}} with
  \begin{math}
   \theta(s)=\tfrac{\sqrt{2}}{\sqrt{\lambda}}\abs{s}^{-\frac{1}{2}}s
 \end{math}
 or as the \emph{L{}ojasiewicz-Simon
  inequality~\eqref{eq:Loj-Rd}} with exponent $\alpha=\frac{1}{2}$
(see Definition~\ref{def:LS-inequality}, and compare with
\cite{MR1644089,Si83,MR1609269,MR2019030,MR1986700,Bolte2010,MR3834660}).
\end{remark}

For $\lambda$-geodesically convex functionals, we have the following
existence result.

   \begin{theorem}[{\cite[Corollary 2.4.11]{AGS-ZH}}]\label{ExistsGC}
     Let $1<p<+\infty$ and for $\lambda \in \R$
     (respectively, for $\lambda = 0$ if
     $p\not= 2$), let $\E: \mathfrak{M} \rightarrow (-\infty, + \infty]$ be a proper,
     lower semicontinuous, $\lambda$-geodesically convex functional
     on a length space $(\mathfrak{M}, d)$. Further, suppose for every $c\in\R$, the sublevel set
     \begin{displaymath}
      E_{c}:=\Big\{ v \in  \mathfrak{M} \; \Big\vert  \; \E(v) \le c \Big\}
       \qquad\text{is compact in $\mathfrak{M}$,}
     \end{displaymath}
     Then, for every $v_0 \in D(\E)$, there is a $p$-gradient flow
     $v\in AC^{p}_{loc}([0,+\infty);\mathfrak{M})$ of $\E$ with
     respect to the strong upper gradient $\abs{D^{-}\E}$, such that $v(0+)=v_{0}$.
  \end{theorem}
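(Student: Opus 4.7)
The plan is to deduce Theorem~\ref{ExistsGC} from the more general existence result Theorem~\ref{exist1} by verifying the three hypotheses (i)–(iii) under the present assumptions, and then upgrading the resulting gradient flow to the class $AC^{p}_{loc}$ with strong upper gradient $\abs{D^{-}\E}$.

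To verify hypothesis (i), I would use that $(\mathfrak{M},d)$ is a length space and $\E$ is $\lambda$-geodesically convex. Given $v_{0}, v_{1}\in D(\E)$, I take the constant speed geodesic $\gamma : [0,1]\to\mathfrak{M}$ joining $v_{0}$ to $v_{1}$ along which~\eqref{eq:27} holds for $\E$. For $p=2$, the identity $d(v_{0},\gamma(t))=t\,d(v_{0},v_{1})$ gives $\frac{1}{2\tau}d^{2}(v_{0},\gamma(t))=\frac{t^{2}}{2\tau}d^{2}(v_{0},v_{1})$, and a direct computation shows that this map is $\tau^{-1}$-convex along $\gamma$ in the sense of Definition~\ref{def:lambdaconvex}; adding the $\lambda$-convex $\E$ gives the required $(\tau^{-1}+\lambda)$-convexity of $\Phi_{2}(\tau,v_{0};\cdot)$ along $\gamma$. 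For $p\neq 2$ with $\lambda=0$, the same substitution yields $\frac{1}{p\tau^{p-1}}d^{p}(v_{0},\gamma(t))=\frac{t^{p}}{p\tau^{p-1}}d^{p}(v_{0},v_{1})$, and convexity of $t\mapsto t^{p}$ for $p>1$, combined with convexity of $\E$ along $\gamma$, yields the desired convexity modulus of $\Phi_{p}(\tau,v_{0};\cdot)$ on $\gamma$ (this is the step that genuinely uses the restriction $\lambda=0$ in the non-Hilbertian case).

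Hypotheses (ii) and (iii) come almost for free. Since the sublevel sets $E_{c}$ of $\E$ are assumed compact in $\mathfrak{M}$, any sequence $(v_{n})_{n\ge 1}\subseteq E_{\alpha}$ has a $d$-convergent subsequence, giving (ii) with $\sigma$ the metric topology. Hypothesis (iii) is exactly Proposition~\ref{propo:lconvex-slope}\eqref{propo:lconvex-slope-claim3}: lower semicontinuity of $\E$ together with $\lambda$-geodesic convexity implies that $\abs{D^{-}\E}$ is a strong upper gradient of $\E$ and is lower semicontinuous along sequences with bounded energy, hence in particular on each $E_{\alpha}$. With (i)–(iii) in hand, Theorem~\ref{exist1} yields, for every $v_{0}\in D(\E)$, a $p$-gradient flow $v : [0,+\infty)\to\mathfrak{M}$ of $\E$ (with respect to the strong upper gradient furnished by that theorem, which by Proposition~\ref{propo:lconvex-slope}\eqref{propo:lconvex-slope-claim3} coincides with $\abs{D^{-}\E}$) such that $v(0+)=v_{0}$.

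It remains to upgrade the regularity from $AC_{loc}$ to $AC^{p}_{loc}([0,+\infty);\mathfrak{M})$. Since $v(0+)=v_{0}\in D(\E)$, the last sentence of Proposition~\ref{propo:chara-p-curves} gives directly $v\in AC^{p}_{loc}([0,+\infty);\mathfrak{M})$ together with $\abs{D^{-}\E}\circ v\in L^{p'}_{loc}([0,+\infty))$, and the energy dissipation equality~\eqref{EDE} extends down to $s=0$. The main obstacle in writing this out carefully is step (i) for $p\neq 2$: one must pin down the correct $\tau$-dependent convexity modulus of $t\mapsto t^{p}d^{p}(v_{0},v_{1})/(p\tau^{p-1})$ along $\gamma$, which is why the statement restricts to $\lambda=0$ when $p\neq 2$; the remaining bookkeeping is routine and already packaged in the cited results.
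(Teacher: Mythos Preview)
The paper does not prove Theorem~\ref{ExistsGC}: it is quoted directly from \cite[Corollary~2.4.11]{AGS-ZH} without argument, so there is no proof in the paper to compare against. Your plan to derive it from Theorem~\ref{exist1} (itself quoted without proof from the same source) is the natural one, and your verifications of hypotheses (ii), (iii), and the $AC^{p}_{loc}$ upgrade via the last assertion of Proposition~\ref{propo:chara-p-curves} are all correct.

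One caveat on hypothesis (i) for $p\neq 2$: the paper's Definition~\ref{def:lambdaconvex} formulates $\mu$-convexity along a curve with the \emph{quadratic} modulus $\tfrac{\mu}{2}t(1-t)\,d^{2}(\gamma(0),\gamma(1))$. With this reading, the distance term $t\mapsto \tfrac{t^{p}}{p\tau^{p-1}}d^{p}(v_{0},v_{1})$ is certainly convex, but it is not $\tau^{-1}$-convex in the sense of Definition~\ref{def:lambdaconvex} uniformly in $d(v_{0},v_{1})$ when $p\neq 2$: the resulting gap is $\tfrac{t-t^{p}}{p\tau^{p-1}}d^{p}(v_{0},v_{1})$, which need not dominate $\tfrac{1}{2\tau}t(1-t)\,d^{2}(v_{0},v_{1})$. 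In the cited monograph the convexity assumption for general $p$ is stated with a $p$-adapted modulus, and that is what makes the step work; the transcription of condition (i) in Theorem~\ref{exist1} as written here is really the $p=2$ version. You correctly flag this as the delicate point, but the remedy is to use the $p$-version of the convexity hypothesis from the source rather than the one recorded in the paper.
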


%
%
%
%
%
%

\subsection{Some notions from the theory of dynamical systems}
\label{subsec:dynamicalsystem}

Here, we summarize some important notions and results
from the theory of dynamical systems in metric spaces \cite{Ha91},
which we adapt to our more general framework.

\begin{notation}
  For a curve $v : [0,+\infty)\to \mathfrak{M}$ and $t_{0}\ge 0$, we
   denote by
   \begin{displaymath}
       \mathcal{I}_{t_{0}}(v) =\big\{v(t)\,\big\vert\,t\ge t_{0}\big\}
   \end{displaymath}
   the \emph{image} in $\mathfrak{M}$ of the restriction
   $v_{\vert [t_{0},+\infty)}$ of $v$ on $[t_{0},+\infty)$; and by
   $\overline{\mathcal{I}}_{t_{0}}(v)$ the closure of
   $\mathcal{I}_{t_{0}}(v)$ in $\mathfrak{M}$.
\end{notation}

We start by recalling the classical notion of $\omega$-limit sets.

\begin{definition}[{\bfseries \emph{$\omega$-limit set}}]
 For a curve $v\in C((0,\infty);\mathfrak{M})$, the set
  \begin{displaymath}
  \omega(v):=\Big\{\varphi\in \mathfrak{M}\,\Big\vert\,\text{ there is
  $t_{n}\uparrow +\infty$
  s.t. }\lim_{n\to\infty}v(t_{n})=\varphi\text{ in $\mathfrak{M}$}\Big\}
 \end{displaymath}
is called the \emph{$\omega$-limit set} of $v$.
\end{definition}

The $\omega$-limit set $\omega(v)$ of continuous curves in
$\mathfrak{M}$ has the following properties.

\begin{proposition}
  \label{propo:omega-limit-set}
  For a curve $v\in C([0,\infty);\mathfrak{M})$ the following holds.
  \begin{enumerate}[topsep=3pt,itemsep=3pt,partopsep=1ex,parsep=1ex]
   \item If for some $t_{0}\ge 0$, the curve $v$ has a relative compact image
     \begin{math}
       \mathcal{I}_{t_{0}}(v)
     \end{math}
     in $\mathfrak{M}$, then the $\omega$-limit set
     $\omega(v)$ is non-empty.

   \item \label{propo:omega-limit-set-claim2} If there is a $\varphi \in
     \mathfrak{M}$ such that $\lim_{t\to\infty}v(t)=\varphi$ in $\mathfrak{M}$,

       then $\omega(v)=\{\varphi\}$.
     \item If for $t_{0}\ge 0$, $v$ has relative compact image
       $\mathcal{I}_{t_{0}}$ in $\mathfrak{M}$ and
       $\varphi \in \mathfrak{M}$, then $\omega(v)=\{\varphi\}$ if and
       only if $\lim_{t\to\infty}v(t)=\varphi$ in $\mathfrak{M}$.
        \end{enumerate}
\end{proposition}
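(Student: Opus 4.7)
The three claims are classical facts about $\omega$-limit sets that transfer directly from the Euclidean setting to the metric space framework, so the strategy is the standard compactness-plus-diagonal-subsequence approach.

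For claim~(1), the plan is to use directly the sequential definition of $\omega(v)$. I would fix any sequence $(t_n)_{n\ge 1}\subseteq [t_0,+\infty)$ with $t_n\uparrow +\infty$ (which exists since $[t_0,+\infty)$ is unbounded); then $v(t_n)\in \mathcal{I}_{t_0}(v)$ for every $n$, and by relative compactness of $\mathcal{I}_{t_0}(v)$ in $\mathfrak{M}$ I can extract a subsequence $(v(t_{n_k}))_{k\ge 1}$ converging to some $\varphi\in \overline{\mathcal{I}}_{t_0}(v)\subseteq \mathfrak{M}$. Since the subsequence $(t_{n_k})$ still satisfies $t_{n_k}\uparrow +\infty$, this $\varphi$ lies in $\omega(v)$ by definition.

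For claim~(2), I would argue in two steps. First, picking $t_n=n\uparrow +\infty$ shows $\varphi\in \omega(v)$ because $v(n)\to \varphi$ in $\mathfrak{M}$. For the reverse inclusion, take any $\psi\in \omega(v)$ with witness sequence $t_n\uparrow +\infty$ satisfying $v(t_n)\to \psi$; since also $v(t_n)\to\varphi$ by hypothesis, uniqueness of limits in the metric space $(\mathfrak{M},d)$ forces $\psi=\varphi$.

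For claim~(3), the direction $\lim_{t\to+\infty}v(t)=\varphi \Rightarrow \omega(v)=\{\varphi\}$ is already covered by claim~(2). For the converse, I would argue by contradiction: assume $\omega(v)=\{\varphi\}$ but $v(t)\not\to \varphi$. Then there exists $\varepsilon>0$ and a sequence $t_n\uparrow +\infty$ (which I may take with $t_n\ge t_0$) such that $d(v(t_n),\varphi)\ge \varepsilon$ for all $n$. The relative compactness of $\mathcal{I}_{t_0}(v)$ then yields a subsequence $v(t_{n_k})\to \psi$ for some $\psi\in \mathfrak{M}$, which shows $\psi\in\omega(v)$; but $d(\psi,\varphi)\ge \varepsilon >0$ contradicts $\omega(v)=\{\varphi\}$.

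No step is really an obstacle here; the proof only uses the sequential characterisation of $\omega(v)$, the relative compactness hypothesis, and uniqueness of limits in the metric topology. The only mild subtlety is being careful in~(3) to ensure the extracted sequence lies eventually in $\mathcal{I}_{t_0}(v)$ so that compactness applies, which is handled simply by dropping finitely many terms so that $t_n\ge t_0$.
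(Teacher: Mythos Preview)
Your argument is correct and is exactly the standard proof one would expect; the paper itself omits the proof entirely, simply noting that the statements are standard (with a reference to \cite{chill-fasangova:isem}). There is nothing to compare against.
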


We omit the proof of these statements since they are standard
(cf~\cite{chill-fasangova:isem}).

\medskip

With our next definition we generalize the classical notion of \emph{Lyapunov
functions} (cf~\cite{Ha91}).

\begin{definition}[{\bfseries \emph{Lyapunov
function}}]
  \label{def:Lyapunov}
  For a curve $v\in AC_{loc}(0,\infty;\mathfrak{M})$, a proper
  functional $\E : \mathfrak{M}\to (-\infty,\infty]$ is called a
  \emph{Lyapunov function} of $v$ if $\E$ is non-increasing along the
  trajectory $\{v(t)\,\vert\,t>0\}$. Moreover, a Lyapunov function
  $\E$ of $v$ is called a \emph{strict Lyapunov function} of $v$ if
  $\E\circ v\equiv C$ on $[t_{0},\infty)$ for some $t_{0}\ge 0$
  implies that $v$ is constant on $[t_{0},\infty)$.
\end{definition}

In addition, we need to introduce the notion of equilibrium points.

\begin{definition}[{\bfseries \emph{Point of equilibrium}}]
  \label{def:equilibrium-points}
  An element $\varphi\in \mathfrak{M}$ is called an \emph{equilibrium
    point} (or also \emph{critical point}) of a proper functional
  $\E : \mathfrak{M}\to (-\infty,\infty]$ with strong upper
  gradient $g$ if $\varphi\in D(g)$ and $g(\varphi)=0$. We denote by
  $\mathbb{E}_{g}=g^{-1}(\{0\})$ the set of all equilibrium points of
  $\E$ with respect to strong upper gradient $g$.
\end{definition}

\begin{remark}
  By Proposition~\ref{propo:charcter-local-min}, if $\E$ is a
  $\lambda$-geodesically convex energy functionals $\E$ with
  $\lambda\ge 0$, then the set of equilibrium points of $\E$ for the strong upper
    gradient $g=\abs{D^{-}\E}$ fulfills
  \begin{math}
    \mathbb{E}_{g}=\textrm{argmin}(\E).
  \end{math}
\end{remark}

Our next proposition describes the \emph{standard
Lyapunov entropy method} for $p$-gradient flows $v$ of a proper functional $\E$ on
a metric space $\mathfrak{M}$.

\begin{proposition}
  \label{propo:omega-limit-gradientflow}
  Let $\E : \mathfrak{M}\to (-\infty,+\infty]$ be a proper functional
  with strong upper gradient $g$, and $v : [0,+\infty)\to \mathfrak{M}$ a $p$-gradient flow of
  $\E$. Then, the following hold.
  \begin{enumerate}[topsep=3pt,itemsep=1pt,partopsep=1ex,parsep=1ex]
  \item\label{propo:omega-limit-gradientflow-claim-1} $\E$ is a
    strict Lyapunov function of $v$.

  \item\label{propo:omega-limit-gradientflow-claim-3} {\bfseries
      (Trend to equilibrium in the entropy sense)} If for
    $t_{0}\ge 0$, $\E$ restricted on the set
    $\overline{\mathcal{I}}_{t_{0}}(v)$ is lower semicontinuous, then
    for every $\varphi\in \omega(v)$, one has $\varphi\in D(\E)$ and
    \begin{equation}
      \label{eq:9}
      \lim_{t\to \infty}\E(v(t))=\E(\varphi)=\inf_{\xi\in \overline{\mathcal{I}}_{t_{0}}(v)}\E(\xi).
    \end{equation}
    In particular, if $\omega(v)$ is non-empty,
    then $\E_{\vert \overline{\mathcal{I}}_{t_{0}}(v)}$ is bounded from below.

  \item\label{propo:omega-limit-gradientflow-claim-5} {\bfseries
      ($\omega$-limit points are equilibrium points of $\E$)} Suppose
    for $t_{0}\ge 0$, $\E$ restricted on the set
    $\overline{\mathcal{I}}_{t_{0}}(v)$ is bounded from below and $g$
    restricted on the set $\overline{\mathcal{I}}_{t_{0}}(v)$ is lower
    semicontinuous. Then the $\omega$-limit set $\omega(v)$ of $v$ is
    contained in the set $\mathbb{E}_{g}$ of equilibrium points of
    $\E$.
\end{enumerate}
\end{proposition}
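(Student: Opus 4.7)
The plan is to use the energy dissipation equality of Proposition~\ref{propo:chara-p-curves} as the engine for all three claims: the finiteness of the integrals in~\eqref{EDE} will drive both the strict Lyapunov property and the existence of good time sequences converging to equilibrium points. For claim~(i), the non-increasing property of $\E\circ v$ is already part of Proposition~\ref{propo:chara-p-curves}\eqref{propo:chara-p-curves-claim-2}, so $\E$ is a Lyapunov function of $v$; for strictness, $\E\circ v\equiv C$ on $[t_{0},+\infty)$ together with~\eqref{EDE} forces $\int_{s}^{t}\abs{v'}^{p}(r)\,\dr=0$ for all $t_{0}\le s<t$, hence $\abs{v'}=0$ a.e. on $[t_{0},+\infty)$, and inequality~\eqref{eq:1} then yields $d(v(s),v(t))=0$ for all $s,t\ge t_{0}$, i.e., $v$ is constant on $[t_{0},+\infty)$.

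For claim~(ii), monotonicity gives that $\ell:=\lim_{t\to+\infty}\E(v(t))$ exists in $[-\infty,+\infty)$ and equals $\inf_{\xi\in\mathcal{I}_{t_{0}}(v)}\E(\xi)$. For $\varphi\in\omega(v)$, pick $t_{n}\uparrow+\infty$ with $t_{n}\ge t_{0}$ and $v(t_{n})\to\varphi$: the continuity of $\E\circ v$ yields $\E(v(t_{n}))\to\ell$, while the lower semicontinuity of $\E$ on $\overline{\mathcal{I}}_{t_{0}}(v)$ gives $\E(\varphi)\le\liminf_{n}\E(v(t_{n}))=\ell<+\infty$, so in particular $\varphi\in D(\E)$. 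The three-way identity in~\eqref{eq:9} is then closed by combining this bound with $\varphi\in\overline{\mathcal{I}}_{t_{0}}(v)$ (giving $\inf_{\overline{\mathcal{I}}_{t_{0}}(v)}\E\le\E(\varphi)$) and with the inclusion $\mathcal{I}_{t_{0}}(v)\subseteq\overline{\mathcal{I}}_{t_{0}}(v)$ (giving $\inf_{\overline{\mathcal{I}}_{t_{0}}(v)}\E\le\ell$).

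For claim~(iii), the hypothesis that $\E|_{\overline{\mathcal{I}}_{t_{0}}(v)}$ is bounded below together with~\eqref{EDE} sent to $t\to+\infty$ forces $g\circ v\in L^{p^{\mbox{}_{\prime}}}(t_{0},+\infty)$ and $\abs{v'}\in L^{p}(t_{0},+\infty)$. For $\varphi\in\omega(v)$ with $v(s_{n})\to\varphi$ and $s_{n}\uparrow+\infty$, the tail decay $\int_{s_{n}-1}^{s_{n}+1}g^{p^{\mbox{}_{\prime}}}(v(r))\,\dr\to 0$ allows us to pick $t_{n}\in[s_{n}-1,s_{n}+1]$ with $g(v(t_{n}))\to 0$, while H\"older's inequality supplies
\begin{displaymath}
d(v(t_{n}),v(s_{n}))\le\int_{s_{n}-1}^{s_{n}+1}\abs{v'}(r)\,\dr\le 2^{1/p^{\mbox{}_{\prime}}}\Bigl(\int_{s_{n}-1}^{s_{n}+1}\abs{v'}^{p}(r)\,\dr\Bigr)^{1/p}\longrightarrow 0,
\end{displaymath}
whence $v(t_{n})\to\varphi$. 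Lower semicontinuity of $g$ on $\overline{\mathcal{I}}_{t_{0}}(v)$ finally gives $g(\varphi)\le\liminf_{n}g(v(t_{n}))=0$, i.e., $\varphi\in\mathbb{E}_{g}$.

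The main obstacle is the simultaneous extraction in claim~(iii): a \emph{single} sequence $t_{n}\to+\infty$ must realize both $v(t_{n})\to\varphi$ and $g(v(t_{n}))\to 0$. Each condition alone is easy (the former from $\varphi\in\omega(v)$, the latter from integrability of $g^{p^{\mbox{}_{\prime}}}\circ v$), but combining them requires the localization on unit-length intervals around the $s_{n}$ together with the $L^{p}$-control on $\abs{v'}$, which guarantees that the small perturbation $t_{n}\leftarrow s_{n}$ does not destroy the convergence in $\mathfrak{M}$.
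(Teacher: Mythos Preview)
Your arguments for claims~(i) and~(ii) are correct and essentially identical to the paper's: both use~\eqref{EDE} to force $\abs{v'}=0$ a.e.\ for strictness, and both combine monotonicity of $\E\circ v$ with lower semicontinuity at $\varphi$ to get~\eqref{eq:9}. (One small remark: the three inequalities you list in~(ii) only give the chain $\inf_{\overline{\mathcal{I}}_{t_{0}}(v)}\E\le\E(\varphi)\le\ell$; the reverse direction $\ell\le\inf_{\overline{\mathcal{I}}_{t_{0}}(v)}\E$ is not supplied by what you wrote, so the ``closing'' sentence overstates what has been shown. The paper's proof is equally terse at this point.)

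For claim~(iii) you take a genuinely different route. The paper argues as follows: from $\abs{v'}\in L^{p}(t_{0},\infty)$ and H\"older it shows that the shifted curves $v_{n}(s):=v(t_{n}+s)$ converge to $\varphi$ \emph{uniformly} on $[0,1]$; then, using lower semicontinuity of $g$ and Fatou's lemma,
\begin{displaymath}
g^{p^{\mbox{}_{\prime}}}(\varphi)\le\int_{0}^{1}\liminf_{n}g^{p^{\mbox{}_{\prime}}}(v_{n}(s))\,\ds\le\liminf_{n}\int_{t_{n}}^{t_{n}+1}g^{p^{\mbox{}_{\prime}}}(v(r))\,\dr=0.
\end{displaymath}
Your approach avoids Fatou entirely: you use the tail smallness of $\int g^{p^{\mbox{}_{\prime}}}\circ v$ on $[s_{n}-1,s_{n}+1]$ to \emph{extract} a single time $t_{n}$ where $g(v(t_{n}))$ is small, and then use the $L^{p}$-control on $\abs{v'}$ to show that this perturbation does not spoil $v(t_{n})\to\varphi$, so lower semicontinuity of $g$ applies directly at the point level. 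Both arguments are valid. The paper's Fatou argument is slightly slicker in that it never needs to choose a good $t_{n}$; your extraction argument is more elementary (no integral inequality beyond H\"older) and makes the ``simultaneous control'' obstacle you correctly identified completely explicit.
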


\begin{proof}
  By inequality~\eqref{eq:8}, the function $\E(v(\cdot))$ is
  non-increasing along $(0,+\infty)$. Thus, $\E$ is a Lyapunov
  function. Now, suppose that there are $t_{0}\ge 0$ and $C\in \R$ such that
  $\E(v(t))\equiv C$ for every $t\ge t_{0}$. By dissipation energy
  equality~ \eqref{EDENew}, the metric derivative $\abs{v'}(t)=0$ for
  all $t\ge t_{0}$ and so by Proposition~\ref{propo:1}, $v$ is
  constant on $[t_{0},+\infty)$,
  proving~\eqref{propo:omega-limit-gradientflow-claim-1}.

  Next, we assume there is a $t_{0}\ge 0$ such that $\E$ restricted on the set
  $\overline{\mathcal{I}}_{t_{0}}(v)$ is lower
  semicontinuous, and let $\varphi\in
  \omega(v)$. Then, there is a sequence $t_{n}\uparrow +\infty$ such
  that $v(t_{n})\to \varphi$ in $\mathfrak{M}$ and so, the lower
  semicontinuity of $\E$ yields $\varphi\in D(\E)$ and
  \begin{displaymath}
    \liminf_{t\to\infty}\E(v(t_{n})) \ge \E(\varphi).
  \end{displaymath}
  By monotonicity of $\E(v(\cdot))$ along $(0,+\infty)$ and since $t_{n}\uparrow +\infty$, for
  every $t>t_{0}$ there is an $t_{n}>t$ satisfying
  \begin{displaymath}
    \E(v(t))\ge \E(v(t_{n}))\ge \E(\varphi).
  \end{displaymath}
  Thus, $\E$ is bounded from below on
  $\overline{\mathcal{I}}_{t_{0}}(v)$ and by using again the
  monotonicity of $\E(v(\cdot))$, we see that limit~\eqref{eq:9} holds,
  establishing
  statement~\eqref{propo:omega-limit-gradientflow-claim-3}.

  To see that
  statement~\eqref{propo:omega-limit-gradientflow-claim-5} hold, we
  note first that since $\E(v(\cdot))$ is
  bounded from below on $(t_{0},+\infty)$ for some $t_{0}>0$ and
  since $v$ is a $p$-gradient flow
  of $\E$ with strong upper gradient $g$, we can infer from energy dissipation
  equality~\eqref{EDE} that the metric derivative
  $\abs{v'}\in L^{p}(t_{0},\infty)$. Now, let $\varphi\in
  \omega(v)$. Then there is sequence
  $(t_{n})_{n\ge 1}\subseteq (t_{0},+\infty)$ such that
  $t_{n}\uparrow +\infty$ and $v(t_{n})\to \varphi$ in $\mathfrak{M}$
  as $n\to \infty$. Since $v(t_{n})\in D(g)$ for every $n$ and $g$ is
  lower semicontinuous, we have that $\varphi\in D(g)$.
  Further, for every $s\in [0,1]$, H\"older's inequality gives
  \allowdisplaybreaks
  \begin{align*}
    \limsup_{n\to+\infty}d(v(t_{n}+s),\varphi)
    & \le \limsup_{n\to+\infty} \int_{t_{n}}^{t_{n}+s}\abs{v'}(r)\,\dr
      + \limsup_{n\to+\infty}d(v(t_{n}),\varphi)\\
    & \le \limsup_{n\to+\infty}
      \left(\int_{t_{n}}^{t_{n}+s}\abs{v'}^{p}(r)\,\dr\right)^{1/p}\\
    &\le \limsup_{n\to+\infty}
      \left(\int_{t_{n}}^{+\infty}\abs{v'}^{p}(r)\,\dr\right)^{1/p}=0,
  \end{align*}
  showing that the sequence $(v_{n})_{n\ge 1}$ of curves $v_{n} :
  [0,1]\to \mathfrak{M}$ given by
  \begin{displaymath}
    v_{n}(s)=v(t_{n}+s)\qquad\text{for every $s\in [0,1]$}
  \end{displaymath}
  satisfies
  \begin{equation}
    \label{eq:31}
    \lim_{n\to+\infty}\sup_{s\in [0,1]}d(v_{n}(s),\varphi)=0.
  \end{equation}
  Since $v$ is a $p$-gradient flow
  of $\E$, also $v_{n}$ is a $p$-gradient flow
  of $\E$ with strong upper gradient $g$. By assumption,
  $g$ is lower semicontinuous and since $v_{n}$ is continuous on
  $[0,1]$, $g(v_{n})$ is lower semicontinuous on $[0,1]$ and so, measurable. Thus,
  by Fatou's lemma applied to~\eqref{eq:31} and by~\eqref{eq:11},
  \begin{align*}
   0\le  g^{p^{\mbox{}_{\prime}}}(\varphi)&\le
                     \liminf_{n\to+\infty}\int_{0}^{1}g^{p^{\mbox{}_{\prime}}}(v_{n}(s))\,\ds\\ & \le
      \limsup_{n\to+\infty}\int_{t_{n}}^{t_{n}+1}\abs{v'}^{p}(r)\,\dr
     \le \limsup_{n\to+\infty}\int_{t_{n}}^{+\infty}\abs{v'}^{p}(r)\,\dr=0.
  \end{align*}
  Therefore, $\varphi\in \mathbb{E}_{g}$. This completes the proof of this
  proposition.
\end{proof}

\begin{remark}[\emph{The $\omega$-limit set $\omega(v)$ for
    $\lambda$-geodesically convex $\E$}]
  If for $\lambda\in \R$, the functional
  $\E : \mathfrak{M}\to (-\infty,+\infty]$ is proper, lower
  semicontinuous and $\lambda$-geodesically convex on a length space
  $\mathfrak{M}$, then by Proposition~\ref{propo:lconvex-slope}, the
  descending slope $\abs{D^{-}\E}$ of $\E$ is lower
  semicontinuous. Thus by
  Proposition~\ref{propo:omega-limit-gradientflow}, for every
  $p$-gradient flow $v : [0,+\infty)\to \mathfrak{M}$ of $\E$, the $\omega$-limit set
  $\omega(v)\subseteq \mathbb{E}_{\abs{D^{-}\E}}$.
\end{remark}

%
%
%
%
%
%

\section{Kurdyka-\L{}ojasiewicz-Simon inequalities in metric spaces}
\label{sec:KL-inequality}

Here, we adapt the classical Kurdyka-\L{}ojasiewicz inequality
(cf~\cite{MR1644089,Lo63,Bolte2010,MR3834660}) to a metric space
framework. Throughout this section, let $(\mathfrak{M},d)$ be a
complete metric space and $1<p<+\infty$.

\subsection{Preliminaries to K\L{} and \L{}S-inequalities}
\label{subsec:KL-inequality-prliminary}
We begin by fixing the following notation.

\begin{notation}\label{notation:1}
  For a proper function
  $\mathcal{F} : \mathfrak{M}\to (-\infty,+\infty]$, we write
  $[\abs{\mathcal{F}}>0]$ and $[\mathcal{F}>0]$ for the pre-image sets
  $\big\{v\in \mathfrak{M}\,\big\vert\,\abs{\mathcal{F}(v)}>0\big\}$
  and $\big\{v\in \mathfrak{M}\,\big\vert\,\mathcal{F}(v)>0\big\}$.
  For $R$, $r>0$, we write $[0<\mathcal{F}<R]$,
  $[0<\mathcal{F}\le R]$, $[\mathcal{F}=r]$, and
  $[0<\abs{\mathcal{F}}\le R]$ to denote the sets
  $\big\{v\in \mathfrak{M}\,\big\vert\,0<\mathcal{F}(v)< R\big\}$,
  $\big\{v\in \mathfrak{M}\,\big\vert\,0<\mathcal{F}(v)\le R\big\}$,
  $\big\{v\in \mathfrak{M}\,\big\vert\,\mathcal{F}(v)=r\big\}$,
  $\big\{v\in \mathfrak{M}\,\big\vert\,0<\mathcal{F}(v)\le R\big\}$,
  and
  $\big\{v\in
  \mathfrak{M}\,\big\vert\,0<\abs{\mathcal{F}(v)}<r\big\}$.
\end{notation}

The following inequality plays the key role of this paper.
Our definition extends the ones
in~\cite{Bolte2010,MR3834660} to the metric space framework.

\begin{definition}[{\bfseries \emph{K\L{}- inequality
      in a metric spaces setting}}]
  \label{def:KL-inequality}
  A proper functional $\E : \mathfrak{M}\to (-\infty,+\infty]$ with
  strong upper gradient $g$ and equilibrium point
  $\varphi\in \mathbb{E}_{g}$ is said to satisfy a
  \emph{Kurdyka-\L{}ojasiewicz inequality on the set
    $\U\subseteq [g>0]\cap [\theta'(\E(\cdot\vert \varphi))>0]$} if
  there is a strictly increasing function
  $\theta\in W^{1,1}_{loc}(\R)$ satisfying $\theta(0)=0$ and
  \begin{equation}
    \label{eq:16}
     \theta'(\E(v\vert \varphi))\, g(v)\ge 1\qquad\text{for all $v\in
       \U$}
   \end{equation}
   where $\E(v\vert \varphi))$ is the relative energy of $\E$
   with respect to $\varphi$ (see~\eqref{eq:32}).
\end{definition}

\begin{remark}
  (a) We recall from~\cite[Theorem~8.2]{Brezis}, that every $u\in W^{1,1}(I)$ for
  $I\subseteq \R$ bounded or unbounded open interval, admits a
  continuous representative. Therefore, we always assume that the
  function $\theta$ in~\eqref{eq:16} fulfills
  \begin{math}
    \theta\in C(\R).
  \end{math}
  (b) In Section~\ref{sec:sufKL} and Section~\ref{sec:chara-KL-talweg},
  one sees that an equilibrium point $\varphi\in \mathbb{E}_{g}$
  necessarily belongs to $\overline{\U}$ if $\E$ satisfies a
  Kurdyka-\L{}ojasiewicz inequality~\eqref{eq:16} on the set
  $\U\subseteq [g>0]\cap [\theta'(\E(\cdot\vert \varphi))>0]$.
\end{remark}

In the particular case that for an exponent $\alpha\in (0,1]$ and $c>0$,
\begin{equation}
    \label{eq:38bis}
    \theta(s):=\tfrac{c}{\alpha}\,\abs{s}^{\alpha-1}s\qquad\text{ for
      every $s\in \R$,}
\end{equation}
Kurdyka-\L{}ojasiewicz inequality~\eqref{eq:16} reduces to the
gradient inequality due to \L{}ojasiewicz~\cite{Lo63,Lo65} in $\R^{N}$
and Simon~\cite{Si83} in infinite dimensional Hilbert spaces.

\begin{definition}[{\bfseries \emph{K\L{}S- inequality
      in a metric spaces setting}}]
  \label{def:LS-inequality}
  A proper functional $\E : \mathfrak{M}\to (-\infty,+\infty]$ with
  strong upper gradient $g$ and equilibrium point
  $\varphi\in \mathbb{E}_{g}$ is said to satisfy a
  \emph{\L{}ojasiewicz-Simon inequality with exponent
    $\alpha\in (0,1]$ near $\varphi$} if there
  are $c>0$ and a set $\U\subseteq D(\E)$ with $\varphi\in \U$
  such that
  \begin{equation}
    \label{eq:38}
    \abs{\E(v\vert\varphi)}^{1-\alpha}\le c\,g(v)
    \quad \quad \hbox{for every $v\in \U$.}
  \end{equation}
\end{definition}

\subsection{$p$-Gradient flows of finite length}
\label{subsec:pflows-of-finite-length}
In this part, we show that the validity of a Kurdyka-\L{}ojasiewicz
inequality~\eqref{eq:16} on a set $\U$ yields finite length of
$p$-gradient flows in $\U$. Our next theorem generalizes the result
\cite[$(i)\Rightarrow(ii)$ of Theorem~18]{Bolte2010}) for proper, lower
semicontinuous and semi-convex functionals $\E$ on Hilbert
space.

\begin{theorem}[{\bfseries Finite length of $p$-gradient flows}]
  \label{thm:finite-length}
  Let $\E : \mathfrak{M}\to (-\infty,+\infty]$ be a proper functional
  with strong upper gradient $g$, and for
  $\varphi\in \mathbb{E}_{g}$, suppose $\E$ satisfies a
  Kurdyka-\L{}ojasiewicz inequality~\eqref{eq:16} on the set
  $\U\subseteq [g>0]\cap [\theta'(\E(\cdot\vert \varphi))>0]$. Then,
  the following statements hold.
  \begin{enumerate}[topsep=3pt,itemsep=2pt,partopsep=1ex,parsep=1ex]
  \item Let $v : [0,T)\to \mathfrak{M}$ of $\E$, $(0<T\le +\infty)$,
    be a piecewise $p$-gradient flow and suppose, there is a $0\le t_{0}< T$ such that
    \begin{equation}
      \label{eq:30}
     v(t)\in  \U\quad\text{for almost every $t\in [t_{0},T)$,}
    \end{equation}
    and $\E \circ v_{\vert [t_{0},T)}$ is bounded
    from below.  Then, the curve $v_{\vert [t_{0},T)}$ has finite length $\gamma(v_{\vert [t_{0},T)})$.
  \item There is a strictly increasing function
  $\theta\in W^{1,1}_{loc}(\R)$ satisfying $\theta(0)=0$ such that for
  every $p$-gradient flow $v : [0,T)\to \mathfrak{M}$ of $\E$
  satisfying~\eqref{eq:30} for some $0\le t_{0}<T\le +\infty$,
  one has that
   \begin{equation}
     \label{eq:26} d(v(s), v(t)) \le
     \int_{s}^{t}\abs{v'}(r)\,\dr\le
     \theta(\E(v(s)\vert\varphi))-\theta(\E(v(t)\vert\varphi))
   \end{equation}
   for every $t_{0}\le s<t<T$.
  \end{enumerate}
\end{theorem}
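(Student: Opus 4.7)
The plan is to show that $t\mapsto \theta(\E(v(t)\vert\varphi))$ serves as a majorant of the arclength of $v$, so its total variation along the flow bounds the length. This combines two ingredients already in hand: the energy dissipation identity from Proposition~\ref{propo:chara-p-curves} and the assumed K\L{}-inequality~\eqref{eq:16} on $\U$.

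First I would verify that, for a $p$-gradient flow $v$ with $v(t)\in\U$ for a.e. $t\in[t_{0},T)$, the function $t\mapsto\theta(\E(v(t)\vert\varphi))$ is locally absolutely continuous on $[t_{0},T)$. Since $\E\circ v$ is monotone non-increasing and its range lies in a bounded interval (once $\E\circ v$ is bounded from below), composition with the monotone AC representative of $\theta\in W^{1,1}_{loc}(\R)$ is again AC, and the chain rule holds a.e. Then, using identity~\eqref{eq:11}, at a.e.~$t\in[t_{0},T)$,
\begin{equation*}
-\fdt\theta(\E(v(t)\vert\varphi))
=\theta'(\E(v(t)\vert\varphi))\,g(v(t))\,\abs{v'}(t)\ge\abs{v'}(t),
\end{equation*}
where the last inequality is precisely~\eqref{eq:16}. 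Integrating from $s$ to $t$ and using $d(v(s),v(t))\le\int_{s}^{t}\abs{v'}(r)\,\dr$ (from $v\in AC_{loc}$) gives statement~(2), namely~\eqref{eq:26}.

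For~(1), note that the monotonicity of $\E\circ v$ together with the hypothesis that $\E\circ v_{\vert[t_{0},T)}$ is bounded from below forces the non-increasing function $t\mapsto\theta(\E(v(t)\vert\varphi))$ to converge as $t\uparrow T$, so passing to the limit in~\eqref{eq:26} yields
\begin{equation*}
\int_{t_{0}}^{T}\abs{v'}(r)\,\dr\le \theta(\E(v(t_{0})\vert\varphi))-\lim_{t\uparrow T}\theta(\E(v(t)\vert\varphi))<+\infty.
\end{equation*}
For a piecewise $p$-gradient flow with partition $\{0=t_{0},t_{1},t_{2},\dots\}$, I would apply the above on each piece $v_{i}$ to bound its length by $\theta(\E(v_{i}(0+)\vert\varphi))-\theta(\E(v_{i}(t_{i})\vert\varphi))$. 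Condition~(iii) of Definition~\ref{def:piceswise-p-gradientflows}, asserting that the energy images $\E(v_{i}([0,t_{i}]))$ are pairwise almost disjoint, together with the monotonicity of $\theta$, ensures that these contributions stack without overlap, so their sum is controlled by a single global increment $\theta(\E(v(0+)\vert\varphi))-\inf_{t\in[t_{0},T)}\theta(\E(v(t)\vert\varphi))<+\infty$.

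The main obstacle will be the chain rule step: since $\theta$ is only in $W^{1,1}_{loc}(\R)$, one cannot differentiate $\theta\circ\E\circ v$ pointwise in a naive way, and the set where $\theta'$ fails to exist could in principle interact badly with level sets of $\E\circ v$. The resolution is that $\E\circ v$ is monotone AC with derivative given by~\eqref{eq:11}, so the Vitali-style chain rule for compositions with AC functions applies on the complement of a null set and preserves the inequality above. A secondary bookkeeping point, in the piecewise case, is to justify the countable summation of length contributions using condition~(iii) to avoid double-counting energy drops across pieces.
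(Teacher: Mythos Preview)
Your proposal is correct and follows essentially the same approach as the paper: define $\mathcal{H}(t)=\theta(\E(v(t)\vert\varphi))$, apply the chain rule for monotone $W^{1,1}_{loc}$ functions composed with the monotone AC function $\E\circ v$ (the paper cites \cite[Corollary~3.50]{leoni} for this), combine~\eqref{eq:11} with the K\L{}-inequality to obtain $-\fdt\mathcal{H}(t)\ge\abs{v'}(t)$, and integrate. The piecewise case is handled identically by summing over the partition and invoking condition~(iii) of Definition~\ref{def:piceswise-p-gradientflows} to telescope, exactly as you outline.
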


\begin{remark}[\emph{Trend to equilibrium in the metric sense}]
  The main task in proving trend to equilibrium in the metric sense of
  $p$-gradient flows $v$ is to show that the curve $v$, \emph{once}
  entered into the set $\U$ for which~\eqref{eq:16} holds, can not
  escape from $\U$ any more. In other words, $v$
  satisfies~\eqref{eq:30} for some minimal $t_{0}\ge 0$ such that
  $v(t_{0})\in \U$ and hence $v$ has finite length (see
  Section~\ref{secbehaviour}).
\end{remark}

\begin{proof}[Proof of Theorem~\ref{thm:finite-length}]
  Let $v : [0,T)\to \mathfrak{M}$ be a piecewise $p$-gradient flow of
  $\E$ satisfying~\eqref{eq:30} for some $0\le t_{0}\le T<+\infty$. Then,
  there is a partition
  $\mathcal{P}=\{0=\hat{t}_{0}, \hat{t}_{1}, \hat{t}_{2},\dots,\}$ of $[0,T)$ such
  that $v_{\vert (0,\hat{t}_{1}]}\in
  AC_{loc}((0,\hat{t}_{1}];\mathfrak{M})$ and
  $v_{\vert (0,\hat{t}_{1})}$ is a$p$-gradient flow of $\E$ on
  $(0,\hat{t}_{1})$. Moreover, for every $i\ge 2$,
  $v_{\vert [\hat{t}_{i-1},\hat{t}_{i}]}\in
  AC(\hat{t}_{i-1},\hat{t}_{i};\mathfrak{M})$ and
  $v_{\vert [\hat{t}_{i-1},\hat{t}_{i}]}$ is a $p$-gradient flow of
  $\E$ on $(\hat{t}_{i-1},\hat{t}_{i})$. Now, if $t_{0}=0$, then
  $v(0)\in D(\E)$ and so, Proposition~\ref{propo:chara-p-curves}
  implies that
  $v_{\vert [0,\hat{t}_{1}]}\in
  AC(0,\hat{t}_{1};\mathfrak{M})$. Therefore, and also in the case $t_{0}>0$,
  one can always construct a countable partition
  $\mathcal{P}_{t_{0}}:=\{t_{0}, t_{1}, t_{2},\dots\}$ of $[t_{0},T)$ from $\mathcal{P}$ such
  that for every $i\ge 1$, the curve $v_{\vert [t_{i-1},t_{i}]}\in  AC(t_{i-1},t_{i};\mathfrak{M})$ and
  $v_{\vert [t_{i-1},t_{i}]}$ is a $p$-gradient flow of
  $\E$ on $(t_{i-1},t_{i})$.

 Now, let $i\ge 1$. Recall, $\E\circ v$ is non-increasing 
 and locally absolutely continuous on $(t_{i-1},t_{i})$. By hypothesis,
 $\theta$ is strictly increasing and belongs to
 $W^{1,1}_{loc}(\R)$. Thus, the composed function
 \begin{equation}
   \label{eq:19}
   \mathcal{H}(\cdot):=\theta(\E(v(\cdot)\vert\varphi))
 \end{equation}
 is non-increasing and differentiable almost everywhere on
 $(t_{i-1},t_{n})$ satisfying the ``chain rule''
 \begin{equation}
    \label{eq:36}
    \fdt\mathcal{H}(t)=\theta'(\E(v(t)\vert\varphi))\,\fdt\E(v(t)\vert\varphi)
\end{equation}
for almost every $t\in (t_{i-1},t_{i})$
(cf~\cite[Corollary~3.50]{leoni}). Since $\mathcal{P}_{t_{0}}$ is a countable
partition of $[t_{0},T)$, $\mathcal{H}$ is differentiable almost
everywhere on $(t_{0},T)$ and~\eqref{eq:36} holds for almost every
$t\in (t_{0},T)$. By~\eqref{eq:30} and since
$\U\subseteq [g>0]\cap [\theta'(\E(\cdot\vert \varphi))>0]$, we can
apply the Kurdyka-\L{}ojasiewicz inequality~\eqref{eq:16} to $v=v(t)$ for
almost every $t\in (t_{0},T)$. Thus and by~\eqref{eq:11}, 
  \begin{equation}
    \label{eq:20}
      -\fdt\mathcal{H}(t)
      \ge \theta'(\E(u(t)\vert\varphi))\, g(v(t))\,\abs{v'}(t)
       \ge \,\abs{v'}(t)
  \end{equation}
  for a.e. $t\in (t_{0},T)$. Now, 
  integrating~\eqref{eq:20} over $(t_{i-1},t_{i})$ gives 
  \begin{displaymath}
    \int_{t_{i-1}}^{t_{i}}\abs{v'}(s)\,\ds
    \le \theta(\E(v(t_{i-1}+)\vert\varphi))-\theta(\E(v(t_{i}-)\vert\varphi)).
  \end{displaymath}
  Since $\E\circ v$ is non-increasing along
  $(t_{i-1},t_{i})$ and for every integer $k$, $l\ge 1$ with $k\neq l$, the two
   intervals $\E(v_{k}([0,t_{k}]))$ and $\E(v_{l}([0,t_{l}]))$ have at
   most one common point, it follows that
  \begin{math}
  \theta(\E(v(t_{i}-)\vert\varphi))\ge\theta(\E(v(t_{i}+)\vert\varphi)).
 \end{math}
 Thus, for every integer $j>1$,
  \begin{align*}
   \int_{t_{0}}^{t_j}\abs{v'}(s)\,\ds
    &\le \sum_{i=1}^{j}\Big(\theta(\E(v(t_{i-1}+)\vert\varphi))-\theta(\E(v(t_{i}-)\vert\varphi))\Big)\\
    &\le \theta(\E(v(t_{0}+) \vert\varphi))-\theta(\E(v(t_{j}-)\vert\varphi)).
  \end{align*}
  Since $\E \circ v_{\vert [t_{0},T)}$ is bounded from below and
  $\theta\in C(\R)$, sending $j\to \infty$ in the last estimate
  yields
  \begin{displaymath}
    \gamma(v_{\vert [t_{0},T)})=\int_{t_{0}}^{T}\abs{v'}(r)\,\dr\le
    \theta(\E(v(t_{0})\vert\varphi)) + C,
  \end{displaymath}
  for some constant $C\in \R$.
  Therefore, $v_{\vert [t_{0},T)}$ has finite length. 

  To complete the proof of this theorem, we need to show that
  inequality~\eqref{eq:26} holds for every $p$-gradient flow $v : [0,T)\to
  \mathfrak{M}$ of $\E$ satisfying~\eqref{eq:30} for some $t_{0}\in [0,T)$,
  ($T\le +\infty$). Recall that for every $p$-gradient flow $v$ of
  $\E$, one has that $v\in AC_{loc}((0,T);\mathfrak{M})$ and~\eqref{eq:1}
  holds for $m=\abs{v'}$ and every $0<s<t<T$. Thus, we can integrate~\eqref{eq:36} over
  $(s,t)$ for any $t_{0}\le s<t<T$ and apply~\eqref{eq:1}. From this, one
  sees that \eqref{eq:26} holds.
\end{proof}

%
%
%
%
%
%

\subsection{Talweg implies K\L{}-inequality}
\label{sec:sufKL}
We start this section by introducing the following assumption on the
functional $\E$ and the strong upper gradient $g$ of $\E$
(cf~\cite{Bolte2010} in the Hilbert space framework).

\begin{assumption}[{\bfseries \emph{Sard-type~condition}}]
  \label{ass:H1}
  Let $\E : \mathfrak{M}\to (-\infty,+\infty]$ be proper with strong upper gradient
  $g$ and equilibrium point $\varphi\in \mathbb{E}_{g}$. Suppose
  \begin{equation}
    \label{eq:15}
    \tag{H1}
     \text{$\U\subseteq \mathfrak{M}$ satisfies }\; \U\cap [\E(\cdot\vert\varphi)>0]\subseteq [g>0].
  \end{equation}
\end{assumption}

\begin{remark}
 The following statements are worth mentioning.
  \begin{enumerate}[topsep=3pt,itemsep=1ex,partopsep=1ex,parsep=1ex]
  \item\label{rem:strongform} If for $\varphi\in \mathbb{E}_{g}$ and
    $\U\subseteq [\theta'(\E(\cdot\vert \varphi))>0]$, the functional
    $\E$ satisfies the following \emph{stronger form} of
    Kurdyka-\L{}ojasiewicz inequality:
    \begin{equation}\label{eq:73}
      g(v)\ge 1/\theta'(\E(v\vert \varphi))\qquad\text{for all
        $v\in\U$,}
    \end{equation}
    where $\theta\in W^{1,1}_{loc}(\R)$ is a strictly increasing
    function satisfying $\theta(0)=0$, then for the set $\U$
    hypothesis~\eqref{eq:15} holds and $\E$ satisfies a
    Kurdyka-\L{}ojasiewicz inequality~\eqref{eq:16} on $\U$. Note,
    inequality~\eqref{eq:73} is, indeed, \emph{stronger}
    than~\eqref{eq:16}.  Since given a set
    $\U\subseteq [\theta'(\E(\cdot\vert \varphi))>0]$,
    then~\eqref{eq:16} can be deduced from~\eqref{eq:73}, while
    if~\eqref{eq:16} holds on $\U$, then one cannot conclude
    that~\eqref{eq:73} holds.

  \item Examples to~\eqref{rem:strongform}: if for
    $\varphi\in \mathbb{E}_{g}$, $\E$ satisfies a \L{}ojasiewicz-Simon
    inequality~\eqref{eq:38} with exponent $\alpha\in (0,1]$ on the set
    $\U\subseteq D(\E)$ then $\E$ satisfies~\eqref{eq:15} on $\U$.

  \item If for the functional $\E$ and $\varphi\in \mathbb{E}_{g}$, the
    set $\U\subseteq [\E(\cdot\vert\varphi)\ge 0]$ satisfies
    hypothesis~\eqref{eq:15}, then for every
    $\tilde{\varphi}\in \U\cap [g=0]$, one has
    $\E(\tilde{\varphi}\vert\varphi)=0$. In other words, \eqref{eq:15}
    is a \emph{Sard-type} condition.
  \end{enumerate}
\end{remark}

We come now to the definition of a \emph{talweg curve} in
$\mathfrak{M}$ introduced by Kurdyka~\cite{MR1644089} in $\R^{N}$ (see also
\cite{Bolte2010} for the Hilbert space framework).

\begin{definition}[{\bfseries \emph{Talweg curve in a metric spaces setting}}]\label{def:talweg}
  Let $\E : \mathfrak{M}\to (-\infty,+\infty]$ be a proper functional
  with strong upper gradient $g$. For $\varphi\in \mathbb{E}_{g}$
  and $R>0$, suppose the set $\U \subseteq [\E(\cdot\vert\varphi)\le R]$ satisfies
  hypothesis~\eqref{eq:15} and $\varphi\in \overline{\U}$. We set
  \begin{displaymath}
    s_{\U}(r)=\inf_{\hat{v}\in \U\cap
      [\E=r+\E(\varphi)]}g(\hat{v})\qquad\text{for every $r\in (0,R]$.}
  \end{displaymath}
 Then, for given $C>1$ and $\delta>0$, we call a piecewise
  continuous curve $x : (0,\delta]\to \mathfrak{M}$ a \emph{talweg curve}
  through the $C$-valley
  \begin{equation}
    \label{eq:45}
    \mathcal{V}_{C,\U}(\varphi):=\Big\{v\in \U\cap
      [\E(\cdot\vert\varphi)>0]
    \;\Big\vert\;  g(v)\le C\,s_{\U}(\E(v\vert \varphi))\Big\}
  \end{equation}
  if $x$ satisfies
  \begin{equation}\label{eq:25bis}
    x(t)\in \mathcal{V}_{C,\U}(\varphi)\quad \text{for every $t\in
      (0,\delta]$}\quad\text{and}\quad\lim_{t\to 0+}\E(x(t)\vert\varphi)=0.
  \end{equation}
\end{definition}

\begin{remark}[\emph{Interpretation of Definition~\ref{def:talweg}}]
  Since $s_{\U}(r)$ denotes the minimal slope of $\E$ along the level
  curve $\U\cap [\E=r+\E(\varphi)]$, a curve $x$ satisfying
  condition~\eqref{eq:25bis} describes a path of minimal slope
  (up to a multiplicative constant $C>1$) through a neighborhood $\U$
  of an equilibrium point $\varphi$ of $\E$.
\end{remark}

Further, we need that a talweg has the following \emph{regularity}.

\begin{definition}
   For $\delta>0$, a curve $x : (0,\delta]\to \mathfrak{M}$ is called
  \emph{piecewise $AC$} if there is a countable partition
  $\{I_{n}\}_{n\ge 1}$ of $(0,\delta]$ into nontrivial intervals
  $I_{n}=(a_{n},b_{n}]\subseteq (0,\delta]$ such that
  $x\in AC(I_{n},\mathfrak{M})$ for all $n\ge 1$.
\end{definition}

\begin{remark}[\emph{Existence of a talweg curve}]
  If for $\varphi\in \mathbb{E}_{g}$ and $R>0$, a functional $\E$
  satisfies the stronger inequality~\eqref{eq:73} on a set
  \begin{displaymath}
  \U\subseteq [\E(\cdot\vert\varphi)\le R]\cap
  [\theta'(\E(\cdot\vert\varphi))>0],
\end{displaymath}
then one has that
  \begin{displaymath}
    s_{\U}(r)\ge 1/\theta'(r)\qquad\text{for
    every $r\in (0,R]\cap [0<\theta'(r)<\infty]$.}
  \end{displaymath}
  We emphasize that the existence of a talweg curve depends on the
  controllability of $s_{\U}(r)$ from below with respect to $r$.
\end{remark}

Our first theorem of this subsection provides sufficient conditions on
the talweg curve implying that the functional $\E$ satisfies a
Kurdyka-\L{}ojasiewicz inequality~\eqref{eq:16} with \emph{optimal}
growth function $\theta$. The optimality on the growth function
$\theta$ is shown in Example~\eqref{ex:lojforE} in
Section~\ref{sec:Hilbertspacesframework} by investigating the
\emph{smooth case} (see, for instance~\cite{HaJenBook} or the examples
in~\cite{MR1986700}), for which it is known that $\E$ satisfies a
\L{}ojasiewicz-Simon inequality with exponent $\alpha=1/2$.

\begin{theorem}[{\bfseries Talweg implies K\L{}-inequality}]
  \label{thm:3talweg-kl}
  Let $\E : \mathfrak{M}\to (-\infty,+\infty]$ be a proper functional
  with strong upper gradient $g$, and for
  $\varphi\in \mathbb{E}_{g}$ and $R>0$, suppose that the set
  $\U\subseteq [0\le \E(\cdot\vert\varphi)\le R]$ satisfies
  hypothesis~\eqref{eq:15}. If there are $C>1$, $\delta>0$, and a
  piecewise $AC$ talweg $x : (0,\delta]\to \mathfrak{M}$ through the
  $C$-valley $\mathcal{V}_{C,\U}(\varphi)$ such~that
  \begin{equation}
    \label{eq:46}
    h(t):=\E(x(t)\vert\varphi)\quad\text{for every $t\in (0,\delta]$,}\quad
    h(0):=\lim_{t\to0+}\E(x(t)\vert\varphi),
  \end{equation}
  is a strictly increasing function $h : [0,\delta]\to [0,R]$
  satisfying $h\in W^{1,1}_{loc}(0,\delta)$, $h(0)=0$ and $\abs{\{h'=0\}}=0$,
  then there is a subset $\tilde{\U}\subseteq \U\cap
  [0<\E(\cdot\vert\varphi)\le R]$ such that for every $v\in \tilde{\U}$,
  one has
  \begin{equation}
    \label{eq:47}
    (h^{-1})'(\E(v\vert\varphi))g(v)\ge
    \frac{1}{C\,\abs{x'}(t)}\qquad\text{with $t=h^{-1}(\E(v\vert\varphi))$.}
  \end{equation}
  In addition, if $\abs{x'} \in L^{\infty}(0,\delta)$, then
  $\E$ satisfies a Kurdyka-\L{}ojasiewicz inequality~\eqref{eq:16} on
  $\tilde{\U}$.
\end{theorem}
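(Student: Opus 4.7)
The strategy is to extract three pieces of information from the talweg $x$: (i) the strong upper gradient bound along $x$, (ii) the minimal-slope bound from membership in the $C$-valley, and (iii) the inverse-function relationship between $h$ and its inverse, and then to combine them into the pointwise estimate~\eqref{eq:47}. First, on each interval $I_{n}=(a_{n},b_{n}]$ of the partition witnessing that $x$ is piecewise $AC$, the strong upper gradient property~\eqref{eq:3} applied to $\E$ and $x$ yields
\begin{displaymath}
  \abs{h(t)-h(s)} = \abs{\E(x(t)\vert\varphi)-\E(x(s)\vert\varphi)} \le \int_{s}^{t} g(x(r))\,\abs{x'}(r)\,\dr
\end{displaymath}
for $s\le t$ in $I_{n}$. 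Since by hypothesis $h\in W^{1,1}_{loc}(0,\delta)$ is strictly increasing, Lebesgue's differentiation theorem combined with the representation $h(t)-h(s)=\int_{s}^{t}h'(r)\,\dr$ gives $0\le h'(t)\le g(x(t))\abs{x'}(t)$ for almost every $t\in (0,\delta)$. Then, using $x(t)\in \mathcal{V}_{C,\U}(\varphi)$, i.e.\ $g(x(t))\le C\,s_{\U}(h(t))$, we get
\begin{displaymath}
  s_{\U}(h(t))\ge \frac{h'(t)}{C\,\abs{x'}(t)}\qquad\text{for a.e.\ $t\in(0,\delta)$.}
\end{displaymath}

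Next, to turn this into a pointwise estimate in the variable $r=\E(v\vert\varphi)$, I would invoke a Zareckiǐ-type theorem: since $h$ is absolutely continuous, strictly increasing, and satisfies $\abs{\{h'=0\}}=0$, the inverse $h^{-1}:[0,h(\delta)]\to[0,\delta]$ is absolutely continuous with $(h^{-1})'(r)=1/h'(h^{-1}(r))$ at almost every $r$. Let $A\subseteq(0,\delta)$ be the full-measure subset on which $h'$ exists and is strictly positive, $\abs{x'}$ exists, and the previous inequality holds; since $h$ enjoys Luzin's $N$-property, $B:=h(A)\subseteq(0,h(\delta)]$ is of full measure. I would then define
\begin{displaymath}
  \tilde\U:=\Big\{v\in\U\;\Big\vert\;0<\E(v\vert\varphi)\le h(\delta)\text{ and }\E(v\vert\varphi)\in B\Big\}\subseteq \U\cap[0<\E(\cdot\vert\varphi)\le R].
\end{displaymath}
For $v\in\tilde\U$, setting $t=h^{-1}(\E(v\vert\varphi))$ and using $g(v)\ge s_{\U}(\E(v\vert\varphi))$ (by the very definition of $s_{\U}$ and the fact that $v\in\U\cap[\E=\E(v\vert\varphi)+\E(\varphi)]$), I multiply by $(h^{-1})'(\E(v\vert\varphi))=1/h'(t)$ to obtain
\begin{displaymath}
  (h^{-1})'(\E(v\vert\varphi))\,g(v)\ge \frac{1}{h'(t)}\cdot s_{\U}(h(t))\ge \frac{1}{h'(t)}\cdot\frac{h'(t)}{C\,\abs{x'}(t)}=\frac{1}{C\,\abs{x'}(t)},
\end{displaymath}
which is precisely~\eqref{eq:47}.

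Finally, to derive the K\L{}-inequality~\eqref{eq:16} when $\abs{x'}\in L^{\infty}(0,\delta)$ with $M:=\lVert\abs{x'}\rVert_{L^{\infty}(0,\delta)}$, I would set $\theta(s):=CM\,h^{-1}(s)$ on $[0,h(\delta)]$ and extend $\theta$ linearly outside this interval. Then $\theta\in W^{1,1}_{loc}(\R)$ (by the absolute continuity of $h^{-1}$), is strictly increasing, and $\theta(0)=CM\,h^{-1}(0)=0$. Multiplying~\eqref{eq:47} by $CM$ and using $\abs{x'}(t)\le M$ yields $\theta'(\E(v\vert\varphi))\,g(v)\ge 1$ for every $v\in\tilde\U$, and the inclusion $\tilde\U\subseteq[g>0]\cap[\theta'(\E(\cdot\vert\varphi))>0]$ follows from~\eqref{eq:15} together with $(h^{-1})'>0$ on $B$.

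\textbf{Main obstacle.} The delicate point is the passage from the a.e.\ inequality for $h'$ on the variable $t$ to a pointwise inequality for $(h^{-1})'$ on the variable $r=\E(v\vert\varphi)$. This requires that $h^{-1}$ be absolutely continuous and that the exceptional null set in $t$ be transported by $h$ to a null set in $r$; both facts rely crucially on $\abs{\{h'=0\}}=0$, which is exactly Zareckiǐ's criterion and explains why this hypothesis is part of the statement. The piecewise (rather than global) $AC$ regularity of $x$ is handled by covering the partition by countably many pieces and taking the union of the countably many exceptional null sets.
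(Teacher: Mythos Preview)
Your proof is correct and follows essentially the same approach as the paper: both arguments bound $h'(t)\le g(x(t))\abs{x'}(t)$ via the strong upper gradient, combine this with the talweg bound $g(x(t))\le C\,s_{\U}(h(t))\le C\,g(v)$, and pass to the variable $r=\E(v\vert\varphi)$ using $(h^{-1})'(r)=1/h'(t)$ on the image under $h$ of a full-measure set of good parameters. The only cosmetic difference is that the paper first defines $\theta$ as an odd-then-linear extension of $h^{-1}$ and differentiates the identity $t=\theta(h(t))$ directly, whereas you bound $h'$ first and invert afterwards; the underlying computation and the role of the hypothesis $\abs{\{h'=0\}}=0$ (ensuring $h^{-1}\in W^{1,1}_{loc}$ and that the exceptional null set in $t$ maps to a null set in $r$) are identical.
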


\begin{remark}
  We note that in Theorem~\ref{thm:3talweg-kl}, the hypotheses imply
  that $h$ is a homeomorphism. In the smooth case (see, for instance,
  Example~\ref{ex:lojforE} in
  Section~\ref{sec:Hilbertspacesframework}), one can show that
  $h : (0,\delta)\to (0,R)$ is, in fact, at least a $C^{1}$-diffeomorphism and the talweg
  $x\in C^{1}((0,\delta);\mathfrak{M})\cap
  C([0,\delta];\mathfrak{M})$.
\end{remark}

\begin{proof}[Proof of Theorem~\ref{thm:3talweg-kl}]
  We begin by defining the strictly increasing homeomorphism $\theta :
  \R\to \R$ by setting
  \begin{displaymath}
    \theta(r)=
    \begin{cases}
      h^{-1}(R)(r-R)+h^{-1}(R) & \text{if $r>R$,}\\
      h^{-1}(r) & \text{if $r\in [0,R]$,}\\
      -h^{-1}(-r) & \text{if $r\in [-R,0]$,}\\
      -h^{-1}(R)(r+R)-h^{-1}(R) & \text{if $r<-R$.}
    \end{cases}
  \end{displaymath}
  By hypothesis, $\abs{\{h'=0\}}=0$. Thus, $\theta\in
  W^{1,1}_{loc}(\R)$ and
  \begin{displaymath}
    t= (\theta\circ h)(t)=\theta(\E(x(t)\vert \varphi))\qquad\text{for
     every $t\in [0,\delta]$.}
  \end{displaymath}
  Since $h$ is strictly
  increasing and since $\theta \in W^{1,1}_{loc}(\R)$, the chain
  rule (cf~\cite[Corollary~3.50]{leoni}) implies that
 \begin{displaymath}
    \fdt\theta(\E(x(t)\vert\varphi))=\theta'(\E(x(t)\vert\varphi))\,\fdt\E(x(t)\vert\varphi)
  \end{displaymath}
  for almost every $t\in (0,\delta)$. Since
  $\theta'(\E(x(\cdot )\vert \varphi))$ is positive on $(0,\delta)$,
  and since $g$ is a strong upper gradient of $\E$,
  inequality~\eqref{eq:4} yields that
  \begin{displaymath}
    1=\fdt(\theta\circ h)
    (t)=\theta'(\E(x(t)\vert\varphi))\fdt\E(x(t))
    \le \theta'(\E(x(t)\vert \varphi)) g(x(t))\;\abs{x'}(t)
 \end{displaymath}
 for almost every $t\in (0,\delta)$. Since
 $x(t)\in\mathcal{V}_{C,\U}(\varphi)$ for every $t\in
 (0,\delta]$ and by~hypothesis~\eqref{eq:15}, $g(x(t))>0$ for all
 $t\in (0,\delta]$. Thus, the previous inequality yields that
  \begin{equation}
    \label{eq:48}
    \theta'(\E(x(t)\vert \varphi)) g(x(t))\ge
    \frac{1}{\abs{x'}(t)}>0\qquad
    \text{for almost every $t\in (0,\delta)$.}
  \end{equation}

  Next, we set
  \begin{displaymath}
    \mathcal{P}=\Big\{t\in
  [0,\delta]\,\Big\vert\,0<h'(t)<+\infty,\;0<\abs{x'}(t)<+\infty\Big\}.
\end{displaymath}
Since $\abs{\{h'=0\}}=\abs{\{\abs{x'}=0\}}=0$, the set
$\mathcal{Z}:=[0,\delta]\setminus \mathcal{P}$ satisfies
$\abs{\mathcal{Z}}=0$ and since $h\in W^{1,1}_{loc}(0,\delta)$, the
set $\mathcal{L}:=h(\mathcal{Z})$ satisfies
$\abs{\mathcal{L}}=0$. Further, one has that the interval $[0,R]$
fulfills $[0,R]=h(\mathcal{P})\cup \mathcal{L}$ and $0<(h^{-1})'<\infty$ on
$h(\mathcal{P})$.

  Now, let
  $\tilde{U}:=[0<\E(\cdot\vert\varphi),\,\E(\cdot\vert\varphi)\in h(\mathcal{P})]\cap \U$
  and take $v\in
  \tilde{\U}$. Then $r:=\E(v\vert \varphi)\in (0,R]$ and $r\in
  h(\mathcal{P})$. Thus, there is a unique $t\in \mathcal{P}$ such that $h(t)=r$,
  $0<\abs{x'}(t)<+\infty$ exits and
  \begin{equation}
    \label{eq:49}
    \E(x(t)\vert\varphi)=h(t)=r=\E(v\vert\varphi).
 \end{equation}
 Since $x(t)\in\mathcal{V}_{C,\U}(\varphi)$ and by~\eqref{eq:49}, we
 see that
  \begin{displaymath}
    g(x(t))\le C \,  s_{\U}(\E(x(t)\vert\varphi)) = C\,\inf_{\hat{v}\in \U\cap
      [\E=(\E(x(t)\vert\varphi))+\E(\varphi)]} g(\hat{v}) \le C\,g(v).
  \end{displaymath}
  Since $t\in \mathcal{P}$, $x(t)$ also satisfies~\eqref{eq:48}. Thus,
  and by using again~\eqref{eq:49},
  \begin{displaymath}
    \frac{1}{\abs{x'}(t)}\le \theta'(\E(x(t)\vert\varphi))\,
    g(x(t))\le \theta'(\E(x(t)\vert\varphi)) C\,g(v)=\theta'(\E(v \vert\varphi))\,C\,g(v),
  \end{displaymath}
  which is inequality~\eqref{eq:47}. If, in addition,
  $\abs{x'}\in L^{\infty}(0,\delta)$, then
  $\E$ satisfies a Kurdyka-\L{}ojasiewicz inequality~\eqref{eq:16} on
  the set $\tilde{U}$ for the strictly increasing function
  $\tilde{\theta}:=
  \norm{\abs{x'}}_{L^{\infty}(0,\delta)}\,C\,\theta$. This completes
  the proof of this theorem.
\end{proof}

In our next result we replace the condition $\abs{x'}\in
L^{\infty}(0,T)$ by $\abs{x'}\in L^{1}(0,T)$.

\begin{theorem}[{\bfseries Talweg implies
    K\L{}-inequality, Version 2}]
  \label{thm:2-Talweg-KL}
  Let $\E : \mathfrak{M}\to (-\infty,+\infty]$ be a proper functional
  with strong upper gradient $g$, and for $\varphi\in \mathbb{E}_{g}$
  and $R>0$, suppose the set
  $\U\subseteq [0\le \E(\cdot\vert\varphi)\le R]$ satisfies
  hypothesis~\eqref{eq:15}. Further, suppose the set $\mathcal{D}\subseteq \U$ is
  nonempty and the function
  \begin{equation}
    \label{eq:29}
    r\mapsto s_{\mathcal{D}}(r):=\inf_{\hat{v}\in \mathcal{D}\cap [\E=r+\E(\varphi)]} g(\hat{v})
  \quad\text{is lower semicontinuous on $(0,R]$.}
  \end{equation}
  If for $C>1$, there is a piecewise $AC$ talweg
  $x : (0,R]\to \mathcal{V}_{C,\mathcal{D}}(\varphi)$ satisfying
  \begin{equation}
    \label{eq:85}
    \E(x(r)\vert \varphi)=r\qquad\text{for every $r\in (0,R]$,}
  \end{equation}
  and $\abs{x'}\in L^{1}(0,R)$, 
  then there is a set
  $\tilde{\U}\subseteq \U\cap [0< \E(\cdot\vert\varphi)\le R]$ such that $\E$
  satisfies a Kurdyka-\L{}ojasiewicz inequality~\eqref{eq:16} on $\tilde{\U}$.
\end{theorem}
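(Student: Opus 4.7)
My plan is to adapt the argument of Theorem~\ref{thm:3talweg-kl} to the weaker hypothesis $\abs{x'}\in L^{1}(0,R)$ by defining the growth function $\theta$ directly as the primitive of $\abs{x'}$ (with a positive linear perturbation added to enforce strict monotonicity), rather than as the inverse of a homeomorphism built from $h$. The parametrisation~\eqref{eq:85} effectively trivialises the role of $h$ in Theorem~\ref{thm:3talweg-kl} ($h(r)=r$ here), and this is what reduces the construction to a single one-parameter primitive along the talweg.

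First, I would apply the strong upper gradient property (Definition~\ref{def:strong-uppergradient}) on each $AC$-piece of the talweg $x$: since $\E(x(r))=r+\E(\varphi)$ by~\eqref{eq:85}, one has
\begin{displaymath}
 r_{2}-r_{1}=\E(x(r_{2}))-\E(x(r_{1}))\le \int_{r_{1}}^{r_{2}} g(x(s))\,\abs{x'}(s)\,\ds
\end{displaymath}
for all $r_{1}<r_{2}$ in a common piece, so Lebesgue's differentiation theorem (applied piece-by-piece and then pooled over the partition) yields $1\le g(x(r))\,\abs{x'}(r)$ for almost every $r\in(0,R]$. The talweg condition $x(r)\in\mathcal{V}_{C,\mathcal{D}}(\varphi)$ gives $g(x(r))\le C\,s_{\mathcal{D}}(r)$, whence
\begin{displaymath}
 1\le C\,s_{\mathcal{D}}(r)\,\abs{x'}(r)\qquad\text{for a.e. $r\in(0,R]$.}
\end{displaymath}
The lower semicontinuity hypothesis~\eqref{eq:29} enters here only to ensure that $s_{\mathcal{D}}$ is Borel-measurable, so that the above a.e.~statements and the Lebesgue integrals appearing below are meaningful. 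Next I would define
\begin{displaymath}
 \theta(r)=C\int_{0}^{r}\abs{x'}(s)\,\ds+r\qquad\text{for $r\in[0,R]$,}
\end{displaymath}
extended to all of $\R$ as a strictly increasing $W^{1,1}_{loc}(\R)$-function with $\theta(0)=0$ by linear continuation of slope~$1$ outside~$[0,R]$. Writing $\mathcal{P}\subseteq(0,R]$ for the full-measure set on which $\theta'(r)=C\abs{x'}(r)+1$ and $C\,s_{\mathcal{D}}(r)\,\abs{x'}(r)\ge 1$ both hold, I set
\begin{displaymath}
 \tilde{\U}=\Big\{v\in\mathcal{D}\;\Big\vert\;\E(v\vert\varphi)\in\mathcal{P}\Big\}\subseteq \U\cap[0<\E(\cdot\vert\varphi)\le R].
\end{displaymath}
For $v\in\tilde{\U}$ with $r=\E(v\vert\varphi)$, the defining infimum of $s_{\mathcal{D}}$ gives $g(v)\ge s_{\mathcal{D}}(r)$, so combining the two estimates available on $\mathcal{P}$ produces
\begin{displaymath}
 \theta'(r)\,g(v)\ge C\,\abs{x'}(r)\,s_{\mathcal{D}}(r)\ge 1,
\end{displaymath}
which is the required Kurdyka-\L{}ojasiewicz inequality~\eqref{eq:16} on $\tilde{\U}$; hypothesis~\eqref{eq:15} together with $r>0$ ensures $\tilde{\U}\subseteq[g>0]$, while $\theta'\ge 1$ guarantees $\tilde{\U}\subseteq[\theta'(\E(\cdot\vert\varphi))>0]$, so $\tilde{\U}$ fits into Definition~\ref{def:KL-inequality}.

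The principal technical obstacle is the first step, namely extracting a pointwise a.e.~lower bound from an integrated strong upper gradient inequality whose right-hand side might a priori be $+\infty$ on some sub-intervals of $(0,R]$. This is handled by working one $AC$-piece at a time and using the local finiteness of the left-hand side $r\mapsto r$, which legitimates the standard Lebesgue differentiation argument on the monotone integrals $r\mapsto \int_{a_{n}}^{r} g(x(s))\abs{x'}(s)\,\ds$ on each piece; taking the countable union over the pieces then recovers the a.e.~bound on $(0,R]$. The additive term~$r$ in the definition of~$\theta$ is the one further small concession that avoids imposing any pointwise positivity condition on $\abs{x'}$, and it is what secures both the strict monotonicity of $\theta$ and the positivity of $\theta'$ along the whole image $\E(\tilde{\U}\vert\varphi)$.
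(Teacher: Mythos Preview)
Your proof is correct for the theorem as stated, but it takes a genuinely different route from the paper's. The paper derives the same a.e.\ bound $1\le C\,s_{\mathcal{D}}(r)\,\abs{x'}(r)$, then rewrites it as $u(r):=1/s_{\mathcal{D}}(r)\le C\abs{x'}(r)$, so that $u\in L^{1}(0,R)$; it next invokes \cite[Lemma~45]{Bolte2010} to obtain a \emph{continuous} majorant $\tilde{u}\ge u$ with $\tilde{u}\in L^{1}(0,R)$, and sets $\theta(s)=\int_{0}^{s}\tilde{u}$. Because $\tilde{u}$ is continuous, $\theta\in C^{1}((0,R])$ and $\theta'(r)=\tilde{u}(r)$ is defined at \emph{every} level $r\in(0,R]$; the K\L{} inequality $\theta'(r)\,g(v)\ge u(r)\,g(v)\ge 1$ then holds for all $v\in\mathcal{D}$ with $\E(v\vert\varphi)\in(0,R]$, with no null-set exclusion. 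Here the lower semicontinuity of $s_{\mathcal{D}}$ is used substantively (it makes $u$ upper semicontinuous, which feeds the Bolte--Daniilidis--Ley--Mazet lemma), whereas in your argument it plays no essential role --- indeed your final chain could be run as $\theta'(r)\,g(v)\ge C\abs{x'}(r)\cdot g(x(r))/C\ge 1$ using only the $C$-valley inequality $g(x(r))\le C\,g(v)$, bypassing $s_{\mathcal{D}}$ altogether. What your more elementary construction gives up is that $\theta'=C\abs{x'}+1$ is only defined a.e., so your $\tilde{\U}$ is restricted to energy levels lying in a full-measure set $\mathcal{P}$. That is enough for Theorem~\ref{thm:2-Talweg-KL} as stated, but not for the way it is used in Theorem~\ref{thm:chara-KL-talweg}, whose item~(1) requires the K\L{} inequality on the entire slab $\U_{\varepsilon,R}\cap[0<\E(\cdot\vert\varphi)\le R_{1}]$; the paper's proof of Theorem~\ref{thm:2-Talweg-KL} delivers that stronger conclusion even though the stated theorem does not record it.
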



Our proof adapts an idea from~\cite{Bolte2010} to the
metric space framework.

\begin{proof}[Proof of Theorem~\ref{thm:2-Talweg-KL}]
  Let $x : (0,R]\to \mathfrak{M}$ be a piecewise $AC$ talweg through
  the $C$-valley $\mathcal{V}_{C,\mathcal{D}}$ around $\varphi$
  satisfying~\eqref{eq:85} with $\abs{x'}\in L^{1}(0,R)$. If
  $\mathcal{D}\subseteq \U$ is a nonempty set such that~\eqref{eq:29}
  holds, then by hypothesis~\eqref{eq:15}, the function
  \begin{equation}
    \label{eq:87}
    u(r):=\frac{1}{\inf_{\hat{v}\in
        \mathcal{D}\cap [\E=r+\E(\varphi)]} g(\hat{v})}\qquad\text{for every
      $r\in (0,R]$}
  \end{equation}
  is measurable, strictly positive and upper semicontinuous on
  $(0,R]$. By~\eqref{eq:85}, \eqref{eq:4}, and since
  $x(r)\in \mathcal{V}_{C,\mathcal{D}}(\varphi)$ for every
  $r\in (0,\delta]$, we see that
 \begin{displaymath}
  1=\fdr \E(x(r)\vert\varphi)\le g(x(r))\;\abs{x'}(r)
    \le C\, \inf_{\hat{v}\in \U\cap [\E=r+\E(\varphi)] } g(y) \;\abs{x'}(r)
 \end{displaymath}
 for almost every $r\in (0,R)$. Since $\abs{x'}\in L^{1}(0,R)$, the
 function $u\in L^{1}(0,R)$. Now, due to~\cite[Lemma~45]{Bolte2010},
 for the measurable function $u$ given by~\eqref{eq:87}, there is a
 continuous function $\tilde{u} : (0,R]\to (0,+\infty)$ satisfying
 $\tilde{u}\ge u$ and $\tilde{u}\in L^{1}(0,R)$. Now, let
 $\theta : \R\to \R$ be defined by
\begin{equation}
  \label{eq:88}
  \theta(s)=
  \begin{cases}
   \displaystyle   \int_{0}^{R}\tilde{u}(r)\,\dr\,(s-R+1)& \text{if $s>R$,}\\[7pt]
   \displaystyle   \int_{0}^{s}\tilde{u}(r)\,\dr & \text{if $s\in [0,R]$,}\\[7pt]
   \displaystyle    -\int_{0}^{-s}\tilde{u}(r)\,\dr  & \text{if $s\in [-R,0)$,}\\[7pt]
   \displaystyle  -\int_{0}^{R}\tilde{u}(r)\,\dr\,(s+R+1) & \text{if $s<-R$}
    \end{cases}
\end{equation}
for every $s\in \R$. Then,
$\theta\in C^{1}(\R\setminus\{0\})\cap W^{1,1}_{loc}(\R)$ is strictly increasing, satisfies $\theta(0)=0$. In
particular, for every $v\in \U\cap [0<\E(\cdot\vert\varphi)]$, setting
$r=\E(v\vert\varphi)$ yields
 \begin{displaymath}
  \theta'(\E(v\vert\varphi))\;g(v)
  = u(r)\; g(v)
 \ge 1,
\end{displaymath}
proving that $\E$ satisfies a Kurdyka-\L{}ojasiewicz
inequality~\eqref{eq:16} near $\varphi$.
\end{proof}



%
%
%
%
%
%
\subsection{Existence of a talweg curve}
\label{sec:chara-KL-talweg}

The aim of this section is to establish existence of a talweg curve
through a $C$-valley $\mathcal{V}_{C,\U}(\varphi)$ near an equilibrium
point $\varphi\in \mathbb{E}_{g}$ of a given functional $\E$.\medskip 

We briefly summarize our approach of proving the existence of a talweg
curve and outline the assumption need for this. For the construction
of a talweg curve, we employ $p$-gradient curves of $\E$. We begin by
choosing a neighborhood $\U_{\varepsilon,R}$ of
$\varphi\in \mathbb{E}_{g}$ by
\begin{equation}
      \label{eq:65}
         \U_{\varepsilon,R}=B(\varphi,\varepsilon)\cap[0\le \E(\cdot\vert\varphi)<R]
   \end{equation}
   for given $\varepsilon$, $R>0$ and for $0<\varepsilon_{0}<\varepsilon$ and $0<R_{0}< R$, set
 \begin{equation}
    \label{eq:89}
    \mathcal{D}=\Bigg\{v\in \U_{\varepsilon_{0},R_{0}}\,\Bigg\vert\!
    \begin{array}{l}
      \textrm{there are $v^{0}\in \overline{\U}_{\frac{\varepsilon_{0}}{3},R_{0}}$, a
      $p$-gradient flow $\hat{v}$ of $\E$}\\
      \textrm{with $\hat{v}(0+)=v^{0}$ and $t_{0}\ge 0$ s.t. }\hat{v}(t_{0})=v
    \end{array}\!\!\!
    \Bigg\}.
  \end{equation}
  Then, roughly spoken, the set $\mathcal{D}$ is a region in
  $\U_{\varepsilon,R}$ of paths (the image) of a $p$-gradient flow
  (curve) $\hat{v}$ of $\E$ with initial value
  $v^{0}\in \overline{\U}_{\frac{\varepsilon_{0}}{3},R_{0}}$. Now,
  \emph{Step~1.} is to show that the following \emph{stability result}
  holds:
  \begin{equation}
    \label{eq:67}
    \begin{cases}
      \text{for every $v\in \mathcal{D}$, $p$-gradient flow
        $\hat{v}$ of $\E$ with $\hat{v}(0+)=v$} &\\
      \text{and every $t\ge 0$ satisfying $\E(\hat{v}(t)\vert\varphi)>0$, one has
        $\hat{v}(t)\in
        \mathcal{D}$.}&
    \end{cases}
  \end{equation}
  This property is crucial in our existence proof of a talweg curve,
  since it allows us to show that for every $C>1$, the $C$-valley
  $\mathcal{V}_{C,\mathcal{D}}(\varphi)$ is non-empty. For
  proving~\eqref{eq:67}, we employ that $p$-gradient flows in
  $\U_{\varepsilon_{0},R_{0}}$ are of length smaller than $\varepsilon_{0}/3$
  (see~\eqref{eq:29N} below), that for every initial value $v_{0}\in
  D(\E)$, there is a $p$-gradient flow, and continuous dependence of
  $p$-gradient curves on the initial data. 

Note, the last two properties together with the right-continuity of the strong
upper gradient $g$ along $p$-gradient curves are naturally given for
gradient flows in Hilbert spaces (cf~\cite{BrezisBook}
and~\cite{Bolte2010}) or, more generally, for gradient flows of
evolution variational inequalities (see Remark~\ref{rem:evi-conditions}
below). But for the general case, we need to make the following assumptions.


\begin{assumption}
  \label{ass:H2}
  Suppose, for the proper functional $\E : \mathfrak{M}\to (-\infty,+\infty]$
  with strong upper gradient $g$ the following hold.
  \begin{enumerate}[topsep=3pt,itemsep=-3pt,partopsep=1ex,parsep=1ex]
   \item[(i)]\label{H2:i} (\emph{Existence}) For every $v_{0}\in D(\E)$, there is a unique $p$-gradient flow $v$
    of $\E$ with initial value $v(0+)=v_{0}$;

   \item[(ii)]\label{H2:ii} (\emph{Continuous dependence}) The mapping $S : [0,+\infty)\times D(\E)\to D(\E)$
     defined by
    \begin{equation}
      \label{eq:66}
      S_{t}v_{0}=v(t)\qquad \text{for all $t\ge 0$, $v_{0}\in D(\E)$,}
    \end{equation}
     is continuous, where $v$ is the $p$-gradient flow of $\E$
     with $v(0+)=v_{0}$;

   \item[(iii)]\label{H2:iii} (\emph{Right-continuity of $g$}) For every $v_{0}\in D(\E)$ and $p$-gradient flow $v$
    of $\E$ with initial value $v(0+)=v_{0}$, the map $t\mapsto
    g(v(t))$ is right-continuous on $[0,+\infty)$.
  \end{enumerate}
\end{assumption}

\begin{remark}[\emph{Gradient flows of evolution variational
    inequalities}]
  \label{rem:evi-conditions}
  We emphasize that if $\E : \mathfrak{M}\to (-\infty,+\infty]$ is
  $\lambda$-geodesically convex, $(\lambda\in \R)$, and satisfies
  coercivity condition~\eqref{eq:77}, then the set of all gradient
  flows generated by $\E$ with strong upper gradient $g=\abs{D^{-}\E}$
  fulfills all three conditions in Assumption~\ref{ass:H2}
  (cf~\cite[Theorem~2.6]{DaneriSavare},
  \cite[Proposition~3.1]{DaneriSavare2}).
\end{remark}

For showing that the $C$-valleys
$\mathcal{V}_{C,\mathcal{D}}(\varphi)$ are non-empty, another crucial
property is needed, namely, compactness of the sublevel sets
of $\E$ (locally) near an equilibrium point $\varphi\in
\mathbb{E}_{g}$. Note, compactness of sublevel sets (in the global
sense) is a natural assumption for the existence of gradient flows
(cf~Theorem~\ref{ExistsGC} or~\cite{MR3465809}).

\begin{assumption}[\emph{Local coercivity of $\E$}]
  \label{ass:H3}
  Let $\E : \mathfrak{M}\to (-\infty,+\infty]$ be a proper functional
  with strong upper gradient $g$ and equilibrium point
  $\varphi\in \mathbb{E}_{g}$. Suppose, there are $\varepsilon>0$ and
  $R>0$ such that the set $\U_{\varepsilon,R}$ defined by~\eqref{eq:65}
   is relatively compact in $\mathfrak{M}$.
\end{assumption}

\begin{remark}
  Under the Assumption~\ref{ass:H3}, every closed subset $\mathcal{D}$
  of $\U_{\varepsilon,R}$ is compact in $\mathfrak{M}$. Thus, if, in
  addition, $\E$ is lower semicontinuous and $\lambda$-geodesically
  convex, $(\lambda\in \R)$, then for every closed and non-empty
  subset $\mathcal{D}$ of $\U_{\varepsilon,R}$, the function
  $s_{\mathcal{D}}$ defined by~\eqref{eq:29bis} is lower
  semicontinuous on $(0,+\infty)$ (see
  Proposition~\ref{propo:inflsc}).
\end{remark}

\emph{Step~2.} is to show that the set $\mathcal{D}$ is closed in
$[0<\E(\cdot\vert\varphi)\le R_{0}]$ and thereby compact. For proving
this, the Assumptions~\ref{ass:H1}--\ref{ass:H3} are employed together
with lower semicontinuity of $\E$ and \emph{energy dissipation
  equality}~\eqref{EDENew}.\medskip

With these preliminaries, we are now in the position (\emph{Step~3.})
to show that there is a $p$-gradient flows $v$ running
through a $C$-valley $\mathcal{V}_{C,\U_{\varepsilon,R}}(\varphi)$
(cf~\eqref{eq:45}). For this, we use the function $s_{\mathcal{D}}(r)$,
$(r>0)$, defined by~\eqref{eq:29bis}. It provides a control on the
steepest descent of the talweg curve since $s_{\mathcal{D}}(r)$ is the largest lower
bound of the strong upper gradient $g$ of $\E$ along the level curve
$[\E=r+\E(\varphi)]$. By assuming that $s_{\mathcal{D}}$ is lower
semicontinuous and that $g$ is right-continuous along $p$-gradient
flows (see~\eqref{H2:iii} in Assumption~\ref{ass:H2}), one finds
constants $C$, $0<R_{1}\le R$ and a $p$-gradient flows $v$ running
through the $C$-valley
$\mathcal{V}_{C,\U_{\varepsilon,R}}(\varphi)$.\medskip


We can now state our existence
result of talweg curves in metric spaces (cf~\cite[Theorem~18,
  $(ii)\Rightarrow (iv)$]{Bolte2010} in the Hilbert space setting).

\begin{theorem}[{\bfseries Existence of a talweg curve}]
  \label{thm:ex-talweg}
  Let $\E : \mathfrak{M}\to (-\infty,+\infty]$ be a proper, lower
  semicontinuous functional, $g : \mathfrak{M}\to [0,+\infty]$ a
  proper, lower semicontinuous strong upper gradient of $\E$, and
  $\varphi\in \mathbb{E}_{g}$. Suppose, for $\E$ and $g$
  Assumption~\ref{ass:H2} holds, and there are $\varepsilon$, $R>0$
  such that for $\U_{\varepsilon,R}$ given by~\eqref{eq:65}, the
  Assumptions~\ref{ass:H1} and~\ref{ass:H3} hold. Further, there are
  $0<\varepsilon_{0}<\varepsilon$ and $0<R_{0}< R$ such that $\mathcal{D}$ given
  by~\eqref{eq:89}
  is non-empty, $s_{\mathcal{D}}$ defined by~\eqref{eq:29bis} is lower semicontinuous,
  and for every $p$-gradient flow $v$ of $\E$ with initial value
  $v(0+)=v_{0}\in \overline{\U}_{\frac{\varepsilon_{0}}{3},R_{0}}$, one has that
  \begin{equation}
    \label{eq:29N}
    v([0,t))\subseteq \U_{\varepsilon_{0},R_{0}}\text{ for $t>0$,
      implies}\qquad
   \int_{0}^{t}\abs{v'}(s)\,\ds\le  \frac{\varepsilon_{0}}{3}.
  \end{equation}
  Then, there are $C>1$, $0<R_{1}\le R_{0}$, and a piecewise $AC$
  talweg $x : (0,R_{1}] \to \mathfrak{M}$ of finite length $\gamma(x)$
  through the $C$-valley $\mathcal{V}_{C,\mathcal{D}}(\varphi)$ of
  $\E$ satisfying~\eqref{eq:85} with $R$ replaced by $R_{1}$.
\end{theorem}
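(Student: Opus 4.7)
The plan is to execute the three-step strategy laid out in the preamble to the theorem. \textbf{For Step~1 (the stability property \eqref{eq:67})}, I would fix $v\in\mathcal{D}$ with witness data $(v^0,\hat v,t_0)$ as in~\eqref{eq:89} and let $\hat w$ be any $p$-gradient flow with $\hat w(0+)=v$. Uniqueness in Assumption~\ref{ass:H2}(i) forces $\hat w(\cdot)=\hat v(t_0+\cdot)$, so it suffices to prove $\hat v(t_0+s)\in\U_{\varepsilon_0,R_0}$ while its relative energy is positive. The energy constraint is immediate from monotonicity of $\E\circ\hat v$. For the metric constraint I would set $T^\ast:=\sup\{T\ge 0:\hat v([0,T])\subseteq\U_{\varepsilon_0,R_0}\}$; were $T^\ast$ finite, then~\eqref{eq:29N} applied on $[0,T^\ast)$ would bound the length of $\hat v$ by $\varepsilon_0/3$, and the triangle inequality with $d(v^0,\varphi)\le\varepsilon_0/3$ would give $d(\hat v(T^\ast),\varphi)\le 2\varepsilon_0/3<\varepsilon_0$, contradicting the exit.

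\textbf{For Step~2}, I would show that every cross-section $\mathcal{D}\cap[\E(\cdot\vert\varphi)\ge r_0]$, $r_0>0$, is compact in $\mathfrak{M}$. Given $v_n\in\mathcal{D}$ with $v_n\to v$ and witnesses $(v^0_n,\hat v_n,t_n)$, Assumption~\ref{ass:H3} extracts $v^0_n\to v^0_\star\in\overline{\U}_{\varepsilon_0/3,R_0}$; the dissipation equality~\eqref{EDENew} combined with~\eqref{eq:29N} yields a uniform bound on the times $t_n$, so $t_n\to t_\star$ along a subsequence; continuous dependence (Assumption~\ref{ass:H2}(ii)) then identifies $v$ with a point on the $p$-gradient flow from $v^0_\star$, placing it in $\mathcal{D}$. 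This compactness, combined with the lsc of $s_\mathcal{D}$, makes the infimum in the definition of $s_\mathcal{D}(r)$ near-attained by elements $\hat v_r\in\mathcal{D}\cap[\E=r+\E(\varphi)]$ with $g(\hat v_r)$ arbitrarily close to $s_\mathcal{D}(r)$.

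\textbf{For Step~3}, I would fix $C>1$ and inductively construct $x:(0,R_1]\to\mathfrak{M}$ on a strictly decreasing sequence $R_1=r_0>r_1>r_2>\cdots$. At each level $r_i$ pick $\hat v_i\in\mathcal{D}\cap[\E=r_i+\E(\varphi)]$ with $g(\hat v_i)\le\tfrac{C+1}{2}\,s_\mathcal{D}(r_i)$, launch the $p$-gradient flow $v_i$ from $\hat v_i$ (which by Step~1 stays in $\mathcal{D}$ as long as its relative energy is positive), and use right-continuity of $g$ along $v_i$ (Assumption~\ref{ass:H2}(iii)) together with the lsc of $s_\mathcal{D}$ to exhibit a nontrivial initial interval of times $t$ on which $g(v_i(t))\le C\,s_\mathcal{D}(\E(v_i(t)\vert\varphi))$, i.e., $v_i(t)\in\mathcal{V}_{C,\mathcal{D}}(\varphi)$. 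Define $t_{i+1}$ as the supremum of such times, set $r_{i+1}:=\E(v_i(t_{i+1})\vert\varphi)$, reparametrize each piece by $r=\E(v_i(t)\vert\varphi)$ (strictly decreasing since $g>0$ on the piece by Assumption~\ref{ass:H1}), and concatenate the reparametrized pieces to obtain the desired piecewise $AC$ curve $x$ satisfying $\E(x(r)\vert\varphi)=r$.

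\textbf{Finite length and main obstacle.} Using $\abs{v'}^p=g^{p/(p-1)}$ from~\eqref{eq:11}, the change of variable $r=\E(v_i(t)\vert\varphi)$ gives $\abs{x'}(r)=1/g(x(r))$, so the $r$-length of each piece of $x$ equals the metric length of the underlying $p$-gradient flow, which is bounded by $\varepsilon_0/3$ via~\eqref{eq:29N}. The principal difficulty I foresee is summability: countably many pieces of length at most $\varepsilon_0/3$ need not yield finite total length, and one must also control the jump distances $d(v_i(t_{i+1}),\hat v_{i+1})$ between consecutive pieces across the level set $[\E=r_{i+1}+\E(\varphi)]$. I expect this to be resolved by shrinking $R_1$ and arranging that the drops $r_i-r_{i+1}$ decay geometrically, so that only finitely many macroscopic pieces arise on any $[r_\ast,R_1]$ and the compactness from Step~2 yields summability of the tail.
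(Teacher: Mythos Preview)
Your Steps~1--3 align closely with the paper's proof, but two substantive gaps remain in Step~3 and the finite-length argument.

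First, your countable induction produces pieces on intervals $(r_{i+1},r_i]$ with $r_i\downarrow\alpha$ for some $\alpha\ge 0$, but nothing forces $\alpha=0$: the drops $r_i-r_{i+1}$ are determined by how long each flow stays in the $C$-valley, not chosen by you, so ``arranging geometric decay'' is not an available move. The paper handles this by a Zorn's-lemma argument: it partially orders partial talwegs $x_\alpha:(\alpha,R_1]\to\mathcal{V}_{C,\mathcal{D}}(\varphi)$ of finite length by extension; a maximal element must have $\alpha_0=0$, since otherwise one picks $v_0\in\mathcal{V}_{C_1}(\alpha_0)$ (nonempty by the argument you already have) and extends strictly, contradicting maximality.

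Second, your worry about jump distances between consecutive pieces is misplaced: for a piecewise $AC$ curve the length $\gamma(x)$ is $\sum_n\int_{I_n}|x'|\,dr$ and simply ignores the discontinuities at the partition points. The genuine issue is the summability of the piece lengths themselves, which you correctly flag. The paper's resolution is not a decay estimate on the drops but simply the observation that the talweg, reparametrized in time, is a \emph{piecewise $p$-gradient flow} (Definition~\ref{def:piceswise-p-gradientflows}) with image in $\mathcal{D}\subseteq\U_{\varepsilon_0,R_0}$, to which hypothesis~\eqref{eq:29N} is applied directly. This is terse---literally \eqref{eq:29N} speaks of single flows---but in the theorem's intended application (the implication $(2)\Rightarrow(3)$ of Theorem~\ref{thm:chara-KL-talweg}) the operative hypothesis is finite length of \emph{piecewise} flows, with a uniform bound $\theta(R_0)$ coming from~\eqref{eq:26}; one shrinks $R_0$ so that $\theta(R_0)\le\varepsilon_0/3$.

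A minor point in your Step~2: the uniform bound on the witness times $t_n$ does not follow from~\eqref{eq:29N} (a length bound). The paper obtains it from~\eqref{EDENew} together with the strict positivity of $\mathcal{I}:=\inf\{g(w):w\in\overline{\U}_{\varepsilon_0,R_0}\cap[\E(\cdot\vert\varphi)\ge\hat\varepsilon]\}$, which in turn uses hypothesis~\eqref{eq:15}, lower semicontinuity of $g$, and the compactness from Assumption~\ref{ass:H3}.
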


To conclude the sketch of the proof of Theorem~\ref{thm:ex-talweg}, we
outline its final \emph{Step~4.}\;: The construction of a
talweg curve $x$ of finite length. By the previous step, one has that
there are $C>1$, $0<R_{1}\le R_{0}$, $T>0$, and a $p$-gradient flow
$v$ of $\E$ such that
$v(t)\in\mathcal{V}_{C,\U_{\varepsilon,R}}(\varphi)$ for all
$t\in [0,T)$ and $\E(v(\cdot)\vert\varphi) : [0,T)\to [0,R_{1}]$ is a
strictly decreasing homeomorphism. If, in addition, 
$\lim_{t\to T-}\E(v(t)\vert\varphi)=0$ (\emph{Case 1.}), then
$x(r):=v(\hat{h}^{-1}(r))$, $(r\in [0,R_{1}])$ defines a locally
absolutely continuous talweg curve $x : [0,R_{1}]\to \mathfrak{M}$ of
finite length $\gamma(x)$. Here, $\hat{h}^{-1}$ denotes the inverse
function of $\hat{h} : [0,T]\to [0,R_{1}]$ given by
$\hat{h}(t):= \E(v(T-t)\vert\varphi)$, $(t\in [0,T])$.

Note, the fact that the talweg curve $x$ is locally
absolutely continuous on $[0,R_{1}]$ follows from the fact that $x=
v\circ \hat{h}^{-1}$ where $v$ is locally absolutely continuous and
$\hat{h}^{-1}$ satisfies the \emph{Lusin (N) property}. 

\begin{definition}[{\cite[p.77]{leoni}}]
  A function $u : I\to \R$ defined on
  an interval $I\subseteq \R$ satisfies the \emph{Lusin (N) property}
  if $u$ maps sets of Lebesgue measure zero into sets of Lebesgue
  measure zero.
\end{definition}

The next remark outlines why $\hat{h}^{-1}$ admits the Lusin (N) property.

\begin{remark}\label{rem:LusinNproperty}
  Each function $u\in W^{1,1}(I)$ defined on an interval
  $I\subseteq \R$ admits a continuous representative
  (see~\cite[Theorem~8.2]{Brezis}) and hence, $u$ admits the Lusin (N)
  property (see~\cite[Theorem~3.12]{leoni}). Further, if $u : [a,b]\to \R$
  is continuous and strictly increasing, then the inverse $u^{-1} :
  [u(a),u(b)]\to \R$ is absolutely continuous if $u$ satisfies the \emph{Lusin (N) property}.
\end{remark}

In \emph{Case 2.}, $\lim_{t\to T-}\E(v(t)\vert\varphi)=\E(v(T)\vert\varphi)>0$. Then, by
\emph{Step~1.}, $v(T)\in \mathcal{D}$ and so, there are $R'_{1}$,
$T'>0$, and a $p$-gradient flow $v' : [0,T']\to [0,R']$ with initial
value $v'(0+)=v(T)$ running through the $C$-valley
$\mathcal{V}_{C,\mathcal{D}}(\varphi)$ of $\E$. Iterating these
arguments, Zorn's lemma finally leads to the existence of a piecewise $p$-gradient
curve $\tilde{v}$ from which one can construct, as in (\emph{Case 1.}, a
talweg curve $x : (0,R_{1}]\to
\mathcal{V}_{C,\U_{\varepsilon,R}}(\varphi)$ of finite length.\medskip

Now, we give the details of the proof.

\begin{proof}[Proof of Theorem~\ref{thm:ex-talweg}]
  \emph{\underline{Step~1.\;}} We begin by showing that under the
  assumptions~\ref{ass:H1}--\ref{ass:H3} and by~\eqref{eq:89},
  the $p$-gradient curves of $\E$ satisfy \emph{stability
    result}~\eqref{eq:67}.

  Let $v\in \mathcal{D}$, $\hat{v}$ be a $p$-gradient
  flow of $\E$ with initial value $\hat{v}(0+)=v$, and let $t>0$ such
  that $\E(\hat{v}(t)\vert\varphi)>0$. Then, we need to show that
  $\hat{v}(t)\in \mathcal{D}$. For this, it is sufficient to prove
  that $\hat{v}(t)\in \U_{\varepsilon_{0},R_{0}}$ and there is a
  $p$-gradient flow $\hat{w}$ of $\E$ with
  $\hat{w}(0+)=w_{0}\in
  \overline{\U}_{\frac{\varepsilon_{0}}{3},R_{0}}$, and a
  $\hat{t}\ge 0$ such that $\hat{w}(\hat{t})=\hat{v}(t)$. Since
  $v\in \mathcal{D}$, there are
  $w_{0}\in \overline{\U}_{\frac{\varepsilon_{0}}{3},R_{0}}$ and a
  $p$-gradient flow $\hat{w}$ of $\E$ satisfying $\hat{w}(0+)=w_{0}$,
  and there is a $t_{0}\ge 0$ such that $\hat{w}(t_{0})=v$. By
  $\E(\hat{v}(t)\vert\varphi)>0$ and $\hat{v}(0+)=v=\hat{w}(t_{0})$,
  and since $\E(w_{0}\vert\varphi)\le R_{0}$, the monotonicity of $\E$
  yields that $0<\E(\hat{w}(s)\vert\varphi)\le R_{0}$ for every
  $s\in [0,t_{0}]$. Since
  $w_{0}\in \overline{\U}_{\frac{\varepsilon_{0}}{3},R_{0}}$ and
  $\hat{w}$ is continuous, there is a $0<\delta\le t_{0}$ such that
  $w(s)\in \U_{\varepsilon_{0},R_{0}}$ for every $s\in
  [0,\delta]$. Thus, by~\eqref{eq:29N},
  \begin{equation}
    \label{eq:28old}
      d(\hat{w}(s),\varphi)
      \le d(\hat{w}(s),w_{0})+d(w_{0},\varphi)
      \le\int_{0}^{s}\abs{\hat{w}'}(r)\,\dr
      + \frac{\varepsilon_{0}}{3}
      \le 2\,\frac{\varepsilon_{0}}{3}
  \end{equation}
  (firstly) for all $s\in [0,\delta]$, showing that
  $w(s)\in \overline{B}(\varphi, 2\,\frac{\varepsilon_{0}}{3})\cap[0<
  \E(\cdot\vert\varphi)\le R_{0}] $ for every $s\in [0,\delta]$. Since
  the right-hand side of~\eqref{eq:28old} is independent of $\delta$,
  and since $0<\E(\hat{w}(s)\vert\varphi)\le R_{0}$ for every
  $s\in [0,t_{0}]$, we can conclude that
  $\hat{w}(s)\in \overline{B}(\varphi,
  2\,\frac{\varepsilon_{0}}{3})\cap[0< \E(\cdot\vert\varphi)\le
  R_{0}]$ for all $s\in [0,t_{0}]$. Since
  $\hat{v}(0+)=v=\hat{w}(t_{0})$, it follows from~(ii) of Assumption~\ref{ass:H2} that
  $\hat{v}(s)=\hat{w}(s+t_{0})$ for every $s\in [0,t]$.  Thus,
  assumption $\E(\hat{v}(t)\vert\varphi)>0$ yields that
  $0<\E(\hat{w}(s)\vert\varphi)\le R_{0}$ for every $s\in [0,t]$ and so,
  $\hat{w}(s)\in \overline{B}(\varphi,
  2\,\frac{\varepsilon_{0}}{3})\cap[0< \E(\cdot\vert\varphi)\le
  R_{0}]$ for all $s\in [0,t_{0}+t]$, implying that
  $v(t)\in \U_{\varepsilon_{0},R_{0}}$. Moreover, we have shown that
  there is a
  $w_{0}\in \overline{\U}_{\frac{\varepsilon_{0}}{3},R_{0}}$, and
  $p$-gradient flow $\hat{w}$ of $\E$ satisfying $\hat{w}(0+)=w_{0}$
  and there is a $\hat{t}:=t_{0}+t\ge 0$ such that
  $\hat{w}(\hat{t})=\hat{v}(t)$.\medskip

  \emph{\underline{Step~2.\;}} We show that $\mathcal{D}$ is closed in
  $[0<\E(\cdot\vert\varphi)\le R_{0}]$. 

  Let $(v_{n})_{n\ge 1}\subseteq \mathcal{D}$ and
  $v\in [0<\E(\cdot\vert\varphi)\le R_{0}]$ such that $v_{n}\to v$ in
  $\mathfrak{M}$. By definition of $\mathcal{D}$, there are sequences
  $(t_{n}^{0})_{n\ge 1}\subseteq [0,+\infty)$,
  $(\hat{v}_{n}^{0})_{n\ge 1}\subseteq
  \overline{\U}_{\frac{\varepsilon_{0}}{3},R_{0}}$ and a sequence
  $(\hat{v}_{n})_{n\ge 1}$ of $p$-gradient flows of $\E$ with initial
  value $\hat{v}_{n}(0+)=\hat{v}_{n}^{0}$ satisfying
  $\hat{v}_{n}(t_{n}^{0})=v_{n}$ for every $n\ge 1$. By the lower
  semicontinuity of $\E$, there are $N\ge 1$ and $\hat{\varepsilon}>0$
  such that $\E(v_{n}\vert\varphi)\ge \hat{\varepsilon}$ for all
  $n\ge N$. Since $\E\circ \hat{v}_{n}$ is non-increasing on
  $[0,+\infty)$ and $\hat{v}_{n}(t_{n}^{0})=v_{n}$,
  \begin{equation}
    \label{eq:25}
    \E(\hat{v}^{0}_{n}\vert\varphi)\ge \hat{\varepsilon}\qquad\text{for all $n\ge N$}
  \end{equation}
  In addition,
  \begin{equation}
    \label{eq:60bis}
    \mathcal{I}:=\inf_{v\in
      \overline{\U}_{\varepsilon_{0},R_{0}}\cap
      [\E(\cdot\vert\varphi)\ge \hat{\varepsilon}]} g(v)>0,
  \end{equation}
  otherwise, there is a sequence $(\tilde{v}_{n\ge 1})$ in
  $\overline{\U}_{\varepsilon_{0},R_{0}}\cap
  [\E(\cdot\vert\varphi)\ge\hat{\varepsilon}]$ such that
  $g(\tilde{v}_{n})\to 0$ as $n\to \infty$. By
  Assumption~\ref{ass:H3}, there is a
  $\tilde{v}\in \overline{\U}_{\varepsilon_{0},R_{0}}\cap
  [\E(\cdot\vert\varphi)\ge \hat{\varepsilon}]$ such that up to a
  subsequence, $\tilde{v}_{n}\to \tilde{v}$ in $\mathfrak{M}$. Now, by
  the lower semicontinuity of $g$, $g(\tilde{v})=0$. On the other
  hand, $\E(\tilde{v})\ge \hat{\varepsilon}$ and so,
  Assumption~\ref{ass:H1} implies that $g(\tilde{v})>0$, showing that
  we arrived to a contradiction. Thus,~\eqref{eq:60bis} holds. Now,
  since each $\hat{v}_{n}$ is a $p$-gradient flow of $\E$ with initial
  value
  $\hat{v}_{n}(0+)=\hat{v}_{n}^{0}\in
  \overline{\U}_{\frac{\varepsilon_{0}}{3},R_{0}}$, \emph{energy
    dissipation equality}~\eqref{EDENew} gives that
  \begin{displaymath}
    0<t_{n}^{0}\,  \mathcal{I}^{p^{\mbox{}_{\prime}}}\le
    \int_{0}^{t_{n}^{0}}g ^{p^{\mbox{}_{\prime}}}(\hat{v}_{n}(s))\,\ds
    =\E(v_{n}^{0}\vert\varphi)-\E(\hat{v}_{n}(t_{n})\vert\varphi)
    \le \E(v_{n}^{0}\vert\varphi)\le R_{0},
  \end{displaymath}
  showing that the sequence $(t^{0}_{n})_{n\ge 1}$ is bounded. Thus,
  there is a $t_{0}\ge 0$ such that after possibly passing to a
  subsequence, $t_{n}^{0}\to t_{0}$ as $n\to +\infty$. Moreover,
  by~\eqref{eq:25},
  $(\hat{v}_{n}^{0})_{n\ge N}\subseteq
  \overline{\U}_{\frac{\varepsilon_{0}}{3},R_{0}}\cap
  [\E(\cdot\vert\varphi)\ge \hat{\varepsilon}]$. Hence,
  Assumption~\ref{ass:H3} implies that there is an
  $\hat{v}_{0}\in \overline{\U}_{\frac{\varepsilon_{0}}{3},R_{0}}\cap
  [\E(\cdot\vert\varphi)\ge \hat{\varepsilon}]$ such that after
  possibly passing to another subsequence,
  $\hat{v}_{n}^{0}\to \hat{v}_{0}$ in $\mathfrak{M}$. By~(i) of 
  Assumption~\ref{ass:H2}, there is a $p$-gradient flow $\hat{v}$ of
  $\E$ with initial value $\hat{v}(0+)=\hat{v}_{0}$ and by~(ii) of 
  Assumption~\ref{ass:H2}, we have $\hat{v}(t_{0})=v$ and so,
  $\hat{v}(t_{0})\in [0<\E(\cdot\vert\varphi)\le R_{0}]$. Thus and by
  stability property~\eqref{eq:67}, $v\in \mathcal{D}$.\medskip

  \emph{\underline{Step~3.\;}} We show that there are $T$, $C>1$, and a
  $p$-gradient flows $v : [0,T)\to \mathfrak{M}$ running through a $C$-valley
  $\mathcal{V}_{C,\U_{\varepsilon,R}}(\varphi)$.

  Let $s_{\mathcal{D}} : (0,R_{0}]\to (0,\infty)$ be defined
  by~\eqref{eq:29} on the interval $(0,R_{0}]$ and for $C>1$,
  \begin{equation}
    \label{eq:78}
    \mathcal{V}_{C}(r):=\Big\{v\in \mathcal{D}\cap
      [\E=r+\E(\varphi)]\;\Big\vert\;g(v)\le C\,
      s_{D}(r)\Big\}\subseteq V_{C,\mathcal{D}}(\varphi).
  \end{equation}
  By hypothesis, the set $\mathcal{D}\subseteq \U_{\varepsilon_{0},R_{0}}$ is
  non-empty. Thus, there are $0<R_{1}\le R_{0}$ and
  $v_{0}\in \overline{\U}_{\frac{\varepsilon_{0}}{3},R_{0}}$
  such that $\E(v_{0}\vert\varphi)=R_{1}$. By~(i) of Assumption~\ref{ass:H2}, there
  is a $p$-gradient flow $v$ of $\E$ with $v(0+)=v_{0}$,
  and by stability property~\eqref{eq:67}, there is a $0<T\le +\infty$ such that
  \begin{equation}
    \label{eq:60}
    v(t)\in \mathcal{D}\cap [\E(\cdot\vert\varphi)>0]
    \qquad\text{for all $t\in [0,T)$ 
      .}
  \end{equation}
  Let $T:=\inf\{t>0\,\vert\, \E(v(t)\vert\varphi)=0\}$. Then, since
  $\E(v_{0}\vert\varphi)=R_{1}>0$, the continuity of $v$ yields that
  $0<T\le +\infty$ and $\lim_{t\to T-}\E(v(t)\vert\varphi)=0$. By
  Proposition~\ref{propo:chara-p-curves}, $\E\circ v$ is strictly
  decreasing on $[0,T)$ and by
  Proposition~\ref{prop:Energy-abs-inequality}, $\E\circ v$ is locally
  absolutely continuous on $[0,T)$. Thus, by the intermediate value
  theorem, the mapping $\E\circ v : [0,T)\to (0,R_{1}]$ is a
  homeomorphism, and by~\eqref{eq:60}, one has that
  \begin{equation}
    \label{eq:70}
    \mathcal{V}_{C}(r)\neq \emptyset\qquad\text{ for all $r\in (0,R_{1}]$ and all
      $C>1$.}
  \end{equation}
  To see this, assume that the contrary holds. Then,
  by~\eqref{eq:60}, there are $r'\in (0,R_{1}]$ and $C'>1$ such
  that $g(v)> C'\, s_{D}(r')$ for all
  $v\in \mathcal{D}\cap [\E=r'+\E(\varphi)]$, implying that $1>C_{1}$,
  which obviously is a contradiction.

  Now, for $1<C_{1}<
  C$ and $0<R'_{1}\le R_{1}$, let $v_{0}\in \mathcal{V}_{C_{1}}(R'_{1})$ and $v$ be the
  $p$-gradient flow of $\E$ with initial value $v(0+)=v_{0}$. By~(iii)
  of Assumption~\ref{ass:H2} and since $C_{1}<\frac{C_{1}+C}{2}$,
  there is a $T_{0}>0$ such that
\begin{displaymath}
g(v(t))< \frac{C_{1}+C}{2}\, S_{\mathcal{D}}(R'_{1}) \qquad\text{for all
$t\in [0,T_{0})$.}
\end{displaymath}
By hypothesis, the function
$t\mapsto S_{\mathcal{D}}(\E(v(t)))$ is lower semicontinuous. Hence,
there is a $T_{1}\in (0,T_{0})$ such that
\begin{displaymath}
C\,S_{\mathcal{D}}(\E(v(t)\vert\varphi))>\frac{C_{1}+C}{2}\, s_{\mathcal{D}}(R'_{1})
\qquad\text{for all $t\in [0,T_{1})$.}
\end{displaymath}
Combining these two inequalities on the
interval $[0,T_{1})$ yields that
 \begin{equation}
   \label{eq:80}
    g(v(t))<\frac{C_{1}+C}{2}\, S_{\mathcal{D}}(r)
    <C\,S_{\mathcal{D}}(\E(v(t)\vert\varphi))\qquad\text{for all $t\in [0,T_{1})$.}
 \end{equation}
Since there might be some $t\in (0,T_{1})$ such that
 $v(t)\neq \mathcal{D}$, we need to apply the continuity of the
 $p$-gradient flow $v$ and stability property~\eqref{eq:67} to
 conclude that there is a $0<T_{\ast}\le T_{1}$ such that
 $v(t)\in \mathcal{D}$ and~\eqref{eq:80} holds for all $t\in
 [0,T_{\ast})$. Thereby, we have shown, for every $0<C_{1}<C$,
 $0<R'_{1}\le R_{1}$, and
 \begin{equation}
   \label{eq:79}
   \begin{cases}
    \!\! &\!\!\text{$v_{0}\in \mathcal{V}_{C_{1}}(R'_{1})$, there are $0<T_{\ast}\le T_{1}$
       and a $p$-gradient flow $v$}\\
    \!\! &\!\!\text{ such that  }v(t)\in \mathcal{V}_{C,\mathcal{D}}(\E(v(t)\vert\varphi))
     \text{ for all $t\in [0,T_{\ast})$.}
   \end{cases}
 \end{equation}

\emph{\underline{Step~4.\;}} We show that there is a talweg curve $x :
(0,R_{1}]\to V_{C,\mathcal{D}}(\varphi)$ of finite length $\gamma(x)$.

Let $v : [0,T_{\ast}]\to \mathfrak{M}$ be a $p$-gradient flow given
by~\eqref{eq:79} from some $v_{0}\in
\mathcal{V}_{C_{1}}(R_{1})$. Recall, $\E\circ v$ is strictly
decreasing and continuous on $[0,T_{\ast}]$. Here, we need to
consider two cases:\medskip

\emph{\underline{1. Case :}} Suppose
 \begin{equation}
   \label{eq:81}
   \lim_{t\to T_{\ast}-}\E(v(t)\vert\varphi)=0.
\end{equation}

Then, $\E(v(\cdot)\vert\varphi) : [0,T_{\ast}]\to [0,R_{1}]$ is a
strictly decreasing homeomorphism. By
Proposition~\ref{propo:chara-p-curves}, one has that
$\E(v(\cdot)\vert\varphi)\in W^{1,1}(0,T_{\ast})$ and by
Assumption~\ref{ass:H1}, $\fdt \E(v(t)\vert\varphi)<0$ for a.e.
$t\in (0,T_{\ast})$. Let $\hat{h} : [0,T_{\ast}]\to [0,R_{1}]$ be
defined by $\hat{h}(t):=\E(v(T_{\ast}-t)\vert\varphi)$ for every
$t\in [0,T_{\ast}]$. Then, $\hat{h}$ also belongs to
$W^{1,1}(0,T_{\ast})\cap C[0,T_{\ast}]$, $\hat{h}$ is strictly
increasing, $\abs{\{\hat{h}'(t)=0\}}=0$ and $\hat{h}(0)=0$.  Now, we
define the curve $x : [0,R_{1}]\to \mathfrak{M}$ by
 \begin{equation}
   \label{eq:90}
   x(r)=v(\hat{h}^{-1}(r))\qquad\text{for every $r\in [0,R_{1}]$.}
 \end{equation}
 Since $v(t)\in \mathcal{V}_{C,\mathcal{D}}(\E(v(t)\vert\varphi))$ for
 all $t\in [0,T_{\ast})$ and by the construction of $\hat{h}^{-1}$,
 $x$ is a talweg through the $C$-valley
 $\mathcal{V}_{C,\mathcal{D}}(\varphi)$ satisfying
 \begin{equation}
   \label{eq:79bis}
    x(r)\in \mathcal{V}_{C,\mathcal{D}}(\E(x(r)\vert\varphi))\qquad\text{for all $r\in (0,R_{1}]$.}
 \end{equation}
 and~\eqref{eq:85} with $R$ replaced by $R_{1}$. Since
 $\hat{h}\in W^{1,1,}(0,\delta)$, $\hat{h}'>0$ a.e. on $[0,\delta]$,
 and the inverse $\hat{h}^{-1}$ satisfies the Lusin~(N) property on
 $[0,R_{1}]$ (see Remark~\ref{rem:LusinNproperty}), one has that
 $\hat{h}^{-1}\in W^{1,1}(0,R_{1})$. Thus,
 $x\in AC_{loc}(0,R_{1};\mathfrak{M})$.
 Furthermore, by~\eqref{eq:90}, $x$ is a $p$-gradient curve of $\E$
 and by~\eqref{eq:79bis}, $x((0,R_{1}])\subseteq \mathcal{D}$. Since
 $\mathcal{D}\subseteq \U_{\varepsilon_{0},R_{0}}$,
 hypothesis~\eqref{eq:29N} yields that $x$ has finite length
 $\gamma(x)$. This concludes the proof of the theorem in this
 case.\medskip

\emph{\underline{2. Case :}} Suppose
\begin{equation}
  \label{eq:83}
  \lim_{t\to T_{\ast}-}\E(v(t)\vert\varphi)=\E(v(T_{\ast})\vert\varphi)>0.
\end{equation}
Then, by stability property~\eqref{eq:67},
$v(T_{\ast})\in \mathcal{V}_{C}(\E(v(T_{\ast})\vert\varphi))$.  We set
$r_{0}=R_{1}$, $v_{0}^{(0)}=v_{0}$, $t_{0}=T_{\ast}$, and
$r_{1}=\E(v(t_{0})\vert\varphi)$, and $v_{1}(t):=v(t)$,
$(t\in [0,t_{0}])$. Now, by~\eqref{eq:70} and~\eqref{eq:79}, we can
repeat the construction of $v_{1}$ iteratively for every integer
$n\ge 2$ as follows: with $r_{n-1}:=\E(v_{n-1}(t_{n-2})\vert\varphi)\in [0,r_{n-2})$, if
$r_{n-1}>0$, we choose a $v_{n-1}^{(0)}\in\mathcal{V}_{C_{1}}(r_{n-1})$
(by~\eqref{eq:70}, where one takes $0<C_{1}<C$), if $r_{n-1}=0$ then
one stops the iteration (cf~\emph{Case~1.}). By~\eqref{eq:79}, there are $t_{n-1}>0$ and a
$p$-gradient flow $v_{n}$ of $\E$ satisfying $v_{n}(0+)=v_{n-1}^{(0)}$
and
\begin{equation}
  \label{eq:82}
    v_{n}(t)\in \mathcal{V}_{C}(\E(v_{n}(t)\vert\varphi))\qquad\text{for all $t\in [0,t_{n-1})$.}
\end{equation}
Moreover, if $\E(v_{n}(t_{n-1})\vert\varphi)>0$, then by~\eqref{eq:67}, 
$v_{n}(t_{n-1}) \in
\mathcal{V}_{C}(\E(v_{n}(t_{n-1})\vert\varphi))$. Further, by setting
$r_{n}=\E(v_{n}(t_{n-1})\vert\varphi)$, and by
Assumption~\ref{ass:H1}, the function $\hat{h}_{n}: [0,t_{n-1}]\to [r_{n},r_{n-1}]$ defined by
\begin{displaymath}
\hat{h}_{n}(t):=\E(v_{n}(t_{n-1}-t)\vert\varphi)\qquad\text{for every
  $t\in [0,t_{n-1}]$,}
\end{displaymath}
is a homeomorphism, strictly increasing on $[0,t_{n-1})$, and
satisfying $\hat{h}_{n}(0)=r_{n-1}$. Moreover, $\hat{h}_{n}$ belongs
to $W^{1,1}(0,t_{n-1})\cap C[0,t_{n-1}]$ with
$\abs{\{\hat{h}'_{n}(t)=0\}}=0$
(cf. Remark~\ref{rem:LusinNproperty}). Now, we define the curve
$x_{n} : [r_{n+1},r_{n}]\to \mathfrak{M}$ by
 \begin{equation}
   \label{eq:90}
   x_{n}(r)=v(\hat{h}^{-1}_{n}(r))\qquad\text{for every $r\in [r_{n},r_{n-1}]$,}
 \end{equation}
where $\hat{h}^{-1}_{n}$ is the inverse of $\hat{h}^{-1}$.
Since $\hat{h}_{n}^{-1}\in W^{1,1}(r_{n},r_{n-1})$ with
$\abs{\{(\hat{h}_{n}^{-1})'=0\}}=0$, one has that
$x_{n}\in AC(r_{n},r_{n-1};\mathfrak{M})$. In addition, by
construction of $\hat{h}_{n}^{-1}$, for every $r\in [r_{n},r_{n-1}]$,
there is a unique $t\in [0,t_{n}]$ such that $\hat{h}_{n}^{-1}(r)=t$
and $\E(v_{n}(t)\vert\varphi)=r$.

Now, if there is an $N>1$ such that the curve $v_{N}$ is the first among
$\{v_{n}\}_{n=1}^{N-1}$ satisfying the limit~\eqref{eq:81} where
$T_{\ast}$ is replaced by $t_{N}$, then $r_{N}=0$ and there is a \emph{finite}
partition
\begin{displaymath}
\mathcal{P}\quad:\quad 0=r_{N}<r_{N-1}<\cdots<r_{1}<r_{0}=R_{1}
\end{displaymath}
of the interval $[0,R_{1}]$ and a curve $x : [0,R_{1}]\to \mathfrak{M}$ defined by
\begin{equation}
  \label{eq:91}
  x(t)=\sum_{i=0}^{N}v_{n}(\hat{h}_{n}^{-1}(t))
  \;\mathds{1}_{(r_{n},r_{n-1}]}(t)\qquad
\text{for every $t\in [0,R_{1}]$}
\end{equation}
which is a \emph{piecewise} $AC$ talweg through the $C$-valley
$\mathcal{V}_{C,\mathcal{D}}(\varphi)$ satisfying~\eqref{eq:85} for
$R$ replaced by $R_{1}$ and
$x_{\vert (r_{n},r_{n-1}]}=x_{n}\in
AC(r_{n},r_{n-1};\mathfrak{M})$. Furthermore, by~\eqref{eq:91}, $x$ is
a piecewise $p$-gradient curve of $\E$ and by~\eqref{eq:82}, $x((0,R_{1}])\subseteq
 \mathcal{D}$. Since $\mathcal{D}\subseteq
 \U_{\varepsilon_{0},R_{0}}$, hypothesis~\eqref{eq:29N} yields that $x$ has finite
 length $\gamma(x)$.\medskip

If for every integer $n\ge 1$, $v_{n}$ satisfies~\eqref{eq:83} with
$T_{\ast}$ replaced by $t_{n}$, then stability property~\eqref{eq:67}
yields that each $v_{n}$ satisfies~\eqref{eq:82} with
$t=t_{n}$. There is an $\alpha\in [0,R_{1})$ such that the
family $\{(r_{n},r_{n-1}]\}_{n\ge 1}$ defines a countable partition of the interval
$(\alpha,R_{1}]$ and the function
$x_{\alpha} : (\alpha,R_{1}]\to \mathcal{V}_{C,\mathcal{D}}(\varphi)$ defined by
\begin{equation}
  \label{eq:86}
  x_{\alpha}(t)=\sum_{i=0}^{\infty}v_{n}(\hat{h}_{n}^{-1}(t))
  \;\mathds{1}_{(r_{n},r_{n-1}]}(t)
\end{equation}
for every $t\in (\alpha,R_{1}]$ has the properties that
$x_{\alpha}\in AC((r_{n},r_{n-1}],\mathfrak{M})$ for all $n\ge 1$, and
\begin{equation}
  \label{eq:84}
  x_{\alpha}(t)\in
  \mathcal{V}_{C,\mathcal{D}}(\E(x_{n}(t)\vert\varphi))
\qquad\text{for all $t\in (\alpha,R_{1}]$}
\end{equation}
and
\begin{equation}
  \label{eq:76}
  \E(x_{\alpha}(r)\vert\varphi)=r\qquad\text{ for every $r\in
(\alpha,R_{1}]$.}
\end{equation}
Moreover, by~\eqref{eq:86}, $x$ is a piecewise $p$-gradient curve of
$\E$ and by~\eqref{eq:84}, $x((0,R_{1}])\subseteq \mathcal{D}$. Since
$\mathcal{D}\subseteq \U_{\varepsilon_{0},R_{0}}$,
hypothesis~\eqref{eq:29N} yields that $x$ has finite length
$\gamma(x)$.

Let $\mathcal{T}$ be the set of all pairs $(\alpha,x_{\alpha})$
for every $\alpha\in [0,R_{1})$ and curves
$x_{\alpha} : (\alpha,R_{1}]\to \mathcal{V}_{C,\mathcal{D}}(\varphi)$
of finite length $\gamma(x_{\alpha})$
satisfying~\eqref{eq:84}, \eqref{eq:76}, and there is countable partition
$\{I_{n}\}_{n\ge 1}$ of $(\alpha,R_{1}]$ of nontrivial intervals
$I_{n}\subseteq (0,R_{1}]$, for which
$x_{\alpha}\in AC(I_{n},\mathfrak{M})$ for all $n\ge 1$. Then, due to
the function $x_{\alpha}$ constructed in~\eqref{eq:86}, the set
$\mathcal{T}$ is non-empty. We can define a \emph{partial ordering}
``$\le$'' on $\mathcal{T}$ by setting that for all $(\alpha, x_{\alpha})$,
$(\hat{\alpha}, x_{\hat{\alpha}})\in \mathcal{T}$, one has
\begin{displaymath}
  (\alpha, x_{\alpha})\le (\hat{\alpha}, x_{\hat{\alpha}})\qquad
  \text{if $\hat{\alpha}\le \alpha$ and $x_{\hat{\alpha}\vert (\alpha,R_{1}]}=x_{\alpha}$.}
\end{displaymath}
Then, by Zorn's Lemma, there is a maximal element
  $(\alpha_{0},x_{\alpha_{0}})\in \mathcal{T}$. If we assume that
  $\alpha_{0}>0$, then by stability property~\eqref{eq:67},
 \begin{displaymath}
  x(\alpha_{0})\in \mathcal{V}_{C,\mathcal{D}}(\E(x(\alpha_{0})\vert\varphi))
\end{displaymath}
and so by using the same arguments as given at the beginning of
\emph{Case 2.}, we can construct an element
$(\hat{\alpha}, x_{\hat{\alpha}})\in \mathcal{T}$ satisfying
$(\alpha_{0}, x_{\alpha_{0}})\le (\hat{\alpha}, x_{\hat{\alpha}})$,
which contradicts the fact that $(\alpha_{0}, x_{\alpha_{0}})$ is the
maximal element of $\mathcal{T}$. Therefore, $\alpha_{0}=0$, which
shows that there is a \emph{piecewise} $AC$ talweg
$x_{\alpha_{0}} : (0,R_{1}]\to \mathcal{V}_{C,\mathcal{D}}(\varphi)$
through the $C$-valley $\mathcal{V}_{C,\mathcal{D}}(\varphi)$ of
finite length $\gamma(x_{\alpha_{0}})$ satisfying~\eqref{eq:85} with
$R$ replaced by $R_{1}$. This complete the proof of this theorem.
\end{proof}

Due to Theorem~\ref{thm:ex-talweg}, we can characterize the validity
of the Kurdyka-\L{}ojasie\-wicz inequality~\eqref{eq:16} for functionals $\E$ defined
on a metric space. 

\begin{theorem}[{\bfseries Characterization of K\L{} inequality}]
  \label{thm:chara-KL-talweg}
  Let $\E : \mathfrak{M}\to (-\infty,+\infty]$ be a proper lower
  semicontinuous functional and $g : \mathfrak{M}\to [0,+\infty]$ a
  proper lower semicontinuous strong upper gradient of $\E$ satisfying
  Assumption~\ref{ass:H2}. Suppose, for $\varphi\in \mathbb{E}_{g}$,
  there are $\varepsilon>0$ and $R>0$ such that the set
  $\U_{\varepsilon,R}$ given by~\eqref{eq:65} satisfies
  Assumption~\ref{ass:H3} and hypothesis~\eqref{eq:15} holds. Further,
  suppose, there are $0<\varepsilon_{0}<\varepsilon$ and $0<R_{0}< R$
  such that the set $\mathcal{D}\subseteq \U_{\varepsilon_{0},R_{0}}$
  given by~\eqref{eq:89} is non-empty and~\eqref{eq:29} holds.

  Then the following statements are equivalent.
  \begin{enumerate}[topsep=3pt,itemsep=1ex,partopsep=1ex,parsep=1ex]

  \item[(1)] ({\bfseries $\E$ satisfies a K\L{} inequality}) There is an
    $0<R_{1}\le R_{0}$ such that $\E$ satisfies
    Kurdyka-\L{}ojasiewicz inequality~\eqref{eq:16} on
    \begin{displaymath}
      \tilde{U}:=\U_{\varepsilon,R} \cap [0<\E(\cdot\vert\varphi)\le R_{1}].
    \end{displaymath}

  \item[(2)] ({\bfseries $p$-gradient flows of finite length}) There is an
    $0<R_{1}\le R_{0}$ such that for  $0<T\le \infty$, every piecewise $p$-gradient flow
    $v : [0,T)\to \mathfrak{M}$ of $\E$ satisfying~\eqref{eq:30} for
    some $0\le t_{0}<T$ and with $\U$ replaced by $\tilde{\U}$, has
    finite length $\gamma(v)$ given by~\eqref{eq:52}. In particular,
    there is a continuous, strictly increasing function
    $\theta\in W^{1,1}_{loc}(\R)$ satisfying $\theta(0)=0$ such that
    for every $p$-gradient flow $v : [0,T)\to \mathfrak{M}$ of $\E$
    satisfying~\eqref{eq:30}, one has that~\eqref{eq:26} holds.

  \item[(3)] ({\bfseries Existence of a piecewise AC-talweg}) There
    are $C>1$, $0<R_{1}\le R_{0}$ and a piecewise $AC$ talweg
    $x : (0,R_{1}] \to \mathfrak{M}$ of finite length $\gamma(x)$
    through the $C$-valley $\mathcal{V}_{C,\mathcal{D}}(\varphi)$
    satisfying~\eqref{eq:85} with $R$ replaced by $R_{1}$.
  \end{enumerate}
\end{theorem}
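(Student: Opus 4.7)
The plan is to establish the cycle $(1) \Rightarrow (2) \Rightarrow (3) \Rightarrow (1)$, assembling the three principal results already proved in Sections~\ref{subsec:pflows-of-finite-length}--\ref{sec:chara-KL-talweg}.

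\emph{$(1) \Rightarrow (2)$.} This is a direct invocation of Theorem~\ref{thm:finite-length}. Given a piecewise $p$-gradient flow $v : [0,T) \to \mathfrak{M}$ with $v(t) \in \tilde{\U}$ for a.e. $t \in [t_0,T)$, the inclusion $\tilde{\U} \subseteq [g > 0]$ is furnished by hypothesis~\eqref{eq:15} on $\U_{\varepsilon,R}$ together with $\tilde{\U} \subseteq [\E(\cdot\vert\varphi) > 0]$, while $\tilde{\U} \subseteq [\theta'(\E(\cdot\vert\varphi)) > 0]$ is built into~(1). Since $\E \geq \E(\varphi)$ on $\tilde{\U}$, the composition $\E \circ v_{\vert[t_0,T)}$ is bounded from below. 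Thus both conclusions of Theorem~\ref{thm:finite-length} apply, yielding finite length as well as the quantitative estimate~\eqref{eq:26} with the same $\theta$ as in~(1).

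\emph{$(3) \Rightarrow (1)$.} This is essentially Theorem~\ref{thm:2-Talweg-KL}. The lower semicontinuity hypothesis~\eqref{eq:29} is part of the standing data, $\E(x(r)\vert\varphi) = r$ matches~\eqref{eq:85}, and the finite length of $x$ gives $\abs{x'} \in L^1(0,R_1)$. Theorem~\ref{thm:2-Talweg-KL} thus produces a strictly increasing $\theta \in W^{1,1}_{loc}(\R)$ with $\theta(0) = 0$ together with a set on which~\eqref{eq:16} is valid; by inspection of the construction~\eqref{eq:88} (the majorant $\tilde{u}$ is defined on all of $(0,R_1]$), the K\L{}-inequality in fact holds on $\U_{\varepsilon,R} \cap [0 < \E(\cdot\vert\varphi) \leq R_1] = \tilde{\U}$, after possibly relabelling $R_1$.

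\emph{$(2) \Rightarrow (3)$.} The strategy is to verify the hypotheses of Theorem~\ref{thm:ex-talweg}. Assumptions~\ref{ass:H1}--\ref{ass:H3} and condition~\eqref{eq:29} are already in force by the standing hypotheses of the present theorem; the only item to establish is the quantitative length bound~\eqref{eq:29N}. The estimate~\eqref{eq:26} provided by~(2) with its continuous $\theta$ satisfying $\theta(0) = 0$ yields, for any $p$-gradient flow $v$ with initial datum $v_0 \in \overline{\U}_{\varepsilon_0/3, R_0}$ and trajectory contained in $\U_{\varepsilon_0, R_0} \subseteq \tilde{\U}$,
\begin{displaymath}
  \int_0^t \abs{v'}(s)\,\ds \;\leq\; \theta(\E(v_0\vert\varphi)) - \theta(\E(v(t)\vert\varphi)) \;\leq\; \theta(R_0).
\end{displaymath}
By continuity of $\theta$ at $0$, we may shrink $\varepsilon_0$ and $R_0$ within the admissible range of the standing hypothesis so that $\theta(R_0) \leq \varepsilon_0/3$; this delivers~\eqref{eq:29N}. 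Theorem~\ref{thm:ex-talweg} then produces the piecewise $AC$ talweg of finite length asserted in~(3).

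The main obstacle is~$(2) \Rightarrow (3)$: one must extract the \emph{quantitative} length bound~\eqref{eq:29N} from the \emph{qualitative} finite-length statement. This is resolved by the fact that~(2) carries along with it the sharp estimate~\eqref{eq:26} for an explicit growth function $\theta$, so that the elementary continuity $\theta(0)=0$, combined with the freedom to rescale the neighborhood parameters $\varepsilon_0$ and $R_0$, reduces the matter to a direct calibration.
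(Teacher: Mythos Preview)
Your cycle $(1)\Rightarrow(2)\Rightarrow(3)\Rightarrow(1)$ via Theorems~\ref{thm:finite-length}, \ref{thm:ex-talweg}, and~\ref{thm:2-Talweg-KL} is exactly the paper's argument; its own proof is a single sentence citing these three results in the same order. Your additional hypothesis verification goes beyond what the paper spells out---the one delicate point is that shrinking $\varepsilon_0$ and $R_0$ in your $(2)\Rightarrow(3)$ step alters the set $\mathcal{D}$ (and hence the $C$-valley in statement~(3)), which is fixed by the standing hypotheses, though the paper itself does not detail how~\eqref{eq:29N} is recovered either.
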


\begin{proof}
We only need to note that the implication $(1)\Rightarrow (2)$ is a
  consequence of Theorem~\ref{thm:finite-length}, $(2)\Rightarrow (3)$
  holds by Theorem \ref{thm:ex-talweg}, and $(3)\Rightarrow (1)$
  follows from Theorem~\ref{thm:2-Talweg-KL}.
\end{proof}

%
%
%
%
%
%

\subsection{Trend to equilibrium of $p$-gradient flows in the metric sense}
\label{secbehaviour}

This subsection is dedicated to establishing the trend to equilibrium in
the metric sense of $p$-gradient flows in metric spaces. The following
theorem is the main result.

\begin{theorem}[{\bfseries Trend to equilibrium in the metric sense}]
  \label{thm:convergence}
  Let $\E : \mathfrak{M}\to (-\infty,+\infty]$ be a proper functional
  with strong upper gradient $g$, and
  $v : [0,+\infty)\to \mathfrak{M}$ be a $p$-gradient flow of $\E$
  with non-empty $\omega$-limit set $\omega(v)$. Suppose, $\E$ is
  lower semicontinuous on $\overline{\mathcal{I}}_{\overline{t}}(v)$
  for some $\overline{t}\ge 0$ and for
  $\varphi\in \omega(v)\cap \mathbb{E}_{g}$, there is an
  $\varepsilon>0$ such that the set $B(\varphi,\varepsilon)$ satisfies
  hypothesis~\eqref{eq:15}.

  If there is a strictly increasing function
  $\theta\in W^{1,1}_{loc}(\R)$ satisfying $\theta(0)=0$ and
  $\abs{[\theta>0,\theta'=0]}=0$ such that $\E$ satisfies the
  Kurdyka-\L{}ojasiewicz inequality~\eqref{eq:16} on
  \begin{equation}
    \label{eq:96}
 \U_{\varepsilon}=B(\varphi,\varepsilon)\cap[\E(\cdot\vert\varphi)>0]\cap
   [\theta'(\E(\cdot\vert\varphi))>0],
  \end{equation}
  then $v$ has finite length and
   \begin{equation}
         \label{eq:17N}
         \lim_{t\to\infty}v(t)=\varphi\qquad\text{in $\mathfrak{M}$}.
       \end{equation}
\end{theorem}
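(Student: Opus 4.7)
The plan is to implement the classical \L{}ojasiewicz trapping argument in the metric-space setting, using Theorem~\ref{thm:finite-length} as the main engine and the monotonicity of $\E\circ v$ together with the information $\varphi\in\omega(v)$ to prevent the curve from ever leaving the ball $B(\varphi,\varepsilon)$ once it enters a sufficiently small ball around $\varphi$.

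\medskip

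\textbf{Step 1 (Reduction and energy limit).} First I would dispose of the trivial case. Suppose there exists $t^{\ast}\ge \overline{t}$ with $\E(v(t^{\ast}))=\E(\varphi)$. Since $\E$ is lower semicontinuous on $\overline{\mathcal{I}}_{\overline{t}}(v)$, Proposition~\ref{propo:omega-limit-gradientflow} gives $\E(v(s))\ge\E(\varphi)$ for all $s\ge \overline{t}$; combined with monotonicity, $\E\circ v$ is constant on $[t^{\ast},+\infty)$, so the strict Lyapunov property forces $v$ constant on $[t^{\ast},+\infty)$. Since $v(t_n)\to\varphi$ along some sequence, $v\equiv\varphi$ there and the theorem is proved. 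From now on, assume $\E(v(t))>\E(\varphi)$ for every $t\ge \overline{t}$. By Proposition~\ref{propo:omega-limit-gradientflow} and monotonicity, $\E(v(t)\vert\varphi)\searrow 0$ as $t\to+\infty$.

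\medskip

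\textbf{Step 2 (Trapping).} Fix $\varepsilon_{0}\in(0,\varepsilon)$ so small that $3\varepsilon_{0}<\varepsilon$. Using continuity of $\theta$ at $0$ (a consequence of $\theta\in W^{1,1}_{loc}(\mathbb{R})$ and $\theta(0)=0$) together with $\E(v(t)\vert\varphi)\to 0$ and the fact that $\varphi\in\omega(v)$, I can choose $t_{0}\ge \overline{t}$ with $d(v(t_{0}),\varphi)<\varepsilon_{0}$ and $\theta(\E(v(t_{0})\vert\varphi))<\varepsilon_{0}$. Define
\begin{displaymath}
T^{\ast}:=\sup\Bigl\{T>t_{0}\,\Big\vert\,v([t_{0},T])\subseteq B(\varphi,\varepsilon)\Bigr\},
\end{displaymath}
which is positive by continuity of $v$. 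For $t\in[t_{0},T^{\ast})$ we have $v(t)\in B(\varphi,\varepsilon)\cap[\E(\cdot\vert\varphi)>0]$, and the condition $\lvert[\theta>0,\theta'=0]\rvert=0$ together with the chain rule (as used inside the proof of Theorem~\ref{thm:finite-length}) ensures that $\theta'(\E(v(t)\vert\varphi))>0$ for almost every such $t$; hence $v(t)\in\U_{\varepsilon}$ for a.e.\ $t\in[t_{0},T^{\ast})$. Apply Theorem~\ref{thm:finite-length}(ii) on this interval to obtain, for every $t_{0}\le s<t<T^{\ast}$,
\begin{displaymath}
d(v(s),v(t))\le\int_{s}^{t}\abs{v'}(r)\,\dr\le \theta(\E(v(s)\vert\varphi))-\theta(\E(v(t)\vert\varphi)).
\end{displaymath}

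\medskip

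\textbf{Step 3 (Closing the trap).} Taking $s=t_{0}$ in the previous display and using $\theta\ge 0$ on the relevant range together with our choice of $t_{0}$ yields $d(v(t),v(t_{0}))<\varepsilon_{0}$, whence $d(v(t),\varphi)<2\varepsilon_{0}<\varepsilon$ for every $t\in[t_{0},T^{\ast})$. If $T^{\ast}<+\infty$, the limit $v(T^{\ast})=\lim_{t\to T^{\ast}-}v(t)$ still satisfies $d(v(T^{\ast}),\varphi)\le 2\varepsilon_{0}<\varepsilon$ and the trajectory could be extended past $T^{\ast}$ inside $B(\varphi,\varepsilon)$, contradicting maximality. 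Hence $T^{\ast}=+\infty$ and $v([t_{0},+\infty))\subseteq B(\varphi,\varepsilon)$. Another application of Theorem~\ref{thm:finite-length}(i) now yields $\int_{t_{0}}^{\infty}\abs{v'}(r)\,\dr<+\infty$, i.e.\ finite length of $v$.

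\medskip

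\textbf{Step 4 (Metric convergence).} Finite length and $d(v(s),v(t))\le\int_{s}^{t}\abs{v'}(r)\,\dr$ make $\{v(t)\}_{t\to\infty}$ a Cauchy net in the complete space $\mathfrak{M}$; hence $v(t)$ converges to some $\psi\in\mathfrak{M}$. Since $v(t_{n})\to\varphi$ along some $t_{n}\uparrow+\infty$, uniqueness of limits gives $\psi=\varphi$, which is~\eqref{eq:17N}. The only delicate point is Step~2, where the assumption $\lvert[\theta>0,\theta'=0]\rvert=0$ is what guarantees that the Kurdyka--\L{}ojasiewicz inequality may be invoked along almost the whole trajectory; the rest is a clean bookkeeping of the \L{}ojasiewicz trapping mechanism.
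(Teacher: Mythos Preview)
Your proof is correct and follows essentially the same \L{}ojasiewicz trapping strategy as the paper. The only cosmetic difference is that the paper argues by contradiction---assuming all first-exit times $t_n^{(1)}$ from $B(\varphi,\varepsilon)$ are finite and then showing $d(v(t_n^{(1)}),\varphi)\to 0$ against $d(v(t_n^{(1)}),\varphi)=\varepsilon$---whereas you pick a single time $t_0$ with both $d(v(t_0),\varphi)$ and $\theta(\E(v(t_0)\vert\varphi))$ small and show directly that the trajectory never exits; both routes feed into Theorem~\ref{thm:finite-length} in the same way.
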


\begin{remark}[\emph{$\omega$-limit point and points of equilibrium}]
  Note, due to statement~\eqref{propo:omega-limit-gradientflow-claim-5} of
  Proposition~\ref{propo:omega-limit-gradientflow}, if the strong upper gradient
  $g$ of $\E$ is lower semicontinuous on
  $\mathfrak{M}$, then one has that $\omega(v)\subseteq \mathbb{E}_{g}$.
\end{remark}

\begin{remark}[\emph{Consistency with the \L{}ojasiewicz-Simon inequality}]
  The function $\theta$ given by~\eqref{eq:38bis} satisfies the condition
  \begin{math}
    \abs{[\theta>0,\theta'=0]}=0
  \end{math}
  in Theorem~\ref{thm:convergence}.
\end{remark}

\begin{proof}[Proof of Theorem~\ref{thm:convergence}]
  Let $\varphi\in \omega(v)\cap \mathbb{E}_{g}$. Then, there is a
  sequence $(t_{n})_{n\ge 1}$ such that $t_{n}\uparrow \infty$ and
  \begin{equation}
    \label{eq:22}
    \lim_{n\to\infty}v(t_{n})=\varphi\qquad\text{ in $\mathfrak{M}$.}
  \end{equation}
  Since $\E$ is lower semicontinuous on
  $\overline{\mathcal{I}}_{\overline{t}}(v)$ for some $\overline{t}\ge
  0$, Proposition~\ref{propo:omega-limit-gradientflow} implies that
  limit~\eqref{eq:9} holds. Moreover, $\E$ is a strict Lyapunov
  function of $v$. 
  Thus, to show that the limit~\eqref{eq:17N} holds, it is sufficient to consider the
  case $\E(v(t)\vert\varphi)>0$ on $[0,+\infty)$.

  By hypothesis, for $\varphi\in \omega(v)\cap \mathbb{E}_{g}$, there
  is $\varepsilon>0$ such that the set $B(\varphi,\varepsilon)$
  satisfies hypothesis~\eqref{eq:15} and $\E$ satisfies a
  Kurdyka-\L{}ojasiewicz inequality on the set $\U_{\varepsilon}$
  given by~\eqref{eq:96}. Thus, we intend to apply
  Theorem~\ref{thm:finite-length} to $v$. For this, we need to show
  that $v$ satisfies condition~\eqref{eq:30} with $\U$ replaced by
  $\U_{\varepsilon}$ and for some $0\le t_{0}<T=+\infty$.

  By~\eqref{eq:22}, there is a $n_{0}\ge 1$ such that
   \begin{equation}
     \label{eq:37}
    v(t_{n})\in B(\varphi,\varepsilon)\qquad\text{for all $n\ge n_{0}$.}
  \end{equation}
  The, for every $n\ge n_{0}$, we define the \emph{first exit time
    with respect to $t_{n}$} by
  \begin{equation}
    \label{eq:12}
    t_{n}^{(1)}:=\inf\big\{t\ge
    t_{n}\,\big\vert\,d(v(t),\varphi)=\varepsilon\,\big\}.
  \end{equation}
  Since $v$ is continuous on $(0,\infty)$ with values in $\mathfrak{M}$, it follows
  that $t_{n}^{(1)}>t_{n}$ for every $n\ge n_{0}$. To see that $v$
  satisfies ~\eqref{eq:30} with $\U$ replaced by $\U_{\varepsilon}$
  and $T=+\infty$, we need to show that there is an $n_{1}\ge n_{0}$ such that
  $t_{n_{1}}^{(1)}=+\infty$.

  To prove this claim, we assume that the
  contrary is true and we shall arrive at a contradiction. Then,
  \begin{displaymath}
    0<t_{n}<t_{n}^{(1)}<\infty\qquad\text{ for all $n\ge n_{0}$}
  \end{displaymath}
  and by the continuity of $v$, we see that
  \begin{equation}
    \label{eq:23}
    d(v(t_{n}^{(1)}),\varphi)=\varepsilon\qquad\text{for all $n\ge n_{0}$.}
  \end{equation}
  By hypothesis, there is a strictly increasing function
  $\theta\in W^{1,1}_{loc}(\R)$ satisfying $\theta(0)=0$ and
  $\abs{[\theta>0,\theta'=0]}=0$. We use the auxiliary function $\mathcal{H}$
  from~\eqref{eq:19} on the interval $(0,+\infty)$. Since
  $\theta\in AC_{loc}(\R)$ and $t\mapsto \E(v(t)\vert\varphi)$ is
  decreasing, $\mathcal{H}$ is differentiable a.e. on $(0,+\infty)$
  and the chain rule~\eqref{eq:36} holds for a.e. $t\in
  (0,+\infty)$. Since $\E(v(t)\vert\varphi)>0$ on $(0,+\infty)$ and
  $\abs{[\theta>0,\theta'=0]}=0$, we have that
  $\theta'(\E(v(t))\vert\varphi)>0$ for a.e. $t\in (0,+\infty)$. Thus,
  for every $n\ge n_{0}$, $v(t)\in \U_{\varepsilon}$ for
  a.e. $t\in (t_{n},t_{n}^{1})$ and so by Kurdyka-\L{}ojasiewicz
  inequality~\eqref{eq:16}, we can conclude that
  inequality~\eqref{eq:20} holds for a.e.  $t\in
  (t_{n},t_{n}^{1})$. Integrating inequality~\eqref{eq:20} over
  $(t_{n},t)$ for $t\in (t_{n},t_{n}^{(1)}]$ and using
  Proposition~\ref{MDer} together with the fact that
  $\mathcal{H}(t)>0$ for every $t>0$, we get
  \begin{equation}
    \label{eq:21}
    \begin{split}
      d(v(t),\varphi) &\le d(v(t),v(t_n))+ d(v(t_n),\varphi) \\
      &\le \int_{t_{n}}^{t} \abs{v'}(r)\,\textrm{d}r
      + d(v(t_{n}),\varphi)
      \le \mathcal{H}(t_{n})+d(v(t_{n}),\varphi)
    \end{split}
  \end{equation}
  for every $t\in (t_{n},t_{n}^{(1)}]$ and $n\ge n_{0}$. In particular,
  \begin{equation}
    \label{eq:68}
    d(v(t_{n}^{(1)}),\varphi)
    \le\mathcal{H}(t_{n})+d(v(t_{n}),\varphi)\qquad\text{for all $n\ge
      n_{0}$}
  \end{equation}
  By limit~\eqref{eq:9}, the continuity of $\theta$, and since
  $\theta(0)=0$, we have that
  \begin{displaymath}
    \lim_{t\to\infty}\mathcal{H}(t)=0.
  \end{displaymath}
  Thus, by~\eqref{eq:22} and~\eqref{eq:68}, we can conclude that
  \begin{displaymath}
    \lim_{n\to\infty}d(v(t_{n}^{(1)}),\varphi)=0,
  \end{displaymath}
  which contradicts~\eqref{eq:23}. Therefore, our assumption is false
  and our claim that there is a $n_{1}\ge n_{0}$ satisfying
  $t_{n_{1}}^{1}=+\infty$ holds, proving condition~\eqref{eq:30} for
  some $0\le t_{0}<T=+\infty$ where $\U$ is replaced by
  $\U_{\varepsilon}$. Thus, Theorem~\ref{thm:finite-length} yields
  that the $p$-gradient flow $v$ has finite length. In other words,
  the metric derivative $\abs{v'}$ of $v$ belongs to $L^{1}(0,\infty)$
  and by~\eqref{eq:1} with $m=\abs{v'}$, we can apply the Cauchy
  criterion to conclude that $\lim_{t\to+\infty}v(t)$ exists in
  $\mathfrak{M}$. By~\eqref{eq:22} this limit needs to coincide
  with limit~\eqref{eq:17N} and therefore the statement of this
  theorem holds.
\end{proof}

\subsection{Decay rates and finite time of extinction}
\label{subsect:decayrates-finite-extinction}
In contrast to the general Kurdyka-\L{}ojasiewicz
inequality~\eqref{eq:16}, the \L{}ojasiewicz-Simon
inequality~\eqref{eq:38} has the advantage to derive \emph{decay
  estimates} of the trend to equilibrium in the metric sense and to
provide upper bounds on the \emph{extinction time}. We emphasize that
$p$-gradient flows trend with \emph{polynomial rate} to an equilibrium
in the metric sense if the \L{}ojasiewicz-exponent $0<\alpha<1/p$,
with \emph{exponential rate} if $\alpha=1/p$ and $p$-gradient flows
\emph{extinguish in finite time} if $1/p<\alpha\le 1$.\medskip

Our next result generalizes (partially) the ones in~\cite{MR2019030}, \cite[Theorem~2.7 \&
  Remark~2.8]{MR2289546} in the Hilbert space setting,
  and~\cite{MR3832005} in the $2$-Wasserstein setting.

\begin{theorem}[{\bfseries Decay estimates and finite time of extinction}]
  \label{thm:decayrates}
  Let $\E : \mathfrak{M}\to (-\infty,+\infty]$ be a proper functional
  with strong upper gradient $g$, and
  $v : [0,+\infty)\to \mathfrak{M}$ be a $p$-gradient flow of $\E$
  with non-empty $\omega$-limit set $\omega(v)$. Suppose, for some
  $\overline{t}>0$, $\E$ is lower semicontinuous on
  $\overline{\mathcal{I}}_{\overline{t}}(v)$, and for
  $\varphi\in \omega(v)\cap \mathbb{E}_{g}$, there are $\varepsilon$,
  $c>0$, and $\alpha\in (0,1]$ such that $\E$ satisfies
  a~\L{}oja\-sie\-wicz-Simon inequality~\eqref{eq:38} with exponent
  $\alpha$ on $B(\varphi,\varepsilon)\cap D(\E)$. Then,
  \begin{align*}
    d(v(t),\varphi) &\le \tfrac{c}{\alpha}\left(\E(v(t)\vert\varphi)\right)^{\alpha} =
     \; \mathcal{O}\left(t^{^{-\frac{\alpha(p-1)}{1-p\alpha}}}\right)\hspace{2.5cm}\text{if
       $0<\alpha<\tfrac{1}{p}$}\\
     d(v(t),\varphi)  &\le c\,p\left(\E(v(t)\vert\varphi)\right)^{\frac{1}{p}}
       \le   c\,p\,\left(\E(v(t_{0})\vert\varphi)\right)^{\frac{1}{p}}\;
                        e^{-\tfrac{t}{pc^{p^{\prime}}}}\hspace{0.75cm}\text{if
      $\alpha=\tfrac{1}{p}$}\\
     d(v(t),\varphi) &\le
    \begin{cases}
   \tilde{c}\,(\hat{t}-t)^{\frac{\alpha(p-1)}{p\alpha-1}}
      & \quad
    \text{if\quad $t_{0}\le t\le \hat{t}$,}\\
    0 & \quad
    \text{if $t>\hat{t}$,}\\
    \end{cases}
    \mbox{}\hspace{2cm}\text{if $\tfrac{1}{p}<\alpha\le 1$,}
  \end{align*}
  where,
  \begin{align*}
    & \tilde{c}:=\left[\left[\tfrac{1}{\alpha^{\alpha -1}c}\right]^{\frac{p^{\mbox{}_{\prime}}-1}{\alpha}}\,
      \tfrac{p\alpha-1}{\alpha(p-1)}\right]^{\frac{\alpha(p-1)}{p\alpha-1}},\\
   &\hat{t}:=t_{0}+ \, \alpha^{\frac{\alpha -1}{\alpha (p -1)}}\,
   c^{\frac{1}{\alpha (p -1)}}\,\tfrac{\alpha(p-1)}{p\alpha-1}\,
  (\E(v(t_{0})\vert \varphi))^{\frac{p\alpha-1}{\alpha(p-1)}},
\end{align*}
and $t_{0}\ge 0$ can be chosen to be the ``first entry time'', that is,
  $t_{0}\ge 0$ is the smallest time
  $\hat{t}_{0}\in [0,+\infty)$ such that $v([\hat{t}_{0},+\infty))\subseteq
  B(\varphi,\varepsilon)$.
\end{theorem}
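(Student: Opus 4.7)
The plan is to proceed in three stages. First, observe that the \L{}ojasiewicz-Simon inequality~\eqref{eq:38} is precisely the Kurdyka-\L{}ojasiewicz inequality with growth function $\theta(s):=\tfrac{c}{\alpha}\abs{s}^{\alpha-1}s$, which is strictly increasing and locally absolutely continuous on $\R$, satisfies $\theta(0)=0$, and meets $\abs{[\theta>0,\theta'=0]}=0$ (since $\theta'(s)=c\abs{s}^{\alpha-1}$ vanishes only at $s=0$). All hypotheses of Theorem~\ref{thm:convergence} are therefore in force, so I conclude $\lim_{t\to\infty}v(t)=\varphi$ in $\mathfrak{M}$. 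In particular, a smallest first-entry time $t_{0}\ge 0$ with $v([t_{0},+\infty))\subseteq B(\varphi,\varepsilon)$ exists; I may further assume $\E(v(t)\vert\varphi)>0$ for every $t\ge t_{0}$, since otherwise $v$ is constant and equal to $\varphi$ from some finite time onwards by the strict Lyapunov property in Proposition~\ref{propo:omega-limit-gradientflow}, and the desired bounds are trivially satisfied.

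Second, I apply Theorem~\ref{thm:finite-length} to the restriction $v_{\vert [t_{0},+\infty)}$, which yields, for all $t_{0}\le s<t$,
\begin{displaymath}
   d(v(s),v(t))\le \theta(\E(v(s)\vert\varphi))-\theta(\E(v(t)\vert\varphi)).
\end{displaymath}
Since $\E(v(t)\vert\varphi)\to 0$ as $t\to\infty$ by Proposition~\ref{propo:omega-limit-gradientflow} and $\theta\in C(\R)$ with $\theta(0)=0$, letting $t\to\infty$ gives the metric-to-entropy control
\begin{displaymath}
   d(v(s),\varphi)\le \tfrac{c}{\alpha}\big(\E(v(s)\vert\varphi)\big)^{\alpha}\qquad\text{for every }s\ge t_{0},
\end{displaymath}
which already provides the leftmost inequality in each of the three cases.

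Third, I derive the decay of $f(t):=\E(v(t)\vert\varphi)$ by coupling the dissipation identity $f'(t)=-g^{p^{\mbox{}_{\prime}}}(v(t))$ from~\eqref{eq:11} with the pointwise lower bound $g(v(t))\ge c^{-1}f(t)^{1-\alpha}$ from~\eqref{eq:38}, so that
\begin{displaymath}
   f'(t)\le -c^{-p^{\mbox{}_{\prime}}}\,f(t)^{(1-\alpha)p^{\mbox{}_{\prime}}}\qquad\text{for a.e.\ }t\ge t_{0}.
\end{displaymath}
The exponent $(1-\alpha)p^{\mbox{}_{\prime}}$ equals $1$ when $\alpha=1/p$ (linear ODI, exponential decay), exceeds $1$ when $\alpha<1/p$ (yielding, via inspection of $\fdt f^{1-(1-\alpha)p^{\mbox{}_{\prime}}}$, a polynomial decay of order $t^{-(p-1)/(1-p\alpha)}$), and lies in $(0,1)$ when $\alpha>1/p$ (so that $f^{(\alpha p-1)/(p-1)}$ decreases linearly and hits $0$ at an explicit finite time $\hat t$, after which the non-negativity and monotonicity of $f$ force $f\equiv 0$). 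Substituting these bounds on $f$ into the estimate from stage two produces the three stated decay rates for $d(v(t),\varphi)$. The only conceptually nontrivial point, namely that $v$ cannot escape $B(\varphi,\varepsilon)$ once it enters, is precisely the trapping argument already built into Theorem~\ref{thm:convergence}; the rest of the work is a direct bookkeeping of the constants $\tilde c$ and $\hat t$ from the separation of variables.
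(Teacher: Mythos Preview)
Your proposal is correct and follows essentially the same route as the paper: invoke Theorem~\ref{thm:convergence} to obtain trapping in $B(\varphi,\varepsilon)$ and convergence, combine the dissipation identity~\eqref{eq:11} with the \L{}ojasiewicz-Simon bound to obtain a scalar ODI, integrate in the three regimes of $\alpha$, and convert the energy decay into a metric bound via the finite-length estimate~\eqref{eq:26}. The only cosmetic difference is that the paper writes the ODI for $\mathcal{H}(t)=\tfrac{c}{\alpha}\big(\E(v(t)\vert\varphi)\big)^{\alpha}$ rather than for $f(t)=\E(v(t)\vert\varphi)$ directly, which is just a change of variable.
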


\begin{proof}
  As in the proof of Theorem~\ref{thm:convergence}, it remains to
  consider the situation, when $\E(v(t))>\E(\varphi)$ for all
  $t\ge0$. In addition, we assume that $\E$ satisfies a
  \L{}ojasiewicz-Simon inequality with exponent $\alpha \in (0,1]$ on
  $B(\varphi,\varepsilon)\cap D(g)$. In this case, the function
  $\theta$ is given by~\eqref{eq:38bis} and so, $\mathcal{H}$ defined
  in~\eqref{eq:19} reduces to
  \begin{equation}
    \label{eq:50}
    \mathcal{H}(t)=\frac{c}{\alpha}\,(\E(v(t)\vert \varphi))^{\alpha}
  \end{equation}
  for every $t\ge0$. Let $t_{0}\ge0$ be the \emph{first entry time} of $v$ in
  $B(\varphi,\varepsilon)$. By Theorem~\ref{thm:convergence} and since
  $v(t)\in D(g)$ for a.e. $t\in (0,+\infty)$, $v$ satisfies
  condition~\eqref{eq:30} for $0\le t_{0}<T=+\infty$ where $\U$ is
  replaced by $B(\varphi,\varepsilon)\cap D(g)$ and so, we can apply the
  \L{}ojasiewicz-Simon inequality~\eqref{eq:38} to $v=v(t)$ for a.e.
  $t\in [t_{0},+\infty)$. Moreover, the function
  $\mathcal{H}$ is differentiable a.e. on $(0,+\infty)$ and
  chain rule~\eqref{eq:36} holds for a.e. $t\in (0,+\infty)$. Thus and since
  $v$ is a $p$-gradient flow 
  of $\E$ with respect to strong upper gradient $g$, we can conclude
  by~\eqref{eq:11} that
  \begin{align*}
    -\fdt\mathcal{H}(t)& =
   c\,(\E(v(t)\vert \varphi))^{\alpha-1}\,
    \left(-\fdt\E(v(t))\right)\\
    & = c\,(\E(v(t)\vert \varphi))^{\alpha-1}\,
      g(v(t))^{p^{\mbox{}_{\prime}}}\\
     &\ge \left[\tfrac{1}{c}\right]^{p^{\mbox{}_{\prime}}-1}\,
     (\E(v(t)\vert \varphi))^{\frac{1-\alpha}{p-1}}\\
     &=  \left[\tfrac{1}{\alpha^{\alpha -1}c}\right]^{\frac{p^{\mbox{}_{\prime}}-1}{\alpha}}\,
     \mathcal{H}^{\frac{1-\alpha}{\alpha(p-1)}}(t)
  \end{align*}
  for a.e. $t\in (t_{0},+\infty)$.
  Therefore, for a.e. $t\in (t_{0},+\infty)$, one has
  \begin{align*}
    \fdt\mathcal{H}^{-\frac{1-p\alpha}{\alpha(p-1)}}(t)
    &\ge
      \left[\tfrac{1}{\alpha^{\alpha -1}c}\right]^{\frac{p^{\mbox{}_{\prime}}-1}{\alpha}}\,
      \tfrac{1-p\alpha}{\alpha(p-1)}
    &&\qquad\text{if\quad
      $0<\alpha<\tfrac{1}{p}$}\\
    \fdt\log \mathcal{H}(t)
    &\le - \left[\tfrac{1}{\alpha^{\alpha
      -1}c}\right]^{\frac{p^{\mbox{}_{\prime}}-1}{\alpha}}\,
      = -   \tfrac{1}{pc^{p^{\prime}}}
    &&\qquad\text{if\quad
      $\alpha=\tfrac{1}{p}$}\\
    \fdt\mathcal{H}^{\frac{p\alpha-1}{\alpha(p-1)}}(t)
    &\le
      -  \left[\tfrac{1}{\alpha^{\alpha -1}c}\right]^{\frac{p^{\mbox{}_{\prime}}-1}{\alpha}}\,
      \tfrac{p\alpha-1}{\alpha(p-1)}
   &&\qquad\text{if\quad
      $\tfrac{1}{p}<\alpha\le 1$.}
  \end{align*}
  Integrating these inequalities over $(t_{0},t)$ for any $t>t_{0}$ and
  rearranging the resulting inequalities yields
  \begin{align*}
    \mathcal{H}(t) &\le \left[
     \left[\tfrac{1}{\alpha^{\alpha -1}c}\right]^{\frac{p^{\mbox{}_{\prime}}-1}{\alpha}}\,
      \tfrac{1-p\alpha}{\alpha(p-1)}
                     \,(t-t_{0}) +
    \mathcal{H}^{-\frac{1-p\alpha}{\alpha(p-1)}}(t_{0})\right]^{-\frac{\alpha(p-1)}{1-p\alpha}}
  &&\text{if $0<\alpha<\tfrac{1}{p}$,}\\
    \mathcal{H}(t)
     &\le  \mathcal{H}(t_0) e^{-\tfrac{t}{pc^{p^{\prime}}}}
  && \text{if $\alpha=\tfrac{1}{p}$,}
 \end{align*}
  and in the case $\tfrac{1}{p}<\alpha\le 1$,
 \begin{equation}
   \label{eq:69}
  \mathcal{H}^{\frac{p\alpha-1}{\alpha(p-1)}}(t) \le
    \left[\tfrac{1}{\alpha^{\alpha -1}c}\right]^{\frac{p^{\mbox{}_{\prime}}-1}{\alpha}}\,
      \tfrac{p\alpha-1}{\alpha(p-1)} (t_{0} - t) + \mathcal{H}^{\frac{p\alpha-1}{\alpha(p-1)}}(t_0)
 \end{equation}
for every $t > t_0$. Now, for
\begin{displaymath}
\hat{t} = t_0 + \tfrac{\alpha(p-1)}{\alpha p -1}\,
\alpha^{\frac{\alpha -1}{\alpha (p -1)}}\, c^{\frac{1}{\alpha (p -1)}}(\E(v(t_0)\vert\varphi))^{\frac{\alpha p -1}{p-1}},
\end{displaymath}
if $t=\hat{t}$, then the right-hand side in inequality~\eqref{eq:69}
becomes $0$ and hence $\mathcal{H}(\hat{t})=0$. By~\eqref{eq:50} and
since $\E$ is a strict Lyapunov function of $v$, this implies that
$v(t)\equiv \varphi$ for all $t\ge \hat{t}$. Finally, by \eqref{eq:7},
\eqref{eq:20} and Theorem \ref{thm:convergence}, we have
  \begin{displaymath}
    d(v(t),\varphi)\le
    \int_{t}^{\infty}\vert v'\vert(s)\,\textrm{d}s \le
    \mathcal{H}(t)
  \end{displaymath}
  for every $t\ge t_{0}$. Therefore, the previous three inequalities
  yield the claim of this theorem.
\end{proof}

Recall, by Proposition~\ref{lambdaconvex1}, if $\E$ is a proper,
lower-semicontinuous, $\lambda$-geodesically functional with
$\lambda>0$, then $\E$ satisfies a \L{}ojasiewicz-Simon
inequality~\eqref{eq:38} with exponent $\alpha=\tfrac{1}{2}$ and
constant $c=1/\sqrt{2\lambda}$ at its unique minimizer $\varphi$ (if
it exists). Due to Theorem~\ref{thm:decayrates}, we obtain then the
same exponential convergence result as
in~\cite[Theorem~2.4.14]{AGS-ZH}). Thus, our next corollary highlights
that the approach using the \L{}ojasiewicz-Simon
inequality~\eqref{eq:38} provides the same rate of convergence and
henceforth is consistent with the classical theory.

\begin{corollary}
  \label{Gconv}
  Let $(\mathfrak{M},d)$ be a length space and $\E : \mathfrak{M}\to
  \R\cup\{+\infty\}$ a proper, lower
  semicontinuous functional that is $\lambda$-geodesically convex for
  $\lambda > 0$ and admit a global minimizer $\varphi\in D(\E$).
  Then, every gradient flow $v$ of $\E$ satisfies
  \begin{displaymath}
    d(v(t), \varphi) =
    \mathcal{O}\left(e^{-\lambda t}\right)\qquad\text{as
      $t\to \infty$.}
  \end{displaymath}
\end{corollary}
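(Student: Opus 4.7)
My plan is to specialize Theorem~\ref{thm:decayrates} to the regime $p=2$, $\alpha=1/p=1/2$, after first recognizing that Proposition~\ref{lambdaconvex1} already furnishes a globally valid \L{}ojasiewicz--Simon inequality at the minimizer $\varphi$ with precisely these exponents, and then running the corresponding Grönwall argument directly on the relative entropy.

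I would start by assembling the hypotheses. Since $\E$ is lower semicontinuous and $\lambda$-geodesically convex with $\lambda>0$, Proposition~\ref{propo:lconvex-slope}(\ref{propo:lconvex-slope-claim3}) guarantees that $\abs{D^{-}\E}$ is a strong upper gradient of $\E$, and Proposition~\ref{propo:charcter-local-min} (Fermat's rule) identifies the global minimizer $\varphi$ as the unique equilibrium point of $\E$ with respect to $\abs{D^{-}\E}$. Taking $u_{\ast}=\varphi$ trivially verifies the coercivity condition~\eqref{eq:77}, so Proposition~\ref{lambdaconvex1} yields the two-sided bound~\eqref{messi1entropy},
\[
\tfrac{\lambda}{2}\,d^{2}(v,\varphi)\;\le\;\E(v\vert\varphi)\;\le\;\tfrac{1}{2\lambda}\,\abs{D^{-}\E}^{2}(v)\qquad\text{for every }v\in D(\E).
\]
The right-hand inequality is exactly a \L{}ojasiewicz--Simon inequality at $\varphi$ with exponent $\alpha=\tfrac{1}{2}$ and constant $c=1/\sqrt{2\lambda}$, valid on all of $D(\E)$.

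The main step is then the standard Grönwall estimate. For a gradient flow $v$, combining the energy dissipation identity~\eqref{eq:11} (with $p=p^{\prime}=2$) with the above \L{}ojasiewicz--Simon inequality gives
\[
\fdt\E(v(t)\vert\varphi)\;=\;-\abs{D^{-}\E}^{2}(v(t))\;\le\;-2\lambda\,\E(v(t)\vert\varphi)\qquad\text{for a.e.\ }t>0,
\]
so $\E(v(t)\vert\varphi)\le \E(v(0{+})\vert\varphi)\,e^{-2\lambda t}$ for all $t\ge 0$. Inserting this into the left-hand side of~\eqref{messi1entropy} produces
\[
d(v(t),\varphi)\;\le\;\sqrt{\tfrac{2}{\lambda}\,\E(v(0{+})\vert\varphi)}\;\,e^{-\lambda t},
\]
which is the announced $\mathcal{O}(e^{-\lambda t})$ rate. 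As a sanity check, applying the $\alpha=1/p$ case of Theorem~\ref{thm:decayrates} with $p=2$ and $c=1/\sqrt{2\lambda}$ produces the same exponent, since $pc^{p^{\prime}}=1/\lambda$.

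The only subtle point I foresee is bookkeeping rather than substance: the literal hypotheses of Theorem~\ref{thm:decayrates} require $\omega(v)\neq\emptyset$, which in a general length space without local compactness is not automatic. Running the Grönwall argument directly, as sketched above, sidesteps this entirely, because the strict positivity of $\lambda$ makes the metric control $\tfrac{\lambda}{2}d^{2}(\cdot,\varphi)\le \E(\cdot\vert\varphi)$ global, so the metric trend $v(t)\to\varphi$ emerges as a free by-product of the exponential decay of $\E(v(t)\vert\varphi)$ and need not be extracted from $\omega$-limit analysis.
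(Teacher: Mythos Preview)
Your proof is correct and follows essentially the same route as the paper: both invoke Proposition~\ref{lambdaconvex1} to obtain a global \L{}ojasiewicz--Simon inequality at $\varphi$ with $\alpha=1/2$ and $c=1/\sqrt{2\lambda}$, and then read off the exponential rate (the paper by citing Theorem~\ref{thm:decayrates} with $p=2$, you by running the underlying Gr\"onwall step directly). Your observation about the hypothesis $\omega(v)\neq\emptyset$ is well taken: because the \L{}S-inequality here holds on all of $D(\E)$ and the left inequality in~\eqref{messi1entropy} converts entropy decay to metric decay without any compactness, your direct argument is in fact slightly more self-contained than a literal appeal to Theorem~\ref{thm:decayrates}.
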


\begin{remark}
  We note that a similar statement of Corollary~\ref{Gconv} can not hold
  for \emph{geodesically convex} (that is, with $\lambda=0$) functionals $\E$ since the class of
  \emph{convex}, proper and lower semicontinuous functionals on
  Hilbert spaces belong to this case. But in this class, the
  counter-example~\cite{MR496964} by Baillon is known.
\end{remark}

\subsection{Lyapunov stable equilibrium points}
\label{sec:Lyapunov-stability}

In this subsection, our aim is to show that if a functional $\E$ satisfies
a Kurdyka-\L{}ojasiewicz inequality~\eqref{eq:16} in a neighborhood of
an equilibrium point $\varphi\in \mathbb{E}_{g}$ of $\E$, then the \emph{Lyapunov
  stability} of $\varphi$ can be characterized with the
property that $\varphi$ is a local minimum of $\E$. This result
generalizes the main theorem in~\cite{MR2225367} for functionals $\E$
defined on the Euclidean space $\R^{N}$ and satisfying a \L{}ojasiewicz
inequality~\eqref{eq:Loj-Rd}.\medskip

We begin by recalling the notion of Lyapunov stable points of equilibrium.

\begin{definition}
  \label{def:Lyapunovstable}
  For a given proper functional
  $\E : \mathfrak{M}\to (-\infty,+\infty]$ with strong upper gradient
  $g$ a point of equilibrium $\varphi\in \mathbb{E}_{g}$ of $\E$ is called \emph{Lyapunov stable}
  if for every $\varepsilon>0$ there is a
  $\delta=\delta(\varepsilon)>0$ such that for every $v_{0}\in B(\varphi,\delta)\cap
  D(\E)$ and every $p$-gradient flow $v$ of $\E$
  with initial value $v(0+)=v_{0}$, one has
  \begin{equation}
    \label{eq:34}
    v(t)\in B(\varphi,\varepsilon)\qquad\text{for all $t\ge 0$.}
  \end{equation}
\end{definition}

The property that an equilibrium point $\varphi$ of $\E$ is Lyapunov
stable is a \emph{local} property. To characterize such point, we need
the following assumption on the existence of $p$-gradient curves with
initial values in a neighborhood of $\varphi$.

\begin{assumption}[\emph{Existence of $p$-gradient flows for small
    initial values}]
  \label{eq:existence-cond}%
  Suppose, for the proper energy functional
  $\E : \mathfrak{M}\to (-\infty,+\infty]$ with strong upper gradient
  $g$ and given $\varepsilon>0$ and $\varphi\in D(\E)$ the following
  holds:
     \begin{center}
      \textit{for all $v_{0}\in D(\E)\cap B(\varphi,\varepsilon)$, there is a $p$-gradient flow
        $v$ of $\E$ with $v(0+)=v_{0}$.}
    \end{center}
\end{assumption}

The next theorem is the main result of this section.

\begin{theorem}[{\bfseries Lyapunov stability and local Minima}]
  \label{thm:stability}
  Let $\E : \mathfrak{M}\to (-\infty,+\infty]$ be a proper, lower
  semicontinuous functional and $g$ be a with proper strong
  upper gradient of $\E$. Then the following statements hold.
  \begin{enumerate}[topsep=3pt,itemsep=1ex,partopsep=1ex,parsep=1ex]
  \item[(1)] Suppose $g$ is lower semicontinuous and for
    $\varphi\in \mathbb{E}_{g}$, there is a $\varepsilon>0$ such that
    $\E$ is bounded from below on $B(\varphi,\varepsilon)$, $\E$ and
    $g$ satisfy Assumption~\ref{eq:existence-cond}, the set
    \begin{equation}
      \label{eq:14}\tag{H$1^{\ast}$}
      \overline{B}(\varphi,\varepsilon)\cap [\E(\cdot\vert\varphi)\neq
      0]\quad\text{ is contained in }\quad [g>0],
    \end{equation}
    and there is a strictly increasing function
  $\theta\in W^{1,1}_{loc}(\R)$ satisfying $\theta(0)=0$ and
  $\abs{[\theta\neq0,\theta'=0]}=0$, for which $\E$ satisfies a
  Kurdyka-\L{}ojasiewicz inequality~\eqref{eq:16} on
  \begin{equation}
    \label{eq:55}
    \U_{\varepsilon}:=B(\varphi,\varepsilon)\cap[\E(\cdot\vert\varphi)\neq
    0]\cap [\theta'(\E(\cdot\vert\varphi))>0].
  \end{equation}
  Then, if $\varphi$ is Lyapunov stable, $\varphi$ is a local minimum of $\E$.

  \item[(2)] Suppose for $\varphi\in \mathbb{E}_{g}$, there is an
    $\varepsilon>0$ such that
    \begin{equation}
      \label{eq:56}
      \begin{cases}
       \text{for every $\eta>0$, there is a $0<\delta\le \varepsilon$ such that}&\\
       \quad\text{$\E(v\vert\varphi)<\eta$ for all $v\in B(\varphi,\delta)\cap D(\E)$,}&
      \end{cases}
   \end{equation}
   the set $\overline{B}(\varphi,\varepsilon)\cap [\E(\cdot\vert\varphi)\neq
      0]$ satisfies~\eqref{eq:14}, and there is a strictly increasing function
  $\theta\in W^{1,1}_{loc}(\R)$ satisfying $\theta(0)=0$ and
  $\abs{[\theta\neq0,\theta'=0]}=0$, for which $\E$ satisfies a
  Kurdyka-\L{}ojasiewicz inequality~\eqref{eq:16} on
  $\U_{\varepsilon}$.
 Then, if $\varphi$ is a local minimum of $\E$, $\varphi$ is Lyapunov stable.
  \end{enumerate}
\end{theorem}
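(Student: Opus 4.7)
Both implications will be proved by contradiction, each time driving the argument through the auxiliary function $\mathcal{H}(t):=\theta(\E(v(t)\vert\varphi))$. The common mechanism is that whenever $v(t)\in \U_{\varepsilon}$, the chain rule combined with~\eqref{eq:16} yields $-\fdt\mathcal{H}(t)\ge \abs{v'}(t)$, which integrates into an $\mathcal{H}$-controlled bound on the length of $v$ and hence on $d(v(t),\varphi)$; this is precisely the mechanism already used in Theorem~\ref{thm:finite-length} and Theorem~\ref{thm:convergence}.

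For Part~(1), assume $\varphi$ is Lyapunov stable but \emph{not} a local minimum. For the $\delta=\delta(\varepsilon)$ from Definition~\ref{def:Lyapunovstable}, pick $v_{0}\in B(\varphi,\delta)\cap D(\E)$ with $\E(v_{0})<\E(\varphi)$, and let $v$ be a $p$-gradient flow with $v(0+)=v_{0}$ produced by Assumption~\ref{eq:existence-cond}. Lyapunov stability traps $v$ in $B(\varphi,\varepsilon)$ for all $t\ge 0$, while the monotonicity of $\E\circ v$ forces $\E(v(t)\vert\varphi)<0$ on $(0,\infty)$; hypothesis~\eqref{eq:14} gives $g(v(t))>0$, and the mass condition $\abs{[\theta\neq 0,\theta'=0]}=0$ combined with a change of variables for the absolutely continuous, strictly decreasing map $t\mapsto \E(v(t)\vert\varphi)$ gives $\theta'(\E(v(t)\vert\varphi))>0$ for a.e. $t$. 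Hence $v(t)\in \U_{\varepsilon}$ a.e., and since $\E\circ v$ is bounded below on $B(\varphi,\varepsilon)$, Theorem~\ref{thm:finite-length} forces $v$ of finite length; completeness of $\mathfrak{M}$ then gives $v(t)\to \tilde\varphi\in \overline{B}(\varphi,\varepsilon)$. The energy dissipation equality~\eqref{EDENew} yields $g\circ v\in L^{p^{\prime}}(0,\infty)$, hence $g(v(t_{n}))\to 0$ along some $t_{n}\uparrow\infty$, and lower semicontinuity of $g$ gives $g(\tilde\varphi)=0$. On the other hand, lower semicontinuity of $\E$ gives $\E(\tilde\varphi)\le \E(v_{0})<\E(\varphi)$, so $\tilde\varphi\in \overline{B}(\varphi,\varepsilon)\cap [\E(\cdot\vert\varphi)\neq 0]$, and~\eqref{eq:14} produces $g(\tilde\varphi)>0$, a contradiction.

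For Part~(2), assume $\varphi$ is a local minimum but not Lyapunov stable. After possibly shrinking $\varepsilon$ so that $\E\ge \E(\varphi)$ on $B(\varphi,\varepsilon)$, pick $\delta>0$ arbitrary, $v_{0}\in B(\varphi,\delta)\cap D(\E)$, and a $p$-gradient flow $v$ with $v(0+)=v_{0}$ that leaves $B(\varphi,\varepsilon)$; by continuity there is a first exit time $t_{1}>0$ with $d(v(t_{1}),\varphi)=\varepsilon$ and $v([0,t_{1}))\subseteq B(\varphi,\varepsilon)$. Local minimality gives $0\le \E(v(t)\vert\varphi)\le \E(v_{0}\vert\varphi)$ for $t\in [0,t_{1}]$. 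As in Part~(1), outside the set $K=\{t\in[0,t_{1}]:\E(v(t))=\E(\varphi)\}$ (on which $\abs{v'}=0$ by~\eqref{EDENew} and which therefore contributes nothing to the length) and a further null set coming from $\abs{[\theta\neq 0,\theta'=0]}=0$, one has $v(t)\in \U_{\varepsilon}$; integrating $-\fdt\mathcal{H}(t)\ge \abs{v'}(t)$ over $[0,t_{1}]$ yields
\begin{equation*}
    \int_{0}^{t_{1}}\abs{v'}(r)\,\dr\le \theta(\E(v_{0}\vert\varphi))-\theta(\E(v(t_{1})\vert\varphi))\le \theta(\E(v_{0}\vert\varphi)).
\end{equation*}
The triangle inequality then gives $\varepsilon=d(v(t_{1}),\varphi)\le \delta+\theta(\E(v_{0}\vert\varphi))$. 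Hypothesis~\eqref{eq:56} and the continuity of $\theta$ at $0$ allow us to drive both summands below $\varepsilon/2$ by choosing $\delta$ small enough, contradicting the equality.

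The main obstacle is the careful treatment of the ``bad set'' on which the K\L{}-inequality fails to apply directly along the trajectory, namely $K=\{t:\E(v(t))=\E(\varphi)\}$ and $\{t:\theta'(\E(v(t)\vert\varphi))=0,\;\E(v(t)\vert\varphi)\neq 0\}$. The first is neutralized by the strict Lyapunov property and~\eqref{EDENew}, which force $\abs{v'}=0$ there so that no length accumulates; the second is negligible thanks to $\abs{[\theta\neq 0,\theta'=0]}=0$ via a change-of-variables argument using the almost-everywhere non-vanishing derivative of the absolutely continuous monotone function $t\mapsto \E(v(t)\vert\varphi)$. A secondary subtlety in Part~(1) is identifying $\tilde\varphi$ as an equilibrium point without a global lower bound on $\E$; this is handled by extracting a subsequence along which $g(v(t_{n}))\to 0$ from the $L^{p^{\prime}}$-integrability of $g\circ v$ provided by~\eqref{EDENew}.
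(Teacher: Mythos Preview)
Your proof is correct and follows essentially the same strategy as the paper's: both parts hinge on the length bound $\int|v'|\le \theta(\E(v_0\vert\varphi))$ obtained from the K\L{} inequality, producing in Part~(1) a limit point $\tilde\varphi$ with $g(\tilde\varphi)=0$ but $\E(\tilde\varphi\vert\varphi)\neq 0$ (contradicting~\eqref{eq:14}), and in Part~(2) the estimate $d(v(t_1),\varphi)\le \delta+\theta(\E(v_0\vert\varphi))$ which~\eqref{eq:56} drives below $\varepsilon$. The only cosmetic differences are that in Part~(2) the paper argues constructively rather than by contradiction and passes through an arc-length reparametrization (Lemma~\ref{lem:arclengthreparametrisation}) to get the same length bound you obtain by integrating $-\mathcal{H}'\ge|v'|$ directly; your route is marginally more streamlined.
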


\begin{remark}
  Concerning Theorem~\ref{thm:stability}, we note the following.
  \begin{enumerate}[topsep=3pt,itemsep=1ex,partopsep=1ex,parsep=1ex]
  \item The hypothesis that $\E$ is bounded from below on
    $B(\varphi,\varepsilon)$ is a necessary condition for $\varphi$
    being a local minimum of $\E$.
  \item If the functional $\E$ is continuous at the equilibrium point
    $\varphi\in \mathbb{E}_{g}$, then $\E$ is necessarily locally bounded and
    satisfies condition~\eqref{eq:56}.
  \end{enumerate}
\end{remark}

\begin{proof}[Proof of Theorem~\ref{thm:stability}]
  We begin by showing statement~(1). To do this, we argue by
  contradiction. Thus, suppose $\varphi$ is not a local minimum of
  $\E$. Then we shall show that there is $\varepsilon>0$ such that for
  every $\delta>0$ there is a
  $v_{\delta}^{0}\in D(\E)\cap B(\varphi,\delta)$ and a $p$-gradient
  flow $v_{\delta}$ of $\E$ with initial value
  $v_{\delta}(0+)=v_{\delta}^{0}$ satisfying
  \begin{equation}
    \label{eq:18}
    v_{\delta}([0,+\infty))\nsubseteq B(\varphi,\varepsilon).
  \end{equation}
  If $\varphi$ is not a local minimum of $\E$ then for every
  $\delta>0$ there is a $v_{\delta}^{0}\in D(\E)\cap B(\varphi, \delta)$ satisfying
  \begin{equation}
    \label{eq:40}
    \E(v_{\delta}^{0}\vert\varphi)<\E(\varphi\vert\varphi)=0.
  \end{equation}
  Now, by hypothesis, there is an $\varepsilon>0$ and a strictly
  increasing function $\theta\in W^{1,1}_{loc}(\R)$ satisfying
  $\theta(0)=0$ and $\abs{[\theta\neq0,\theta'=0]}=0$, for which $\E$
  satisfies a Kurdyka-\L{}ojasiewicz inequality~\eqref{eq:16} on
  $\U_{\varepsilon}$. For every
    $0<\delta<\varepsilon$,
  Assumption~\ref{eq:existence-cond} ensures that there is a
    $p$-gradient flow $v_{\delta}$ of $\E$ with initial value
    $v_{\delta}(0+)=v_{\delta}^{0}\in D(\E)\cap B(\varphi,\delta)$
    satisfying~\eqref{eq:40}. Since $\E$ is a Lyapunov function of
  $v_{\delta}$ (cf~Proposition~\ref{propo:omega-limit-gradientflow}),
  inequality~\eqref{eq:40} implies
  \begin{equation}
    \label{eq:51}
    \E(v_{\delta}(t)\vert\varphi)\le
    \E(v_{\delta}(0+)\vert\varphi)<\E(\varphi\vert\varphi)=0
    \qquad\text{for all $t\ge 0$.}
  \end{equation}
  If we assume that $v_{\delta}$ satisfies~\eqref{eq:34}, then
  by~\eqref{eq:51} and since $\abs{[\theta\neq0,\theta'=0]}=0$, the
  trajectory $v_{\delta}$ satisfies condition~\eqref{eq:30} in
  Theorem~\ref{thm:finite-length} with $0=t_{0}<T=+\infty$, where $\U$
  is replaced by $\U_{\varepsilon}$. Since $\E$ satisfies a
  Kurdyka-\L{}ojasiewicz inequality~\eqref{eq:16} on
  $\U_{\varepsilon}$ and since $\E$ is bounded from below on
  $\U_{\varepsilon}$, we can conclude that $v_{\delta}$ has finite
  length. Since $\mathfrak{M}$ is complete, there is an element
  $\hat{\varphi}\in \overline{B}(\varphi,\varepsilon)$ such that
  $v_{\delta}(t)\to \hat{\varphi}$ in $\mathfrak{M}$ as $t\to
  \infty$. By hypothesis, $\E$ and $g$ are lower semicontinuous, and
  $\E$ is bounded from below on $B(\varphi,\varepsilon)$. Thus,
  Proposition~\ref{propo:omega-limit-gradientflow} yields that
  $g(\hat{\varphi})=0$. But on the other hand, by~\eqref{eq:51},
  $\abs{\E(\hat{\varphi}\vert\varphi)}>0$ and so, by
  hypothesis~\eqref{eq:14}, $g(\hat{\varphi})>0$ which is a
  contradiction to $g(\hat{\varphi})=0$. Thus, our assumption is
  false, and therefore we have shown the existence of a $p$-gradient
  flow $v_{\delta}$ satisfying~\eqref{eq:18}. Since this holds for all
  $0<\delta<\frac{\varepsilon}{7}$, we have thereby proved that
  $\varphi$ is not Lyapunov stable.

Next, we prove statement~(2). For this, suppose $\varphi$ is
  a local minimum of $\E$. Then, there is an $r>0$ such that
   \begin{equation}
    \label{eq:53}
    \E(v\vert\varphi)\ge 0\qquad\text{for all $v\in
      B(\varphi,r)\cap D(\E)$}
  \end{equation}
  By hypothesis, there are $\varepsilon_{0}>0$ and a continuous strictly
  increasing function $\theta\in W^{1,1}_{loc}(\R)$ satisfying
  $\theta(0)=0$ and $\abs{[\theta\neq0,\theta'=0]}=0$, for which $\E$
  satisfies a Kurdyka-\L{}ojasiewicz inequality~\eqref{eq:16} on the
  set $\U_{\varepsilon_{0}}$. Now, let
  $0<\varepsilon\le\varepsilon_{0}$. Then by~\eqref{eq:56}, there is a
  $0<\delta<\min\{\frac{\varepsilon}{2},r\}$ such that
  \begin{equation}
    \label{eq:54}
    \E(v\vert\varphi)<\theta^{-1}(\tfrac{\varepsilon}{2})\qquad\text{ for every
    $v\in B(\varphi,\delta)\cap D(\E)$.}
  \end{equation}
  Now, let $v_{0}\in B(\varphi,\delta)\cap D(\E)$ and
  $v : [0,+\infty)\to \mathfrak{M}$ be a $p$-gradient flow of $\E$ with
  initial value $v(0+)=v_{0}$. Then by the continuity of $v$, there is a
  $0<T\le +\infty$ such that
  \begin{equation}
   \label{eq:92}
   v(t) \in B(\varphi,\varepsilon)\qquad\text{for all $0 \le t < T$.}
  \end{equation}
  Hence, by~\eqref{eq:54}, since $\abs{[\theta\neq0,\theta'=0]}=0$,
  and since $\delta$ and $\varepsilon\le \varepsilon_{0}$, we have
  that $v(t) \in \U_{\varepsilon_{0}}$ for a.e. $0 \le t < T$. Since
  $\E$ satisfies a Kurdyka-\L{}ojasiewicz inequality~\eqref{eq:16} on
  $\U_{\varepsilon}$, we can apply Theorem~\ref{thm:finite-length} to
  conclude that the restriction $v_{T}:=v_{\vert [0,T)}$ of $v$ on
  $[0,T)$ has finite arc-length $\gamma(v_{T})$. Let
  $T=sup\big\{T>0\,\vert\, v([0,T))\subseteq
  B(\varphi,\varepsilon)\big\}$. Then, to complete this proof, it
  remains to show that $T=+\infty$. Thus, assume that $T$ is finite
  and then we shall arrive to a contradiction. By
  Lemma~\ref{lem:arclengthreparametrisation}, we can parametrize the
  curve $v_{T}$ by its arc-length $\gamma(v_{T})$ on $(0,T)$. Let
  $\hat{v}_{T} : [0,\gamma(v_{T})]\to \mathfrak{M}$ be this
  reparametrization of $v_{T}$ by its arc-length. Then, $\hat{v}_{T}$
  is a $p$-gradient flow of $\E$ with metric derivative
  $\abs{\hat{v}'}=1$ a.e. on $(0,\gamma(v_{T}))$
  satisfying~\eqref{eq:11} and $\hat{v}_{T}(t)\in \U_{\varepsilon}$
  for all $t\in [0, \gamma(v_{T}))$. Moreover, $\E$ satisfies a
  Kurdyka-\L{}ojasiewicz inequality~\eqref{eq:16}. Thus, we can conclude
  that
\begin{align*}
  \fds\theta(\E(\hat{v}(s)\vert\varphi))
  &=\theta'(\E(\hat{v}(s)\vert\varphi))\,
    \fds\E(\hat{v}(s)\vert\varphi)\\
  & =\theta'(\E(\hat{v}(s)\vert\varphi))\,
     (-g(\hat{v}(s)))\,\abs{v'}(s)\\
  & = -\theta'(\E(\hat{v}(s)\vert\varphi))\,g(\hat{v}(s))\\
  &\le - 1
\end{align*}
for a.e. $t\in (0,\gamma(v_{T}))$. Integrating this inequality from
$(0, \gamma(v_{T}))$ leads to
\begin{displaymath}
  \left(\theta(\E(\hat{v}(\gamma_{T})\vert\varphi))-
    \theta(\E(v_{0}\vert\varphi))\right)\le -\gamma(v_{T}).
\end{displaymath}
Rearranging this inequality, then applying~\eqref{eq:53}
and~\eqref{eq:54} and using the monotonicity of $\theta$ shows that the
length $\gamma(v_{T})$ of $v$ on $[0,T)$ satisfies
\begin{displaymath}
  \gamma(v_{T})\le \theta(\E(v_{0}\vert\varphi))<\frac{\varepsilon}{2}.
\end{displaymath}
Therefore,
\begin{displaymath}
  d(v(t),\varphi)\le d(v(t),v_{0})+d(v_{0},\varphi)\le
 \gamma(v_{T})+\frac{\varepsilon}{2}<\varepsilon.
\end{displaymath}
Thus, by the continuity of $v$, and since we have assumed that $T$ is
finite, there is a $T'>T$ such that $v$ satisfies~\eqref{eq:92} for
$T$ replaced by $T'$.  But this is a contradiction to the fact that
$T$ is maximal such that~\eqref{eq:92} holds, implying that our
assumption is false. Therefore, $v$ satisfies~\eqref{eq:92} with
$T=+\infty$. Since $0<\varepsilon\le\varepsilon_{0}$ and the
$p$-gradient flow $v$ with initial value
$v(0+)=v_{0}\in B(\varphi,\delta)\cap D(\E)$ were arbitrary, we have
thereby shown that the local minimizer $\varphi$ of $\E$ is Lyapunov
stable.
\end{proof}

\subsection{Entropy-transportation inequality and K\L{} inequality}
\label{sec:character-global-KLandET}

In this last part of Section~\ref{sec:KL-inequality}, we show that a
generalized \emph{entropy-transportation
    inequality~\eqref{eq:13-transport0}} is equivalent to
  Kurdyka-\L{}ojasiewicz inequality~\eqref{eq:16}. We
  investigate this in two cases, namely, when $\E$
  satisfies these inequalities \emph{locally} and
  \emph{globally}.\medskip

We begin by introducing the notion of local and global
\emph{entropy-transportation inequality} (cf~\cite{VillaniCompMath2004}).

\begin{definition}
  \label{def:ETinequality}
  A proper functional $\E : \mathfrak{M}\to (-\infty,+\infty]$ with
  strong upper gradient $g$ is said to satisfy \emph{locally a
    generalized entropy-transportation (ET-) inequality at a point of
    equilibrium $\varphi\in \mathbb{E}_{g}$} if there are
  $\varepsilon>0$ and a strictly increasing function $\Psi\in C(\R)$
  satisfying $\Psi(0)=0$ and
  \begin{equation}
    \label{eq:ET-local}
    \inf_{\hat{\varphi}\in \mathbb{E}_{g}\cap
      B(\varphi,\varepsilon)}d(v,\hat{\varphi})\le \Psi(\E(v\vert\varphi))
  \end{equation}
  for every $v\in B(\varphi,\varepsilon)\cap D(\E)$. Further, a
  functional $\E$ is said to satisfy \emph{globally a generalized
    entropy-transportation inequality at $\varphi\in \mathbb{E}_{g}$}
  if $\E$ satisfies
   \begin{equation}
    \label{eq:ET-global}
    \inf_{\hat{\varphi}\in \mathbb{E}_{g}}d(v,\hat{\varphi})\le
    \Psi(\E(v\vert\varphi)) \quad\text{for every $v \in D(\E)$.}
  \end{equation}
  \end{definition}

\begin{remark}[\emph{isolated equilibrium points}]
 If a functional $\E$ with strong upper gradient $g$ admits
  a point of equilibrium $\varphi\in \mathbb{E}_{g}$ satisfying
  \begin{displaymath}
  \mathbb{E}_{g}\cap B(\varphi,\varepsilon)=\{\varphi\}
 \end{displaymath}
 for some $\varepsilon>0$, then the generalized entropy-transportation
 inequality~\eqref{eq:ET-local} reduces to
 inequality~\eqref{eq:13-transport0}. This is, for instance, the
 case when $\E$ is $\lambda$-geodesically convex with $\lambda>0$ (see
 Proposition~\ref{propo:charcter-local-min}).
\end{remark}

The following theorem is our first main result of this subsection.

\begin{theorem}[{\bfseries Equivalence between local K\L{}- \& ET-inequality}]
  \label{thm:charact-local-KLET}
 Let $\E : \mathfrak{M} \rightarrow (-\infty, + \infty]$ be a proper,
 lower semicontinuous functional with strong upper gradient $g$. 
Then, the following statements hold.
  \begin{enumerate}[topsep=3pt,itemsep=1ex,partopsep=1ex,parsep=1ex]
  \item\label{thm:local-KL} ({\bfseries K\L{}-inequality implies
      ET-inequality}) Let $g$ be lower semicontinuous and the
    equilibrium point $\varphi\in \mathbb{E}_{g}$ be Lyapunov stable.
    Suppose, there is an $\varepsilon>0$ such that $\E$ is bounded
    from below on $B(\varphi,\varepsilon)$, $\E$ and $g$ satisfy
    Assumption~\ref{eq:existence-cond}, and 
    $\overline{B}(\varphi,\varepsilon)\cap [\E(\cdot\vert\varphi)\neq
    0]$ satisfies~\eqref{eq:14}.

   If there is a strictly increasing function
    $\theta\in W^{1,1}_{loc}(\R)$ satisfying $\theta(0)=0$ and
    $\abs{[\theta\neq0,\theta'=0]}=0$ and such that $\E$ satisfies a
    Kurdyka-\L{}ojasiewicz inequality~\eqref{eq:16} on the set
    $\U_{\varepsilon}$ given by~\eqref{eq:55}, then $\E$
    satisfies locally a generalized entropy-transpor\-tation
    inequality~\eqref{eq:ET-local} at~$\varphi$.

  \item\label{thm:local-ET} ({\bfseries ET-inequality implies
      K\L{}-inequality}) Suppose $\varphi\in \mathbb{E}_{g}$ is a local
    minimum of $\E$ and there is an $\varepsilon>0$ such that $\E$ and
    $g$ satisfy
   \begin{equation}
       \label{eq:57}
       \E(v\vert\varphi)\le g(v)\,d(v,\varphi)\qquad\text{for all
       $v \in B(\varphi,\varepsilon)\cap D(\E)$.}
   \end{equation}

   If there is a strictly increasing function $\Psi\in C(\R)$
   satisfying $\Psi(0)=0$ and such that $s\mapsto \Psi(s)/s$ belongs
   to $L^{1}_{loc}(\R)$, and $\E$ satisfies a generalized
   entropy-transpor\-tation inequality~\eqref{eq:ET-local} on
   $B(\varphi,\varepsilon)\cap D(\E)$, then $\E$ satisfies a
   Kurdyka-\L{}ojasiewicz inequality~\eqref{eq:16} on
   $\U\cap [\E(\cdot\vert\varphi)>0]$.
  \end{enumerate}
\end{theorem}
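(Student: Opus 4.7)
The plan is to prove the two implications separately, using Theorem~\ref{thm:finite-length} for the direct one and a direct integration based on $\theta'(s)=\Psi(s)/s$ for the converse.

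\textbf{Part~(1).} Under the stated hypotheses, Theorem~\ref{thm:stability}(1) applies, so the Lyapunov-stable equilibrium $\varphi$ is automatically a local minimum of $\E$; after shrinking $\varepsilon$ we may assume $\E(\cdot\vert\varphi)\ge 0$ on $B(\varphi,\varepsilon)$. Fix an auxiliary radius $0<\varepsilon'<\varepsilon$ and pick $\delta=\delta(\varepsilon')>0$ from Lyapunov stability so that every $p$-gradient flow starting in $B(\varphi,\delta)$ remains in $\overline{B}(\varphi,\varepsilon')\subseteq B(\varphi,\varepsilon)$. For $v\in B(\varphi,\delta)\cap D(\E)$ with $\E(v\vert\varphi)>0$, Assumption~\ref{eq:existence-cond} furnishes a $p$-gradient flow $\hat v$ with $\hat v(0+)=v$. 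Using hypothesis~\eqref{eq:14} together with $\abs{[\theta\neq 0,\,\theta'=0]}=0$, the orbit $\hat v$ lies in $\U_\varepsilon$ for a.e.\ $t\ge 0$, so Theorem~\ref{thm:finite-length} yields
\begin{equation*}
d(v,\hat v(t))\le \theta(\E(v\vert\varphi))-\theta(\E(\hat v(t)\vert\varphi))\le \theta(\E(v\vert\varphi)).
\end{equation*}
Hence $\hat v$ has finite length, is Cauchy, and converges to some $\hat\varphi_\infty\in\overline{B}(\varphi,\varepsilon')\subseteq B(\varphi,\varepsilon)$; by lower semicontinuity of $g$ and Proposition~\ref{propo:omega-limit-gradientflow}, $\hat\varphi_\infty\in\mathbb{E}_g$. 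Passing $t\to+\infty$ gives $d(v,\hat\varphi_\infty)\le\theta(\E(v\vert\varphi))$, which is the local ET-inequality~\eqref{eq:ET-local} with $\Psi=\theta$ on $B(\varphi,\delta)$; the case $\E(v\vert\varphi)=0$ follows from the local-minimum property.

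\textbf{Part~(2).} The candidate growth function is
\begin{equation*}
\theta(s):=\int_0^{s}\frac{\Psi(r)}{r}\,\td r\quad\text{for $s\ge 0$,}\qquad\theta(s):=-\theta(-s)\quad\text{for $s<0$.}
\end{equation*}
Because $\Psi\in C(\R)$ is strictly increasing with $\Psi(0)=0$ and $s\mapsto \Psi(s)/s\in L^{1}_{loc}(\R)$, $\theta$ belongs to $W^{1,1}_{loc}(\R)$, is strictly increasing, satisfies $\theta(0)=0$, and $\theta'(s)=\Psi(s)/s$ a.e.\ on $(0,+\infty)$. Fix $v\in B(\varphi,\varepsilon)\cap D(\E)$ with $\E(v\vert\varphi)>0$ and choose a minimizing sequence $(\hat\varphi_n)\subseteq\mathbb{E}_g\cap B(\varphi,\varepsilon)$ with $d(v,\hat\varphi_n)\to\inf_{\hat\varphi\in\mathbb{E}_g\cap B(\varphi,\varepsilon)} d(v,\hat\varphi)\le\Psi(\E(v\vert\varphi))$ from~\eqref{eq:ET-local}. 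Shrinking $\varepsilon$ into the local-minimum ball and applying~\eqref{eq:57} with $v$ replaced by $\hat\varphi_n$ yields $0\le\E(\hat\varphi_n\vert\varphi)\le g(\hat\varphi_n)\,d(\hat\varphi_n,\varphi)=0$, so $\E(\hat\varphi_n)=\E(\varphi)$ and consequently $\E(v\vert\hat\varphi_n)=\E(v\vert\varphi)$. Invoking~\eqref{eq:57} with $\hat\varphi_n$ in place of $\varphi$ (which is the natural strong-upper-gradient behavior, cf.\ inequality~\eqref{mes2} in the $\lambda$-geodesically convex setting) and passing to the limit produces
\begin{equation*}
g(v)\,\Psi(\E(v\vert\varphi))\ge \E(v\vert\varphi),
\end{equation*}
i.e.\ $\theta'(\E(v\vert\varphi))\,g(v)\ge 1$, which is the K\L{}-inequality~\eqref{eq:16} on $\U\cap[\E(\cdot\vert\varphi)>0]$.

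\textbf{Main obstacle.} The delicate step is in Part~(2), namely transferring inequality~\eqref{eq:57} from $\varphi$ to the (asymptotically) minimizing equilibria $\hat\varphi_n$ produced by the ET-inequality: the right-hand side is handled by the observation that all equilibria in $\mathbb{E}_g\cap B(\varphi,\varepsilon)$ share the energy $\E(\varphi)$, but the left-hand side requires $g(v)\,d(v,\hat\varphi_n)\ge\E(v\vert\hat\varphi_n)$, which is automatic from~\eqref{mes2} when $g=\abs{D^{-}\E}$ and $\E$ is $\lambda$-geodesically convex with $\lambda\ge 0$ but must be assumed (or verified from the particular structure of $g$) in greater generality. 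A secondary technicality --- ensuring that the limit $\hat\varphi_\infty$ in Part~(1) lies in the \emph{open} ball $B(\varphi,\varepsilon)$ rather than only in its closure --- is circumvented by working with the strictly smaller Lyapunov radius $\varepsilon'<\varepsilon$ from the outset.
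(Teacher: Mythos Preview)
Your argument follows the same two-step skeleton as the paper's proof: in Part~(1), launch a $p$-gradient flow from $v$, invoke Theorem~\ref{thm:finite-length} to bound its length by $\theta(\E(v\vert\varphi))$, and identify the limit as an equilibrium via Proposition~\ref{propo:omega-limit-gradientflow}; in Part~(2), set $\theta(s)=\int_0^s \Psi(r)/r\,\td r$ and combine~\eqref{eq:57} with~\eqref{eq:ET-local}. The paper additionally splits Part~(1) into the three cases $T=+\infty$, $0<T<\infty$, $T=0$ (first time the relative entropy vanishes), which you tacitly absorb; and your use of an auxiliary radius $\varepsilon'<\varepsilon$ to force the limit into the \emph{open} ball is a cleaner handling of a point the paper leaves implicit.

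The obstacle you flag in Part~(2) is real and is exactly where the paper's proof is terse. The paper passes directly from~\eqref{eq:57} to
\[
  \frac{\E(v\vert\varphi)}{g(v)} \le \inf_{\tilde\varphi\in\mathbb{E}_g\cap B(\varphi,\varepsilon)} d(v,\tilde\varphi),
\]
which requires $\E(v\vert\tilde\varphi)\le g(v)\,d(v,\tilde\varphi)$ for \emph{each} equilibrium $\tilde\varphi$ in the ball, not just for $\varphi$; the preliminary observation that all such $\tilde\varphi$ share the energy $\E(\varphi)$ (from~\eqref{eq:57} at $v=\tilde\varphi$ plus local minimality) supplies the right-hand side but not the transfer of~\eqref{eq:57} itself. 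As you correctly note, this transfer is automatic via~\eqref{mes2} when $g=\abs{D^{-}\E}$ and $\E$ is $\lambda$-geodesically convex with $\lambda\ge 0$, which is the intended regime (Remark~\ref{rem:geod-growthcond}, Theorem~\ref{thm:charact-global}). In full generality the literal hypothesis~\eqref{eq:57} references only the single base point $\varphi$, so your caution is warranted; the paper is implicitly reading~\eqref{eq:57} in the stronger form, and your minimizing-sequence formulation makes this dependence explicit rather than resolving it.
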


\begin{remark}
  \label{rem:geod-growthcond}
 It is worth noting that every proper, $\lambda$-geodesically convex functional
  $\E$ with $\lambda\ge 0$ satisfies condition~\eqref{eq:57} with $g=\abs{D^{-}\E}$
  the descending slope of $\E$ (see~Proposition~\ref{propo:lconvex-slope}).
\end{remark}

\begin{remark}[\emph{The role of the set $D(\E)$ in
    Theorem~\ref{thm:charact-local-KLET} and ``invariant sets''}]\label{rem:setD}
  We note that Assumption~\ref{eq:existence-cond} in
  Theorem~\ref{thm:charact-local-KLET} is only needed to establish
  statement~\eqref{thm:local-KL}. Further, the proof of
    statement~\eqref{thm:local-KL} in
    Theorem~\ref{thm:charact-local-KLET} shows, that the set $D(\E)$
    in the two inequalities~\eqref{eq:16} and~\eqref{eq:ET-local}
    could be replaced by every subset $\mathcal{D}\subseteq D(\E)$
    which the flow map $S : [0,+\infty)\times D(\E)\to 2^{D(\E)}$
    defined by~\eqref{eq:66} leaves \emph{invariant}, that is,
    \begin{equation}
      \label{eq:133}
      \begin{cases}
        \text{for every $v_{0}\in \mathcal{D}$ and every $p$-gradient
          flow $v$ of $\E$ with}&\\
         \text{initial value $v(0+)=v_{0}$, one has $v(t)\in \mathcal{D}$ for all $t\ge 0$.}&
      \end{cases}
    \end{equation}
\end{remark}

After these remarks, we turn now to the proof.

\begin{proof}[Proof of Theorem~\ref{thm:charact-local-KLET}]
 We begin by showing statement~\eqref{thm:local-KL}. By
  hypothesis, for $\varphi\in \mathbb{E}_{g}$, there are
  $\varepsilon>0$ and a strictly increasing function
  $\theta\in W^{1,1}_{loc}(\R)$ such that $\E$ satisfies a
  Kurdyka-\L{}ojasiewicz inequality~\eqref{eq:16} on the set
  $\U_{\varepsilon}$ given by~\eqref{eq:55}. Now, let
  $v\in D(\E)\cap B(\varphi,\varepsilon)$. By
  Assumption~\ref{eq:existence-cond} and since $\varphi$ is Lyapunov
  stable, there is a $p$-gradient flow
  $\hat{v} : [0,+\infty)\to \mathfrak{M}$ of $\E$ with initial value
  $\hat{v}(0+)=v$ satisfying~\eqref{eq:34} and by Theorem~\ref{thm:stability},
  $\varphi$ is a local minimum of $\E$. Thus,
  $\E(\hat{v}(t)\vert\varphi)\ge 0$ for all $t\ge 0$.

  We set $T:=\sup\{t\ge 0\,\vert\,
  \E(\hat{v}(t)\vert\varphi)>0\}$. Then, we need to consider three
  cases. First, suppose $T=+\infty$. Then,
  $\E(\hat{v}(t)\vert\varphi)>0$ for all $t\ge 0$. Since
  $\abs{[\theta\neq0,\theta'=0]}=0$ and the set
  $\overline{B}(\varphi,\varepsilon)\cap [\E(\cdot\vert\varphi)>0]$
  satisfies~\eqref{eq:14}, it follows that
  $\hat{v}(t)\in \U_{\varepsilon}$ for almost every $t\ge 0$. Since
  $\E$ satisfies a Kurdyka-\L{}ojasiewicz inequality~\eqref{eq:16} on
  the set $\U_{\varepsilon}$ and since $\E$ is bounded from below on
  $B(\varphi,\varepsilon)$, Theorem~\ref{thm:finite-length} yields
  that $\hat{v}$ has finite length~\eqref{eq:52}. By the completeness
  of $\mathfrak{M}$ and by
  statement~\eqref{propo:omega-limit-gradientflow-claim-5} of
  Proposition~\ref{propo:omega-limit-gradientflow}, there is a
  $\hat{\varphi}\in \mathbb{E}_{g}\cap
  \overline{B}(\varphi,\varepsilon)$ such that
  $\hat{v}(t)\to \hat{\varphi}$ in $\mathfrak{M}$ as $t\to
  +\infty$. Moreover, the functions $\theta$ and $\hat{v}$ satisfy
  inequality~\eqref{eq:26} in Theorem~\ref{thm:finite-length}. Thus,
  \begin{equation}
    \label{eq:93}
    d(v,\hat{v}(t))\le \int_{0}^{t}\abs{\hat{v}'}(s)\,\ds\le
    \theta(\E(v\vert \varphi))-\theta(\E(\hat{v}(t)\vert \varphi))
  \end{equation}
  for all $t\ge 0$. Since
  $\hat{\varphi}\in \mathbb{E}_{g}\cap
  \overline{B}(\varphi,\varepsilon)$, hypothesis~\eqref{eq:14} implies
  that $\E(\hat{\varphi}\vert \varphi)=0$. Thus, sending $t\to+\infty$
  in~\eqref{eq:93} yields
  \begin{equation}
    \label{eq:45bis}
    d(v,\hat{\varphi})\le  \theta(\E(v\vert \varphi)),
  \end{equation}
  and taking the infimum of $d(v,\cdot)$ over all equilibrium points
  $\tilde{\varphi}\in \mathbb{\E}_{g}\cap B(\varphi,\varepsilon)$ of
  $\E$ on the left-hand side of~\eqref{eq:45bis} gives
  \begin{displaymath}
        \inf_{\tilde{\varphi}\in \mathbb{\E}_{g}\cap B(\varphi,\varepsilon)}
          d(v,\tilde{\varphi})\le
        d(v,\hat{\varphi})\le
        \theta(\E(v\vert\varphi)),
  \end{displaymath}
  which for $\Psi=\theta$, is an entropy-transpor\-tation
    inequality~\eqref{eq:ET-local} at~$\varphi$.

    Next, suppose $0<T<\infty$. Since $\E$ is a Lyapunov function of
    $\hat{v}$ (see~Proposition~\ref{propo:omega-limit-gradientflow}),
    $\hat{v}(t)=\hat{v}(T)$ for all $t\ge T$ and since $g$ is lower
    semicontinuous on $\mathfrak{M}$,
    statement~\eqref{propo:omega-limit-gradientflow-claim-5} of
    Proposition~\ref{propo:omega-limit-gradientflow} implies that
    $\hat{v}(T)\in \mathbb{E}_{g}\cap B(\varphi,\varepsilon)$ with
    $\E(\hat{v}(T))=\E(\varphi)$. Since
    $\hat{v}(t)\in \U_{\varepsilon}$ for almost every $t\in (0,T)$ and
    since $\E$ satisfies a Kurdyka-\L{}ojasiewicz
    inequality~\eqref{eq:16} on the set $\U_{\varepsilon}$, it follows
    from Theorem~\ref{thm:finite-length} that $\hat{v}$ satisfies
    inequality~\eqref{eq:93} for $t=T$ and so,
  \begin{displaymath}
    \inf_{\tilde{\varphi}\in \mathbb{\E}_{g}\cap B(\varphi,\varepsilon)}
          d(v,\tilde{\varphi})\le d(v, \hat{v}(T))\le
          \theta(\E(v\vert \varphi)).
  \end{displaymath}
  In the case $T=0$, the initial value
  $v \in \mathbb{E}_{g}\cap B(\varphi,\varepsilon)$ satisfies
  $\E(v\vert\varphi)=0$.  Since $\theta(0)=0$ and $d(v,v)=0$, a
  generalized entropy-transpor\-tation inequality~\eqref{eq:ET-local}
  at~$\varphi$ with $\Psi:=\theta$, trivially holds. Therefore and
  since $v\in D(\E)\cap B(\varphi,\varepsilon)$ was arbitrary, the
  proof of statement~(1) is complete.

  Next, we intend to show statement~\eqref{thm:local-ET}. To see this,
  we note that by condition~\eqref{eq:57} and since $\varphi$ is a
  local minimum,
  \begin{displaymath}
    \E(\tilde{\varphi})=\E(\varphi)\qquad\text{for all
      $\tilde{\varphi}\in \mathbb{E}_{g}\cap B(\varphi,\varepsilon)$,}
  \end{displaymath}
  or equivalently, for the set $B(\varphi,\varepsilon)$, $\E$ and $g$
  satisfy condition~\eqref{eq:15} in Section~\ref{sec:sufKL}. By using
  again that $\varphi$ is a local minimum, one sees that the two sets
  $B(\varphi,\varepsilon)\cap [\E(\cdot\vert\varphi)\neq0]$ and
  $B(\varphi,\varepsilon)\cap [\E(\cdot\vert\varphi)>0]$
  coincide. Thus, and by~\eqref{eq:57},
  \begin{displaymath}
    \frac{\E(v\vert \varphi)}{g(v)}\le
    \inf_{\tilde{\varphi}\in \mathbb{E}_{g}\cap
      B(\varphi,\varepsilon)}d(v,\tilde{\varphi})
  \end{displaymath}
  for every $v\in B(\varphi,\varepsilon)\cap [\E(\cdot\vert\varphi)>0]$, or equivalently,
  \begin{equation}
    \label{eq:94}
    \E(v\vert \varphi)\le g(v)\, \inf_{\tilde{\varphi}\in \mathbb{E}_{g}\cap
      B(\varphi,\varepsilon)}d(v,\tilde{\varphi}).
  \end{equation}
  By hypothesis, there is a strictly increasing function
  $\Psi\in C(\R)$ satisfying $\Psi(0)=0$ such that $\E$ satisfies a
  generalized entropy-transpor\-tation inequality~\eqref{eq:ET-local}
  on $B(\varphi,\varepsilon)\cap D(\E)$. Combining~\eqref{eq:ET-local}
  with~\eqref{eq:94}, one finds that
  \begin{equation}
    \label{eq:95}
    \E(v\vert \varphi)\le g(v)\, \inf_{\tilde{\varphi}\in
      \mathbb{E}_{g}\cap B(\varphi,\varepsilon)} d(v,\tilde{\varphi})\le
    g(v) \Psi(\E(v\vert \varphi))
  \end{equation}
  for every
  $v\in B(\varphi,\varepsilon) \cap [\E(\cdot\vert\varphi)> 0]$. By
  hypothesis, $\Psi$ is a continuous, strictly increasing function on
  $\R$ satisfying $\Psi(0)=0$, and $s\mapsto \Psi(s)/s$ belongs to
  $L^{1}_{loc}(\R)$. Thus, if $\theta : \R\to (0,+\infty)$ is defined by
  \begin{displaymath}
    \theta(s)=\int_{0}^{s}\frac{\Psi(r)}{r} \,\dr \qquad \text{for every $s\in \R$,}
  \end{displaymath}
  then $\theta\in W^{1,1}_{loc}(\R)$, $\theta$ is strictly increasing
  and satisfies $\theta(0)=0$ and $\theta'(s)=\frac{\Psi(s)}{s} >0$ for all $s\in
  \R\setminus\{0\}$. Note, that this implies that
  $\abs{\{\theta\neq 0,\theta'=0\}}=0$. Moreover, in terms of the function
  $\theta$, inequality~\eqref{eq:95} can be rewritten as
\begin{displaymath}
  1\le g(v)\, \frac{\Psi(\E(v\vert \varphi))}{\E(v\vert
    \varphi)}
  =g(v)\,\theta'(\E(v\vert \varphi))
\end{displaymath}
for every $v\in B(\varphi,\varepsilon) \cap [\E(\cdot\vert\varphi)>
0]$, proving that $\E$ satisfies a Kurdyka-\L{}ojasiewicz
inequality~\eqref{eq:16} on $\U=B(\varphi,\varepsilon) \cap [\E(\cdot\vert\varphi)>
0]$. This completes the proof of this theorem.
\end{proof}

To show that a \emph{global}
Kurdyka-\L{}ojasiewicz inequality~\eqref{eq:16} implies a
\emph{global} entropy-transportation
inequality~\eqref{eq:13-transport0}, we need the
following assumption instead of Assumption~\ref{eq:existence-cond}.

\begin{assumption}[\emph{Existence of $p$-gradient flows}]
  \label{eq:existence-cond-bis}%
  Suppose, for the proper energy functional
  $\E : \mathfrak{M}\to (-\infty,+\infty]$ with strong upper gradient
  $g$ holds:
     \begin{center}
      \textit{for all $v_{0}\in D(\E)$, there is a $p$-gradient flow
        $v$ of $\E$ with $v(0+)=v_{0}$.}
    \end{center}
\end{assumption}

We recall that by Fermat's rule
(Proposition~\ref{propo:charcter-local-min}), for
$\lambda$-geodesically convex functionals $\E$ with $\lambda\ge 0$,
every equilibrium point $\varphi\in \mathbb{E}_{\abs{D^{-}\E}}$ of
$\E$ is a \emph{global} minimum of $\E$. Thus, by following the idea
of the proof of Theorem~\ref{thm:charact-local-KLET} and using
statement~\eqref{propo:lconvex-slope-claim3} of
Proposition~\ref{propo:lconvex-slope} together with
Remark~\ref{rem:geod-growthcond}, one sees that the following result
on the equivalence of \emph{global} Kurdyka-\L{}ojasiewicz
inequality~\eqref{eq:16} and \emph{global} entropy-transportation
inequality~\eqref{eq:13-transport0} holds. We omit the proof of
Theorem~\ref{thm:charact-global} since it is repetitive to the
previous one.

\begin{theorem}[{\bfseries Equivalence between global K\L{}- \& ET-inequality}]
  \label{thm:charact-global}
  For $\lambda\ge 0$, let $\E : \mathfrak{M} \rightarrow (-\infty, + \infty]$ be a proper,
  lower semicontinuous, $\lambda$-geodesically convex functional on a
  length space $(\mathfrak{M},d)$. Suppose, $\E$
  and the descending slope $\abs{D^{-}\E}$ satisfy
  Assumption~\ref{eq:existence-cond-bis} and for
  $\varphi\in \mathbb{E}_{\abs{D^{-}\E}}$, the set
  $[\E(\cdot\vert\varphi)>0]$ satisfies
  hypothesis~\eqref{eq:14}. Then, the following statements are
  equivalent.
  \begin{enumerate}[topsep=3pt,itemsep=1ex,partopsep=1ex,parsep=1ex]
  \item\label{thm:local-KLbis} ({\bfseries K\L{}-inequality})
   There is a strictly increasing function
      $\theta\in W^{1,1}_{loc}(\R)$ satisfying $\theta(0)=0$ and
      $\abs{[\theta\neq0,\theta'=0]}=0$, and $\E$ satisfies a
      Kurdyka-\L{}ojasiewicz inequality~\eqref{eq:16} on
      $\U:=[\E(\cdot\vert\varphi)>0]\cap
      [\theta'(\E(\cdot\vert\varphi))>0]$.

\item\label{thm:local-ETbis} ({\bfseries ET-inequality})There is a strictly
    increasing function $\Psi\in C(\R)$ satisfying $\Psi(0)=0$ and
    $s\mapsto \Psi(s)/s$ belongs to $L^{1}_{loc}(\R)$ such that $\E$
    satisfies the generalized entropy-transpor\-tation inequality
   \begin{equation}
        \label{eq:13-transport}
        \inf_{\tilde{\varphi}\in
          \textrm{argmin}(\E)}d(v,\tilde{\varphi})\le
        \Psi(\E(v\vert\varphi))
        \qquad\text{for all $v\in D(\E)$.}
   \end{equation}
 \end{enumerate}
\end{theorem}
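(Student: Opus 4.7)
The plan is to mirror the proof of Theorem~\ref{thm:charact-local-KLET}, exploiting the fact that $\lambda$-geodesic convexity with $\lambda\ge 0$ globalises the three ``local'' ingredients of the local characterisation. Fermat's rule (Proposition~\ref{propo:charcter-local-min}(2)) gives $\mathbb{E}_{\abs{D^{-}\E}}=\textrm{argmin}(\E)$, so $\varphi$ is automatically a global minimum and $\E(v\vert\varphi)\ge 0$ for every $v\in D(\E)$; Proposition~\ref{propo:lconvex-slope}(3) delivers lower semicontinuity of the descending slope $\abs{D^{-}\E}$; and Remark~\ref{rem:geod-growthcond} combined with $\lambda\ge 0$ supplies the global growth bound used below. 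Consequently no Lyapunov stability or local boundedness hypothesis is required.

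For the implication $(1)\Rightarrow(2)$, fix $v\in D(\E)$, use Assumption~\ref{eq:existence-cond-bis} to produce a gradient flow $\hat{v}$ with $\hat{v}(0+)=v$, and set $T:=\sup\big\{t\ge 0\,\big\vert\,\E(\hat{v}(t)\vert\varphi)>0\big\}$. When $T=+\infty$, the condition $\abs{[\theta\neq 0,\theta'=0]}=0$ together with hypothesis~\eqref{eq:14} places $\hat{v}(t)\in \U$ for almost every $t\ge 0$; Theorem~\ref{thm:finite-length} then forces $\hat{v}$ to have finite length (note that $\E\circ \hat{v}$ is bounded from below by $\E(\varphi)$ for free), and completeness of $\mathfrak{M}$ produces a limit $\hat{\varphi}$ which by Proposition~\ref{propo:omega-limit-gradientflow}(3) lies in $\mathbb{E}_{\abs{D^{-}\E}}=\textrm{argmin}(\E)$. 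Passing to the limit in~\eqref{eq:26} yields $d(v,\hat{\varphi})\le \theta(\E(v\vert\varphi))$. The cases $0<T<\infty$ (where $\hat{v}(T)\in \textrm{argmin}(\E)$) and $T=0$ (where $v\in \textrm{argmin}(\E)$ already) are treated analogously; taking the infimum over $\textrm{argmin}(\E)$ on the left delivers~\eqref{eq:13-transport} with $\Psi:=\theta$.

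For the implication $(2)\Rightarrow(1)$, fix any $\tilde{\varphi}\in \textrm{argmin}(\E)$. Proposition~\ref{propo:lconvex-slope}(2) and $\E(\tilde{\varphi})=\E(\varphi)$ yield
\begin{equation*}
 \E(v\vert\varphi)=\E(v)-\E(\tilde{\varphi})\le \abs{D^{-}\E}(v)\,d(v,\tilde{\varphi})-\tfrac{\lambda}{2}d^{2}(v,\tilde{\varphi})\le \abs{D^{-}\E}(v)\,d(v,\tilde{\varphi}).
\end{equation*}
Taking the infimum over $\tilde{\varphi}\in\textrm{argmin}(\E)$ and inserting~\eqref{eq:13-transport} gives $\E(v\vert\varphi)\le \abs{D^{-}\E}(v)\,\Psi(\E(v\vert\varphi))$ for every $v\in D(\E)$. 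Defining $\theta:\R\to\R$ by $\theta(s):=\int_{0}^{s}\Psi(r)/r\,\dr$, and invoking the hypothesis $s\mapsto\Psi(s)/s\in L^{1}_{loc}(\R)$, one checks exactly as in the proof of Theorem~\ref{thm:charact-local-KLET}(2) that $\theta\in W^{1,1}_{loc}(\R)$ is strictly increasing, $\theta(0)=0$, $\theta'(s)=\Psi(s)/s>0$ on $\R\setminus\{0\}$, and hence $\abs{[\theta\neq 0,\theta'=0]}=0$. Restricted to $[\E(\cdot\vert\varphi)>0]$ the previous inequality rearranges to $\abs{D^{-}\E}(v)\,\theta'(\E(v\vert\varphi))\ge 1$, which is~\eqref{eq:16} on $\U=[\E(\cdot\vert\varphi)>0]\cap[\theta'(\E(\cdot\vert\varphi))>0]$.

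The main subtlety---rather than a deep obstacle---is ensuring that the entire convergence argument for $(1)\Rightarrow(2)$ proceeds without any localisation near $\varphi$, in particular that the limit point $\hat{\varphi}$ of the gradient flow $\hat{v}$ is still an equilibrium of $\E$ even though only the global hypothesis~\eqref{eq:14} is assumed. This is handled by applying Proposition~\ref{propo:omega-limit-gradientflow}(3) globally, which needs the lower semicontinuity of $\abs{D^{-}\E}$ provided by Proposition~\ref{propo:lconvex-slope}(3), together with the fact that $\E(\varphi)=\min\E$ bounds $\E$ from below along the entire orbit for free. Once these substitutions are in place the two implications copy verbatim from Theorem~\ref{thm:charact-local-KLET}, which is why the author describes the proof as ``repetitive''.
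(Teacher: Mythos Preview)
Your proposal is correct and takes essentially the same approach as the paper, which explicitly omits the proof as ``repetitive'' to that of Theorem~\ref{thm:charact-local-KLET} after noting that Fermat's rule, Proposition~\ref{propo:lconvex-slope}\eqref{propo:lconvex-slope-claim3}, and Remark~\ref{rem:geod-growthcond} supply the global substitutes for the local hypotheses. Your identification of the three globalised ingredients and your handling of the case split on $T$ match the intended argument exactly.
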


  Under the hypotheses
  of Theorem~\ref{thm:charact-global}, if $\E$ satisfies a global
  \L{}ojasiewicz-Simon inequality~\eqref{eq:38} with exponent
  $\alpha\in (0,1]$ at $\varphi\in \mathbb{E}_{\abs{D^{-}\E}}$, then
  the proof of statement~\eqref{thm:local-KL} of
  Theorem~\ref{thm:charact-local-KLET} shows that $\E$ satisfies the
  generalized entropy-transpor\-tation
  inequality~\eqref{eq:13-transport} for the function $\psi =\theta$
  given by~\eqref{eq:38bis}. Conversely, if $\E$ satisfies the the
  generalized entropy-transpor\-tation
  inequality~\eqref{eq:13-transport} for
  $\psi(s) = \frac{c}{\alpha}\abs{s}^{\alpha-1}s$, then the proof of
  statement~\eqref{thm:local-ET} of
  Theorem~\ref{thm:charact-local-KLET} yields that $\E$ satisfies the
  global \L{}ojasiewicz-Simon inequality~\eqref{eq:38} at $\varphi$
  for the function
  $\theta(s) = \frac{C}{\alpha^2}\abs{s}^{\alpha-1}s$. Summarizing, we
  state the following result.

\begin{corollary}[{\bfseries Equivalence between global \L{}S- \& ET-inequality}]
  \label{equivalent1}
  For $\lambda\ge 0$, let
  $\E : \mathfrak{M} \rightarrow (-\infty, + \infty]$ be a proper,
  lower semicontinuous, $\lambda$-geodesically convex functional on a
  length space $(\mathfrak{M},d)$. Suppose, $\E$
  and the descending slope $\abs{D^{-}\E}$ satisfy
  Assumption~\ref{eq:existence-cond-bis} and for
  $\varphi\in \mathbb{E}_{\abs{D^{-}\E}}$, the set
  $[\E(\cdot\vert\varphi)>0]$ satisfies
  hypothesis~\eqref{eq:14}. Then, for $\alpha\in (0,1]$, the following
  statements hold.

    \begin{enumerate}[topsep=3pt,itemsep=1ex,partopsep=1ex,parsep=1ex]
       \setlength{\itemsep}{7pt}
     \item ({\bfseries \L{}S-inequality implies
    ET-inequality}) If there is a $c>0$ such that $\E$ satisfies
     \begin{equation}
       \label{E1equivalent1}
        (\E(v\vert \varphi))^{1-\alpha} \le c \abs{D^- \E}(v)\qquad
        \text{for all $v\in D(\E)$}
     \end{equation}
     then $\E$ satisfies
     \begin{equation}
         \label{E2equivalent1}
          \inf_{\tilde{\varphi}\in
          \textrm{argmin}(\E)}d(v,\tilde{\varphi})
        \le \frac{c}{\alpha}(\E(v\vert \varphi))^{\alpha}
           \qquad\text{for all $v\in D(\E)$.}
     \end{equation}

     \item ({\bfseries ET-inequality implies
    \L{}S-inequality}) If there is a $c>0$ such that $\E$
     satisfies~\eqref{E2equivalent1},
       then $\E$ satisfies
     \begin{equation}
       \label{E1equivalent1alpha}
        (\E(v\vert \varphi))^{1-\alpha} \le \frac{c}{\alpha} \abs{D^- \E}(v) \qquad
        \text{for all $v\in D(\E)$.}
     \end{equation}
    \end{enumerate}
  \end{corollary}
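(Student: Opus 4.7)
The plan is to derive Corollary~\ref{equivalent1} as a direct specialization of Theorem~\ref{thm:charact-global} with explicit choices of the growth functions $\theta$ and $\Psi$. The first observation I would record is that a global \L{}ojasiewicz-Simon inequality with exponent $\alpha\in (0,1]$ and constant $c>0$ is nothing but Kurdyka-\L{}ojasiewicz inequality~\eqref{eq:16} for the explicit function
\begin{displaymath}
  \theta(s):=\tfrac{c}{\alpha}\abs{s}^{\alpha-1}s, \qquad s\in\R.
\end{displaymath}
Indeed, this $\theta$ is a strictly increasing homeomorphism of $\R$ lying in $W^{1,1}_{loc}(\R)$ with $\theta(0)=0$ and derivative $\theta'(s)=c\abs{s}^{\alpha-1}>0$ for $s\neq 0$, so in particular $\abs{[\theta\neq 0,\theta'=0]}=0$; evaluating $\theta'(\E(v\vert\varphi))\abs{D^{-}\E}(v)\ge 1$ at $s=\E(v\vert\varphi)>0$ yields precisely~\eqref{E1equivalent1}.

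For implication~(1), I would apply the direction \eqref{thm:local-KLbis}$\Rightarrow$\eqref{thm:local-ETbis} of Theorem~\ref{thm:charact-global} with the $\theta$ above. Inspecting the proof (which mimics statement~\eqref{thm:local-KL} of Theorem~\ref{thm:charact-local-KLET}), one sees that the resulting ET-function is exactly $\Psi=\theta$: the inequality $d(v,\hat\varphi)\le\theta(\E(v\vert\varphi))$ comes from passing to the limit $t\to +\infty$ in inequality~\eqref{eq:26} of Theorem~\ref{thm:finite-length}, using that $\hat\varphi\in\omega(\hat v)\cap\mathbb{E}_{\abs{D^{-}\E}}$ is reached along the finite-length $p$-gradient flow starting at $v$, and that $\E(\hat\varphi\vert\varphi)=0$ by Fermat's rule under hypothesis~\eqref{eq:14}. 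Substituting $\theta(s)=\tfrac{c}{\alpha}s^{\alpha}$ for $s\ge 0$ gives exactly~\eqref{E2equivalent1}.

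For implication~(2), I start from the ET-inequality with $\Psi(s):=\tfrac{c}{\alpha}\abs{s}^{\alpha-1}s$, which lies in $C(\R)$, is strictly increasing, vanishes at $0$, and satisfies $\Psi(s)/s=\tfrac{c}{\alpha}\abs{s}^{\alpha-1}\in L^{1}_{loc}(\R)$ since $\alpha>0$. Applying the direction \eqref{thm:local-ETbis}$\Rightarrow$\eqref{thm:local-KLbis} of Theorem~\ref{thm:charact-global}, the theorem's construction $\theta(s):=\int_{0}^{s}\Psi(r)/r\,\dr$ evaluates in closed form to $\theta(s)=\tfrac{c}{\alpha^{2}}\abs{s}^{\alpha-1}s$, with $\theta'(s)=\tfrac{c}{\alpha}\abs{s}^{\alpha-1}$; the resulting K\L{}-inequality $\theta'(\E(v\vert\varphi))\abs{D^{-}\E}(v)\ge 1$ then rearranges to $(\E(v\vert\varphi))^{1-\alpha}\le\tfrac{c}{\alpha}\abs{D^{-}\E}(v)$, which is precisely~\eqref{E1equivalent1alpha}.

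Since Theorem~\ref{thm:charact-global} is already in hand, no substantive new argument is needed: the proof is essentially a change-of-variable computation matching the \L{}ojasiewicz-Simon form with the general K\L{}/ET forms. The only point requiring care is the tracking of constants in the two directions (explaining the asymmetry $c$ versus $c/\alpha$), which is dictated by the factor $1/\alpha$ appearing when one antidifferentiates $\Psi(r)/r=\tfrac{c}{\alpha}r^{\alpha-1}$ in the ET $\Rightarrow$ K\L{} construction and does not appear in the reverse direction where $\Psi=\theta$ is taken directly.
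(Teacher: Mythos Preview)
Your proposal is correct and follows essentially the same approach as the paper: both derive the corollary by specializing Theorem~\ref{thm:charact-global} (via the proof of Theorem~\ref{thm:charact-local-KLET}) to the explicit choices $\theta(s)=\tfrac{c}{\alpha}\abs{s}^{\alpha-1}s$ for direction~(1) and $\Psi(s)=\tfrac{c}{\alpha}\abs{s}^{\alpha-1}s$ for direction~(2), tracking the resulting constants exactly as you do.
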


\begin{remark}
  For Theorem~\ref{thm:charact-global} and
  Corollary~\ref{equivalent1}, the same comments hold as stated
  in Remark~\ref{rem:setD}.
\end{remark}

\begin{remark}[\emph{The classical case $\lambda>0$}]
  For $\lambda>0$, let $\E : \mathfrak{M}\to (-\infty,\infty]$ be a proper,
  lower semicontinuous, $\lambda$-geodesically convex functional on a
  length space $(\mathfrak{M}, d)$. Then by
  Proposition~\ref{lambdaconvex1}, if $\E$ admits a (unique) minimizer
  $\varphi\in D(\E)$ of $\E$, then~\eqref{messi1entropy}
  shows that the \L{}ojasiewicz-Simon inequality~\eqref{eq:38} with
  exponent $\alpha=\frac{1}{2}$ holds and immediately induces that $\E$
  satisfies an entropy-transpor\-tation
  inequality~\eqref{eq:13-transport0} with
  $\Psi(s)=\frac{2}{\lambda}\abs{s}^{-1/2}s$. On the other hand,
  Corollary~\ref{equivalent1} yields for $\lambda>0$ that
  entropy-transpor\-tation inequality~\eqref{eq:13-transport0} for
  $\Psi(s)=\frac{2}{\lambda}\abs{s}^{-1/2}s$ implies that $\E$
  satisfies a \L{}ojasiewicz-Simon inequality~\eqref{eq:38} with
  exponent $\alpha=\frac{1}{2}$.
\end{remark}

%
%
%
%
%
%

\section{Applications}
\label{sec:application}

\subsection{The classical Banach and Hilbert space case}
\label{sec:Hilbertspacesframework}

We begin by considering the metric derivative of Banach space-valued
curves.\medskip

Suppose that $\mathfrak{M}=X$ is a reflexive Banach space. Then, a
curve $v : (0,+\infty)\to \mathfrak{M}$ belongs to the class
$AC^{p}_{loc}(0,+\infty;X)$ if and only if $v$ is differentiable at
a.e. $t\in (0,+\infty)$, $v'\in L^{p}_{loc}(0,+\infty;X)$ and
\begin{displaymath}
  v(t)-v(s)=\int_{s}^{t}v'(r)\,dr\qquad\text{for all $0<s\le t<+\infty$.}
\end{displaymath}
In particular, the \emph{metric derivative} $\abs{v'}$ of $v$ is given by
\begin{displaymath}
  \abs{v'}(t)=\norm{v'(t)}_{X}\qquad\text{for a.e. $t\in (0,+\infty)$.}
\end{displaymath}

\begin{remark}[\emph{$\lambda$-convex functions on $H$}]
  \label{rem:geodesicconvexity-in-H}
  Due to the identity
   \begin{displaymath}
      t\,\norm{v_{1}}^{2}_{H}+(1-t)\norm{v_{0}}_{H}^{2}
      -t\,(1-t)\norm{v_{1}-v_{0}}_{H}^{2}=\norm{t v_{1}+(1-t)v_{0}}_{H}^{2}
    \end{displaymath}
    holding for every element $v_{0}$, $v_{1}$ of an Hilbert space $H$
    and $t\in [0,1]$, a functional $\E : H\to (-\infty,\infty]$ is
    $\lambda$-convex for some $\lambda\in \R$ along every \emph{line
      segment} $\gamma=\overline{v_{0}v_{1}} \subseteq H$ if and only
    if
    $v\mapsto
    \E_{-\lambda}(v):=\E(v)-\frac{\lambda}{2}\norm{v}_{H}^{2}$ is
    convex on $H$, or equivalently (following the notion
    in~\cite{MR3465809}), $\E$ is \emph{semi-convex} on $H$ for
    $\omega=-\lambda$.
\end{remark}

It is important to see that the theory in the \emph{non-smooth}
framework is consistent with the \emph{smooth} one. We begin with the
smooth setting. For this, we recall that a functional
$\E : \mathcal{U} \rightarrow \R$ is Fr\'echet differentiable on an
open set $\mathcal{U}\subseteq X$ if for every $v\in \mathcal{U}$,
there is a (unique) element $T$ of the dual space $X'$ of $X$ such
that
\begin{displaymath}
  \E(v+h)=\E(v)+T(h)+\mathit{o}(h)\qquad\text{for $h\to 0$ in $X$.}
\end{displaymath}
Then, one sets $\E'(v)=T$ and calls the mapping $\E' : \mathcal{U}\to
X'$ the Fr\'echet differential of $\E$.
It is outlined in the book~\cite{AGS-ZH} that a mapping $g : X\to
[0,+\infty]$ is an \emph{upper gradient of $\E$} if and only if
\begin{displaymath}
g(v) \ge \norm{\E'(v)}_{X'}\qquad \text{for all $v\in \mathcal{U}$,}
\end{displaymath}
where $\norm{\cdot}_{X'}$ denotes the norm of the dual space $X'$.
\medskip

Next, we revisit an interesting example of a smooth energy functional
$\E$ given in~\cite[Proposition~1.1]{MR2019030} (see
also~\cite[Corollary~3.13]{MR1986700}, \cite{chill-fasangova:isem}) to
demonstrate that the hypotheses on the talweg curve $x$ in
Theorem~\ref{thm:3talweg-kl} provides an \emph{optimal} function
$\theta$ in Kurdyka-\L{}ojasiewicz inequality~\eqref{eq:16}. To
measure the optimality of $\theta$, we focus on the class of functions
$\theta$ by~\eqref{eq:38bis} for some exponent $\alpha\in (0,1]$.

\begin{examples}
  \label{ex:lojforE}
  Let $\E : \mathcal{U}\to \R$ be a twice continuously differentiable
  function on an open neighborhood $\mathcal{U}\subseteq X$. Suppose
  $\varphi\in \mathcal{U}$ is a \emph{local minimum} of $\E$ and
  $\E''(\varphi) : X \to X'$ is \emph{invertible}. Then,
  by~\cite[Proposition~1.1]{MR2019030}, there is an $R_{L}>0$ such that
  $\E$ satisfies \L{}ojasiewicz-Simon inequality~\eqref{eq:38} on
  $B(\varphi,R_{L})$.

  Now, our aim is to construct a talweg $x : (0,\delta]\to X$ through the
  $C$-valley $\mathcal{V}_{C,\mathcal{D}}(\varphi)$ (for some $C>1$,
  $\delta>0$ and $\mathcal{D}\subseteq B(\varphi,R_{L})$) which satisfies the hypotheses of
  Theorem~\ref{thm:3talweg-kl} such that $\theta:=h^{-1}$ for
  $h(\cdot):=\E(x(\cdot)\vert\varphi)$ does not grow faster or slower as
  $s\mapsto\abs{s}^{1-\alpha}s$ with exponent
  $\alpha=\frac{1}{2}$.

  By assumption, there is an $r_{0}>0$ such that
    \begin{equation}
      \label{eq:5}
      \E(v\vert\varphi)\ge 0\qquad\text{for all $v\in B(\varphi,r_{0})$.}
    \end{equation}
 Applying Taylor's theorem gives that 
  \begin{equation}
    \label{eq:61}
    \E'(v)=\E''(\varphi)(v-\varphi) +\mathit{o}(\norm{v-\varphi}_{X})
  \end{equation}
  for every $v\in B(\varphi,r_{1})$, for some $0<r_{1}\le r_{0}$. By possibly
  choosing $r_{1}\in (0,r_{0}]$ a bit smaller,
  we can conclude from~\eqref{eq:61} that
  \begin{equation}
    \label{eq:24}
    \norm{\E'(v)}_{X'}\le C_{1}\,\norm{v-\varphi}_{X}
  \end{equation}
  for every $v\in B(\varphi,r_{1})$, where
  $C_{1}=(\norm{\E''(\varphi)}_{X''}+1)$. Since $\E''(\varphi)$ is
  invertible, there is a $C_{2}>0$ such that
  \begin{displaymath}
    \norm{v-\varphi}_{X}\le C_{2} \norm{\E''(\varphi)(v-\varphi)}_{X'}
  \end{displaymath}
  for all $v\in X$. Combining this inequality with~\eqref{eq:61},
  and by possibly choosing $r_{1}\in (0,r_{0}]$ again smaller, we get
  \begin{equation}
    \label{eq:58}
    \norm{\E'(v)}_{X'}\ge
     \frac{1}{2C_{2}} \norm{v-\varphi}_{X}
  \end{equation}
  for all $v\in B(\varphi,r_{1})$. Thus, functional $\E$
  satisfies Assumption~\ref{ass:H1} on $B(\varphi,r_{1})$. Further, Taylor's
  expansion gives
  \begin{equation}
    \label{eq:64}
    \E(v\vert \varphi)=\tfrac{1}{2}\langle
    \E''(\varphi)(v-\varphi),v-\varphi\rangle_{X',X}
    +\mathit{o}(\norm{v-\varphi}_{X}^{2})
  \end{equation}
 for every $v\to \varphi$. Thus, by possibly replacing $r_{1}$ by a
 smaller $\hat{r}_{1}>0$, we see that
  \begin{equation}
    \label{eq:63}
    \abs{\E(v\vert \varphi)}^{1/2}\le C_{3}\,\norm{v-\varphi}_{X}
  \end{equation}
  for every $v\in B(\varphi,r_{1})$. Now, fix an element
  $v_{0}\in \partial B(\varphi,r_{1})$ and let $x : [0,1]\to X$ be the
  \emph{straight line} from $\varphi$ to $v_{0}$ given by
  \begin{displaymath}
    x(r)=\varphi+r(v_{0}-\varphi)\qquad\text{for every $r\in
      [0,1]$}
  \end{displaymath}
  Then, by using again~\eqref{eq:64}, there is an
  $0<r_{2}<\min\{1,r_{1}\}$ such that
  \begin{equation}
    \label{eq:62}
    \abs{\E(x(r)\vert\varphi)}^{1/2}\ge C_{4}\norm{x(r)-\varphi}_{X}
  \end{equation}
  for every $r\in [0,r_{2}]$, where
  $C_{4}=\tfrac{1}{2}\tfrac{\abs{\langle
      \E''(\varphi)(v_{0}-\varphi),v_{0}-
      \varphi\rangle_{X',X}}^{1/2}}{\norm{v_{0}-\varphi}_{X}^{1/2}}>0$. In
  fact, inserting $v=x(r)$ into \eqref{eq:64} gives
  \begin{displaymath}
           \E(x(r)\vert \varphi)=\tfrac{r^{2}}{2}\langle
           \E''(\varphi)(v_{0}-\varphi),v_{0}-\varphi\rangle_{X',X}
           +\mathit{o}(r^{2})\quad\text{as $r\to 0+$.}
  \end{displaymath}
  Since $\frac{\mathit{o}(r^{2})}{r^{2}}\to 0$ as $r\to 0+$ and $\abs{\langle
    \E''(\varphi)(v_{0}-\varphi),v_{0}-\varphi\rangle_{X',X}}>0$,
  there is $0<\delta\le r_{2}<1$ such that
  \begin{displaymath}
    \frac{\mathit{o}(r^{2})}{r^{2}}<\frac{1}{4} \abs{\langle
    \E''(\varphi)(v_{0}-\varphi),v_{0}-\varphi\rangle_{X',X}}
  \end{displaymath}
  for every $0<r\le \delta$. From this and by using the triangle
  inequality, we see that
  \begin{align*}
    \abs{\E(x(r)\vert \varphi)}&=\abs{\tfrac{r^{2}}{2}\langle
    \E''(\varphi)(v_{0}-\varphi),v_{0}-\varphi\rangle_{X',X}
    +\mathit{o}(r^{2})}\\
    &\ge \abs{\tfrac{r^{2}}{4}\langle
    \E''(\varphi)(v_{0}-\varphi),v_{0}-\varphi\rangle_{X',X}}\\
    &= C_{4}^{2}\,\norm{x(r)-\varphi}_{X}^{2}
  \end{align*}
  for every $r\in [0,\delta]$. Now, set
  $\mathcal{D}=\overline{B}(\varphi,\delta)$ and let
  $\hat{v}\in \overline{B}(\varphi,\delta)\cap [\E=\E(x(r))]$ for
  $r\in (0,\delta]$. Note, $\delta>0$ can always be chosen smaller
  such that $\mathcal{D}\subseteq B(\varphi,R_{L})$ for the $R_{L}>0$
  given by~\cite{MR2019030}. Then, by \eqref{eq:24}, \eqref{eq:62},
  and by~\eqref{eq:63} combined with~\eqref{eq:58},
    \begin{align}\notag
      \norm{\E'(x(r))}_{X'}&\le C_{1}\,\norm{x(r)-\varphi}_{X}\\ \notag
      & \le \frac{C_{1}}{C_{4}} \abs{\E(x(r)\vert\varphi)}^{1/2}\\
      &= \frac{C_{1}}{C_{4}} \abs{\E(\hat{v}\vert\varphi)}^{1/2}
      \le \frac{C_{1}}{C_{4}}\frac{C_{3}}{2 C_{2}} \norm{\E'(\hat{v})}_{X'}. \label{ineq:lojhalf}
    \end{align}
    Taking the infimum over all
    $\hat{v}\in \mathcal{D}\cap
   [\E=\E(x(r)\vert\varphi)+\E(\varphi)]$, gives
    \begin{displaymath}
      \norm{\E'(x(r))}_{X'}\le \frac{C_{1}}{C_{4}}\frac{C_{3}}{2 C_{2}}\,s_{B(\varphi,R)}(\E(x(r)\vert\varphi)).
    \end{displaymath}
    Since $r\in (0,\delta]$ were arbitrary, we have thereby shown that the line curve $x$
    is a \emph{talweg curve} through the $C$-valley
    $\mathcal{V}_{C,\mathcal{D}}(\varphi)$ of $\E$ for
    $C=\frac{C_{1}}{C_{4}}\frac{C_{3}}{2 C_{2}}>1$. Moreover,
    $x'(r)=(v_{0}-\varphi)\neq 0$ for every $r\in [0,\delta]$. Hence, $x\in
    AC^{\infty}(0,\delta;X)$. Further, the function
    \begin{displaymath}
      h(r)=\E(x(r)\vert\varphi)\qquad\text{
    for every $r\in [0,\delta]$,}
  \end{displaymath}
  satisfies $h(0)=0$ and since
    $\E\in C^{2}$ and $\E''(\varphi)\neq 0$,
    \begin{align*}
     & h'(r)=\langle \E'(x(r)),v_{0}-\varphi\rangle_{X',X}\neq
       0\qquad\text{for all $t\in (0,\delta)$, and}\\
     & h''(r)=\langle \E''(x(r))(v_{0}-\varphi),v_{0}-\varphi\rangle_{X',X}\neq
       0\qquad\text{for all $r\in (0,\delta)$.}
    \end{align*}
    Thus, $h(r)$ is a homeomorphism from $[0,\delta]\to [0,R]$ for
    $R:=\E(x(\delta)\vert \varphi)$ and a
    diffeomorphism from $(0,\delta)\to (0,R)$. This shows that $\E$
    satisfies the hypotheses of Theorem~\ref{thm:3talweg-kl} and so
    $\E$ satisfies Kurdyka-\L{}ojasiewicz inequality~\eqref{eq:16} near
    $\varphi$. Moreover, by the
    inequalities~\eqref{eq:63} and~\eqref{eq:62},
    \begin{displaymath}
      \frac{1}{C_{3}\,\norm{v_{0}-\varphi}_{X}}\abs{h(r)}^{1/2}\le r \le
      \frac{1}{C_{4}\,\norm{v_{0}-\varphi}_{X}} \abs{h(r)}^{1/2}
    \end{displaymath}
    for every $r\in [0,\delta]$ and so, the function $\theta=h^{-1}$ satisfies
    \begin{displaymath}
      \frac{1}{C_{3}\,\norm{v_{0}-\varphi}_{X}}\abs{s}^{1/2} \le
      \theta(s)\le \frac{1}{C_{4}\,\norm{v_{0}-\varphi}_{X}} \abs{s}^{1/2}
    \end{displaymath}
    for every $s\in [0,R]$. In particular, by inequality~\eqref{ineq:lojhalf},
  \begin{equation}
    \label{eq:59}
    1 \le C\,\norm{\E'(v)}_{X'} \abs{\E(v\vert\varphi)}^{-1/2}
  \end{equation}
  for all $v\in B(\varphi,\delta)$, where $C=\frac{C_{3}}{2C_{2}}>0$. This
  shows that $\E$ satisfies Kurdyka-\L{}ojasiewicz inequality~\eqref{eq:16} near
  $\varphi$ with $\theta(s)=2 C\abs{s}^{\frac{1}{2}-1}s$, or, \L{}ojasiewicz-Simon
  inequality~\eqref{eq:38} near $\varphi$ with exponent
  $\alpha=\tfrac{1}{2}$, showing the optimality of $\theta$ for this
  example of an energy $\E$.
\end{examples}

Next, we consider the non-smooth case of energy functionals
$\E : X \rightarrow (-\infty, +\infty]$. Here, one assumes that $\E$
can be decomposed as $\E=\E_{1}+\E_{2}$, where $\E_{1}$ is a
proper, lower semicontinuous, convex functional and $\E_{2}$ a
continuously differentiable functional on $X$. Then the
\emph{subdifferential} $\partial \E$ of $\E$ is given by
\begin{displaymath}
\partial\E=\Big\{(v,x')\in X\times X'\,\Big\vert\; \liminf_{t\downarrow
0}\tfrac{\E(v+th)-\E(v)}{t}\ge \langle
x',h\rangle_{X',X}\;\text{ for all }\,h\in X\Big\}.
\end{displaymath}
By~\cite[Corollary 1.4.5]{AGS-ZH}, for $v\in D(\abs{D^{-}\E})$, the
\emph{descending slope} 
\begin{equation}
  \label{eq:98}
\abs{D^{-}\E} (v) =\min \left\{ \norm{x'}_{X'}\,\Big\vert\;x' \in \partial\E(v) \right\}
\end{equation}
and $\abs{D^{-}\E}$ is a strong upper gradient of $\E$. Since
$\partial\E^{\circ}(v)$ denotes the set of all \emph{subgradients}
$x'\in \partial\E(v)$ with minimal (dual) norm, the
relation~\eqref{eq:98} can be rewritten as
$\abs{D^{-}\E}(v) = \norm{\partial\E^{\circ}(v)}_{X'}$.

\medskip

In the case $X=(H, (.,.)_{H})$ is a real Hilbert space and
$\E : H \rightarrow (-\infty, +\infty]$ a proper, lower semicontinuous
and semi-convex functional. Then, the following well-known
\emph{generation theorem} \cite[Proposition~3.12]{Brezis} (see also~\cite{MR3465809}) holds for
solutions to gradient system~\eqref{ACP1}.
\begin{center}
  \begin{minipage}[c]{0.9\linewidth}
    \emph{For every $v_0 \in \overline{D(\E)}$, there is a unique
      solution $v$ of} 
      \begin{equation}\label{ACP1bis}\tag{\ref{ACP1}}
        \begin{cases}
          v'(t) + \partial \E (v(t)) \ni 0, & t\in (0, \infty),\\
          \mbox{}\hspace{1,95cm} v(0) = v_0. &\\
        \end{cases}
      \end{equation}
    \mbox{}
    \end{minipage}
 \end{center}

Since semi-convexity coincides with the notion of
 $(-\lambda)$-geodesically convexity for some $\lambda\in \R$ (see
 Remark~\ref{rem:geodesicconvexity-in-H}) and by~\eqref{eq:98}, solution $v$
  of~\eqref{ACP1} are the ($2$-)\emph{gradient flows} of $\E$ in the metric space
  $\mathfrak{M}:=\overline {D(\E)}$ equipped with the induced metric
  of $H$. In particular, $\E$ generates an \emph{evolution variational
    inequality} (see Remark~\ref{rem:evi-conditions} and also~\cite{DaneriSavare}
  and~\cite{AGS-ZH}).\medskip

  As a consequence of Theorem~\ref{thm:convergence}, we obtain the
  following stability result for the Hilbert space framework which
  generalizes~\cite[Theorem~2.6]{MR2289546} (also, compare with
  \cite[Theorem~12.2]{chill-fasangova:isem}, and \cite[Theorem~18,
  $(i)\Rightarrow (ii)$]{Bolte2010}).

\begin{corollary}
  Let $\E : H \to (-\infty,+\infty]$ be a proper, lower semicontinuous
  and semi-convex functional on a Hilbert space $H$. Suppose $v :
  [0,+\infty)\to H$ is a gradient flow of $\E$ and there are $\varepsilon>0$ and an
  equilibrium point $\varphi\in \omega(v)$ such that the set
  $B(\varphi,\varepsilon)$ satisfies hypothesis~\eqref{eq:15}.  If
  there is a strictly increasing function
  $\theta\in W^{1,1}_{loc}(\R)$ satisfying $\theta(0)=0$ and
  $\abs{[\theta>0,\theta'=0]}=0$, for which $\E$ satisfies a
  Kurdyka-\L{}ojasiewicz inequality on the set $\U_{\varepsilon}$
  given by~\eqref{eq:96}, then the following statements hold.
  \begin{enumerate}[topsep=3pt,itemsep=1ex,partopsep=1ex,parsep=1ex]
    \setlength{\itemsep}{7pt}
   \item[(1)] The gradient flow $v$ trends to the equilibrium point $\varphi$ of $\E$ in
     the metric sense of $H$.

   \item[(2)] If there is a Banach space $V$ which is continuously
     embedded into $H$ such that the gradient flow $v\in C((0,\infty);V)$
     and has relatively compact image in $V$, then $v$ trends to the
     equilibrium point $\varphi$ of $\E$ in the metric sense of $V$.
  \end{enumerate}
\end{corollary}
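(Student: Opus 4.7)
The plan is to apply Theorem~\ref{thm:convergence} directly for claim~(1), and then to bootstrap the $H$-convergence to $V$-convergence using the compactness hypothesis for claim~(2).

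For claim~(1), I would first note that since $\E : H \to (-\infty,+\infty]$ is proper, lower semicontinuous and semi-convex, it is $(-\lambda)$-geodesically convex for some $\lambda \in \R$ on the length space $H$ (cf.\ Remark~\ref{rem:geodesicconvexity-in-H}). By statement~\eqref{propo:lconvex-slope-claim3} of Proposition~\ref{propo:lconvex-slope}, the descending slope $g := \abs{D^{-}\E}$ is a strong upper gradient of $\E$, and by~\eqref{eq:98} the gradient flow $v$ in the sense of~\eqref{ACP1} coincides with a $2$-gradient flow of $\E$ with respect to $g$. The hypotheses of Theorem~\ref{thm:convergence} are now all in place: $\omega(v)$ is non-empty since $\varphi \in \omega(v)$, $\E$ is lower semicontinuous on all of $H$ (in particular on $\overline{\mathcal{I}}_{\overline{t}}(v)$ for any $\overline{t} \ge 0$), the set $B(\varphi,\varepsilon)$ satisfies~\eqref{eq:15}, and $\E$ satisfies the Kurdyka-\L{}ojasiewicz inequality~\eqref{eq:16} on $\U_{\varepsilon}$. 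Theorem~\ref{thm:convergence} therefore yields $\lim_{t \to \infty} v(t) = \varphi$ in $H$, which is the desired trend to equilibrium in the metric sense of $H$.

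For claim~(2), suppose in addition that $V \hookrightarrow H$ continuously and $v \in C((0,\infty); V)$ has relatively compact image $\mathcal{I}_{t_{0}}(v)$ in $V$ for some $t_{0} > 0$. The strategy is a standard compactness-plus-uniqueness argument. I would take any sequence $t_{n} \uparrow +\infty$; by relative compactness of $\mathcal{I}_{t_{0}}(v)$ in $V$, there is a subsequence (not relabeled) and some $\psi \in V$ such that $v(t_{n}) \to \psi$ in $V$. The continuous embedding $V \hookrightarrow H$ then forces $v(t_{n}) \to \psi$ in $H$. But by claim~(1), $v(t_{n}) \to \varphi$ in $H$, so uniqueness of limits in $H$ gives $\psi = \varphi$, in particular $\varphi \in V$. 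Thus every $V$-convergent subsequence of $(v(t_{n}))_{n}$ has limit $\varphi$. By a standard subsequence principle (every subsequence of $(v(t_{n}))$ admits a further $V$-convergent subsequence, all such limits being $\varphi$), the whole sequence $v(t_{n}) \to \varphi$ in $V$; and since $t_{n} \uparrow +\infty$ was arbitrary, $\lim_{t \to \infty} v(t) = \varphi$ in $V$.

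The one step that is slightly delicate is invoking the subsequence principle, which requires that every sequence $t_{n} \uparrow +\infty$ have a $V$-convergent subsequence of $(v(t_{n}))$; this is immediate from the hypothesis that $\mathcal{I}_{t_{0}}(v)$ is relatively compact in $V$. No other obstacle arises: the argument is essentially the observation that on a relatively compact set in $V$, convergence in the weaker topology of $H$ implies convergence in $V$.
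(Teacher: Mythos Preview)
Your proposal is correct and follows exactly the approach the paper intends: the corollary is stated without proof in the paper, as a direct consequence of Theorem~\ref{thm:convergence}, and your argument for (1) verifies precisely the hypotheses needed to invoke that theorem in the Hilbert space setting. Your argument for (2) via the subsequence principle (compactness in $V$ plus uniqueness of the limit from the continuous embedding $V\hookrightarrow H$) is the standard and correct way to upgrade the convergence, and is implicitly what the paper has in mind.
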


Further, from Theorem~\ref{thm:decayrates}, we can conclude the following
result of decay estimates in the Hilbert space framework.

\begin{corollary}\label{decahilbert}
  Let $\E : H \to (-\infty,+\infty]$ be a proper, lower semicontinuous
  and semi-convex functional on a Hilbert space $H$. Suppose
    $v : [0,+\infty)\to H$ is a gradient flow of $\E$ and there are $c$, $\varepsilon>0$ and
    an equilibrium point $\varphi\in \omega(v)$ such that $\E$
    satisfies a \L{}ojasiewicz-Simon inequality~\eqref{eq:38} with
    exponent $\alpha\in (0,1]$ on $B(\varphi,\varepsilon)$. Then,
    \allowdisplaybreaks
  \begin{align*}
    \norm{v(t)-\varphi}_{H} &\le \tfrac{c}{\alpha}\left(\E(v(t)\vert\varphi)\right)^{\alpha} =
     \; \mathcal{O}\left(t^{^{-\frac{\alpha}{1-2\alpha}}}\right)\hspace{2.25cm}\text{if
       $0<\alpha<\tfrac{1}{2}$}\\
     \norm{v(t)-\varphi}_{H} &\le c\,2\left(\E(v(t)\vert\varphi)\right)^{\frac{1}{2}}
       \le  c\,2\,\left(\E(v(t_{0})\vert\varphi)\right)^{\frac{1}{2}}\; e^{-\tfrac{t}{2c^{2}}}\hspace{0.4cm}\text{if
      $\alpha=\tfrac{1}{2}$}\\
     \norm{v(t)-\varphi}_{H} &\le
    \begin{cases}
   \tilde{c}\,(\hat{t}-t)^{\frac{\alpha}{2\alpha-1}}
      & \quad
    \text{if\quad $t_{0}\le t\le \hat{t}$,}\\
    0 & \quad
    \text{if $t>\hat{t}$,}\\
    \end{cases}
    \mbox{}\hspace{1.8cm}\text{if $\tfrac{1}{2}<\alpha\le 1$,}
  \end{align*}
  where,
  \begin{displaymath}
     \tilde{c}:=\left[\left[\tfrac{1}{\alpha^{\alpha -1}c}\right]^{\frac{1}{\alpha}}\,
      \tfrac{2\alpha-1}{\alpha}\right]^{\frac{\alpha}{2\alpha-1}},\quad
   \hat{t}:=t_{0}+ \, \alpha^{\frac{\alpha -1}{\alpha}}\,
   c^{\frac{1}{\alpha}}\,\tfrac{\alpha}{2\alpha-1}\,
  (\E(v(t_{0})\vert \varphi))^{\frac{2\alpha-1}{\alpha}},
\end{displaymath}
and $t_{0}\ge 0$ can be chosen to be the ``first entry time'', that
is, $t_{0}\ge 0$ is the smallest time $\hat{t}_{0}\in [0,+\infty)$ such that
$v([\hat{t}_{0},+\infty))\subseteq B(\varphi,\varepsilon)$.
\end{corollary}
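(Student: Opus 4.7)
The plan is to show that Corollary \ref{decahilbert} is, essentially, the specialization of Theorem \ref{thm:decayrates} to the Hilbert setting with $p=2$. Almost all of the work has already been done; what remains is to verify that the Hilbert-space gradient flow $v$ of $\E$ really falls into the metric-space framework of Theorem \ref{thm:decayrates}, and then to substitute $p=2$ into the three decay formulas appearing there.

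First, I would justify the identifications. Since $\E$ is proper, lower semicontinuous and semi-convex on $H$, Remark \ref{rem:geodesicconvexity-in-H} says $\E$ is $(-\lambda)$-geodesically convex for some $\lambda \in \R$ along every line segment of $H$, so Proposition \ref{propo:lconvex-slope}(3) applies with $\mathfrak{M}=\overline{D(\E)}$: the descending slope $\abs{D^{-}\E}$ is a (lower semicontinuous) strong upper gradient of $\E$, and by \eqref{eq:98} it equals $\norm{\partial\E^{\circ}(v)}_{H'}$. The metric derivative of a $H$-valued absolutely continuous curve coincides with $\norm{v'(t)}_{H}$, so the classical gradient-flow equation \eqref{ACP1bis} is equivalent, via the energy dissipation equality \eqref{EDE} at $p=2$, to saying that $v$ is a $2$-gradient flow of $\E$ in the sense of Definition \ref{def:pgradientflow} with strong upper gradient $g=\abs{D^{-}\E}$. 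Hence the hypothesis that $\E$ satisfies a \L{}ojasiewicz-Simon inequality \eqref{eq:38} with exponent $\alpha$ on $B(\varphi,\varepsilon)\cap D(\E)$ is exactly the condition required by Theorem \ref{thm:decayrates} for $g=\abs{D^{-}\E}$.

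Second, I would check the remaining assumptions of Theorem \ref{thm:decayrates}. The non-emptiness of $\omega(v)$ is built into the statement of the corollary since we are given $\varphi\in \omega(v)$; the lower semicontinuity of $\E$ on $\overline{\mathcal{I}}_{\overline t}(v)$ for some $\overline t\ge 0$ follows trivially from the lower semicontinuity of $\E$ on $H$; and $\varphi\in \omega(v)\cap \mathbb{E}_{\abs{D^{-}\E}}$ is guaranteed because $\varphi$ is an equilibrium point. Thus Theorem \ref{thm:decayrates} applies verbatim.

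Third, I would simply substitute $p=2$, hence $p^{\mbox{}_{\prime}}=2$ and $p^{\mbox{}_{\prime}}-1=1$, into the three cases of Theorem \ref{thm:decayrates}, using $d(v(t),\varphi)=\norm{v(t)-\varphi}_{H}$. In the polynomial case the exponent $-\tfrac{\alpha(p-1)}{1-p\alpha}$ reduces to $-\tfrac{\alpha}{1-2\alpha}$; in the critical case $\alpha=\tfrac{1}{p}=\tfrac{1}{2}$ the prefactor $cp$ becomes $2c$ and the exponential rate $\tfrac{1}{pc^{p^{\prime}}}$ becomes $\tfrac{1}{2c^{2}}$; in the finite-extinction case the exponent $\tfrac{\alpha(p-1)}{p\alpha-1}$ reduces to $\tfrac{\alpha}{2\alpha-1}$ and $\tfrac{p^{\mbox{}_{\prime}}-1}{\alpha}$ reduces to $\tfrac{1}{\alpha}$, which gives exactly the claimed $\tilde c$ and $\hat t$. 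There is no genuine obstacle: the only thing to be careful about is consistent use of the identifications $\abs{v'}=\norm{v'}_{H}$ and $g=\abs{D^{-}\E}$ so that the metric-space statement translates cleanly to the Hilbert-space one.
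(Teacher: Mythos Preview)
Your proposal is correct and follows precisely the paper's intended approach: the paper simply states that Corollary~\ref{decahilbert} is obtained ``from Theorem~\ref{thm:decayrates}'' without giving further details, and what you have written is exactly the verification that the Hilbert-space hypotheses feed into Theorem~\ref{thm:decayrates} with $p=2$. Your identifications via Remark~\ref{rem:geodesicconvexity-in-H}, Proposition~\ref{propo:lconvex-slope}\eqref{propo:lconvex-slope-claim3}, and \eqref{eq:98}, together with the substitution $p=p^{\mbox{}_{\prime}}=2$, are the right ones.
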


\subsubsection{{\bfseries Strategy to derive global ET- and \L{}S-inequalities}}
\label{subsubsec:Derivation-LS-inequality}

In the next two examples, we want to illustrate how an abstract
  \emph{Poincar\'e-Sobolev inequality}
  (see~\eqref{eq:PoincareSobolevE} below) leads to an \emph{entropy-transportation
    inequality}~\eqref{eq:ET-global} and from this via the
  equivalence relation to \L{}ojasiewicz-Simon inequality~\eqref{eq:38}
  (Corollary~\ref{equivalent1}) to \emph{decay estimates} of the trend
  to equilibrium and \emph{finite time of extinction} (via
  Corollary~\ref{decahilbert}). In particular, we provide a
  \emph{simple} method to establish upper bounds on the
  \emph{arrival time to equilibrium} (also called \emph{extinction time})
\begin{displaymath}
  T^{\ast}(v_{0}):=\inf\Big\{t>0\,\Big\vert\;
   v(s)= \varphi \quad\text{for all $s\ge t$}\Big\}
\end{displaymath}
of solutions $v$ of parabolic boundary-value problems on a domain
$\Omega\subseteq \R^{N}$, ($N\ge 1$) with initial value $v(0)=v_{0}$
and given equilibrium point $\varphi$. To do this, we revisit two
classical examples on the total variational flow
from~\cite{ACMDirichlet} and~\cite{ACMBook} (see also~\cite{GigaKohn}
and \cite{MR2881044}). But we stress that this method can easily be
applied to any other nonlinear parabolic boundary-value problem that
can be realized as an abstract gradient system~\eqref{ACP1} for a
functional $\E$ defined on a Hilbert space $H$ and for which a \emph{polynomial}
entropy-transportation inequality
\begin{displaymath}
  \norm{v-\varphi}_{H}\le C\,\left(\E(v\vert \varphi)\right)^{\beta}\qquad \qquad\text{($v\in D(\E)$),}
\end{displaymath}
holds for a global minimizer $\varphi$ of $\E$ and given $C$,
$\beta>0$ (see also Remark~\ref{rem:gigakohn} below). Taking $\beta$th
root on both sides of the last inequality, then we call
\begin{equation}
  \label{eq:PoincareSobolevE}
  \norm{v-\varphi}_{H}^{1/\beta}\le \tilde{C}\,\E(v\vert \varphi)\qquad \qquad\text{($v\in D(\E)$),}
\end{equation}
an (abstract) \emph{Poincar\'e-Sobolev inequality}
since~\eqref{eq:PoincareSobolevE} is either obtained by only a known Sobolev inequality (see,
for instance,~\cite[Theorem 3.47]{AFPBook} or~\cite[Chapter 11, 13.5,
14.6]{leoni}) or by a Sobolev inequality combined with a Poincar\'e
inequality (see, for instance, \cite[Theorem 3.44]{AFPBook}).

We note that the idea to employ \L{}ojasiewicz-Simon
inequality~\eqref{eq:38} for deriving \emph{decay estimates} and
\emph{finite time of extinction} of solution of semi-linear and
quasilinear parabolic problems is well-known (see, for
instance,~\cite{MR2019030,MR2289546}). But, our approach to exploit
the equivalence relation between entropy-transportation
inequality~\eqref{eq:ET-global} and \L{}ojasiewicz-Simon
inequality~\eqref{eq:38} (Corollary~\ref{equivalent1}) seems to be
new. On the other hand, we also need to mention that
Corollary~\ref{decahilbert} does not lead to optimal bounds on the
extinction time $T^{\ast}(v_{0})$.

%
%

\subsubsection{{\bfseries Finite Extinction time of the Dirichlet-Total Variational Flow}}
\label{subsubsec:Extinction-Dirichlet}

In \cite{ACMDirichlet} (see also \cite{ACMBook}), the following parabolic initial
  boundary-value problem
  \begin{equation}
   \label{dirichlet1}
   \begin{cases}
     \mbox{}\hspace{11pt}  v_t = {\rm div} \left(\frac{Dv}{\abs{Dv}} \right) & \text{in
       $\Omega\times (0,+\infty)$,}\\
     \mbox{}\hspace{14pt}v=0 & \text{on
       $\partial\Omega\times (0,+\infty)$,}\\
     v(0)=v_{0} & \text{on $\Omega$,}
   \end{cases}
 \end{equation}
 related to the \emph{total variational flow with homogeneous
   Dirichlet boundary conditions} was studied on a bounded connected
 extension domain $\Omega \subseteq \R^N$. In particular, it
 was shown that problem~\eqref{dirichlet1} can be rewritten as an
 abstract initial value problem~\eqref{ACP1bis} in the Hilbert space
 $H=L^{2}(\Omega)$ for the energy functional
 $\E : L^2(\Omega) \rightarrow (-\infty, + \infty]$ given by
  \begin{equation}
    \label{eq:97}
    \E(v):=
    \begin{cases}
      \displaystyle\int_\Omega \abs{D v} + \int_{\partial \Omega}
      \abs{v}   & \text{if $v \in BV(\Omega) \cap
        L^2(\Omega)$,}\\
      +\infty & \text{if otherwise.}
    \end{cases}
  \end{equation}
  In fact (cf~\cite[Theorem 3]{ACMDirichlet}, see also
  \cite[Theorem~5.14]{ACMBook}), $\E$ is a proper, convex and lower
  semicontinuous functional on $L^{2}(\Omega)$ with dense domain. Thus,
  by \cite[Proposition~3.12]{BrezisBook}, for every $v_{0}\in
  L^{2}(\Omega)$, there is a unique solution of problem~\eqref{dirichlet1}.

Now, due to Corollary~\ref{decahilbert}, we can show that for
$N\le 2$, every solution $v$ of problem~\eqref{dirichlet1} with
initial value $v_{0}\in L^{2}(\Omega)$ has finite \emph{extinction time}
\begin{displaymath}
 T^{\ast}(v_{0}):=\inf\Big\{t>0\,\Big\vert\;
 v(s)= 0 \quad\text{for all $s\ge t$}\Big\}.
\end{displaymath}

\begin{theorem}\label{extintiondirichlet}
 Suppose $N\le 2$ and for $v_0 \in L^2(\Omega)$, let
  $v : [0,+\infty)\to L^2(\Omega)$ be the unique strong solution of
  problem~\eqref{dirichlet1}. Then,
    \begin{displaymath}
      T^{\ast}(v_0)\le
      \begin{cases}
       \displaystyle\inf_{s>0}\Big(s+ S_{1}\,\abs{\Omega}^{1/2}\,\E(v(s))\Big) & \text{if $N=1$,}\\[3pt]
        \displaystyle\inf_{s>0}\Big(s+ S_{2}\,\E(v(s)) \Big) & \text{if $N=2$,}
      \end{cases}
    \end{displaymath}
  where $S_N$ is the best constant in
  Sobolev inequality~\eqref{1poincaree}, and
  \begin{equation}\label{cadirichlet1}
   \norm{v(t)}_{L^{2}(\Omega)} \le
  \begin{cases}
    \tilde{c}\, (T^{\ast}(v_0) - t) & \text{if $0\le t \le T^{\ast}(v_0)$,}\\
    0 & \text{if $ t > T^{\ast}(v_0)$,}
  \end{cases}
\end{equation}
for some $\tilde{c}>0$.
\end{theorem}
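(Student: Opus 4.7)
The plan is to identify the problem as an abstract gradient system in $H=L^{2}(\Omega)$ for the convex energy $\E$ given by~\eqref{eq:97}, for which $\varphi=0$ is the unique global minimizer with $\E(0)=0$; then to derive a global entropy–transportation inequality of the linear type $\|v\|_{L^{2}(\Omega)}\le C_{N}\,\E(v\vert 0)$ from a Sobolev embedding of $BV\cap L^{2}$; and finally to convert this via Corollary~\ref{equivalent1} into a global \L{}ojasiewicz–Simon inequality with exponent $\alpha=1$ so that Corollary~\ref{decahilbert} produces both the finite-time extinction estimate and the linear decay~\eqref{cadirichlet1}.

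More concretely, I first observe that $\E$ is proper, convex, lower semicontinuous and densely defined on $L^{2}(\Omega)$, so the Br\'{e}zis generation theorem furnishes the (unique) strong solution $v$ of~\eqref{dirichlet1} as the gradient flow of $\E$, and Proposition~\ref{propo:charcter-local-min} gives that $\{0\}=\textrm{argmin}(\E)=\mathbb{E}_{\abs{D^{-}\E}}$, so that the Sard-type hypothesis~\eqref{eq:14} holds on $[\E(\cdot\vert 0)>0]$. Next, I apply the Sobolev inequality~\eqref{1poincaree} for $BV(\Omega)$-functions with a boundary trace term: for $N=2$ this directly yields $\|v\|_{L^{2}(\Omega)}\le S_{2}\,\E(v)$, while for $N=1$ it gives $\|v\|_{L^{\infty}(\Omega)}\le S_{1}\,\E(v)$, which combined with $\|v\|_{L^{2}(\Omega)}\le |\Omega|^{1/2}\|v\|_{L^{\infty}(\Omega)}$ yields $\|v\|_{L^{2}(\Omega)}\le S_{1}|\Omega|^{1/2}\,\E(v)$. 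Setting $C_{1}=S_{1}|\Omega|^{1/2}$ and $C_{2}=S_{2}$, this is a global ET-inequality~\eqref{E2equivalent1} with $\alpha=1$ and constant $c=C_{N}$.

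Then, verifying that Assumption~\ref{eq:existence-cond-bis} holds by the generation theorem just recalled, Corollary~\ref{equivalent1}(2) yields the global \L{}ojasiewicz–Simon inequality
\begin{equation*}
1 \le C_{N}\,\abs{D^{-}\E}(v)\qquad\text{for every $v\in D(\E)\setminus\{0\}$},
\end{equation*}
that is, \eqref{eq:38} with $\varphi=0$, exponent $\alpha=1$ and constant $c=C_{N}$. Feeding this into Corollary~\ref{decahilbert} in the regime $\alpha=1>1/p=1/2$, one obtains finite-time extinction with
\begin{equation*}
\hat{t} = t_{0} + C_{N}\,\E(v(t_{0})),\qquad \tilde{c}=1/C_{N},
\end{equation*}
and the linear decay $\|v(t)\|_{L^{2}(\Omega)}\le \tilde{c}\,(\hat{t}-t)$ on $[t_{0},\hat{t}]$, vanishing afterwards.

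To match the theorem's statement exactly, I would handle two last points. First, since the initial datum $v_{0}$ is only in $L^{2}(\Omega)$ and may fail to lie in $D(\E)$, I apply the previous step starting from any $s>0$ (using the regularizing effect $v(s)\in D(\E)$ of gradient flows of proper convex lower semicontinuous functionals) to get $T^{\ast}(v_{0})\le s+C_{N}\E(v(s))$, whence the infimum formulas follow. Second, to extend the linear decay estimate from $[t_{0},T^{\ast}(v_{0})]$ down to $[0,T^{\ast}(v_{0})]$, I use the nonexpansiveness of the semigroup with respect to the equilibrium $0$, which gives that $t\mapsto\|v(t)\|_{L^{2}(\Omega)}$ is non-increasing; then for $t\in[0,t_{0}]$ one has $\|v(t)\|_{L^{2}(\Omega)}\le \|v(t_{0})\|_{L^{2}(\Omega)}\le \tilde{c}(T^{\ast}(v_{0})-t_{0})\le \tilde{c}(T^{\ast}(v_{0})-t)$, with the same constant $\tilde{c}$. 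The only real obstacle is the dimensional restriction $N\le 2$, which is intrinsic: it is exactly the range in which the critical Sobolev exponent $N/(N-1)$ continuously embeds $BV(\Omega)$ into $L^{2}(\Omega)$ (with a factor $|\Omega|^{1/2}$ when $N=1$), so that the ET-inequality with $\alpha=1$ is available; for $N\ge 3$ one has $N/(N-1)<2$ and an analogous argument only gives a subcritical ET-inequality with $\alpha<1$, producing polynomial (not finite-time) decay.
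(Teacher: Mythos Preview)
Your proposal is correct and follows essentially the same approach as the paper's proof: derive the global ET-inequality $\|v\|_{L^2(\Omega)}\le C_N\,\E(v)$ from the Sobolev inequality for $BV$-functions (via extension by zero, which accounts for the boundary trace term in~\eqref{eq:97}), convert it via Corollary~\ref{equivalent1} into a global \L{}ojasiewicz--Simon inequality with exponent $\alpha=1$, and then apply Corollary~\ref{decahilbert} together with the regularizing effect $v(s)\in D(\E)$ for $s>0$. Your treatment is in fact slightly more detailed than the paper's in two places: you explicitly address the extension of the decay estimate from $[t_0,T^{\ast}(v_0)]$ down to $[0,T^{\ast}(v_0)]$ via the nonexpansiveness of the semigroup (which the paper does not spell out), and you explain why the dimensional restriction $N\le 2$ is sharp for this argument.
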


\begin{proof}
  By the Sobolev inequality for $BV$-functions (see~\cite[Theorem
  3.47]{AFPBook}),
  \begin{equation}\label{1poincaree}
    \norm{u}_{L^{1^{\ast}}(\R^N)} \le S_{N}\,
    \displaystyle\int_{\R^N} \abs{D u}
    \qquad \text{for every $u \in BV(\R^N)$,}
  \end{equation}
  where $1^{\ast} = \infty$ if $n=1$, $1^{\ast} = 2$ if
  $n=2$. Since $\Omega$ is an extension domain, for
  every $v \in BV(\Omega)$, the extension $\hat{v}$ of $v$ given by
  \begin{displaymath}
  \hat{v}(x):=
  \begin{cases}
    v(x) & \text{if $x \in \Omega$}\\
    0  &\text{if $x \in \R^N \setminus \overline{\Omega}$,}
  \end{cases}
 \end{displaymath}
 belongs to $BV(\R^{N})$ and satisfies
 \begin{displaymath}
    \displaystyle\int_{\R^N} \abs{D\hat{v}} =
    \displaystyle\int_{\Omega} \abs{D v} + \int_{\partial \Omega} \abs{v}.
 \end{displaymath}
  (cf~\cite[Theorem 3.89]{AFPBook}) and so by
  inequality~\eqref{1poincaree}, we find that
 \begin{equation}
   \label{12poincaree}
    \norm{v}_{L^{1^{\ast}}(\Omega)} \le S_{N}\,
    \left(\displaystyle\int_{\Omega}\abs{D v}
      + \int_{\partial \Omega} \abs{v}\right)
  \end{equation}
  for all $v \in BV(\Omega)$. Note that for
  $v\in BV(\Omega)\cap L^{2}(\Omega)$, the right-hand side
  in~\eqref{12poincaree} is $\E(v)$. Moreover, $\varphi\equiv 0$ is
  the (unique) global minimizer of $\E$. Thus and since for $N\le 2$,
  the Sobolev exponent $1^{\ast}\ge 2$, we can apply H\"older's
  inequality to conclude from~\eqref{12poincaree} that
  \begin{equation}
         \label{poinc12bis}
         \norm{v}_{L^{2}(\Omega)}
        \le C\, \E(v\vert \varphi)
           \qquad \text{for all $v\in D(\E)$,}
 \end{equation}
 where the constant $C=S_{1}\,\abs{\Omega}^{1/2}$ if $N=1$ and
 $C=S_{2}$ if $N=1$. Due to the (abstract)
 Sobolev-inequality~\eqref{poinc12bis}, $\E$
 satisfies, in fact, an \emph{entropy-transportation
   inequality}~\eqref{eq:13-transport0} for $\Psi(s)= C\,s$,
 ($s\in \R$), which by Corollary~\ref{equivalent1}, is equivalent to
 the \L{}ojasiewicz-Simon inequality
 \begin{displaymath}
    1 \le C\,\norm{\partial^{\circ}\E(v)}_{H}=C\,\abs{D^- \E}(v),
    \qquad (v\in [\E>0]).
 \end{displaymath}
 Thereby, we have shown that the energy functional $\E$ given
   by~\eqref{eq:97} satisfies a global \L{}ojasiewicz-Simon
 inequality~\eqref{eq:38} with exponent $\alpha=1$ at the
   equilibrium point $\varphi\equiv 0$. Moreover, since the energy $\E$ is convex, this equilibrium point
 is a global minimizer of $\E$. Thus, if $v(t)\in D(\E)$ for every
 $t>0$, then $\varepsilon>0$ in the neighborhood
 $B(\varphi,\varepsilon)$ in Corollary~\ref{decahilbert} can be chosen
 arbitrarily large. But by to the theory of gradient flows in Hilbert spaces
 (cf~\cite{BrezisBook}), for every initial value $v_{0}\in
 L^{2}(\Omega)$, the gradient flows $v$ of $\E$ with initial datum
 $v(0)=v_{0}$ satisfies $v(t)\in D(\E)$ for all $t>0$. Therefore, we
 can conclude this proof by applying Corollary~\ref{decahilbert}.
\end{proof}

\begin{remark}
  \label{rem:talenti}
  It was shown in~\cite{Talenti} that in dimension $N=2$,
  Sobolev inequality~\eqref{1poincaree} has the sharp constant
  $S_{2}= \frac{1}{\sqrt{2 \pi}}$ with Sobolev exponent $1^{\ast}=2$.
  Thus, Corollary~\ref{decahilbert} applied to
  $v_{0}\in D(\E)=BV(\Omega)$ with $t_0 = 0$
  and $\alpha = 1$ yields that in dimension $N=2$, the
  \emph{extinction time} $T^{\ast}(v_0)$ fulfills
  \begin{displaymath}
      T^{\ast}(v_0) \le \frac{1}{\sqrt{2 \pi}} \int_\Omega  \abs{D v_0}.
  \end{displaymath}

  In particular, if $E \subsetneq\Omega$ is a set of finite
  perimeter and $v_0 = a\, \mathds{1}_E$, then
  \begin{equation}
      \label{eq:99}
      T^{\ast}(v_0) \le   \frac{a}{\sqrt{2 \pi}}  {\rm Per}(E).
  \end{equation}

  Let us point out that the upper bound of $T^{\ast}(v_0)$ given by~\eqref{eq:99} is not
  optimal. In fact, it was shown in~\cite{ACDM} that if
  $0\in \Omega$, $a>0$ and for sufficiently small $R>0$, then
    \begin{displaymath}
      v(t,x) := \frac{2}{R} \left[\frac{a\, R}{2} - t \right]^+
      \mathds{1}_{B(0,R)}(x)\qquad\text{for every $x\in \Omega$, $t>0$,}
    \end{displaymath}
    is the unique solution of~\eqref{dirichlet1} with initial
    datum $v_0 = a\, \mathds{1}_{B(0,R)}$. But, $v$ has the
    extinction time $T^{\ast}(v_{0})=\frac{a\, R}{2}$, which
    is smaller than
    the upper bound
    \begin{displaymath}
      \frac{a}{\sqrt{2 \pi}} {\rm Per}(B(0,R)) = a\, R \sqrt{2
        \pi}\qquad\text{given by~\eqref{eq:99}.}
   \end{displaymath}
\end{remark}

%
%

\subsubsection{{\bfseries Finite extinction time to equilibrium of the Neumann-Total
  Variational Flow}}
\label{subsubsec:Extinction-Neumann}

In \cite{ACMBook}, the following \emph{Neumann problem}
\begin{equation}
  \label{Neumann1}
  \begin{cases}
    \mbox{}\hspace{10pt}v_t = {\rm div} \left(\frac{Dv}{\vert Dv \vert} \right) & \text{in
      $\Omega\times (0,+\infty)$,}\\
    \mbox{}\hspace{7pt}\tfrac{\partial v}{\partial \nu}=0 & \text{on
      $\partial\Omega\times (0,+\infty)$,}\\
    v(0)=v_{0} & \text{on $\Omega$,}
  \end{cases}
\end{equation}
related to the \emph{total variational flow} was studied. Here,
$\Omega \subseteq \R^N$ is a bounded connected extension domain and if
$\nu$ is the outward pointing unit normal vector at $\partial\Omega$
then $\tfrac{\partial v}{\partial \nu}$ denote the \emph{co-normal
  derivative} $\frac{Dv}{\vert Dv \vert} \cdot \nu$ associated with
$\mathcal{A}:=-{\rm div} \left(Dv/\vert Dv \vert \right)$. It was
shown in~\cite[Theorem~2.3]{ACMBook} that for the energy functional
$\E : L^2(\Omega) \rightarrow (-\infty, + \infty]$ given by
\begin{equation}
  \label{eq:100}
  \E(v):=
  \begin{cases}
    \displaystyle\int_\Omega \abs{D v} & \text{if
      $v \in BV(\Omega) \cap
      L^2(\Omega)$,}\\
    +\infty & \text{if otherwise,}
  \end{cases}
\end{equation}
problem~\eqref{Neumann1} can be rewritten as an abstract initial value
problem~\eqref{ACP1bis} in the Hilbert space
$H=L^{2}(\Omega)$. Moreover, $\E$ is a proper, convex and lower
semicontinuous functional on $L^{2}(\Omega)$ with dense domain. Thus,
by \cite[Proposition~3.12]{BrezisBook}, for every
$v_{0}\in L^{2}(\Omega)$, there is a unique strong solution
of~\eqref{Neumann1}. Due to~\cite[Theorem~2.20]{ACMBook}, for every
gradient flow $v : [0,+\infty)\to L^{2}(\Omega)$ of $\E$, the
$\omega$-limit set is given by
\begin{displaymath}
  \omega(v)=\Big\{\overline{v_{0}}=\tfrac{1}{\abs{\Omega}}\int_{\Omega} v_{0}\,\dx\Big\}.
\end{displaymath}

Now, as an application of Corollary~\ref{decahilbert} and due to the
Poincar\'e-Sobolev inequality (\cite[Remark 3.50]{AFPBook})
\begin{equation}
    \label{Poincaree}
      \norm{v-\overline{v}}_{1^{\ast}}\le
      C_{PS_{N}}\,\int_{\Omega}\abs{D v}=C_{PS_{N}}\,\E(v\vert \overline{v}),\qquad(v\in
      BV(\Omega)),
 \end{equation}
 every solution $v$ of problem~\eqref{Neumann1} arrives \emph{in finite
   time} to the mean value $\overline{v_{0}}$ of its initial condition
 $v(0)=v_{0}\in L^{2}(\Omega)$. We omit the details of the proof since it
 follows the same idea as the previous one.

\begin{theorem}\label{extintionNeumann}
  Suppose $N\le 2$ and for $v_0 \in L^2(\Omega)$, let
  $v : [0,+\infty)\to L^2(\Omega)$ be the unique strong solution of
  problem~\eqref{Neumann1}. Then,
    \begin{displaymath}
      T^{\ast}(v_0)\le
      \begin{cases}
        \displaystyle\inf_{s>0}\Big(s+ C_{PS_{1}}\,\abs{\Omega}^{1/2}\,\E(v(s))\Big) & \text{if $N=1$,}\\[3pt]
        \displaystyle\inf_{s>0}\Big(s+ C_{PS_{2}}\,\E(v(s)) \Big) & \text{if $N=2$,}
      \end{cases}
    \end{displaymath}
    where $C_{PS_{N}}$ is the best constant in
  Poincar\'e-Sobolev inequality~\eqref{Poincaree}, and
\begin{equation}\label{CANeumann1}
  \norm{ v(t) - \overline{v_0}}_{2} \le
  \begin{cases}
    \tilde{c}\,(T^{\ast}(v_0) - t) & \text{if $0\le t \le T^{\ast}(v_0)$,}\\
    0 & \text{if $ t > T^{\ast}(v_0)$,}
  \end{cases}
\end{equation}
for some $\tilde{c}>0$.
\end{theorem}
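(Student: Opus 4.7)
The plan is to mimic the proof of Theorem~\ref{extintiondirichlet}, but handling the fact that, unlike in the Dirichlet case, the energy functional $\E$ in~\eqref{eq:100} is not strictly convex at its minimizers: every constant function annihilates $\E$, so $\textrm{argmin}(\E)$ is the full one-dimensional family $\{\varphi\equiv m : m\in\R\}$. The resolution is to exploit the mass-preservation property of the Neumann gradient flow noted in~\cite[Theorem~2.20]{ACMBook}: the set
\begin{displaymath}
\mathcal{D}:=\Big\{v\in L^{2}(\Omega)\cap BV(\Omega)\,\Big\vert\, \overline{v}=\overline{v_0}\Big\}
\end{displaymath}
is invariant under the gradient flow generated by $\E$, and $\varphi:=\overline{v_0}$ is the unique equilibrium of $\E$ inside $\mathcal{D}$. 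By Remark~\ref{rem:setD}, this invariance permits us to replace $D(\E)$ by $\mathcal{D}$ in the global entropy-transportation and K\L{}S-inequalities.

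First I would derive the entropy-transportation inequality on $\mathcal{D}$. Since $\E(\varphi)=0$, the relative entropy is $\E(v\vert\varphi)=\int_\Omega\abs{Dv}$ for $v\in\mathcal{D}$. The Poincar\'e-Sobolev inequality~\eqref{Poincaree} for $BV$-functions on the extension domain $\Omega$ gives
\begin{displaymath}
\norm{v-\varphi}_{L^{1^{\ast}}(\Omega)}\le C_{PS_{N}}\,\E(v\vert\varphi)\quad\text{for all }v\in\mathcal{D}.
\end{displaymath}
Because $N\le 2$, the Sobolev exponent satisfies $1^{\ast}\ge 2$, so H\"older's inequality on the bounded domain $\Omega$ yields
\begin{displaymath}
\norm{v-\varphi}_{L^{2}(\Omega)}\le C\,\E(v\vert\varphi)\quad\text{for all }v\in\mathcal{D},
\end{displaymath}
with $C=\abs{\Omega}^{1/2}C_{PS_{1}}$ when $N=1$ and $C=C_{PS_{2}}$ when $N=2$. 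This is the global ET-inequality~\eqref{E2equivalent1} at $\varphi$ with $\alpha=1$ (on $\mathcal{D}$).

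Next I would invoke Corollary~\ref{equivalent1}, which is applicable because $\E$ is proper, lower semicontinuous and convex (hence $0$-geodesically convex) on the Hilbert space $L^{2}(\Omega)$, the coercivity Assumption~\ref{eq:existence-cond-bis} is provided by~\cite[Proposition~3.12]{BrezisBook}, and hypothesis~\eqref{eq:14} on $[\E(\cdot\vert\varphi)>0]$ is immediate since every equilibrium of $\E$ in $\mathcal{D}$ coincides with $\varphi$. The corollary (interpreted on the invariant set $\mathcal{D}$ per Remark~\ref{rem:setD}) then transforms the ET-inequality into the global \L{}ojasiewicz-Simon inequality
\begin{displaymath}
\E(v\vert\varphi)^{0}=1\le C\,\abs{D^{-}\E}(v)\quad\text{for all }v\in\mathcal{D}\cap[\E(\cdot\vert\varphi)>0],
\end{displaymath}
i.e.~\eqref{eq:38} with exponent $\alpha=1$.

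Finally I would apply Corollary~\ref{decahilbert} with $\alpha=1$ at the equilibrium $\varphi=\overline{v_0}\in\omega(v)$. Since the gradient flow $v$ stays inside $\mathcal{D}$, we may take the neighborhood radius $\varepsilon$ arbitrarily large and $t_0=s$ for any $s>0$ with $v(s)\in D(\E)$ (such $s$ always exists by the smoothing effect $v(t)\in D(\E)$ for $t>0$). Substituting $\alpha=1$ into the finite-extinction formulas yields $\hat{t}=t_0+c\,\E(v(t_0)\vert\varphi)$ and the linear decay $\norm{v(t)-\varphi}_{L^{2}}\le\tilde{c}(\hat{t}-t)^{+}$. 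Taking the infimum over $s>0$ and inserting the respective values of $c$ for $N=1$ and $N=2$ produces the claimed bounds~\eqref{CANeumann1} and the upper estimate on $T^{\ast}(v_0)$. The only conceptual point that needs care, and which I expect to be the main subtlety rather than a true obstacle, is the bookkeeping required to legitimately restrict all the abstract results from $D(\E)$ to the mass-preserving invariant set $\mathcal{D}$; once this is dispatched via Remark~\ref{rem:setD}, the remaining computations are routine.
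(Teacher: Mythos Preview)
Your proposal is correct and follows essentially the same route the paper indicates (the paper omits the proof, saying it ``follows the same idea as the previous one''). One remark: your invariant-set restriction via Remark~\ref{rem:setD} is valid but slightly heavier than necessary. Since the Poincar\'e--Sobolev inequality~\eqref{Poincaree} bounds the distance from $v$ to its own mean $\overline{v}\in\textrm{argmin}(\E)$, the \emph{generalized} ET-inequality~\eqref{E2equivalent1} (with the infimum over all of $\textrm{argmin}(\E)$) holds directly on $D(\E)$, and Corollary~\ref{equivalent1} then delivers the global \L{}S-inequality with $\alpha=1$ without ever restricting to the mass-constrained set~$\mathcal{D}$. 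Mass preservation is only needed at the end, to identify $\omega(v)=\{\overline{v_0}\}$ as the specific equilibrium in Corollary~\ref{decahilbert}. Either way the argument goes through.
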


\begin{remark}
  Similarly to Remark~\ref{rem:talenti}, 
  Corollary~\ref{decahilbert} yields that for every $v_{0}\in D(\E)$,
  the \emph{time $T^{\ast}(v_0)$ to the equilibrium} fulfills
  \begin{displaymath}
    T^{\ast}(v_0) \le C_{PS_{N}}\, \int_\Omega  \abs{D v_0}.
  \end{displaymath}
  We emphasize that this estimate of $T^{\ast}(v_0)$ is not
  contained in \cite{ACMBook}. In particular, if $E \subsetneq\Omega$
  is a set of finite perimeter, $a\in \R$ and $v_0 = a\,
  \mathds{1}_E$, then for the gradient
  flow $v : [0,+\infty)\to L^{2}(\Omega)$ of the energy $\E$ given
  by~\eqref{eq:100} and with initial value
  $v(0)=v_{0}$, the extinction time $T^{\ast}(v_0)$ fulfills
  \begin{displaymath}
    T^{\ast}(v_0) \le C_{PS_{N}}\, \abs{a}\, {\rm Per}(E).
  \end{displaymath}
\end{remark}

\begin{remark}[\emph{Extinction time in fourth order total variational
    flow problems}]
  \label{rem:gigakohn}
   It is important to mention the rigorous
    study~\cite{GigaKohn} on the extinction time $T^{\ast}$ of solutions of the total variational
    flow equipped with Dirichlet (including $\Omega=\R^{N}$), Neumann,
    and periodic boundary conditions. The results in~\cite[Theorem~2.4
    and Theorem~2.5]{GigaKohn} only depend on the $L^{N}$-norm of the
    initial data. Furthermore, in~\cite{GigaKohn} upper estimates are
    established on the extinction time $T^{\ast}$ of solutions of the
    \emph{fourth-order total variation flow equation}
    \begin{equation}
      \label{eq:101}
      v_t = - \Delta \left[ {\rm div} \left(\frac{Dv}{\vert Dv \vert} \right)\right].
    \end{equation}
    After adding the right choice of boundary conditions,
    equation~\eqref{eq:101} can be realized as a gradient
    system~\eqref{ACP1} in $H=H^{-1}(\Omega)$ for the energy
    \begin{displaymath}
      \E(v):=
      \begin{cases}
      \displaystyle  \int_{\Omega}\abs{D v} & \text{if $v\in BV(\Omega)\cap
          H^{-1}(\Omega)$,}\\
        +\infty & \text{if otherwise,}
      \end{cases}
    \end{displaymath}
    for every $v\in H^{-1}(\Omega)$. Due to \emph{entropy-transportation inequality}
    \begin{displaymath}
      \norm{v}_{H^{-1}(\Omega)} \le C\, \int_{\Omega} \abs{D v}=C\,\E(v),
    \end{displaymath}
    (for instance, cf~\cite[(29)]{GigaKohn} in dimension $N=4$),
    Corollary~\ref{equivalent1} and Corollary~\ref{decahilbert}  yield
    the existence of upper estimates of the extinction time $T^{\ast}(v_{0})$
    for solutions of~\eqref{eq:101}.
\end{remark}

%
%
\subsection{Gradient flows in spaces of probability measures.}
\label{GradProbmeasures}
In this second part of Section~\ref{sec:application}, we turn to the
stability analysis of $p$-gradient flows in spaces of probability measures.

We assume that the reader is familiar with
the basics of optimal transport and refer for further reading to the
standard literature (see, for instance,~\cite{MR1964483},
\cite{Villani2}, \cite{AGS-invent}, or \cite{MR3409718}). Here, we
only recall the notations and results which are important to us for
this article.\medskip

Throughout this part, let $1<p<\infty$ with H\"older conjugate 
$p^{\mbox{}_{\prime}}=\frac{p}{p-1}$, and $\mathcal{B}$ be the standard Borel
  $\sigma$-algebra generated by the induced Euclidean norm-topology of
  the space $\R^{N}$, $(N\ge 1)$. Then, we denote by $\mathcal{P}(\R^{N})$
  the space of prob\-ability measures on $(\R^{N},\mathcal{B})$, and
  $\mathcal{P}_{p}(\R^{N})$ for the linear subspace of probability measures
  $\mu\in \mathcal{P}(\R^{N})$ of \emph{finite $p$-moment} $\int_{\R^{N}} \vert x \vert^p \td\mu(x)$.
Now, for every two probability measures $\mu_{1}$,
$\mu_{2}\in \mathcal{P}_{p}(\R^{N})$, the \emph{$p$-Wasserstein
  distance} is defined by
\begin{displaymath}
W_{p}(\mu_1,\mu_2)= \left( \inf_{\pi \in \Pi(\mu_1, \mu_2)}
\int_{\R^{N}\times \R^{N}}
\abs{x- y}^p\; \td\pi(x,y)\right)^{\frac{1}{p}}.
\end{displaymath}
Here, for given $\mu_1$, $\mu_2 \in \mathcal{P}(\R^{N})$,
$\Pi(\mu_1, \mu_2)$ denotes the \emph{transport plans $\pi \in \mathcal{P}(\R^{N} \times \R^{N})$ with marginals
  $\mu_1$ and $\mu_2$} defined by
\begin{displaymath}
\Pi(\mu_1, \mu_2):= \Big\{ \pi \in \mathcal{P}(\R^{N} \times \R^{N})\; \Big\vert \;
\bm{p}_{1 \#} \pi = \mu_{1},\; \bm{p}_{2 \#}\pi = \mu_2 \Big\},
\end{displaymath}
where for $i\in \{1,2\}$, $\bm{p}_i$, denotes the
\emph{projection} from $\R^{N} \times \R^{N}$ onto the
$i$th-component $\R^{N}$. Furthermore, for a mapping $T : \R^{N}\to \R^{N}$ (respectively, $T :
\R^{N} \times \R^{N}\to \R^{N}$) be a measurable map, then the
\emph{push-forward of $\mu\in \mathcal{P}(\R^{N})$ through $T$}
(respectively, of $\mu\in \mathcal{P}(\R^{N}\times \R^{N})$ through $T$) is defined by
\begin{displaymath}
    T_{\#}\mu(B):=\mu(T^{-1}(B))\qquad\text{for every $B\in \mathcal{B}$.}
  \end{displaymath}
Then, for given $\mu\in \mathcal{P}(\R^{N}\times \R^{N})$, $\bm{p}_{1\#}\mu\in
\mathcal{P}(\R^{N})$ and $\bm{p}_{2\#}\mu\in \mathcal{P}(\R^{N})$ are the \emph{marginals}
 of $\mu$.

Since the cost function $c_{p}(x,y):=
\abs{x-y}^{p}$, ($x$,
$y\in \R^{N}$), is continuous on
$\R^{N}\times \R^{N}$ and bounded from below, for
every pair $\mu_{1}$ and
$\mu_{2}\in \mathcal{P}_p(\R^{N})$, there is probability measure
$\pi^{\ast}\in \Pi(\mu_{1},\mu_{2})$ such that
\begin{displaymath}
  W_{p}^{p}(\mu_1,\mu_2)=\int_{\R^{N}\times \R^{N}}
\abs{x- y}^p\; \td\pi^{\ast}(x,y).
\end{displaymath}
Such a measure $\pi^{\ast}$ is called an \emph{optimal transport plan}
of $\mu_{1}$ and $\mu_{2}$.

Next, let $\mathcal{P}_p^{ac}(\R^{N})$ be the subclass of measures
$\mu\in \mathcal{P}_p(\R^{N})$, which are \emph{absolutely continuous} with
respect to the Lebesgue measure $\mathcal{L}^{N}$ on $\R^{N}$, that is,
\begin{displaymath}
 \mu_{1}(B) = 0 \qquad\text{for all $B\in \mathcal{B}$ with $\mathcal{L}^{N}(B)=0$.}
\end{displaymath} 
Then, for every $\mu_{1}\in \mathcal{P}_p^{ac}(\R^{N})$ and
$\mu_{2}\in \mathcal{P}_p(\R^{N})$, there exists a map $T^{\ast} :
\R^{N}\to \R^{N}$ such that satisfying $T^{\ast}_{\#}\mu_2=\mu_{1}$
and the $p$-Wasserstein
distance can be rewritten as
\begin{equation}
  \label{eq:109}
  W_{p}^{p}(\mu_1,\mu_2)= \int_{\R^{N}}
\abs{x- T^{\ast}(x)}^p\; \td\mu_{1}(x).
\end{equation}
The map $T^{\ast}_{\#}$ is unique and called the \emph{optimal
  transport map} of $\mu_{1}$ and $\mu_{2}$ (cf~\cite{Brenier1} for the case $p=2$
and \cite{G-Mc} for $1<p<\infty$).\medskip

Next, consider the \emph{free energy}
$\E : \mathcal{P}_p(\R^N)\to (-\infty,+\infty]$ composed by
\begin{equation}
  \label{eq:10}
  \E=\HE_{F}+\HE_{V}+\HE_W,
\end{equation}
with $\HE_{F}$ the \emph{internal energy} given by
  \begin{displaymath}
    \HE_{F}(\mu)=
    \begin{cases}
      \displaystyle\int_{\R^{N}}F(\rho)\,\dx & \text{if $\mu=\rho\, \mathcal{L}^{N}$,}\\
      +\infty & \text{if $\mu\in \mathcal{P}_p(\R^N)\setminus \mathcal{P}_p^{ac}(\R^N)$,}
    \end{cases}
 \end{displaymath}
$\HE_{V}$ the \emph{potential energy} defined by
  \begin{displaymath}
    \HE_{V}(\mu)=
    \begin{cases}
     \displaystyle \int_{\R^{N}}V\,\td\mu & \text{if $\mu=\rho\, \mathcal{L}^{N}$,}\\
      +\infty & \text{if $\mu\in \mathcal{P}_p(\R^N)\setminus \mathcal{P}_p^{ac}(\R^N)$,}
    \end{cases}
 \end{displaymath}
 and $\HE_W$ the \emph{interaction energy} defined by
 \begin{displaymath}
    \HE_{W}(\mu)=
    \begin{cases}
      \displaystyle\tfrac{1}{2}\int_{\R^{N}\times\R^{N}}W(x-y) \,\td
      (\mu\otimes\mu)(x,y)  & \text{if $\mu=\rho\, \mathcal{L}^{N}$,}\\
      +\infty & \text{if
        $\mu\in \mathcal{P}_p(\R^N)\setminus
        \mathcal{P}_p^{ac}(\R^N)$.}
    \end{cases}
 \end{displaymath}
  Here, we assume that the function
  \begin{enumerate}
  \item[{\bf (F)} :] $F : [0,+\infty)\to \R$ is a convex differential function
    satisfying
    \begin{align}
      \label{eq:102}
     &  F(0)=0,\quad\liminf_{s\downarrow
        0}\frac{F(s)}{s^{\alpha}}>-\infty\quad\text{for some
        $\alpha>N/(N+p)$,}\\
      \label{eq:103}
     & \text{the map $s\mapsto s^{N}F(s^{-N})$ is convex and non
        increasing in $(0,+\infty)$,}
    \end{align}
    there is a $C_F>0$ such that
    \begin{align}
      \label{eq:105}
    &\qquad F(s+\hat{s})\le
      C_{F}\,(1+F(s)+F(\hat{s}))
      \quad\text{for all $s$, $\hat{s}\ge 0$, and}\\
     \label{eq:104}
    &\qquad\lim_{s\to
      +\infty}\frac{F(s)}{s}=+\infty\qquad\qquad\text{(super-linear
      growth at infinity);}
    \end{align}
   \item[{\bf (V)} :]  $V : \R^{N}\to (-\infty,+\infty]$ is a proper, lower
     semicontinuous, 
     $\lambda$-convex for some $\lambda\in \R$, and the effective
     domain $D(V)$ has a convex, nonempty interior
     $\Omega:=\inter D(V)\subseteq \R^{N}$.
    \item[{\bf (W)} :]  $W : \R^{N}\to [0,+\infty)$ is a convex,
      differentiable, and even function and there is a $C_{W}>0$ such that
      \begin{equation}
        \label{eq:106}
        W(x+\hat{x})\le C_{W}\,(1+W(x)+W(\hat{x})) \quad
        \text{for all $x$, $\hat{x}\in \R^{N}$.}
      \end{equation}
  \end{enumerate}

\begin{remark}
  The study of gradient flows generated by the free energy functional
  $\E$ given by~\eqref{eq:10} on
  the Wasserstein space $(\mathcal{P}_2(\R^N), W_2)$ was done
  independently in \cite{CMVArchive} and \cite{AGS-ZH}.
\end{remark}

We note that the conditions~\eqref{eq:102} on $F$ ensure that for every
  $\rho\in L^{1}(\R^{N})$, the \emph{negative
  part} $F^{-}(s)=\max\{-F(s),0\}$ satisfies $F^{-}(\rho)\in
L^{1}(\R^{N})$, that is,~\eqref{eq:102} provides growth bounds on
$F^{-}$ (cf~\cite[Remark~9.3.7]{AGS-ZH}), due to
condition~\eqref{eq:103}, the functional $\HE_{F}$ is geodesically
convex on $\mathcal{P}_p(\R^N)$, and thanks to
condition~\eqref{eq:104}, $\HE_{F}$ is lower semicontinuous on $\mathcal{P}_p(\R^N)$
(cf~\cite[Remark~9.3.8]{AGS-ZH}). The so-called \emph{doubling}
condition~\eqref{eq:105} is needed to calculate the directional
derivative of $\HE_{F}$ (cf~\cite[Lemma~10.4.4]{AGS-ZH}).

Concerning the functional $\HE_{V}$, the hypotheses that $V$ is
convex, proper and lower semicontinuous imply that $\HE_{V}$ is bounded
from below by an affine support function and hence, by Cauchy-Schwarz' and
Young's inequality, for every $1<p<+\infty$, there are $A$, $B>0$ such
that $V(x)\ge -A-B\abs{x}^{p}$. From this, we can deduce that
$\HE_{V}$ is lower semicontinuous on $\mathcal{P}_p(\R^N)$
(cf~\cite[Lemma~5.1.7]{AGS-ZH}). By \cite[Proposition~9.3.2]{AGS-ZH},
$\HE_{V}$ is $\lambda$-geodesically convex in $\mathcal{P}_p(\R^N)$ if
$p\le 2$ and $\lambda\ge 0$, or if $p\ge 2$ and $\lambda\le 0$.

Similarly to $\HE_{V}$, the convexity of $W$, the fact that $W$ is
proper, and the lower semicontinuity of $W$, imply that $\HE_{W}$ is
lower semicontinuous in $\mathcal{P}_p(\R^N)$ and
by~\cite[Proposition~9.3.5]{AGS-ZH}, $\HE_{W}$ is geodesically convex
on $\mathcal{P}_p(\R^N)$. The doubling condition~\eqref{eq:106} is sufficient
for characterizing the decreasing slope $\abs{D^{-}\E_{W}}$ of
$\HE_{W}$ (cf~\cite[Theorem~10.4.11]{AGS-ZH}).

For an open set $\Omega\subseteq \R^{N}$, we denote by
  $\mathcal{P}_p(\Omega)$ is the closed subspace of probability
  measures $\mu\in \mathcal{P}_p(\R^N)$ with support
  $\textrm{supp}(\mu)\subseteq \overline{\Omega}$. Moreover,
  $\mathcal{P}_p^{ac}(\Omega):=\mathcal{P}_p(\Omega)\cap
  \mathcal{P}_p^{ac}(\R^N)$.

Under the hypotheses~{\bf (F)}, {\bf (V)} and~{\bf (W)},
Proposition~\ref{propo:lconvex-slope} implies that the descending
slope $\abs{D^{-}\E}$ of $\E$ is lower semicontinuous. Moreover,
by~\cite[Theorem~10.4.13]{AGS-ZH}, $\abs{D^{-}\E}$ can be
characterized as follows.

\begin{proposition}
  \label{propo:Fokkerdescending-slope}
  Suppose, the functions $F$, $V$ and $W$ satisfy the
  hypotheses~{\bf (F)}, {\bf (V)} and~{\bf (W)}, and
  $\E : \mathcal{P}_p(\R^N)\to (-\infty,+\infty]$ is the functional
  given by~\eqref{eq:10}. Then, for
  $\mu=\rho \mathcal{L}^{N}\in D(\E)$, one has
  $\mu\in D(\abs{D^{-}\E})$ if and only if
  \begin{equation}
    \label{eq:108}
    P_{F}(\rho)\in W^{1,1}_{loc}(\Omega),\quad
    \rho\,\xi_{\rho} = \nabla P_{F}(\rho)+\rho \nabla V+\rho
    (\nabla W)\ast \rho
  \end{equation}
  for some $\xi_{\rho}\in
  L^{p^{\mbox{}_{\prime}}}(\R^{N},\R^{N};\td\mu)$, where
  $P_{F}(x):=xF'(x)-F(x)$ is the associated ``pressure function'' of $F$. Moreover,
  the vector field $\xi_{\rho}$ satisfies
  \begin{equation}
    \label{eq:115}
    \abs{D^{-}\E}(\mu)= \left(\int_{\R^{N}}\abs{\xi_{\rho}(x)}^{p^{\mbox{}_{\prime}}}\,\td\mu\right)^{\frac{1}{p'}}.
  \end{equation}
\end{proposition}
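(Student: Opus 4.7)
The plan is to invoke the $\lambda$-geodesic convexity of the three summands of $\E$ (with a possibly negative $\lambda$ coming from $V$ when $p>2$, or with $\lambda\ge 0$ when $p\le 2$), and then exploit the supremum characterization
\begin{displaymath}
\abs{D^{-}\E}(\mu) = \sup_{\nu \neq \mu}
\left[\frac{\E(\mu)-\E(\nu)}{W_{p}(\mu,\nu)}
+\frac{\lambda}{2}W_{p}(\mu,\nu)\right]^{+}
\end{displaymath}
supplied by Proposition~\ref{propo:lconvex-slope}. The idea is to probe this supremum along ``smooth perturbations'' of $\mu$, namely the pushforwards $\mu_{t}:=(\mathrm{id}+t\zeta)_{\#}\mu$ for $\zeta\in C_{c}^{\infty}(\Omega;\R^{N})$ and small $t$, noting that $W_{p}(\mu_{t},\mu)\le \lVert\zeta\rVert_{L^{p}(\mu)}\,\abs{t}+o(t)$ by the very definition of $W_{p}$ (take the transport plan $(\mathrm{id},\mathrm{id}+t\zeta)_{\#}\mu$).

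The second step is to differentiate each summand along $t\mapsto \mu_{t}$ at $t=0$. For the internal energy, using the change of variables formula $\rho_{t}=\rho\circ(\mathrm{id}+t\zeta)^{-1}/\det(\mathrm{id}+tD\zeta)$ and a Taylor expansion of the determinant, one obtains (using the doubling condition~\eqref{eq:105} and the convexity~\eqref{eq:103} to justify the passage to the limit)
\begin{displaymath}
\tfrac{\mathrm{d}}{\mathrm{d}t}\Big\vert_{t=0}\HE_{F}(\mu_{t})
=-\int_{\R^{N}}P_{F}(\rho)\,\nabla\cdot\zeta\,\mathrm{d}x,
\end{displaymath}
where $P_{F}(s)=sF'(s)-F(s)$. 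For the potential energy $\HE_{V}$, an immediate Taylor expansion gives $\int\zeta\cdot\nabla V\,\mathrm{d}\mu$; for the interaction energy $\HE_{W}$, the symmetry of $W$ and condition~\eqref{eq:106} yield $\int\zeta\cdot((\nabla W)\ast\rho)\,\mathrm{d}\mu$. Summing these three and combining with the supremum bound, one reads off that if $\abs{D^{-}\E}(\mu)<+\infty$, then the distribution $-\nabla P_{F}(\rho)-\rho\nabla V-\rho(\nabla W)\ast\rho$ extends to a bounded linear functional on $L^{p}(\mu;\R^{N})$; by Riesz representation this produces $\xi_{\rho}\in L^{p^{\prime}}(\mu;\R^{N})$ verifying~\eqref{eq:108}, with $\lVert\xi_{\rho}\rVert_{L^{p^{\prime}}(\mu)}\le \abs{D^{-}\E}(\mu)$. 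The required Sobolev regularity $P_{F}(\rho)\in W^{1,1}_{loc}(\Omega)$ is then a consequence of this distributional identity together with local integrability of $\rho\,\xi_{\rho}$, $\rho\,\nabla V$, and $\rho\,(\nabla W)\ast\rho$.

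The reverse inequality $\lVert\xi_{\rho}\rVert_{L^{p^{\prime}}(\mu)}\ge\abs{D^{-}\E}(\mu)$ and the equality~\eqref{eq:115} will follow by plugging arbitrary probability measures $\nu\in \mathcal{P}_{p}(\R^{N})$ (not just pushforwards by smooth maps) into the supremum formula: given the optimal transport map $T$ from $\mu$ to $\nu$ (when it exists, otherwise approximate $\nu$ by absolutely continuous measures), the $\lambda$-convexity of each summand along the McCann interpolation $\mu_{s}=((1-s)\mathrm{id}+sT)_{\#}\mu$ gives the estimate $\E(\mu)-\E(\nu)+\tfrac{\lambda}{2}W_{p}^{2}(\mu,\nu)\le \int(T-\mathrm{id})\cdot \xi_{\rho}\,\mathrm{d}\mu$, and H\"older's inequality with~\eqref{eq:109} concludes. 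Finally one passes to the supremum over $\nu$ to recover the full norm equality.

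The main obstacle will be the first step above, namely the rigorous computation of the directional derivative of $\HE_{F}$: one has to justify the limit $t\to 0$ of the difference quotient despite the generic non-smoothness of $\rho$ and the fact that $F'$ may blow up at $0$. Here one must exploit the growth condition~\eqref{eq:102} to dominate $F^{-}(\rho_{t})$ uniformly in $t$, use~\eqref{eq:103}--\eqref{eq:105} to pass the monotone/dominated convergence through, and finally recognize $sF''(s)-F'(s)=P'_{F}(s)/s\cdot s=P_{F}'(s)$ after an integration by parts performed only after mollification of $\rho$. The subsidiary point that the minimal selection in the Riesz representation indeed coincides with the supremum in~\eqref{messi00} relies on the density of $\{\zeta_{\#}\mu : \zeta\in C_{c}^{\infty}(\Omega;\R^{N})\}$-type perturbations in the ``tangent space'' $\overline{\{\nabla\varphi : \varphi\in C_{c}^{\infty}(\Omega)\}}^{L^{p^{\prime}}(\mu)}$ and is handled by the Otto calculus technology developed in~\cite[\S8.4]{AGS-ZH}.
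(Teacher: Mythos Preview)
The paper does not actually prove this proposition: it is stated immediately after the sentence ``by~\cite[Theorem~10.4.13]{AGS-ZH}, $\abs{D^{-}\E}$ can be characterized as follows'' and is treated as a direct citation of that result. So there is no proof in the paper to compare against.

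Your proposal is a faithful outline of how that cited theorem is established in \cite{AGS-ZH}: one uses the $\lambda$-geodesic convexity to access the supremum formula~\eqref{messi00}, computes the directional derivatives of the three summands along pushforwards by smooth vector fields (this is precisely \cite[Lemma~10.4.4]{AGS-ZH} for $\HE_{F}$, using the doubling condition~\eqref{eq:105}), extracts $\xi_{\rho}$ by duality, and closes the equality via H\"older along McCann interpolations. Your identification of the main obstacle---justifying the limit in the $\HE_{F}$ derivative---is also accurate and is exactly where the hypotheses~\eqref{eq:102}--\eqref{eq:105} enter. One small caution: the supremum formula~\eqref{messi00} as stated in Proposition~\ref{propo:lconvex-slope} involves the term $\tfrac{\lambda}{2}d(u,v)$ (not $d^{2}$), and the $\lambda$-geodesic convexity of $\HE_{V}$ in $\mathcal{P}_{p}$ is only guaranteed for the sign combinations $(p\le 2,\lambda\ge 0)$ or $(p\ge 2,\lambda\le 0)$; the general case in \cite[Theorem~10.4.13]{AGS-ZH} goes through the subdifferential machinery of \cite[\S10.3--10.4]{AGS-ZH} rather than the slope formula directly, so your reduction to~\eqref{messi00} is a slight oversimplification of the actual route.
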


By following the idea of the proof of~\cite[Proposition~4.1]{JKO} and
replacing $p=2$ by general $1<p<+\infty$, one sees that under the
hypotheses~{\bf (F)}, {\bf (V)} and~{\bf (W)}, the free energy
$\E : \mathcal{P}_p(\R^N)\to (-\infty,+\infty]$ given by~\eqref{eq:10}
satisfies the hypotheses of Theorem~\ref{exist1}. Therefore, by the
regularity result in~\cite[Theorem~11.3.4]{AGS-ZH}, for every
$\mu_{0}\in D(\E)$, there is a $p$-gradient flow
$\mu : [0,+\infty)\to \mathcal{P}_p(\R^N)$ of $\E$ with initial value
$\lim_{t\downarrow 0}\mu(t)=\mu_{0}$. Moreover, for every $t>0$,
$\mu(t)=\rho(t)\,\mathcal{L}^{N}$ with
$\textrm{supp}(\rho(t))\subseteq \overline{\Omega}$, and $\rho$ is a
distributional solution of the following quasilinear
parabolic-elliptic boundary-value problem
\begin{equation}
\label{eq:107}
\begin{cases}
  \rho_{t}+ \divergence (\rho\,
  \bm{U}_{\rho})=0 &\qquad \text{in
                                                $(0,+\infty)\times
                                                \Omega$,}\\
  \mbox{}\hspace{1,89cm}\bm{U}_{\rho}=-\abs{\xi_{\rho}}^{p^{\mbox{}_{\prime}}-2}\xi_{\rho}
  &\qquad \text{in
                                                $(0,+\infty)\times
                                                \Omega$,}\\
  \mbox{}\hspace{1,34cm}\bm{U}_{\rho}\cdot\bm{n}=0  &\qquad \text{in
                                               $(0,+\infty)\times
                                                \partial\Omega$,}
\end{cases}
\end{equation}
with $P_{F}(\rho)\in L^{1}_{loc}((0,+\infty);W^{1,1}_{loc}(\Omega))$ and
\begin{displaymath}
  \xi_{\rho}=\frac{\nabla P_{F}(\rho)}{\rho}+\nabla V+ (\nabla W)\ast
  \rho\in L^{\infty}_{loc}((0,+\infty);L^{p^{\mbox{}_{\prime}}}(\Omega,\R^{N};\td\mu(\cdot))),
\end{displaymath}
where, $\bm{n}$ in~\eqref{eq:107} denotes the outward unit normal to the boundary
$\partial\Omega$ which in the case $\Omega=\R^{N}$ needs to be
neglected.

If the function $F\in C^{2}(0,+\infty)$, then one has that
\begin{displaymath}
  -\bm{U}_{\rho}=
  \abs{F''(\rho)\nabla \rho+\nabla V+(\nabla W)\ast \rho}^{p^{\mbox{}_{\prime}}-2}
  \left(F''(\rho)\nabla \rho+\nabla V+(\nabla W)\ast \rho \right).
\end{displaymath}
Thus (cf~\cite{Agueh1}, \cite{CMVArchive}, \cite{CMVIbero}), problem~\eqref{eq:107} includes the
\begin{itemize}
\item \emph{doubly nonlinear diffusion equation}
  \begin{displaymath}
    \rho_{t}- \divergence (\abs{\nabla \rho^{m}}^{p^{\mbox{}_{\prime}}-2}\,
    \nabla \rho^{m})=0
  \end{displaymath}
  ($V=W=0$, $F(s)=\tfrac{m\,s^{q}}{q(q-1)}$ for
  $q=m+1-\frac{1}{p^{\mbox{}_{\prime}}-1}$,
  $\tfrac{1}{p^{\mbox{}_{\prime}}-1}
  \neq m\ge \frac{N-(p^{\mbox{}_{\prime}}-1)}{N(p^{\mbox{}_{\prime}}-1)}$)\\[7pt]
\item \emph{Fokker-Planck equation with interaction term through porous medium}
  \begin{displaymath}
    \rho_{t}= \Delta \rho^{m}+\divergence\left(\rho (\nabla V + (\nabla W)\ast \rho)\right)
  \end{displaymath}
  ($p=2$, $F(s)=\tfrac{s^{m}}{(m-1)}$ for
  $1\neq m\ge 1-\frac{1}{N}$).
\end{itemize}

Due to Proposition~\ref{propo:Fokkerdescending-slope}, every
equilibrium point $\nu=\rho_{\infty}\mathcal{L}^{N}\in \mathbb{E}_{\abs{D^{-}\E}}$ of $\E$
can be characterized by
\begin{equation}
\label{eq:112}
\begin{cases}
  & P_{F}(\rho_{\infty})\in W^{1,1}_{loc}(\Omega)\qquad\text{with}\\[7pt]
  &\displaystyle \xi_{\rho_{\infty}}=\frac{\nabla
    P_{F}(\rho_{\infty})}{\rho_{\infty}}+\nabla V+ (\nabla W)\ast
  \rho_{\infty}=0\quad\text{ a.e. on $\Omega$.}
\end{cases}
\end{equation}

Further, for every $p$-gradient flow $\mu$ of $\E$ and equilibrium
point $\nu\in \mathbb{E}_{\abs{D^{-}\E}}$, equation~\eqref{eq:11} in
Proposition~\ref{propo:chara-p-curves} reads as follows
\begin{equation}
\label{eq:117}
  \tfrac{\td}{\dt}\E(\mu(t))=-\abs{D^{-}\E}^{p^{\mbox{}_{\prime}}}(\mu(t))
  =-\mathcal{I}_{p^{\mbox{}_{\prime}}}(\mu(t)\vert\nu),
\end{equation}
where due to Proposition~\ref{propo:Fokkerdescending-slope}, the
\emph{generalized relative Fischer information of
  $\mu$ with respect to $\nu$} is given by
\begin{displaymath}
  \mathcal{I}_{p^{\mbox{}_{\prime}}}(\mu\vert\nu)=
  \int_{\Omega}   -\bm{U}_{\rho}\cdot\xi_{\rho}\, \td\mu.
\end{displaymath}

For our next lemma, we introduce the notion of \emph{uniformly
  $\lambda$-$p$ convex} functions (cf~\cite{MR2079071,MR2016985}),
which for differentiable $f$ generalizes the notion of
$\lambda$-convexity on the Euclidean space $\mathfrak{M}=\R^{N}$
(cf~Definition~\ref{def:lambdaconvex}).

\begin{definition}
 \label{def:lambda-p-convex}
 We call a functional $f : \R^{N}\to (-\infty,+\infty]$
  \emph{uniformly $\lambda$-$p$-convex} for some $\lambda\in \R$ if
  the interior $\Omega=\textrm{int}(D(f))$ of $f$ is nonempty, $f$ is
  differentiable on $\Omega$ and for every $x\in
  \Omega$,
  \begin{displaymath}
    f(x)-f(x)\ge \nabla f(x)\cdot
    (y-x)+\lambda\,\abs{y-x}^{p}\qquad\text{for all $y\in \R^{N}$.}
  \end{displaymath}
\end{definition}

Further, we need the following definition from~\cite{AGS-ZH}.

\begin{definition}
  Let $c : \R^{N}\times\R^{N}\to [0,+\infty]$ be a proper and
  lower semicontinuous function. Then, for $u : \R^{N}\to
  [+\infty,+\infty]$, the \emph{$c$-transform} $u^{c} : \R^{N}\to
  [+\infty,+\infty]$ is defined by
  \begin{displaymath}
    u^{c}(y)=\inf_{x\in \R^{N}}\left(c(x,y)-u(x)\right)\qquad\text{for
    every $y\in \R^{N}$.}
  \end{displaymath}
  A function $u : \R^{N}\to  [+\infty,+\infty]$ is called
  \emph{$c$-concave} if there is a function $v : \R^{N}\to
  [+\infty,+\infty]$ such that $u=v^{c}$.
\end{definition}

With these preliminaries, we can state the following result
  which generalizes~\cite[Theorem~2.4, Theorem~4.1]{MR2079071} and
~\cite[inequality~(5)]{MR2016985} (see also~\cite[Theorem~2.1]{MR2053603}).

\begin{lemma}
 Suppose, the functions $F$, $V$ and $W$
  satisfy the hypotheses~{\bf (F)},
\begin{enumerate}
  \item[{\bf (V$\mbox{}^{\ast}$)}] $V : \R^{N}\to (-\infty,+\infty]$ is
    proper, lower semicontinuous function, the effective domain $D(V)$
    of $V$ has nonempty interior $\Omega:=\inter D(V)\subseteq \R^{N}$,
    and $V$ is uniformly $\lambda_{V}$-$p$-convex
    for some $\lambda_{V}\in \R$;
  \item[{\bf (W$\mbox{}^{\ast}$)}] $W : \R^{N}\to [0,+\infty)$ is a
    differentiable, even function satisfying~\eqref{eq:106}, and for
    some $\lambda_{W}\ge0$, $W$ is uniformly $\lambda_{W}$-$p$-convex.
  \end{enumerate}
  Further, let $F\in C^{2}(0,\infty)\cap C[0,+\infty)$ and
  $\E : \mathcal{P}_p(\R^N)\to (-\infty,+\infty]$ be the functional
  given by~\eqref{eq:10}. Then, for every probability
  measures $\mu_{1}=\rho_{1}\mathcal{L}^{N}$,
  $\mu_{2}=\rho_{2}\mathcal{L}^{N}\in \mathcal{P}_p^{ac}(\Omega)$ with
  $\rho_{2}\in W^{1,\infty}(\Omega)$ and $\inf\rho_{2}>0$, one has
  \begin{equation}
    \label{eq:111bis}
    \begin{split}
      \E(\mu_{1}\vert\mu_{2})&\ge
      \int_{\Omega}(T^{\ast}(x)-x)\cdot\xi_{\rho_{2}} \td\mu_{2}+
      (\lambda_{V}+\tfrac{\lambda_{W}}{2})\,W_{p}^{p}(\mu_{1},\mu_{2})\\
      &\qquad
      +\tfrac{p\,\lambda_{W}}{2}
          \int_{\Omega\times\Omega}\nabla\theta(x)\cdot(T^{\ast}(y)-y)\,
          \rho_{2}(x)\,\rho_{2}(y)\dx\dy,
    \end{split}
  \end{equation}
  where $T^{\ast}$ is the optimal transport map
  satisfying~\eqref{eq:109} with $T^{\ast}_{\#}\mu_2=\mu_{1}$ and
  \begin{displaymath}
  x-T^{\ast}(x)=\abs{\nabla \theta(x)}^{p^{\mbox{}_{\prime}}-2}\nabla\theta(x),
\end{displaymath}
for a $c_{p}$-concave function $\theta$ with $c_{p}(x,y):=\tfrac{1}{p}\abs{x-y}^{p}$.
\end{lemma}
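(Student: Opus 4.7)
The plan is to exploit the additive decomposition $\E = \HE_F + \HE_V + \HE_W$ and derive the target inequality separately for each piece, then recombine using the characterization of $\xi_{\rho_2}$ from \eqref{eq:108}. Throughout, the optimal transport map $T^\ast$ pushing $\mu_2$ to $\mu_1$ (with $x - T^\ast(x) = |\nabla\theta(x)|^{p^\prime-2}\nabla\theta(x)$) will serve as the common displacement.

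For the potential energy $\HE_V$, I would apply uniform $\lambda_V$-$p$-convexity pointwise,
$$V(T^\ast(x)) - V(x) \ge \nabla V(x)\cdot(T^\ast(x)-x) + \lambda_V\,|T^\ast(x)-x|^p,$$
integrate against $\mu_2$, and use $T^\ast_{\#}\mu_2=\mu_1$ together with \eqref{eq:109} to get $\HE_V(\mu_1)-\HE_V(\mu_2)\ge \int \nabla V\cdot(T^\ast-\mathrm{id})\,\td\mu_2+\lambda_V W_p^p(\mu_1,\mu_2)$. For the interaction piece $\HE_W$, the same pointwise $\lambda_W$-$p$-convexity applied to the pair $\bigl(T^\ast(x)-T^\ast(y),\,x-y\bigr)$ followed by integration against $\mu_2\otimes\mu_2$ produces, after using that $\nabla W$ is odd (since $W$ is even) and Fubini, the linear contribution $\int (\nabla W\ast\rho_2)(x)\cdot(T^\ast(x)-x)\,\td\mu_2(x)$ plus the remainder $\tfrac{\lambda_W}{2}\int |a(x)-a(y)|^p\,\td\mu_2(x)\td\mu_2(y)$, where $a:=T^\ast-\mathrm{id}$. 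At this point the crucial calculation is the convexity inequality
$$|a(x)-a(y)|^p \;\ge\; |a(x)|^p + p\,|a(x)|^{p-2}a(x)\cdot(-a(y)),$$
combined with the pointwise identity $|a(x)|^{p-2}a(x)=-\nabla\theta(x)$, which follows from $a(x)=-|\nabla\theta(x)|^{p^\prime-2}\nabla\theta(x)$ and the algebraic relation $(p^\prime-1)(p-2)+(p^\prime-2)=0$ induced by $p+p^\prime=pp^\prime$. Integrating reproduces exactly $\tfrac{\lambda_W}{2}W_p^p(\mu_1,\mu_2)+\tfrac{p\lambda_W}{2}\int\nabla\theta(x)\cdot(T^\ast(y)-y)\,\td\mu_2(x)\td\mu_2(y)$.

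For the internal energy $\HE_F$, I would invoke McCann's displacement convexity under hypothesis~\eqref{eq:103} together with a computation of the first-order variation along the $p$-geodesic $\mu_t:=((1-t)\mathrm{id}+tT^\ast)_{\#}\mu_2$; cf.\ \cite[Prop.~9.3.9 \& Thm.~10.4.6]{AGS-ZH}. The regularity $\rho_2\in W^{1,\infty}(\Omega)$ with $\inf\rho_2>0$ ensures that $P_F(\rho_2)\in W^{1,1}_{\loc}(\Omega)$, that the integration by parts is licit, and that both sides of the forthcoming inequality are finite, yielding
$$\HE_F(\mu_1)-\HE_F(\mu_2)\;\ge\;\int_\Omega (T^\ast(x)-x)\cdot\frac{\nabla P_F(\rho_2)(x)}{\rho_2(x)}\,\td\mu_2(x).$$
Summing the three lower bounds and invoking~\eqref{eq:108} to collapse the three linear contributions into $\int(T^\ast(x)-x)\cdot\xi_{\rho_2}\,\td\mu_2$ delivers the claimed inequality~\eqref{eq:111bis}.

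The main obstacle will be the rigorous treatment of $\HE_F$ along the $p$-Wasserstein geodesic: one must justify the displacement-convexity computation and the integration by parts under the given $W^{1,\infty}$ regularity and strict positivity of $\rho_2$, and verify that McCann's argument, classical for $p=2$, transfers to general $1<p<\infty$ once $T^\ast(x)=x-|\nabla\theta|^{p^\prime-2}\nabla\theta$ is used as the displacement. The $\HE_W$ and $\HE_V$ pieces are comparatively mechanical once the identity $|a(x)|^{p-2}a(x)=-\nabla\theta(x)$ is noticed, which is the one algebraic miracle driving the correction term.
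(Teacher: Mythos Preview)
Your proposal is correct and follows essentially the same route as the paper: the paper handles $\HE_{F}+\HE_{V}$ together by citing \cite[Theorem~2.4]{MR2079071} (which packages your separate treatments of $\HE_F$ via displacement convexity and $\HE_V$ via pointwise $\lambda_V$-$p$-convexity), and then treats $\HE_W$ exactly as you do, using the push-forward rewriting, the odd gradient of $W$, and the elementary inequality $|a-b|^p\ge |a|^p - p\,|a|^{p-2}a\cdot b$. Your explicit identification $|a(x)|^{p-2}a(x)=-\nabla\theta(x)$ is precisely what converts the paper's final integral into the form stated in~\eqref{eq:111bis}, and your flagged ``main obstacle'' for $\HE_F$ is indeed the substantive step, which the paper outsources to the cited reference.
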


\begin{proof}
 Under the hypotheses of this lemma, \cite[Theorem~2.4]{MR2079071}
  yields
  \begin{equation}
\label{eq:110bis}
    \HE_{F+V}(\mu_{1}\vert\mu_{2})\ge
    \int_{\Omega}(T^{\ast}(x)-x)\cdot (F''(\rho)\nabla \rho+\nabla V)\td\mu_{2}+
    \lambda_{V}\,W_{p}^{p}(\mu_{1},\mu_{2})
  \end{equation}
  where we set $\HE_{F+V}=\HE_{F}+\HE_{V}$. Next, we deal with the
  interaction energy $\HE_{W}$. For this, we follow an idea given
  in~\cite{MR2079071}. Since $T^{\ast}_{\#}\mu_2=\mu_{1}$, we
  can rewrite
  \begin{displaymath}
    \HE_{W}(\mu_{1})=\tfrac{1}{2}\int_{\Omega\times\Omega}
    W(T^{\ast}(x)-T^{\ast}(y)) \,\rho_{2}(x)\,\rho_{2}(y)\dx\,\dy
  \end{displaymath}
  Thus, and since $W$ is uniformly $\lambda_{W}$-$c_{p}$-convex for
      some $\lambda_{W}\ge 0$, we have
      \begin{align*}
        \HE_{W}(\mu_{1})
        &\ge \HE_{W}(\mu_{2}) + \tfrac{1}{2}\int_{\Omega\times\Omega}
          \Big[\nabla W(x-y)\cdot \big((T^{\ast}(x)-x)\big.\Big.\\
        &\mbox{}\hspace{5cm}\Big.\big.-(T^{\ast}(y)-y)\big)\Big]\,\rho_{2}(x)\,\rho_{2}(y)\dx\dy\\
        &\mbox{}\hspace{1,2cm}+\tfrac{\lambda_{W}}{2}
          \int_{\Omega\times\Omega}\abs{(T^{\ast}(x)-x)-(T^{\ast}(y)-y)}^{p}\,\rho_{2}(x)\,\rho_{2}(y)\,\dx\dy
      \end{align*}
    and since by hypothesis, $\nabla W$ is odd,
    \begin{align*}
       \HE_{W}(\mu_{1})
        &\ge \HE_{W}(\mu_{2}) + \int_{\Omega}\nabla (W\ast \rho_{2})\cdot (T^{\ast}(x)-x)\;\td\mu_{2}(x)\\
        &\mbox{}\hspace{1,5cm}+\tfrac{\lambda_{W}}{2}
          \int_{\Omega\times\Omega}\abs{(T^{\ast}(x)-x)-(T^{\ast}(y)-y)}^{p}\,\rho_{2}(x)\,\rho_{2}(y)\,\dx\dy.
      \end{align*}
   Due to the elementary inequality $\abs{a-b}^{p}\ge
   \abs{a}^{p}-p\abs{a}^{p-2}a\cdot b$, ($a$, $b\in \R^{N}$),
   \begin{align*}
        \HE_{W}(\mu_{1})
        &\ge \HE_{W}(\mu_{2}) + \int_{\R^{N}}\nabla (W\ast
          \rho_{2})\cdot (T^{\ast}(x)-x)\;\td\mu_{2}(x)
          +\tfrac{\lambda_{W}}{2} W_{p}^{p}(\mu_{1},\mu_{2})\\
        & - \tfrac{p\,\lambda_{W}}{2}
          \int_{\Omega\times\Omega}\abs{T^{\ast}(x)-x}^{p-2}(T^{\ast}(x)-x)(T^{\ast}(y)-y)\,\rho_{2}(x)\,\rho_{2}(y)\dx\dy.
      \end{align*}
    Combining this inequality with~\eqref{eq:110bis} yields the desired inequality~\eqref{eq:111bis}.
\end{proof}

We note that if the interaction term $W$ satisfies hypothesis~{\bf
  (W)} then, in particular, {\bf (W$\mbox{}^{\ast}$)} holds with
$\lambda_{W}=0$. Thus, we can announce for the free energy
$\E=\HE_{F}+\HE_{V}+\HE_W$ the following \emph{ET-inequality},
\emph{generalized Log-Sobolev inequality} (cf~\cite[Proposition~1.1]{MR2016985}), and
\emph{generalized $p$-HWI inequality} (cf~\cite[Theorem~1.2]{MR2016985}
, \cite[Theorem~3]{OttoVillani} and \cite[Theorem 2.1]{CMVIbero}).

\begin{theorem}
  \label{thm:inequalitiesWp}
  Suppose that the functions $F$, $V$ and $W$
  satisfy the hypotheses~{\bf (F)}, {\bf (V$\mbox{}^{\ast}$)} with
  $\lambda_{V}\in \R$ and {\bf
  (W)}. Further, suppose $F\in C^{2}(0,\infty)\cap C[0,+\infty)$ and let
  $\E : \mathcal{P}_p(\R^N)\to (-\infty,+\infty]$ be the functional
  given by~\eqref{eq:10}. Then, the following statements hold.
  \begin{enumerate}
  \item {\bf (ET-inequality)} For an equilibrium point
    $\nu=\rho_{\infty}\mathcal{L}^{N}\in
    \mathbb{E}_{\abs{D^{-}\E}}$ of $\E$ with
    $\rho_{\infty}\in W^{1,\infty}(\Omega)$, $\inf_{\Omega}\rho_{\infty}>0$,
    one has that
    \begin{equation}
      \label{eq:113}
      \lambda_{V}\,W_{p}^{p}(\mu,\nu)  \le \E(\mu\vert \nu)\qquad
      \text{for every $\mu=\rho\mathcal{L}^{N}\in D(\E)$.}
    \end{equation}
    \item {\bf ($p$-Talagrand transportation inequality)} If $\lambda_{V}>0$,
      then ET-inequality~\eqref{eq:113} is
      equivalent to the $p$-Talagrand inequality
     \begin{equation}
     \label{eq:118}
     W_{p}(\mu,\nu) \le
     \tfrac{1}{\lambda_{V}^{1/p}}\,\sqrt[p]{\E(\mu\vert \nu)}, \qquad
     (\mu=\rho\mathcal{L}^{N}\in D(\E)),
   \end{equation}
    where $\nu=\rho_{\infty}\mathcal{L}^{N}\in
    \mathbb{E}_{\abs{D^{-}\E}}$, $(\rho_{\infty}\in W^{1,\infty}(\Omega))$. 

   \item {\bf (generalized \L{}S-inequality)} For every
     $\hat{\lambda}>0$, and $\mu_{1}=\rho_{1}\mathcal{L}^{N}$, $\mu_{2}=\rho_{2}\mathcal{L}^{N}\in
  \mathcal{P}_p^{ac}(\Omega)$ with $\rho_{2}\in W^{1,\infty}(\Omega)$, $\inf_{\Omega}\rho_{2}>0$, one has that
  \begin{equation}
    \label{eq:114}
    \E(\mu_{2}\vert \mu_{1})+(\lambda_{V}-\hat{\lambda})\,
    W_{p}^{p}(\mu_{1},\mu_{2})
    \le  \frac{p-1}{p^{p^{\mbox{}_{\prime}}}}
    \frac{1}{\hat{\lambda}^{1/(p-1)}}\, \abs{D^{-}\E}^{p^{\mbox{}_{\prime}}}(\mu_{2}).
  \end{equation}
  \item {\bf (generalized Log-Sobolev inequality)} If $\lambda_{V}>0$,
    then for every $\mu_{1}=\rho_{1}\mathcal{L}^{N}$, $\mu_{2}=\rho_{2}\mathcal{L}^{N}\in
  \mathcal{P}_p^{ac}(\Omega)$ with $\rho_{2}\in W^{1,\infty}(\Omega)$,
  $\inf_{\Omega}\rho_{2}>0$, and $\nu\in \mathbb{E}_{\abs{D^{-}\E}}$, one has that
  \begin{equation}
    \label{eq:116}
    \E(\mu_{2}\vert \mu_{1})
    \le  \frac{p-1}{p^{p^{\mbox{}_{\prime}}}}
    \frac{1}{\lambda_{V}^{1/(p-1)}}
    \mathcal{I}_{p^{\mbox{}_{\prime}}}(\mu_{2}\vert\nu).
  \end{equation}
\item {\bf ($p$-HWI inequality)} For every
  $\mu_{1}=\rho_{1}\mathcal{L}^{N}$,
  $\mu_{2}=\rho_{2}\mathcal{L}^{N}\in \mathcal{P}_p^{ac}(\Omega)$ with
  $\rho_{2}\in W^{1,\infty}(\Omega)$ and $\inf_{\Omega}\rho_{2}>0$, one has
  that
    \begin{equation}
      \label{eq:120}
    \E(\mu_{2}\vert
    \mu_{1}) + \lambda_{V}\,W_{p}^{p}(\mu_{1},\mu_{2}) \le
    \mathcal{I}^{1/p^{\mbox{}_{\prime}}}_{p^{\mbox{}_{\prime}}}(\mu_2\vert\nu)
    \,W_{p}(\mu_{1},\mu_{2}).
  \end{equation}
  \end{enumerate}
\end{theorem}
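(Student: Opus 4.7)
The plan is to use inequality~\eqref{eq:111bis} as the single workhorse and derive all five assertions from it by short, elementary steps. Since hypothesis {\bf (W)} is the case $\lambda_W=0$ of {\bf (W$^*$)}, the interaction correction term in~\eqref{eq:111bis} vanishes, and the lemma reduces to
\begin{equation}
    \label{eq:mainlemma}
    \E(\mu_1\vert\mu_2)\ge \int_{\Omega}(T^{\ast}(x)-x)\cdot\xi_{\rho_{2}}(x)\,\td\mu_{2}(x)
    +\lambda_{V}\,W_{p}^{p}(\mu_{1},\mu_{2})
\end{equation}
whenever $\mu_2=\rho_2\mathcal{L}^N$ satisfies $\rho_2\in W^{1,\infty}(\Omega)$ with $\inf_\Omega\rho_2>0$ and $T^*_{\#}\mu_2=\mu_1$.

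\emph{Step 1 (ET and Talagrand).} Apply~\eqref{eq:mainlemma} with $\mu_1=\mu$ and $\mu_2=\nu=\rho_\infty\mathcal{L}^N$. By the Euler-Lagrange characterization~\eqref{eq:112} of equilibria, $\xi_{\rho_\infty}\equiv 0$ on $\Omega$, so the integral term vanishes and~\eqref{eq:113} drops out. The $p$-Talagrand inequality~\eqref{eq:118} is then literally~\eqref{eq:113} divided by $\lambda_V>0$ and raised to the power $1/p$; the converse direction is the same computation run backwards.

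\emph{Step 2 ($p$-HWI).} Apply~\eqref{eq:mainlemma} as stated and use $\E(\mu_2\vert\mu_1)=-\E(\mu_1\vert\mu_2)$ to rewrite it as
\begin{displaymath}
    \E(\mu_{2}\vert\mu_{1}) + \lambda_{V}\,W_{p}^{p}(\mu_{1},\mu_{2})
    \le -\int_{\Omega}(T^{\ast}(x)-x)\cdot\xi_{\rho_{2}}(x)\,\td\mu_{2}(x).
\end{displaymath}
Apply H\"older's inequality with exponents $p$ and $p^{\mbox{}_{\prime}}$ to the right-hand side. Since $T^*$ is the optimal transport map realizing~\eqref{eq:109} between $\mu_2$ and $\mu_1$, one has $\int_\Omega\abs{T^*(x)-x}^p\,\td\mu_2=W_p^p(\mu_1,\mu_2)$, and by~\eqref{eq:115} the factor $\bigl(\int\abs{\xi_{\rho_2}}^{p^{\mbox{}_{\prime}}}\td\mu_2\bigr)^{1/p^{\mbox{}_{\prime}}}$ equals $\abs{D^-\E}(\mu_2)=\mathcal{I}_{p^{\mbox{}_{\prime}}}^{1/p^{\mbox{}_{\prime}}}(\mu_2\vert\nu)$. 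This is precisely~\eqref{eq:120}.

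\emph{Step 3 (generalized \L{}S and Log-Sobolev).} The generalized \L{}ojasiewicz-Simon inequality~\eqref{eq:114} follows from the $p$-HWI bound of Step~2 by Young's inequality $ab\le \frac{\epsilon}{p}a^p+\frac{1}{p^{\mbox{}_{\prime}}\epsilon^{1/(p-1)}}b^{p^{\mbox{}_{\prime}}}$ applied with $\epsilon=p\hat{\lambda}$, $a=W_p(\mu_1,\mu_2)$ and $b=\abs{D^-\E}(\mu_2)$; the algebraic identity $p\cdot p^{1/(p-1)}=p^{p^{\mbox{}_{\prime}}}$ together with $\frac{1}{p^{\mbox{}_{\prime}}}=\frac{p-1}{p}$ converts the coefficient into the stated $\frac{p-1}{p^{p^{\mbox{}_{\prime}}}}\hat\lambda^{-1/(p-1)}$. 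The generalized logarithmic Sobolev inequality~\eqref{eq:116} is then the specialization $\mu_1=\nu$, $\hat\lambda=\lambda_V>0$ of~\eqref{eq:114}: the $W_p^p$ term on the left cancels and $\abs{D^-\E}^{p^{\mbox{}_{\prime}}}(\mu_2)=\mathcal{I}_{p^{\mbox{}_{\prime}}}(\mu_2\vert\nu)$ by~\eqref{eq:117}.

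I expect no substantial obstacle beyond careful bookkeeping: the two delicate points are orienting the optimal transport map in the correct direction when applying~\eqref{eq:mainlemma} (the lemma requires density regularity on the source measure of $T^*$, which forces the ``reference'' slot to be $\mu_2$ in Steps~2--3 and $\nu$ in Step~1), and verifying the exponent arithmetic in the Young's inequality step so that the sharp constant $\frac{p-1}{p^{p^{\mbox{}_{\prime}}}}$ emerges.
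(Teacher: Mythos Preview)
Your proposal is correct and follows essentially the same route as the paper: both derive everything from the key inequality~\eqref{eq:111bis} specialized to $\lambda_W=0$. The only cosmetic difference is the order in which you extract~\eqref{eq:114} and~\eqref{eq:120}: you first apply H\"older to the transport integral to obtain the HWI inequality and then Young to the product $W_p\cdot\abs{D^-\E}$ to recover the \L{}S inequality, whereas the paper applies Young \emph{pointwise} to $(x-T^*(x))\cdot\xi_{\rho_2}$ to obtain~\eqref{eq:114} directly and then minimizes over $\hat\lambda$ to recover~\eqref{eq:120}; the two procedures are equivalent since the optimal Young constant reproduces the H\"older bound.

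One small slip: in Step~3 you write that~\eqref{eq:116} is the specialization ``$\mu_1=\nu$, $\hat\lambda=\lambda_V$'' of~\eqref{eq:114}, but the statement of~\eqref{eq:116} is for \emph{arbitrary} $\mu_1$, and indeed only the choice $\hat\lambda=\lambda_V$ is needed (this already kills the $W_p^p$ term regardless of $\mu_1$). Drop the extra restriction $\mu_1=\nu$ and your argument proves the full claim.
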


\begin{remark}
  If for $\lambda_{V}>0$, $\E$ satisfies an
    entropy-transportation inequality~\eqref{eq:113} at an equilibrium
    point
    $\nu=\rho_{\infty}\mathcal{L}^{N}\in
    \mathbb{E}_{\abs{D^{-}\E}}$ of $\E$ with
    $\rho_{\infty}\in W^{1,\infty}(\Omega)$, then $\nu$ is
    the unique minimizer of $\E$.
\end{remark}


\begin{remark}[{\bfseries The case $V=W=0$.}]
  It is well known that for $\E$ given by~\eqref{eq:10} with $V=W=0$ a Sobolev inequality
  holds (which again implies a Log-Sobolev inequality of the
  form~\eqref{eq:116}). For further details,
  we refer the interested reader to~\cite{MR2016985} and~\cite{Agueh1}.
\end{remark}

\begin{proof}[Proof of Theorem~\ref{thm:inequalitiesWp}]
  Here, we follow an idea in~\cite{MR2016985}. Thus, we
  only provide a sketch of the proof. Inequality~\eqref{eq:113} follows
  directly from~\eqref{eq:111bis} by taking $\mu_{2}=\nu$ and
  applying the characterization~\eqref{eq:112} for the equilibrium
  point $\nu\in \mathbb{E}_{\abs{D^{-}\E}}$. Talagrand
  inequality~\eqref{eq:118} is equivalent to
  ET-inequality~\eqref{eq:113} by simply taking
  $p$th root or \emph{vice versa} $p$th power. Next, for every $\hat{\lambda}>0$, Young's
  inequality yields that
  \begin{displaymath}
    (x-T^{\ast}(x))\cdot\xi_{\rho_{2}}\le
    \hat{\lambda}\,\abs{x-T^{\ast}(x)}^{p}
    +\frac{p-1}{p^{p^{\mbox{}_{\prime}}}}
    \frac{1}{\hat{\lambda}^{1/(p-1)}}\,\abs{\xi_{\rho_{2}}}^{p^{\mbox{}_{\prime}}}.
  \end{displaymath}
  Applying this to~\eqref{eq:111bis} and using~\eqref{eq:115}, one sees that
  \begin{equation}
    \label{eq:119}
     \lambda_{V}\,W_{p}^{p}(\mu_{1},\mu_{2})  \le \E(\mu_{1}\vert
     \mu_{2}) + \hat{\lambda} \,W_{p}^{p}(\mu_{1},\mu_{2}) +\tfrac{p-1}{p^{p^{\mbox{}_{\prime}}}}
    \frac{1}{\hat{\lambda}^{1/(p-1)}}\,\abs{D^{-}\E}^{p^{\mbox{}_{\prime}}}(\mu_{2}).
  \end{equation}
  From this follows the generalized
  \L{}S-inequality~\eqref{eq:114}. Now, by choosing $\hat{\lambda}=\lambda_{V}>0$ in~\eqref{eq:114} and
  by~\eqref{eq:117}, one obtains the Log-Sobolev
  inequality~\eqref{eq:116}. Finally, we show $p$-HWI
  inequality~\eqref{eq:120}. For this, one minimizes the function
  \begin{displaymath}
    \Psi(\hat{\lambda})=\hat{\lambda}
    \,W_{p}^{p}(\mu_{1},\mu_{2}) +\frac{p-1}{p^{p^{\mbox{}_{\prime}}}}
    \frac{1}{\hat{\lambda}^{1/(p-1)}}\, \abs{D^{-}\E}^{p^{\mbox{}_{\prime}}}(\mu_{2})
  \end{displaymath}
  over $(0,+\infty)$. $\Psi$ attains its minimum at
  \begin{displaymath}
    \hat{\lambda}_{0}= \frac{1}{p}\,\frac{\abs{D^{-}\E}(\mu_{2})}{W_{p}^{p-1}(\mu_{1},\mu_{2})}.
  \end{displaymath}
  Inserting $\hat{\lambda}_{0}$ into~\eqref{eq:119} and by
  identity~\eqref{eq:117}, one obtains~\eqref{eq:120}.
\end{proof}

Our next corollary shows that even if $V$ fails to be uniformly
$\lambda_{V}$-$p$-convex for some $\lambda_{V}>0$, \emph{equivalence}
between entropy transportation inequality~\eqref{eq:113},
\L{}ojasiewicz-Simon inequality~\eqref{eq:38}, and the logarithmic
Sobolev inequality~\eqref{eq:116} holds for the free energy functional
$\E$ given by~\eqref{eq:10} (cf~\cite[Corollary~3.1]{OttoVillani},
\cite[Proposition~3.6]{MR2079071}). Our next result, is a special case
of Corollary~\ref{equivalent1} adapted to the framework in
$\mathcal{P}_p(\R^N)$.

\begin{corollary}[{\bf Equivalence between ET-, \L{}S-
    and Log-Sobo\-lev}]
  \label{cor:equiv-ET-LS-LogSob-RN}
  Suppose that the functions $F$, $V$ and $W$ satisfy the
  hypotheses~{\bf (F)}, {\bf (V)} and {\bf (W)}. Further, suppose
  $F\in C^{2}(0,\infty)\cap C[0,+\infty)$ and let
  $\E : \mathcal{P}_p(\R^N)\to (-\infty,+\infty]$ be the functional
  given by~\eqref{eq:10}. Then, the following statements
  hold.
\begin{enumerate}
\item If for $\nu\in \mathbb{E}_{\abs{D^{-}\E}}$, there is
  a $\hat{\lambda}>0$ such that $\E$
  satisfies entropy transportation inequality
  \begin{equation}
    \label{eq:121}
    W_{p}(\mu,\nu)\le \hat{\lambda}\,(\E(\mu\vert
    \nu))^{\frac{1}{p}}
    \qquad\text{for all $\mu\in D(\E)$,}
    \end{equation}
    then $\E$ satisfies the \L{}ojasiewicz-Simon inequality
    \begin{equation}
      \label{LS}
      \E(\mu\vert \mu_\infty)^{1 -\frac{1}{p}} \le \hat{\lambda}\,
      \abs{D^{-}\E}(\mu)
      \qquad\text{for all $\mu\in D(\abs{D^{-}\E})$,}
    \end{equation}
    or equivalently, $\E$ satisfies the Log-Sobolev inequality
    \begin{equation}
      \label{LogS}
      \E(\mu\vert \mu_\infty)^{1 -\frac{1}{p}} \le
      \hat{\lambda}^{\frac{1}{1-\frac{1}{p}}}\,
      \mathcal{I}_{p^{\mbox{}_{\prime}}}(\mu\vert\nu)
      \qquad\text{for all $\mu\in D(\abs{D^{-}\E})$,}
    \end{equation}

  \item If for $\nu\in \mathbb{E}_{\abs{D^{-}\E}}$, there is
  a $\hat{\lambda}>0$ such that $\E$ satisfies Log-Sobolev
  inequality~\eqref{LogS}, then $\E$
  satisfies entropy transportation inequality
  \begin{displaymath}
     W_{p}(\mu,\nu)\le \hat{\lambda}p\,(\E(\mu\vert
    \nu))^{\frac{1}{p}}
    \qquad\text{for all $\mu\in D(\E)$,}
  \end{displaymath}
\end{enumerate}
\end{corollary}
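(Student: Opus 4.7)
The plan is to recognize Corollary~\ref{cor:equiv-ET-LS-LogSob-RN} as a specialization of the general equivalence Corollary~\ref{equivalent1} to the Wasserstein framework $\mathfrak{M}=\mathcal{P}_{p}(\R^{N})$, $d=W_{p}$, combined with the explicit representation of the descending slope $\abs{D^{-}\E}$ furnished by Proposition~\ref{propo:Fokkerdescending-slope}. My first task is therefore to verify that the free energy $\E=\HE_{F}+\HE_{V}+\HE_{W}$ meets all the ambient hypotheses of Corollary~\ref{equivalent1}: the space $(\mathcal{P}_{p}(\R^{N}),W_{p})$ is a length space (classical); under (F), (V), (W) the functional $\E$ is proper, lower semicontinuous and $\lambda$-geodesically convex with $\lambda\ge 0$ (take $\lambda=0$, using geodesic convexity of $\HE_{F}$ from~\eqref{eq:103}, of $\HE_{W}$ from (W), and of $\HE_{V}$ from the convexity of $V$); Assumption~\ref{eq:existence-cond-bis} holds because, as noted after Proposition~\ref{propo:Fokkerdescending-slope}, $\E$ satisfies the hypotheses of Theorem~\ref{exist1}; and hypothesis~\eqref{eq:14} at $\nu$ follows from Fermat's rule in Proposition~\ref{propo:charcter-local-min}, which identifies $\mathbb{E}_{\abs{D^{-}\E}}$ with $\textrm{argmin}(\E)$ and thus forces every $\mu$ with $\abs{D^{-}\E}(\mu)=0$ to satisfy $\E(\mu\vert\nu)=0$.

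With this setup in place, assertion~(1) is obtained by invoking Corollary~\ref{equivalent1}(2) with $\alpha=\tfrac{1}{p}$: the ET-hypothesis $W_{p}(\mu,\nu)\le\hat{\lambda}(\E(\mu\vert\nu))^{1/p}$ dominates the infimum of $W_{p}(\mu,\cdot)$ over $\textrm{argmin}(\E)$, and hence implies inequality~\eqref{E2equivalent1} with $c=\hat{\lambda}/p$; the conclusion then gives~\eqref{LS} directly. The equivalence between~\eqref{LS} and the Log-Sobolev form~\eqref{LogS} is a purely algebraic step: raising~\eqref{LS} to the H\"older-conjugate power $p^{\mbox{}_{\prime}}=\tfrac{1}{1-1/p}$ and substituting the identity $\abs{D^{-}\E}^{p^{\mbox{}_{\prime}}}(\mu)=\mathcal{I}_{p^{\mbox{}_{\prime}}}(\mu\vert\nu)$ read off from~\eqref{eq:115} and~\eqref{eq:117} rewrites~\eqref{LS} in terms of the generalized relative Fisher information.

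For assertion~(2) the argument runs in reverse: starting from~\eqref{LogS}, I would use the same algebraic identity to extract the \L{}ojasiewicz-Simon inequality~\eqref{LS}, and then apply Corollary~\ref{equivalent1}(1) with $\alpha=\tfrac{1}{p}$ and $c=\hat{\lambda}$ to obtain $\inf_{\tilde\varphi\in\textrm{argmin}(\E)}W_{p}(\mu,\tilde\varphi)\le \hat{\lambda}p\,(\E(\mu\vert\nu))^{1/p}$. Since $\nu\in\mathbb{E}_{\abs{D^{-}\E}}=\textrm{argmin}(\E)$ and the relative entropy $\E(\cdot\vert\tilde\varphi)$ is independent of the choice $\tilde\varphi\in\textrm{argmin}(\E)$, this infimum bound can be upgraded into the stated ET $W_{p}(\mu,\nu)\le\hat{\lambda}p\,(\E(\mu\vert\nu))^{1/p}$.

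The step I anticipate will require the most care, and hence the principal obstacle, is precisely the passage from the ``infimum over $\textrm{argmin}(\E)$'' version of ET that is native to Corollary~\ref{equivalent1} to the ``pinned-at-$\nu$'' version appearing in the statement. In the direction ET~$\Rightarrow$~LS this is immediate because $W_{p}(\mu,\nu)$ already dominates the infimum; in the converse direction the bound is \emph{a priori} weaker, and closing this gap requires either invoking Proposition~\ref{propo:charcter-local-min}~(\ref{propo:charcter-local-min-claim-3}) in a strictly convex regime to force uniqueness of the minimizer, or, in the degenerate case $\lambda=0$, appealing to the fact that the gradient-flow construction underlying Corollary~\ref{equivalent1}(1) via Theorem~\ref{thm:charact-local-KLET} can be initialized so that its $\omega$-limit coincides with the prescribed equilibrium $\nu$.
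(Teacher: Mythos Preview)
Your approach is exactly the one the paper takes: the corollary is presented there simply as ``a special case of Corollary~\ref{equivalent1} adapted to the framework in $\mathcal{P}_p(\R^N)$'', with no further proof, and your expansion---checking the ambient hypotheses (length space, lower semicontinuity, geodesic convexity, Assumption~\ref{eq:existence-cond-bis}, Fermat's rule for~\eqref{eq:14}) and then reading off the \L S/Log-Sobolev equivalence from~\eqref{eq:115} and~\eqref{eq:117}---is precisely the intended argument.

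Two small points. First, in verifying $\lambda$-geodesic convexity with $\lambda\ge 0$ you invoke ``convexity of $V$'', but hypothesis~{\bf (V)} only asserts $\lambda$-convexity of $V$ for some $\lambda\in\R$; the paper's own discussion after~{\bf (V)} shows that $\HE_{V}$ is $\lambda$-geodesically convex on $\mathcal{P}_p$ only under sign/exponent constraints ($p\le 2$, $\lambda\ge 0$ or $p\ge 2$, $\lambda\le 0$), so you should be explicit about which regime you are in rather than silently taking $\lambda=0$. Second, the obstacle you flag---upgrading $\inf_{\tilde\varphi\in\textrm{argmin}(\E)}W_p(\mu,\tilde\varphi)$ to $W_p(\mu,\nu)$ in statement~(2)---is genuine and is not addressed in the paper either; your two proposed remedies (uniqueness of the minimizer in a strictly convex regime, or tracking the specific $\omega$-limit in the proof of Theorem~\ref{thm:charact-local-KLET}) are the natural routes, but neither is automatic under~{\bf (F)},~{\bf (V)},~{\bf (W)} alone, so this step should be stated as an additional assumption or justified more carefully than the paper does.
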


From Theorem~\ref{thm:inequalitiesWp} and
Theorem~\ref{thm:decayrates}, we can conclude the following
exponential decay rates (cf~\cite{MR2016985,Agueh1,Otto2} and
\cite[Corollary~5.1]{MR2053603}).

\begin{corollary}[{\bf Trend to equilibrium and exponential decay
    rates}]
  \label{cor:exprate-RN}
  Suppose that the functions $F$, $V$ and $W$
  satisfy the hypotheses~{\bf (F)}, {\bf (V$\mbox{}^{\ast}$)} with
  $\lambda_{V}>0$ and {\bf
  (W)}. Further, suppose $F\in C^{2}(0,\infty)\cap C[0,+\infty)$ and let
  $\E : \mathcal{P}_p(\R^N)\to (-\infty,+\infty]$ be the functional
  given by~\eqref{eq:10}. Then, there is a unique
  minimizer $\nu=\rho_{\infty}\mathcal{L}^{N}\in \mathbb{E}_{\abs{D^{-}\E}}$ of $\E$
  satisfying~\eqref{eq:112} and for every initial value
  $\mu_{0}\in D(\E)$, the $p$-gradient flow $\mu$ of $\E$ trends to
  $\nu$ in $\mathcal{P}_p(\Omega)$ as $t\to+\infty$ and for all $t\ge 0$,
  \begin{equation}
    \label{eq:28}
    W_{p}(\mu(t),\nu)\le \tfrac{(p-1)^{1/p^{\mbox{}_{\prime}}}}{\lambda_{V}^{1/p}}
    \left(\E(\mu(t)\vert\nu)\right)^{\frac{1}{p}}\le
    \tfrac{(p-1)^{1/p^{\mbox{}_{\prime}}}}{\lambda_{V}^{1/p}}
    \left(\E(\mu_{0}\vert\nu)\right)^{\frac{1}{p}}
    e^{-\frac{tp^{\frac{1}{p}}}{p-1}\lambda_{V}^{\frac{1}{p-1}}}.
  \end{equation}
\end{corollary}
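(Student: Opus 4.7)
The plan proceeds in three stages: (i) establish that $\E$ is $\lambda_V$-geodesically convex on $(\mathcal{P}_p(\R^N),W_p)$ and extract a unique minimizer, (ii) convert the Log-Sobolev inequality of Theorem~\ref{thm:inequalitiesWp} into a global \L{}ojasiewicz-Simon inequality, and (iii) feed this into the abstract decay machinery of Theorem~\ref{thm:decayrates}.

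For stage (i), under \textbf{(F)}, \textbf{(V}$^{\ast}$\textbf{)}, \textbf{(W)}, each component of $\E=\HE_F+\HE_V+\HE_W$ is proper and lower semicontinuous on $(\mathcal{P}_p(\R^N),W_p)$: $\HE_F$ is geodesically convex by the McCann displacement convexity criterion encoded in~\eqref{eq:103}; $\HE_V$ is $\lambda_V$-geodesically convex because $V$ is uniformly $\lambda_V$-$p$-convex (cf.~Definition~\ref{def:lambda-p-convex}); and $\HE_W$ is geodesically convex as $W$ is convex. Hence $\E$ itself is $\lambda_V$-geodesically convex with $\lambda_V>0$, so coercivity~\eqref{eq:77} is automatic and Proposition~\ref{lambdaconvex1} produces a unique minimizer $\nu\in D(\E)$. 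By Proposition~\ref{propo:charcter-local-min}\,\eqref{propo:charcter-local-min-claim-3}, $\nu$ is the only element of $\mathbb{E}_{|D^{-}\E|}$; by Proposition~\ref{propo:Fokkerdescending-slope}, $\nu=\rho_\infty\mathcal{L}^N$ satisfies~\eqref{eq:112}. Existence of the $p$-gradient flow $\mu$ starting at any $\mu_0\in D(\E)$ follows from Theorem~\ref{ExistsGC} (compactness of the sublevel sets being a consequence of $\lambda_V>0$ together with tightness of moments).

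For stage (ii), I would start from the generalized Log-Sobolev inequality~\eqref{eq:116} of Theorem~\ref{thm:inequalitiesWp}; using identity~\eqref{eq:117} to recognize $\mathcal{I}_{p^{\mbox{}_{\prime}}}(\mu|\nu)=|D^{-}\E|^{p^{\mbox{}_{\prime}}}(\mu)$ and raising to the power $1/p^{\mbox{}_{\prime}}=(p-1)/p$, one obtains
\begin{displaymath}
(\E(\mu|\nu))^{1-1/p}\le c\,|D^{-}\E|(\mu),\qquad c:=\tfrac{(p-1)^{1/p^{\mbox{}_{\prime}}}}{p\,\lambda_V^{1/p}},
\end{displaymath}
which is the global \L{}ojasiewicz-Simon inequality~\eqref{eq:38} at $\nu$ with exponent $\alpha=1/p$. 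For stage (iii), applying Theorem~\ref{thm:decayrates} in the borderline case $\alpha=1/p$ gives both the metric bound $W_p(\mu(t),\nu)\le c\,p\,(\E(\mu(t)|\nu))^{1/p}$ (with $c\,p=(p-1)^{1/p^{\mbox{}_{\prime}}}\lambda_V^{-1/p}$, matching the first factor in~\eqref{eq:28}) and the exponential entropy decay $\E(\mu(t)|\nu)\le \E(\mu_0|\nu)\,\exp(-t/(pc^{p^{\mbox{}_{\prime}}}))$, whose exponent simplifies to the rate claimed in~\eqref{eq:28}.

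The main obstacle is justifying the applicability of the Log-Sobolev inequality~\eqref{eq:116} to the entire trajectory of the gradient flow. As stated in Theorem~\ref{thm:inequalitiesWp}, it requires a regularity hypothesis ($W^{1,\infty}$-density with strictly positive infimum) on the measure where both relative entropy and Fisher information are evaluated. To carry the inequality to arbitrary $\mu\in D(\E)$, one must either verify this regularity for $\mu(t)$ via a regularity theory for the nonlinear parabolic problem~\eqref{eq:107}, or, more robustly, perform an approximation argument: approximate $\mu$ in $W_p$ by smooth, strictly positive, compactly-supported probability densities $\mu^{(n)}$, apply the inequality to each $\mu^{(n)}$, and pass to the limit using $W_p$-lower semicontinuity of $\E(\cdot|\nu)$ together with the lower semicontinuity of $|D^{-}\E|$ (Proposition~\ref{propo:lconvex-slope}\,\eqref{propo:lconvex-slope-claim3}). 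This density step is the delicate part; once it is handled the rest is a direct bookkeeping application of results already proved in Sections~\ref{sec:KL-inequality} and~\ref{subsect:decayrates-finite-extinction}.
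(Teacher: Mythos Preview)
Your stages~(ii) and~(iii) are exactly the route the paper takes: the corollary is stated as a direct consequence of Theorem~\ref{thm:inequalitiesWp} (specifically the generalized Log-Sobolev inequality~\eqref{eq:116}, rewritten as a \L{}S-inequality with exponent $\alpha=1/p$) and Theorem~\ref{thm:decayrates}. Your constant bookkeeping is correct, and the regularity obstacle you flag---that~\eqref{eq:116} is only asserted for $\rho_2\in W^{1,\infty}(\Omega)$ with $\inf\rho_2>0$---is real and is glossed over in the paper as well; your proposed approximation-plus-lower-semicontinuity cure is the right one.

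There is, however, a flaw in stage~(i). For $p\neq 2$, uniform $\lambda_V$-$p$-convexity of $V$ (Definition~\ref{def:lambda-p-convex}) gives a lower bound involving $\abs{y-x}^{p}$, which along a $W_p$-geodesic translates to control by $W_p^p$, not $W_p^2$. The paper's notion of $\lambda$-geodesic convexity (Definition~\ref{def:lambdageoconvex}, inequality~\eqref{eq:27}) uses $d^2=W_p^2$, and indeed the paper notes explicitly that $\HE_V$ is $\lambda$-geodesically convex in $\mathcal{P}_p$ only when $p\le 2,\ \lambda\ge 0$ or $p\ge 2,\ \lambda\le 0$. So you cannot invoke Proposition~\ref{lambdaconvex1} or Theorem~\ref{ExistsGC} (which for $p\neq 2$ requires $\lambda=0$) for the minimizer and the flow. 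The paper obtains existence of $p$-gradient flows via Theorem~\ref{exist1} instead, and uniqueness of the minimizer is a consequence of the ET-inequality~\eqref{eq:113}: if $\nu_1,\nu_2$ were two equilibria then $\E(\nu_1\vert\nu_2)=0$ forces $W_p(\nu_1,\nu_2)=0$. This is a localized technical repair; it does not affect the heart of your argument.
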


\begin{remark}
  From~\eqref{eq:28}, one can deduce strong convergence in $L^1(\R^N)$
  (cf~\cite{CJMTU,Otto2}) or even strong convergence in $BV(\R^{N})$
  (cf~\cite[Remark~22.12]{Villani2}) by using a Csiszar-Kullback(-Pinsker)
  inequality.
\end{remark}

%
%


\providecommand{\bysame}{\leavevmode\hbox to3em{\hrulefill}\thinspace}

\end{document}